\newcommand{\beaa}{\begin{eqnarray*}}
\newcommand{\eeaa}{\end{eqnarray*}}
\newcommand{\bea}{\begin{eqnarray}}
\newcommand{\eea}{\end{eqnarray}}
\newcommand\mi{\mathrm{i}\hskip 1pt}	
\newcommand{\Pro}{\noindent\textit{Proof.}\ \ }
\def\dd{\hskip 1pt{\rm{d}}}
\def\Var{\mathop{\rm Var\,}\nolimits}
\def\Cov{{\mathop{\rm Cov}}}
\def\bC{\mathbb{C}}
\def\bN{\mathbb{N}}
\def\bR{\mathbb{R}}
\DeclareMathOperator{\tr}{tr}
\DeclareMathOperator{\etr}{etr}
\DeclareMathOperator{\vect}{vech}
\numberwithin{equation}{section}
\theoremstyle{plain}
\newtheorem{theorem}{Theorem}[section]
\newtheorem{corollary}[theorem]{Corollary}
\newtheorem{lemma}[theorem]{Lemma}
\newtheorem{proposition}[theorem]{Proposition}
\newtheorem{remark}[theorem]{Remark}
\newtheorem{example}[theorem]{Example}
\newtheorem{definition}[theorem]{Definition}
\newtheorem{assumption}[theorem]{Assumptions}
\newcommand{\vertiii}[1]{{\left\vert\kern-0.25ex\left\vert\kern-0.25ex\left\vert #1 
    \right\vert\kern-0.25ex\right\vert\kern-0.25ex\right\vert}}
\theoremstyle{definition}
\begin{document}

\title{\bf \large
Integral Transform Methods in Goodness-of-Fit Testing, II: The Wishart Distributions
}

\author{
{Elena Hadjicosta}\thanks{Department of Statistics, Pennsylvania State University, University Park, PA 16802, U.S.A.  E-mail address: \href{mailto:exh963@psu.edu}{exh963@psu.edu}.} 
\ {and Donald Richards}\thanks{Department of Statistics, Pennsylvania State University, University Park, PA 16802, U.S.A.  E-mail address: \href{mailto:richards@stat.psu.edu}{richards@stat.psu.edu}.
\endgraf
\ {\it MSC 2010 subject classifications}: Primary 33C10, 62G10; Secondary 15A52, 62G20, 62H15.
\endgraf
\ {\it Key words and phrases}.  Bahadur slope; Bessel function of matrix argument; contamination model; contiguous alternative; Frobenius norm; Gaussian random field; generalized Laguerre polynomial; goodness-of-fit testing; Hankel transform of matrix argument; Hilbert-Schmidt operator; hypergeometric function of matrix argument; operator norm;  Pitman efficiency; Schur's lemma; Wishart distribution; zonal polynomials.}
}

\maketitle

\medskip

\date

\medskip

\begin{abstract}
We initiate the study of goodness-of-fit testing when the data consist of positive definite matrices.  Motivated by the recent appearance of the cone of positive definite matrices in numerous areas of applied research, including diffusion tensor imaging, models of the volatility of financial time series, wireless communication systems, and the analysis of polarimetric radar images, we apply the method of Hankel transforms of matrix argument to develop goodness-of-fit tests for Wishart distributions with given shape parameter and unknown scale matrix. We obtain the limiting null distribution of the test statistic and the corresponding covariance operator. We show that the eigenvalues of the operator satisfy an interlacing property, and we apply our test to some financial data. 
Moreover, we establish the consistency of the test against a large class of alternative distributions and we derive the asymptotic distribution of the test statistic under a sequence of contiguous alternatives. We establish the Bahadur and Pitman efficiency properties of the test statistic and we show the validity of a modified Wieand condition. 
\end{abstract}

\newpage
\tableofcontents

\normalsize

\parindent=25pt

\section{Introduction}
\label{introduction}
\setcounter{equation}{0}

In this paper, we develop goodness-of-fit tests for the Wishart distributions, extending the results of \cite{BT2010,ref27} for the exponential distributions and \cite{hadjicosta19,hadjicostarichards} for the gamma distributions.  In recent years, the cone of positive definite matrices has arisen in numerous applications, e.g., diffusion tensor imaging, financial time series, wireless communication systems, and polarimetric radar images; it is these applications that motivate our study of goodness-of-fit tests for probability distributions on the cone.

Positive definite random matrix data have appeared in medical research, specifically in diffusion tensor imaging (DTI), cf. \cite{dryden,jian,jian2,kimrichards,armin, armin2, armin3, armin4}.  DTI is a magnetic resonance imaging method that has attracted much interest in the study of brain diseases.  DTI is based on the observation that water molecules {\it in vivo} are always in motion; by modelling the diffusion of the water molecules at any location by a three-dimensional Brownian motion, the resulting diffusion tensor image is represented by the $3 \times 3$ positive definite matrix of the local diffusion process at the given location. 

Although DTI is non-invasive, it enables the study of deep brain white-matter fibers. Thus, DTI has been used to study epileptic seizures, Alzheimer's disease, traumatic brain injuries, aging, white-matter abnormalities, developmental disorders, and psychiatric conditions \cite{neumann, rosenbloom, pomara, matthews}. DTI has also been used to study the pathology of organ and tissue types such as the breast, cardiac, kidney, lingual, skeletal muscles, and spinal cord \cite{damon}.  In numerous articles, the Wishart distribution with known degrees-of-freedom and unknown scale matrix has been used to model DTI data \cite{dryden,jian,jian2}.  

The Wishart distributions with known degrees-of-freedom also arise in stochastic volatility models \cite{asai,gourieroux,kubloomfield}.  In this area, the problem is to estimate the covariance matrix of the joint capital returns on several financial assets, with the goal of predicting future returns, devising portfolio allocations, pricing options, and estimating risk.  

The {\it complex} Wishart distributions with known degrees-of-freedom arise in the spectral analysis of multivariate Gaussian time series \cite{goodman}, wireless communications \cite{siriteanu,siriteanu2,tulino} and the analysis of polarimetric synthetic aperture radar \cite{multilook1, multilook2}.  These applications are widespread, for the spectral analysis of such time series arises in signal processing, econometrics, meteorology, and polarimetric radar has become an important remote sensing device due to its heightened ability to distinguish between distinct scattering sources.  The results to follow can be extended, after making obvious necessary changes, to the complex Wishart distributions \cite[p.~488]{james} and even to the Wishart distributions on general symmetric cones \cite{farautkoranyi}.  

The technical details required to develop goodness-of-fit tests for positive definite matrix data 
are extensive.  Naturally, we will need mathematical analysis on the cone of positive definite matrices \cite{maass}, the Bessel and Laguerre polynomials of matrix argument and their zonal polynomial expansions \cite{grossrichards,herz,james,muirhead}, and Hankel transforms of matrix argument \cite{herz}.  Further complications arising from the non-commutative nature of matrix multiplication leads us to impose on the distribution of the sample data an orthogonal invariance condition.  In addition, the Frobenius, spectral, and operator norms arise in the matrix case, and numerous inequalities between them will be needed.  There is also the surprising appearance of Schur's lemma, a result well-known in linear algebra but which appears only rarely in statistical inference.  

We now describe the results in the paper.  Throughout, we will follow as templates the presentations in \cite{BT2010,hadjicostarichards,ref27}.  In Section \ref{propertieshankel_matrix} we provide some properties of the Wishart distributions, and related results for the Bessel functions, Hankel transforms, confluent hypergeometric functions, and generalized Laguerre polynomials, all of matrix argument.  Further, we present uniqueness theorems for the Hankel transform of matrix argument, a Hankel inversion formula, and some limit theorems.  After providing results on a generalized hypergeometric function of two matrix arguments, we define the orthogonally invariant Hankel transform and present some of its properties. 

In Section \ref{goodnessoffittests_wishart}, we propose an integral-type test statistic $\boldsymbol{T}_n^2$ for goodness-of-testing for the Wishart distributions.  Generalizing the one-dimensional cases \cite{BT2010,hadjicostarichards,ref27}, the statistic $\boldsymbol{T}_n^2$ is a squared integral, \eqref{statisticc_wishart}, involving the empirical orthogonally invariant Hankel transform.  We obtain the asymptotic distribution of $\boldsymbol{T}_n^2$ under the null hypothesis, proving that $\boldsymbol{T}_n^2$ converges in distribution to a weighted sum of independent and identically distributed random variables, each having a chi-square distribution with one degree-of-freedom. The coefficients of the weighted sum are the positive eigenvalues of the covariance operator corresponding to a certain zero-mean Gaussian random field.  The determination of the multiplicity of the eigenvalues remains an open problem, but we show that the eigenvalues satisfy an interlacing property and we show the usefulness of the interlacing property in an application of the test statistic to financial data.  Also, we establish the consistency of the test against a large class of alternative distributions.

In Section \ref{contiguousmatrix}, we derive the asymptotic distribution of $\boldsymbol{T}_n^2$ under certain sequences of contiguous alternatives to the null hypothesis.  Specifically, we consider Wishart alternatives with varying shape or scale parameters, some classes of contaminated Wishart models in which the contamination distribution is a generalized inverted Gaussian. 

Finally, in Section \ref{efficiency_matrix}, we establish the Bahadur and Pitman efficiency properties of the statistic $\boldsymbol{T}_n^2$.  We investigate the approximate Bahadur slope of $\boldsymbol{T}_n^2$ under local alternatives and we show the validity of a modified Wieand condition.  A complete extension of Wieand's condition, under which the Bahadur and Pitman efficiencies coincide, remains an open problem.


\section{Wishart Distributions and Hankel Transforms of Matrix Argument}
\label{propertieshankel_matrix}

\subsection{Preliminary results for the Wishart distributions}

Throughout the paper, all needed results on the zonal polynomials and on the special functions of matrix argument are provided by Herz \cite{herz}, Muirhead \cite{muirhead}, or Richards \cite{ref1}, so we will generally conform to the notation in those sources.  We denote the zero matrix of any order by $0$, the order being always determined by the context;further $I_m$ will denote the $m \times m$ identity matrix.  We also denote by $\mathbb{R}^{m \times m}$ the space of $m \times m$ (real) matrices, by $\mathcal{S}^{m \times m}$ the space of $m \times m$ symmetric matrices, by $\mathcal{P}_{+}^{m \times m}$ the cone of $m \times m$ positive-definite matrices, and by $O(m)$ the group of $m \times m$ orthogonal matrices.  To specify that $Y \in \mathcal{P}_{+}^{m \times m}$, we usually write $Y > 0$; more generally, we write $Y_1 > Y_2$ whenever $Y_1 - Y_2 > 0$.  Further, we denote the trace of $Y$ by $\tr(Y)$, the determinant of $Y$ by $\det(Y)$  and we write $\etr(Y)$ for $\exp(\tr Y)$. 

The {\it multivariate gamma function} is defined by
$$
\Gamma_m (a) = \int_{R > 0} (\det R) ^{a-\frac{1}{2}(m+1)} \, \etr(-R) \dd R,
$$
for $a \in \mathbb{C}$, Re$(a) > \frac{1}{2} (m-1)$; this integral is well-known to have the explicit formula, 
$$
\Gamma_m(a)= \pi^{m(m-1)/4} \ \prod_{j=1}^m \Gamma \Big(a-\tfrac12(j-1)\Big).
$$

A $m \times m$ positive-definite random matrix $X$ is said to have a \textit{Wishart distribution} if its probability density function (p.d.f.) is of the form 
\begin{equation}
\label{wishartpdf}
f(X)=\frac{1}{\Gamma_m(\alpha)} (\det \Sigma)^{\alpha} (\det X)^{\alpha-\frac{1}{2}(m+1)} \etr(-\Sigma X),
\end{equation}
$X > 0$, where $\alpha > \frac{1}{2} (m-1)$ and $\Sigma > 0$.  We write $X \sim W_m(\alpha, \Sigma)$ whenever (\ref{wishartpdf}) holds.  The parameter $\alpha$ is called the \textit{shape parameter} and $\Sigma$ is called the \textit{scale matrix} of $X$.  If $\alpha$ is a half-integer then $2\alpha$ is called the \textit{degrees-of-freedom of} $X$.  In general, $E(X)=\alpha \Sigma^{-1}$; also, if $M$ is a $q \times m$ matrix of rank $q$, where $q \le m$, then $M X M' \sim W_q(\alpha, (M \Sigma^{-1} M')^{-1})$ \cite[p.~92, Theorem 3.2.5]{muirhead}.

A \textit{partition} $\kappa=(k_1,\dotsc, k_m)$ is a vector of nonnegative integers, listed in non-increasing order.  The \textit{weight} of $\kappa$ is $|\kappa|=k_1+\dotsc+k_m$, and the \textit{length}, $\ell(\kappa)$, of $\kappa$ is the number of non-zero $k_j$, $j=1,\dotsc,m$. 

For $a \in \mathbb{C}$ and $k=0,1,2,\ldots$, the \textit{shifted factorial} is defined as $(a)_k = a(a+1)(a+2)\cdots (a+k-1)$.  For any partition $\kappa = (k_1,\dotsc, k_m)$, the {\it partitional shifted factorial} is defined as 
$$
[a]_\kappa = \prod_{j=1}^m \big( a-\tfrac{1}{2}(j-1)\big)_{k_j}.
$$

For $Y \in \mathcal{S}^{m \times m}$, we denote by $\det_j(Y)$ the $j$th principal minor of $Y$, $j=1,\ldots,m$.  For any partition $\kappa$, the \textit{zonal polynomial} $C_{\kappa}(Y)$ is defined as 
\begin{equation}
\label{zonal}
C_{\kappa}(Y) =  C_{\kappa}(I_m) \, (\det Y)^{k_m} \int_{O(m)} \prod\limits_{j=1}^{m-1} ({\det}{}_j(H Y H^{-1}))^{k_j-k_{j+1}} \, \dd H,
\end{equation}
where $\dd H$ is the normalized Haar measure on $O(m)$ \cite[(35.4.2)]{ref1}.  By\eqref{zonal}, $C_{\kappa}(Y)$ is homogeneous of degree $|\kappa|$.  

It also follows from the invariance of the Haar measure that $C_{\kappa}(HYH') = C_{\kappa}(Y)$ for all $H \in O(m)$ and $Y \in \mathcal{S}^{m \times m}$; hence, $C_{\kappa}(Y)$ depends only on the eigenvalues of $Y$ and it is a symmetric function of the eigenvalues.  Suppose that $Z \in \mathcal{S}^{m \times m}$ and that $Y^{1/2}$ denotes the unique positive definite square root of $Y \in \mathcal{P}_+^{m \times m}$.  Since the matrices $Y^{1/2}ZY^{1/2}$, $YZ$, and $ZY$ all have the same eigenvalues we will follow a widely-adopted convention, writing $C_\kappa(YZ)$ or $C_\kappa(ZY)$ for $C_\kappa(Y^{1/2}ZY^{1/2})$; throughout the paper, we retain this convention for all orthogonally invariant functions of matrix argument.


With the normalization 
\begin{equation}
\label{zonal_identitymatrix}
C_{\kappa}(I_m)=2^{2 |\kappa|}| \, \kappa|! \, [m/2]_{\kappa} \, \frac{ \prod_{i < j}^{\ell(\kappa)} (2k_i - 2k_j - i + j)}{\prod_{i=1}^{\ell(\kappa)} (2k_i + \ell(\kappa) -i )! },
\end{equation}
the zonal polynomials satisfy the identity, 
\begin{equation}
\label{trace_zonal}
(\tr Y )^{k}=\sum_{|\kappa|=k} C_{\kappa}(Y),
\end{equation}
$k=0,1,2, \dotsc$ (see \cite[Eq.~(iii), p.~228]{muirhead} or \cite[Eq.~(35.4.6)]{ref1}).  Further, for $Y, Z \in \mathcal{S}^{m \times m}$, the zonal polynomials satisfy the {\it mean-value property} \cite[p.~243]{muirhead}, 
\begin{equation}
\label{meanvalue}
\int_{O(m)} C_\kappa(HYH'Z) \dd H = \frac{C_\kappa(Y) \, C_\kappa(Z)}{C_\kappa(I_m)}.
\end{equation}

We will also need in the sequel the identity, 
\begin{equation}
\label{sum_zonalfactorial}
\sum_{|\kappa|=k} C_{\kappa}(I_m) [a]_{\kappa}=(m \, a)_k,
\end{equation}
$a \in \mathbb{C}$, $k=0,1,2,\dotsc$.  This result is established by applying a power series identity, 
\begin{equation}
\label{sum_zonalfactorial1}
\sum_{k=0}^{\infty} \frac{t^k}{k!} \sum_{|\kappa|=k} C_{\kappa}(I_m) [a]_k = (\det(I_m-tI_m))^{-a},
\end{equation}
$|t| <1$; see \cite[p.~495, Eq.~(143)]{james}, \cite[p.~259, Eq.~(4)]{muirhead}.  Writing 
\begin{equation}
\label{sum_zonalfactorial2}
(\det(I_m - tI_m))^{-a} \equiv (1-t)^{-m \, a}=\sum_{k=0}^\infty \frac{t^k}{k!} \ (m\, a)_k,
\end{equation}
then (\ref{sum_zonalfactorial}) is obtained by comparing the coefficients of $t^k$ in (\ref{sum_zonalfactorial1}) and (\ref{sum_zonalfactorial2}).  

The zonal polynomials also satisfy a Laplace transform identity \cite[p.~248]{muirhead}: For Re$(a)> \tfrac12(m-1)$, $Z > 0$, and $M \in \mathcal{S}^{m \times m}$,
\begin{equation}
\label{zonalintegral}
\int_{R > 0} C_{\kappa}(M R) (\det R)^{a-\tfrac12(m+1)} \etr(-R Z) \dd R = [a]_{\kappa} \Gamma_m(a) (\det Z)^{-\alpha} C_{\kappa} (M Z^{-1}).
\end{equation}
For $\kappa = 0$, this result reduces to 
\begin{equation}
\label{gammaintegral}
\int_{R > 0} (\det R)^{a-\tfrac12(m+1)} \etr(-R Z) \dd R = \Gamma_m(a) (\det Z)^{-a},
\end{equation}
from which we confirm that (\ref{wishartpdf}) is a probability density function \cite[p.~61
]{muirhead}.

\subsection{Bessel functions and Laguerre polynomials of matrix argument}
\label{besselLaguerre_matrixargument}

The Bessel function of matrix argument, first treated in detail by Herz \cite{herz}, can be defined in several ways.  Let $\nu \in \mathbb{C}$ be such that $-\nu +\tfrac12(j-m) \notin \mathbb{N}$ for all $j=1,\ldots,m$; these restrictions ensure that $[\nu+\tfrac12(m+1)]_\kappa \neq 0$ for all partitions $\kappa$.  Following Muirhead \cite[Chapter 7]{muirhead}, the \textit{Bessel function (of the first kind) of order $\nu$} is defined for $Y \in \mathcal{S}^{m \times m}$ as 
\begin{equation}
\label{besselseriesdef_matrixargument}
A_{\nu}(Y)=\frac{1}{\Gamma_m(\nu+\tfrac12(m+1))}\sum\limits_{k=0}^{\infty} \frac{(-1)^k}{k!} \sum\limits_{|\kappa|=k} \frac{1}{[\nu+\frac{1}{2}(m+1)]_{\kappa}} C_{\kappa}(Y).
\end{equation}
We also refer to \cite{farautkoranyi,{grossrichards},james,ref1} for further details of these Bessel functions. In particular, the series (\ref{besselseriesdef_matrixargument}) converges absolutely for all $Y \in \mathcal{S}^{m \times m}$ \cite[Theorem 6.3]{grossrichards}.


For Re$(\nu) > \tfrac{1}{2}(m-2)$, the Bessel function $A_\nu$ is also given by Herz's generalization of the classical \textit{Poisson integral} \cite[Eq.~(3.6\textprime)]{herz}: For any $m \times m$ matrix $V$, 
\begin{equation}
\label{besselintegraldef_matrixargument}
A_{\nu}(V'V) =\frac{1}{\pi^{m^2/2} \Gamma_m(\nu+\frac{1}{2})} \int_{Q'Q < I_m} { \etr(2 \, \mi V' Q) \, (\det(I_m - Q'Q))^{\nu - \tfrac12 m}}\, \dd Q,
\end{equation}
where $\mi = \sqrt{-1}$ and the integral is with respect to Lebesgue measure on the set $\{Q \in \mathbb{R}^{m \times m}: QQ' < I_m\}$.  This result leads to an inequality 
that will arise repeatedly in the sequel.

\begin{lemma}
For Re$(\nu) > \tfrac{1}{2}(m-2)$ and $V \in \mathbb{R}^{m \times m}$,
\begin{equation}
\label{besselineq1_matrixargument}
\big| A_\nu(V'V) \big| \le \frac{1}{\Gamma_m(\nu+\tfrac12(m+1))}.
\end{equation}
\end{lemma}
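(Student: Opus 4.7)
The plan is to apply Herz's Poisson integral representation \eqref{besselintegraldef_matrixargument} and bound the integrand trivially. Pulling the absolute value inside the integral gives
\[
\big|A_\nu(V'V)\big| \;\le\; \frac{1}{\pi^{m^2/2}\,\Gamma_m(\nu+\tfrac12)} \int_{Q'Q < I_m} \big|\etr(2\mi V'Q)\big| \cdot (\det(I_m - Q'Q))^{\nu - \tfrac12 m}\, \dd Q,
\]
where I use that $I_m - Q'Q > 0$ on the domain of integration, so the determinant power is a nonnegative real. Because $V \in \mathbb{R}^{m \times m}$ and $Q$ is real, $\tr(V'Q) \in \mathbb{R}$, and hence $|\etr(2\mi V'Q)| = 1$. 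The bound therefore collapses to the integral of $(\det(I_m - Q'Q))^{\nu - \tfrac12 m}$ over the matrix ball, divided by $\pi^{m^2/2}\,\Gamma_m(\nu+\tfrac12)$.

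The second and key step is to recognize this remaining expression as $A_\nu(0)$. Indeed, setting $V = 0$ in \eqref{besselintegraldef_matrixargument} collapses the exponential to $1$ and produces exactly this integral representation for $A_\nu(0)$. Independently, evaluating the series \eqref{besselseriesdef_matrixargument} at $Y = 0$ kills every term with $|\kappa| \ge 1$, since $C_\kappa$ is homogeneous of positive degree, leaving only the $k = 0$ contribution
\[
A_\nu(0) \;=\; \frac{1}{\Gamma_m(\nu+\tfrac12(m+1))}.
\]
Combining these two expressions for $A_\nu(0)$ gives the claimed bound \eqref{besselineq1_matrixargument}.

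There is essentially no obstacle: the argument is just the triangle inequality applied to the Poisson representation, followed by the identification of $A_\nu(0)$. The only point to monitor is that the determinant power is being treated as a nonnegative real, which is the natural reading when $\nu$ is real; for genuinely complex $\nu$ with $\mathrm{Re}(\nu) > \tfrac12(m-2)$ the same argument with $\mathrm{Re}(\nu)$ in the exponent goes through, and the right-hand side of \eqref{besselineq1_matrixargument} is to be interpreted with the appropriate absolute value.
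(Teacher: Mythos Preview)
Your proof is correct and essentially identical to the paper's own argument: both apply the Poisson integral representation \eqref{besselintegraldef_matrixargument}, bound $|\etr(2\mi V'Q)|\le 1$, and identify the resulting integral as $A_\nu(0)=1/\Gamma_m(\nu+\tfrac12(m+1))$ via the series \eqref{besselseriesdef_matrixargument}.
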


\Pro
Since $|\etr(2\, \mi V' Q)| \le 1$ then it follows from (\ref{besselseriesdef_matrixargument}) and (\ref{besselintegraldef_matrixargument}) that 
\begin{align*}
\big| A_{\nu}(V' V) \big| &\le \frac{1}{\pi^{m^2/2} \Gamma_m(\nu+\frac{1}{2})} \int_{Q'Q < I_m} (\det(I_m - Q'Q))^{\nu - \tfrac12 m} \, \dd Q \nonumber \\
&= A_{\nu}(0) 
=\frac{1}{\Gamma_m(\nu+\tfrac12(m+1))}. \qquad\qquad\qed
\end{align*}

\medskip

For Re$(\nu)> -1$, $M$ symmetric, and $Z > 0$, the Bessel function of matrix argument satisfies the Laplace transform identity, 
\begin{equation}
\label{besselintegral}
\int_{R > 0} \etr(-R Z) A_{\nu}(M R) (\det R)^{\nu} \, \dd R = \etr(-MZ^{-1}) \, (\det Z)^{-\nu-\tfrac12(m+1)}.
\end{equation}
Indeed, this identity is Herz's original definition of $A_\nu(R)$ \cite[Eq.~(2.5)]{herz}.  

Herz \cite[Eq.~(5.8)]{herz} also obtained a fundamental generalization of a classical formula known as \textit{Weber's second exponential integral}: For Re$(\nu) > -1$, $m \times m$ symmetric matrices $\Lambda$ and $M$, and $Z > 0$,
\begin{multline}
\label{besselproductintegral}
\int_{R > 0} \etr(-R Z)  A_{\nu}(\Lambda R) A_{\nu}(M R) (\det R)^{\nu} \,\dd R \\
= (\det Z)^{-\nu-\tfrac12(m+1)} \, \etr (-(\Lambda+M)Z^{-1}) \ A_{\nu}(-\Lambda Z^{-1} M Z^{-1}).
\end{multline}

Let  $a, b \in \mathbb{C}$ where $-b + \tfrac12(j+1) \notin \mathbb{N}$, $j=1,\ldots,m$.  The \textit{confluent hypergeometric function of matrix argument} is defined, for $Y \in \mathcal{S}^{m \times m}$, as 
\begin{equation}
\label{1F1_matrixargument}
{_1}F_1(a;b;Y) = \sum\limits_{k=0}^{\infty} \frac{1}{k!} \sum\limits_{|\kappa|=k} \frac{[a]_{\kappa}}{[b]_{\kappa}} C_{\kappa} (Y).
\end{equation}
We will make repeated use of \textit{Kummer's formula} \cite[Eq.~(2.8)]{herz}, \cite [p.~265] {muirhead}, \cite[\S 35.8]{ref1}: 
\begin{equation}
\label{kummer_matrixargument}
{_1}F_1(a;b;Y) = \etr(Y) \ {_1}F_1(b-a ; b ; -Y).
\end{equation}
There is a Laplace transform relationship between the Bessel function $A_\nu$ and the confluent hypergeometric function ${}_1F_1$ function \cite[p.~489, Eq.~(2.11)]{herz}: For Re$(a) > \tfrac12(m-1)$, symmetric $M$, and $Z > 0$, 
\begin{multline}
\label{laplacebessel1f1}
\Gamma_m(\nu+\tfrac{1}{2}(m+1)) \int_{R > 0} A_{\nu}(MR) (\det R)^{a-\frac{1}{2}(m+1)} \etr (- RZ) \,\dd R \\
= \Gamma_m(a) \, (\det Z)^{-a}  \,{_1}F_1\big( a; \nu+\tfrac12(m+1); -M Z^{-1}\big).
\end{multline}
This result can also be proved by expressing $A_{\nu}( MR )$ as a series of zonal polynomials and then applying \eqref{zonalintegral} to integrate term-by-term.

Given partitions $\kappa$ and $\sigma$, we denote by $\binom{\kappa}{\sigma}$ the \textit{generalized binomial coefficient} \cite[pp.~267-269]{muirhead}, \cite[Eq.~(35.6.3)]{ref1}.  For $\gamma > -1$ and $Y \in \mathcal{S}^{m \times m}$, the \textit{(generalized) Laguerre polynomial} $L^{(\gamma)}_{\kappa}(Y)$, corresponding to $\kappa$, is defined as 
\begin{equation}
\label{laguerredef}
L^{(\gamma)}_\kappa(Y) = \big[ \gamma + \tfrac12 (m+1) \big]_{\kappa} C_{\kappa}(I_m) \sum\limits_{s=0}^ {|\kappa |} \sum_{|\sigma|=s} \binom{\kappa}{\sigma} \frac{C_{\sigma}(-Y)}{[\gamma + \frac{1}{2}(m+1)]_{\sigma} C_{\sigma}(I_m)},
\end{equation}
Setting $Y = 0$ in \eqref{laguerredef}, we obtain 
\begin{equation}
\label{laguerre_at0_matrixcase}
L_{\kappa}^{(\gamma)}(0)= [\gamma+\tfrac12(m+1)]_{\kappa} \ C_{\kappa}(I_m).
\end{equation}
The \textit{normalized (generalized) Laguerre polynomial} corresponding to $\kappa$ is defined by
\begin{equation}
\label{laguerre2_matrixcase}
\mathcal{L}^{(\gamma)}_{\kappa} (Y) : = \left( | \kappa |! \ L_{\kappa}^{(\gamma)}(0) \right)^{-1/2} \ L^{(\gamma)}_{\kappa} (Y),
\end{equation}
$Y \in \mathcal{S}^{m \times m}$.  By \cite[Theorem 7.6.5]{muirhead}, the polynomials $\mathcal{L}^{(\gamma)}_{\kappa} $ are orthonormal with respect to the Wishart distribution $W(\gamma+\tfrac12(m+1),I_m)$: 
\begin{equation}
\label{laguerreorthog_matrixargument}
\frac{1}{\Gamma_m \big( \gamma + \tfrac12 (m+1) \big)} \int_{Y > 0} \mathcal{L}^{(\gamma)}_{\kappa}(Y) \mathcal{L}^{(\gamma)}_{\sigma}(Y) \, (\det Y)^\gamma \, \etr(-Y) \dd Y = 
\begin{cases}
 1, & \kappa=\sigma \\
 0, & \kappa \neq \sigma
\end{cases}.
\end{equation}

By \cite[p.~282]{muirhead}, for $\gamma > -1$ and $Z > 0$, there holds the Laplace transform, 
\begin{multline}
\label{laguerreintegral_matrixcase}
\int_{Y>0} { \etr(-YZ) (\det Y)^{\gamma} L^{(\gamma)}_{\kappa} (Y) }\, \dd Y \\
= [\gamma+ \tfrac12 (m+1)]_{\kappa} \ \Gamma_m(\gamma+\tfrac12(m+1))(\det Z)^{-\gamma-\tfrac12 (m+1)} \ C_{\kappa}(I_m-Z^{-1}).
\end{multline}
Further, by \cite[Theorem 7.6.4, p.~284]{muirhead}, for $\gamma > -1$ and $Z \in \mathcal{S}^{m \times m}$,
\begin{equation}
\label{integralrepreslaguerre_matrixcase}
 \etr(-Z) L_{\kappa}^{(\gamma)} (Z) = \int_{Y>0} { \etr(-Y) (\det Y)^{\gamma}\ C_{\kappa}(Y) \ A_{\gamma}( ZY)}\, \dd Y.
\end{equation}

\smallskip

\begin{lemma}
\label{lemmalaguerreintegral_matrixcase}
Let $Z > 0$ and $\gamma > -1$, then
\begin{equation}
\label{boundlaguerre_matrixcase}
| L_{\kappa}^{(\gamma)}(Z) | \ \le \etr(Z) \ [\gamma+ \tfrac12 (m+1)]_{\kappa} \ C_{\kappa}(I_m).
\end{equation}
Also, for $v \in \mathbb{R}$, $v > 0$, 
\begin{align}
\label{integrallemmalaguerre_matrixcase}
\int_{Y>0} &{ \etr(-v Y) (\tr Y) (\det Y)^{\gamma} L^{(\gamma)}_{\kappa} (Y) }\, \dd Y \nonumber\\ 
&= [\gamma+ \tfrac12 (m+1)]_{\kappa} \ \Gamma_m(\gamma+\tfrac12(m+1))  \ C_{\kappa}(I_m) \nonumber\\
 & \quad\quad \times (v-1)^{|\kappa|-1} v^{-[m(\gamma +(m+1)/2)+|\kappa|+1]} \left(m (\gamma+\tfrac12(m+1)) (v-1) -|\kappa|\right).
\end{align}
\end{lemma}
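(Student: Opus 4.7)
My plan is to prove the bound \eqref{boundlaguerre_matrixcase} by applying the matrix Bessel estimate \eqref{besselineq1_matrixargument} to the integral representation \eqref{integralrepreslaguerre_matrixcase}, and to derive the formula \eqref{integrallemmalaguerre_matrixcase} by specializing the Laplace transform \eqref{laguerreintegral_matrixcase} to $Z = vI_m$ and differentiating in the scalar $v$.

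For \eqref{boundlaguerre_matrixcase}, I would take absolute values on both sides of \eqref{integralrepreslaguerre_matrixcase} and move them inside the integrand. Since $Y, Z > 0$, the argument of $A_\gamma$ (interpreted as $Y^{1/2} Z Y^{1/2}$ under the paper's orthogonal-invariance convention) equals $V'V$ with $V = Z^{1/2} Y^{1/2}$, so \eqref{besselineq1_matrixargument} gives $|A_\gamma(ZY)| \le 1/\Gamma_m(\gamma+\tfrac12(m+1))$ uniformly in $Y$. Pulling this factor out leaves $\int_{Y>0}\etr(-Y)(\det Y)^\gamma C_\kappa(Y)\,\dd Y$, which by \eqref{zonalintegral} with $M = Z = I_m$ and $a = \gamma+\tfrac12(m+1)$ equals $[\gamma+\tfrac12(m+1)]_\kappa \Gamma_m(\gamma+\tfrac12(m+1)) C_\kappa(I_m)$. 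After the gamma factors cancel, multiplying through by $\etr(Z)$ produces \eqref{boundlaguerre_matrixcase}.

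For \eqref{integrallemmalaguerre_matrixcase}, I would set $Z = vI_m$ in \eqref{laguerreintegral_matrixcase} and use $(\det(vI_m))^{-\gamma-\tfrac12(m+1)} = v^{-m(\gamma+\tfrac12(m+1))}$ together with the homogeneity identity $C_\kappa(I_m - v^{-1}I_m) = v^{-|\kappa|}(v-1)^{|\kappa|}C_\kappa(I_m)$ to obtain the closed form
\[
F(v) := \int_{Y>0}\etr(-vY)(\det Y)^\gamma L_\kappa^{(\gamma)}(Y)\,\dd Y \;=\; C_0\,v^{-\beta}(v-1)^{|\kappa|},
\]
where $C_0 := [\gamma+\tfrac12(m+1)]_\kappa\,\Gamma_m(\gamma+\tfrac12(m+1))\,C_\kappa(I_m)$ and $\beta := m(\gamma+\tfrac12(m+1)) + |\kappa|$. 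Because $-\partial_v \etr(-vY) = (\tr Y)\etr(-vY)$, differentiation under the integral identifies the target integral as $-F'(v)$, and the product rule yields
\[
-F'(v) \;=\; C_0\,v^{-\beta-1}(v-1)^{|\kappa|-1}\bigl[m(\gamma+\tfrac12(m+1))(v-1) - |\kappa|\bigr],
\]
which matches the stated right-hand side.

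The main technical issues I anticipate are: (i) \eqref{besselineq1_matrixargument} as stated requires Re$(\nu) > \tfrac12(m-2)$, so for $m \ge 2$ and $\gamma$ close to $-1$ the Bessel bound must first be extended to all $\gamma > -1$, which can be accomplished either via a direct absolute-value estimate from \eqref{besselseriesdef_matrixargument} or by analytic continuation in $\gamma$; and (ii) differentiation under the integral sign in $v$ must be justified by dominated convergence, which follows once $(\tr Y)\etr(-vY)(\det Y)^\gamma |L_\kappa^{(\gamma)}(Y)|$ is dominated on a neighborhood of any fixed $v_0 > 0$ by an integrable function---for example via the zonal-polynomial expansion of $L_\kappa^{(\gamma)}$ combined with term-by-term use of \eqref{zonalintegral}.
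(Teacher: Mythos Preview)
Your proposal is correct and follows essentially the same approach as the paper: the bound via \eqref{besselineq1_matrixargument} applied inside \eqref{integralrepreslaguerre_matrixcase} and then \eqref{zonalintegral}, and the integral formula via specializing \eqref{laguerreintegral_matrixcase} to $Z=vI_m$ and differentiating in $v$. Your caveat (i) about the range of $\gamma$ in \eqref{besselineq1_matrixargument} is a point the paper glosses over, though in the paper's applications $\gamma=\nu=\alpha-\tfrac12(m+1)>\tfrac12(m-2)$ always holds.
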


\begin{proof}
By (\ref{besselineq1_matrixargument}) and (\ref{integralrepreslaguerre_matrixcase}), 
\begin{align*}
| L_{\kappa}^{(\gamma)}(Z) | \ & \le \etr(Z) \ \int_{Y>0} { \etr(-Y) (\det Y)^{\gamma}\ C_{\kappa}(Y) \ \big| A_{\gamma}( ZY) \big|}\, \dd Y \nonumber \\
& \le \frac{1}{\Gamma_m(\gamma+\tfrac12(m+1))} \, \etr(Z) \ \int_{Y>0} { \etr(-Y) (\det Y)^{\gamma}\ C_{\kappa}(Y)}\, \dd Y.
\end{align*}
Applying (\ref{zonalintegral}) to evaluate the latter integral, we obtain \eqref{boundlaguerre_matrixcase}.  

To establish \eqref{integrallemmalaguerre_matrixcase}, we substitute $Z=v I_m$ into (\ref{laguerreintegral_matrixcase}), obtaining 
\begin{multline*}
\int_{Y>0} { \etr(-v Y) (\det Y)^{\gamma} L^{(\gamma)}_{\kappa} (Y) }\, \dd Y \\
= [\gamma+ \tfrac12 (m+1)]_{\kappa} \ \Gamma_m(\gamma+\tfrac12(m+1)) \ (v-1)^{|\kappa|} \ v^{-[m(\gamma +(m+1)/2)+|\kappa|]} \  C_{\kappa}(I_m).
\end{multline*}
Differentiating both sides of the latter equation with respect to $v$ and simplifying the outcome, we obtain the stated result.
\end{proof}

\subsection{Hankel transforms of matrix argument}

Throughout the rest of the paper, if $X$ is a random entity, we denote expectation with respect to the distribution of $X$ by $E_X$ or simply by $E$ whenever the context is clear.  

Let $X > 0$ be a random matrix with probability density function $f(X)$. For Re$(\nu) > \frac{1}{2}(m-2)$, we define the {\it Hankel transform of order $\nu$ of $X$} as the function
\begin{equation}
\label{hankeltransformdefinition_matrixargument}
\mathcal{H}_{X,\nu}( T ) = E_X \big[\Gamma_m(\nu+\tfrac{1}{2}(m+1)) \ A_{\nu}(T X)\big],
\end{equation}
$T > 0$.  The Hankel transform satisfies the following properties:

\begin{lemma} 
\label{existencehankel_matrixargument}
For {\rm{Re}}$(\nu) > \frac{1}{2}(m-2)$,  ${|\mathcal{H}_{X,\nu}(T)|} \le 1$ for all $T >0$, and  $\mathcal{H}_{X,\nu}(T)$ is a continuous function of $T$.
\end{lemma}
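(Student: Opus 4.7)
The plan is to deduce both conclusions directly from the pointwise inequality \eqref{besselineq1_matrixargument} on the Bessel function, together with the dominated convergence theorem.

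For the bound, I would first invoke the convention that $A_\nu(TX)$ means $A_\nu(T^{1/2}XT^{1/2})$, and then observe that since $T,X \in \mathcal{P}_+^{m\times m}$, the matrix $T^{1/2}XT^{1/2}$ can be written as $V'V$ with $V = X^{1/2}T^{1/2}$. Applying \eqref{besselineq1_matrixargument} gives $|A_\nu(TX)| \le 1/\Gamma_m(\nu+\tfrac12(m+1))$ pointwise in $X$. Multiplying by $\Gamma_m(\nu+\tfrac12(m+1))$, the integrand in \eqref{hankeltransformdefinition_matrixargument} is bounded in modulus by $1$, and the triangle inequality for expectation yields $|\mathcal{H}_{X,\nu}(T)| \le 1$.

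For continuity, fix $T_0 > 0$ and let $T_n \to T_0$ in $\mathcal{P}_+^{m\times m}$. I would first argue that for each fixed $X > 0$, the map $T \mapsto A_\nu(TX)$ is continuous. This follows because $T \mapsto T^{1/2}$ is continuous on $\mathcal{P}_+^{m\times m}$, and the series \eqref{besselseriesdef_matrixargument} converges absolutely and uniformly on compact subsets of $\mathcal{S}^{m\times m}$ (the zonal polynomials $C_\kappa$ being continuous functions of the matrix entries). Hence $\Gamma_m(\nu+\tfrac12(m+1))A_\nu(T_nX) \to \Gamma_m(\nu+\tfrac12(m+1))A_\nu(T_0X)$ pointwise in $X$. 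The bound from the first part supplies the dominating constant $1$, which is integrable against $f(X)\,\dd X$, so the dominated convergence theorem gives $\mathcal{H}_{X,\nu}(T_n) \to \mathcal{H}_{X,\nu}(T_0)$.

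The only mildly delicate point is justifying pointwise continuity of $A_\nu$ in $T$; I would handle it via the series representation on compacts, though one could alternatively invoke the Poisson-type integral \eqref{besselintegraldef_matrixargument} when $\mathrm{Re}(\nu) > \tfrac12(m-2)$ and apply dominated convergence inside that integral as well. Neither step presents a real obstacle, so the proof is essentially a two-line application of Jensen/triangle-type reasoning and dominated convergence on top of the key Bessel bound already proved.
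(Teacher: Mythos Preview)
Your proposal is correct and follows essentially the same approach as the paper: apply the Bessel bound \eqref{besselineq1_matrixargument} pointwise and use the triangle inequality for the first part, then invoke dominated convergence with that same bound as dominating function for continuity. Your write-up is in fact more detailed than the paper's, which simply asserts that $A_\nu(TX)$ is bounded and continuous in $T$ and cites dominated convergence.
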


\begin{proof}  By (\ref{besselineq1_matrixargument}), 
$$
\Gamma_m(\nu+\tfrac{1}{2}(m+1)) \, \big| A_{\nu}( T X) \big| \le 1
$$
for all $T, X >0$.  Therefore, by the triangle inequality, $|\mathcal{H}_{X,\nu}(T)| \le E_X(1) = 1$.

Since $A_\nu(TX)$ is bounded and continuous in $T > 0$ for every fixed $X  > 0$, 
the continuity of $\mathcal{H}_{X,\nu}(T)$ follows by Dominated Convergence.
\end{proof}


\begin{example} 
{\label{hankeltransformwishartdistn_example}
\rm 
Let $X \sim W_m (\alpha, \Sigma)$, $\alpha > \frac{1}{2} (m-1)$, $\Sigma > 0$. For $T > 0$, it follows from the definition  (\ref{hankeltransformdefinition_matrixargument}) of the Hankel transform that 
$$
\mathcal{H}_{X, \nu}(T)= \frac{\Gamma_m(\nu+\tfrac{1}{2}(m+1))}{\Gamma_m(\alpha)} (\det \Sigma )^{\alpha} \int_{X > 0} { A_{\nu}( T X ) (\det X)^{\alpha-\frac{1}{2}(m+1)} \etr (-\Sigma X)}\,\dd X.
$$
Applying \eqref{laplacebessel1f1} to calculate this integral, we obtain 
\begin{align}
\label{hankeltransformwishartdistn}
\mathcal{H}_{X, \nu}(T)={_1}F_1\big( \alpha; \nu+\tfrac12(m+1) ; -T \Sigma^{-1} \big).
\end{align}
For the case in which $\nu=\alpha-\tfrac12(m+1)$, (\ref{hankeltransformwishartdistn}) reduces to
$$
\mathcal{H}_{X, \alpha-\tfrac12(m+1)}(T)={_1}F_1\big( \alpha; \alpha ; -T \Sigma^{-1} \big)=\etr(-T \Sigma^{-1}).
$$
}\end{example}

\smallskip

\begin{example} 
\label{hankeltransformwishartmixture}
{\rm 
Let $Z \sim W_m(\alpha, I_m)$ and $X > 0$ be a $m \times m$ random matrix that is independent of $Z$.  For $T > 0$,  
\begin{align}
\label{hankeltransformwishartmixture_result}
E_{Z} \, \mathcal{H}_{X, \nu} (T^{1/2} Z T^{1/2}) &= E_{X} \, \mathcal{H}_{Z, \nu} (T^{1/2} X T^{1/2}) \nonumber\\
&=E_{X} \, {_1}F_1 \big(\alpha;\nu+\tfrac12 (m+1); -T X\big).
\end{align}

To prove this result, we again apply (\ref{hankeltransformdefinition_matrixargument}) and the independence of $X$ and $Z$, obtaining
\begin{align*}
E_{Z} \, \mathcal{H}_{X, \nu} (T^{1/2} Z T^{1/2}) &= E_{Z,X} \Gamma_m(\nu+\tfrac{1}{2}(m+1)) A_{\nu}( T^{1/2} Z T^{1/2} X ) \nonumber\\
&=E_{X} \, E_{Z} \, \Gamma_m(\nu+\tfrac{1}{2}(m+1)) \, A_{\nu}(T^{1/2} Z T^{1/2} X).
\end{align*}
Since $A_{\nu}(T^{1/2} Z T^{1/2} X) = A_{\nu}( T^{1/2} X T^{1/2} Z)$, we have
\begin{equation}
\label{hankelwishartmixture1}
E_{Z} \, \mathcal{H}_{X, \nu} (T^{1/2} Z T^{1/2}) = E_{X} \, \mathcal{H}_{Z, \nu} (T^{1/2} X T^{1/2}).
\end{equation}
Applying Example \ref{hankeltransformwishartdistn_example}, we obtain
\begin{equation}
\label{hankelwishartmixture2}
E_{X} \mathcal{H}_{Z, \nu} (T^{1/2} X T^{1/2}) = E_{X} \, {_1}F_1 \big(\alpha;\nu+\tfrac12 (m+1); -T X\big).
\end{equation}
Combining (\ref{hankelwishartmixture1}) and (\ref{hankelwishartmixture2}), we obtain (\ref{hankeltransformwishartmixture_result}). 

In particular, if $\nu=\alpha-\frac{1}{2}(m+1)$ then, by Kummer's formula \eqref{kummer_matrixargument}, we obtain 
\begin{align*}
E_{Z} \mathcal{H}_{X, \alpha-\tfrac12(m+1)} (T^{1/2} Z T^{1/2}) &= E_{X} \mathcal{H}_{Z,  \alpha-\tfrac12(m+1)} (T^{1/2} X T^{1/2}) \\
&= E_{X} \etr (-T X),
\end{align*}
the Laplace transform of $X$.
}\end{example}

\smallskip

Throughout the remainder of the paper, if $X$ and $Y$ are random entities we write $X \stackrel{d}{=} Y$ whenever $X$ and $Y$ have the same distribution.  If $\{X_n, n \ge 1\}$, is a sequence of random entities, we write $X_n \xrightarrow{d} X$ whenever $X_n$ converges in distribution to $X$.   

\medskip

\begin{theorem}{\rm (Uniqueness of the Hankel transform).}
\label{uniqueness_matrixargument}
Let $X$ and $Y$ be $m \times m$ positive definite random matrices with Hankel transforms $\mathcal{H}_{X, \nu }$ and $\mathcal{H}_{Y, \nu}$, respectively.  Then $\mathcal{H}_{X, \nu} = \mathcal{H}_{Y, \nu}$ if and only if $X \stackrel{d}{=} Y$.
\end{theorem}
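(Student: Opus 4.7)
The forward implication $X \stackrel{d}{=} Y \Rightarrow \mathcal{H}_{X,\nu}=\mathcal{H}_{Y,\nu}$ is immediate from the definition \eqref{hankeltransformdefinition_matrixargument}, since the Hankel transform is an expectation of a bounded, measurable function of the random matrix. For the nontrivial converse, my plan is to reduce to the uniqueness of the Laplace transform on the cone $\mathcal{P}_+^{m \times m}$ (a standard fact, which follows from the injectivity of the Laplace transform on $\mathbb{R}^{m(m+1)/2}$ after identifying $\mathcal{S}^{m \times m}$ with that space and applying analytic continuation to a tube domain). The task is therefore to recover the Laplace transform of $X$ from its Hankel transform of order $\nu$.

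The key step is to exploit the identity already derived in Example \ref{hankeltransformwishartmixture}. Given $\nu$ with $\mathrm{Re}(\nu) > \tfrac{1}{2}(m-2)$, set $\alpha = \nu + \tfrac{1}{2}(m+1)$; this choice satisfies $\alpha > \tfrac{1}{2}(m-1)$, so $W_m(\alpha, I_m)$ is a bona fide Wishart distribution. Let $Z \sim W_m(\alpha, I_m)$ be independent of $X$ and $Y$. The final display of Example \ref{hankeltransformwishartmixture}, specialized to $\nu = \alpha - \tfrac{1}{2}(m+1)$, gives
\begin{equation*}
E_Z \, \mathcal{H}_{X,\nu}(T^{1/2} Z T^{1/2}) = E_X \, \etr(-TX),
\end{equation*}
with the analogous identity for $Y$. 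The interchange of $E_Z$ with the expectation defining $\mathcal{H}_{X,\nu}$ is legitimate by Fubini's theorem, since the integrand $\Gamma_m(\nu + \tfrac{1}{2}(m+1)) A_\nu(T^{1/2} Z T^{1/2} X)$ is bounded in absolute value by $1$ thanks to \eqref{besselineq1_matrixargument}.

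Assuming $\mathcal{H}_{X,\nu}(S) = \mathcal{H}_{Y,\nu}(S)$ for every $S > 0$, I apply this equality at $S = T^{1/2} Z T^{1/2}$ and take expectations over $Z$. The left-hand sides of the two instances of the displayed identity then coincide, so
\begin{equation*}
E_X \, \etr(-TX) = E_Y \, \etr(-TY)
\end{equation*}
for every $T > 0$. By uniqueness of the Laplace transform on $\mathcal{P}_+^{m \times m}$, we conclude $X \stackrel{d}{=} Y$.

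The only real obstacle is spotting the correct reduction, namely that averaging the Hankel transform of order $\nu$ against the $W_m(\nu + \tfrac{1}{2}(m+1), I_m)$ density collapses the associated confluent hypergeometric function via Kummer's formula \eqref{kummer_matrixargument}, with $b - a = 0$, to the exponential $\etr(-TX)$. Once this reduction is in hand, the rest is a routine Fubini argument plus the classical Laplace-transform uniqueness theorem.
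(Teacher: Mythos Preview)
Your proof is correct and follows essentially the same route as the paper's: introduce an independent Wishart matrix $Z \sim W_m(\nu+\tfrac12(m+1),I_m)$, average the Hankel transform at $T^{1/2}ZT^{1/2}$ via Example~\ref{hankeltransformwishartmixture} to recover the Laplace transform, and then invoke Laplace-transform uniqueness. Your justification of the Fubini step via \eqref{besselineq1_matrixargument} and your remark on how Kummer's formula collapses the ${}_1F_1$ to $\etr(-TX)$ are exactly the mechanism the paper relies on.
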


\begin{proof}
If $X \stackrel{d}{=} Y$ then it is clear that $\mathcal{H}_{X, \nu}$=$\mathcal{H}_{Y, \nu}$.

Conversely, suppose that $Z \sim W_m (\nu+\frac{1}{2}(m+1), I_m)$, independently of $X$ and $Y$. Let $\Psi_{X}$ and $\Psi_{Y}$ be the Laplace transforms of $X$ and $Y$ respectively; then, for all $T > 0$, 
\begin{eqnarray*}
\mathcal{H}_{X, \nu}(T^{1/2} Z T^{1/2})=\mathcal{H}_{Y, \nu}(T^{1/2} Z T^{1/2}), 
\end{eqnarray*} 
and therefore 
\begin{eqnarray}
\label{hankelxyz_matrix}
E_{Z} \,\mathcal{H}_{X, \nu}(T^{1/2} Z T^{1/2}) = E_{Z} \, \mathcal{H}_{Y, \nu}(T^{1/2} Z T^{1/2}).
\end{eqnarray}
By Example \ref{hankeltransformwishartmixture},
\begin{eqnarray}
\label{laplacex_matrix}
E_{Z} \, \mathcal{H}_{X, \nu}(T^{1/2} Z T^{1/2}) = E_{X} \,\etr (-T X) = \Psi_{X}(T) ,
\end{eqnarray}
and
\begin{eqnarray}
\label{laplacey_matrix}
E_{Z} \, \mathcal{H}_{Y, \nu}(T^{1/2} Z T^{1/2}) = E_{Y} \,\etr (-T Y) = \Psi_{Y}(T)
\end{eqnarray}
for all $T > 0$.  Combining (\ref{hankelxyz_matrix}), (\ref{laplacex_matrix}) and (\ref{laplacey_matrix}), we obtain $\Psi_{X}(T)=\Psi_{Y}(T)$, for all $T > 0$. By the uniqueness theorem for multivariate Laplace transforms \cite[p.~16, Theorem 2.1.9]{farrell} we conclude that $X \stackrel{d}{=} Y$.
\end{proof}

\medskip

We denote by $L^2_\nu$ the space of functions $\phi: \mathcal{P}^{m \times m}_+ \to \mathbb{C}$ such that 
$$
\int_{\mathcal{P}^{m \times m}_+} |\phi(X)|^2 \ (\det X)^{-\nu} \, \dd X < \infty.
$$

The following inversion theorem is obtained by applying the Hankel inversion theory of Herz \cite[Section 3]{herz}.  We refer to Hadjicosta \cite{hadjicosta19} for the full details. 

\begin{theorem}{\rm (Inversion of the Hankel transform)}.
\label{hankelinversion_matrixargument}
Let $X > 0$ be a random matrix with Hankel transform $\mathcal{H}_{X, \nu}$, and with a probability density function $f \in L^2_\nu$.  Then, 
$$
f(X)=\frac{1}{\Gamma_m(\nu+\tfrac{1}{2}(m+1))} \ \int_{\mathcal{P}^{m \times m}_+} { A_{\nu}( T X) \, (\det T X)^{\nu} \, \mathcal{H}_{X, \nu}(T)}\, \dd T.
$$
\end{theorem}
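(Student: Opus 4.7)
The plan is to reduce the inversion formula to the classical Hankel inversion theorem of Herz \cite{herz} by a natural reweighting of the density. First, I would unfold the definition \eqref{hankeltransformdefinition_matrixargument}: since $X$ has density $f$,
\begin{equation*}
\Gamma_m(\nu+\tfrac12(m+1))^{-1}\,\mathcal{H}_{X,\nu}(T) \;=\; \int_{\mathcal{P}^{m\times m}_+} A_\nu(TX)\, f(X)\, \dd X.
\end{equation*}
I would then factor $f(X) = (\det X)^{\nu}\cdot g(X)$, where $g(X) := f(X)(\det X)^{-\nu}$, so that the right-hand side becomes $\int A_\nu(TX)\, g(X)\,(\det X)^\nu\, \dd X$. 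This is exactly the Hankel transform of $g$ in the sense studied by Herz (his section 3 is phrased precisely with the weight $(\det X)^\nu\, \dd X$).

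Next I would verify the hypothesis required to apply Herz's inversion in the $L^2$ setting, namely that $g$ lies in $L^2(\mathcal{P}^{m\times m}_+,\,(\det X)^\nu\, \dd X)$. This follows at once from the assumption $f\in L^2_\nu$, since
\begin{equation*}
\int_{\mathcal{P}^{m\times m}_+} |g(X)|^2\,(\det X)^\nu\, \dd X
\;=\; \int_{\mathcal{P}^{m\times m}_+} |f(X)|^2\,(\det X)^{-\nu}\, \dd X \;<\;\infty.
\end{equation*}
Thus Herz's Hankel inversion theorem applies to $g$, yielding
\begin{equation*}
g(X)\;=\;\int_{\mathcal{P}^{m\times m}_+} A_\nu(TX)\,\widehat{g}(T)\,(\det T)^\nu\, \dd T,
\end{equation*}
where $\widehat{g}(T) = \int A_\nu(TY)\,g(Y)\,(\det Y)^\nu\, \dd Y = \Gamma_m(\nu+\tfrac12(m+1))^{-1}\mathcal{H}_{X,\nu}(T)$. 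Multiplying both sides by $(\det X)^\nu$ and replacing $g(X)(\det X)^\nu = f(X)$ would then give exactly the claimed formula, with the factor $(\det TX)^\nu = (\det T)^\nu(\det X)^\nu$ appearing naturally from this rebalancing.

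The main obstacle I anticipate is not the algebraic manipulation but the regularity bookkeeping: Herz's inversion is stated in an $L^2$ sense, and care is needed to justify \emph{pointwise} equality (which is what the statement asserts), to interchange the order of integration in $\int A_\nu(TX)\widehat g(T)(\det T)^\nu \dd T$, and to verify that all implicit integrability and convergence hypotheses on Bessel functions of matrix argument in the non-commutative setting are met. Since the statement only assumes $f\in L^2_\nu$, a standard approximation argument (or the detailed treatment in Hadjicosta \cite{hadjicosta19}) is used to pass from the $L^2$ inversion to the formula as written; I would cite that reference rather than reproduce the approximation step here, and only sketch the reduction to Herz's theorem as above.
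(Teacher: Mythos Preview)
Your proposal is correct and matches the paper's approach exactly: the paper simply states that the result ``is obtained by applying the Hankel inversion theory of Herz \cite[Section 3]{herz}'' and refers to Hadjicosta \cite{hadjicosta19} for the full details. Your reduction via $g(X)=f(X)(\det X)^{-\nu}$ is precisely the way one unpacks that citation, and your acknowledgment that the pointwise regularity bookkeeping is deferred to \cite{hadjicosta19} mirrors the paper's own treatment.
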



\begin{theorem} {\rm (Hankel Continuity)}.
Let $\{ X_{n}, n \in \mathbb{N} \}$ be a sequence of $m \times m$ positive-definite random matrices with corresponding Hankel transforms $ \{ \mathcal{H}_{X_n}, n \in \mathbb{N} \}$.  If there exists a $m \times m$ positive semi-definite random matrix $X$ with Hankel transform $\mathcal{H}_X$ such that $X_{n} \xrightarrow{d} X$ then, for each $T > 0$, 
\begin{equation}
\label{continuityhankel_matrixargument}
\lim_{n \rightarrow \infty} \mathcal{H}_{X_n}(T)=\mathcal{H}_X(T).
\end{equation}

Conversely, suppose there exists a function $\mathcal{H}: \mathcal{P}_{+}^{m \times m} \rightarrow \mathbb{R}$ such that $\mathcal{H}(T) \to 1$ as $T \to 0$, $\mathcal{H}$ is continuous at $0$, and (\ref{continuityhankel_matrixargument}) holds. Then $\mathcal{H}$ is the Hankel transform of an $m \times m$ positive semi-definite random matrix $X$, and $X_{n} \xrightarrow{d} X$.
\end{theorem}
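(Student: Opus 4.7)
I would derive this at once from the Portmanteau theorem. For each fixed $T > 0$, the integrand $\Gamma_m(\nu+\tfrac12(m+1))\,A_\nu(TX)$ defining $\mathcal{H}_{X,\nu}(T)$ is, as a function of $X \in \mathcal{P}^{m\times m}_+$, bounded in absolute value by $1$ (by \eqref{besselineq1_matrixargument}) and continuous (by the absolute convergence of the zonal-polynomial series \eqref{besselseriesdef_matrixargument}). Hence $X_n \xrightarrow{d} X$ gives
$$
\mathcal{H}_{X_n,\nu}(T) = E\bigl[\Gamma_m(\nu+\tfrac12(m+1))\,A_\nu(TX_n)\bigr] \longrightarrow E\bigl[\Gamma_m(\nu+\tfrac12(m+1))\,A_\nu(TX)\bigr] = \mathcal{H}_{X,\nu}(T).
$$

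\textbf{Converse.} My strategy is to smooth $\mathcal{H}_{X_n,\nu}$ by a Wishart matrix to convert the hypothesis into convergence of Laplace transforms, and then invoke the classical Laplace-transform continuity theorem. Let $Z \sim W_m(\nu+\tfrac12(m+1),I_m)$ be independent of the $X_n$. Specializing Example \ref{hankeltransformwishartmixture} to $\alpha = \nu+\tfrac12(m+1)$ and applying Kummer's formula \eqref{kummer_matrixargument} collapses the hypergeometric factor to an exponential, yielding
$$
E_Z\,\mathcal{H}_{X_n,\nu}(T^{1/2} Z T^{1/2}) \;=\; E_{X_n}\,\etr(-TX_n) \;=\; \Psi_{X_n}(T),
$$
the multivariate Laplace transform of $X_n$ at $T$. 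Since $Z > 0$ almost surely and $|\mathcal{H}_{X_n,\nu}| \le 1$ by Lemma \ref{existencehankel_matrixargument}, the pointwise hypothesis implies $\mathcal{H}_{X_n,\nu}(T^{1/2} Z T^{1/2}) \to \mathcal{H}(T^{1/2} Z T^{1/2})$ almost surely, and Dominated Convergence (with constant dominator $1$) gives
$$
\Psi_{X_n}(T) \longrightarrow L(T) := E_Z\,\mathcal{H}(T^{1/2} Z T^{1/2})
$$
for every $T > 0$.

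A further application of Dominated Convergence, combined with the hypothesis that $\mathcal{H}(T) \to 1$ as $T \to 0$ and that $\mathcal{H}$ is continuous at $0$, shows $L(T) \to 1$ as $T \to 0$. Thus $L$ is a pointwise limit of multivariate Laplace transforms that is continuous at the origin with limit $1$. By the continuity theorem for multivariate Laplace transforms on the cone $\mathcal{P}^{m\times m}_+$ (the companion of the uniqueness theorem in Farrell \cite{farrell} already invoked in the proof of Theorem \ref{uniqueness_matrixargument}), $L$ is the Laplace transform of some $m\times m$ positive semi-definite random matrix $X$ and $X_n \xrightarrow{d} X$. The forward direction now gives $\mathcal{H}_{X_n,\nu}(T) \to \mathcal{H}_{X,\nu}(T)$, which together with the hypothesis forces $\mathcal{H} = \mathcal{H}_{X,\nu}$ on $\mathcal{P}^{m\times m}_+$.

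\textbf{Expected difficulty.} The chief subtle point is the invocation of the multivariate Laplace-transform continuity theorem in the appropriate form, in particular permitting the weak limit to be supported on the closed cone $\overline{\mathcal{P}^{m\times m}_+}$, so that $X$ may be merely positive semi-definite, as the statement of the theorem already anticipates. All other steps---two uses of Dominated Convergence, Portmanteau applied to the bounded continuous integrand $A_\nu(TX)$, and the Wishart-smoothing identity \eqref{hankeltransformwishartmixture_result} of Example \ref{hankeltransformwishartmixture}---are essentially mechanical once that theorem is in hand.
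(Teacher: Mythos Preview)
Your proposal is correct and follows essentially the same route as the paper: the forward direction uses that $A_\nu(TX)$ is bounded and continuous (you invoke Portmanteau directly, the paper uses the Continuous Mapping Theorem plus Dominated Convergence, which amounts to the same thing), and the converse uses the Wishart-smoothing identity of Example~\ref{hankeltransformwishartmixture} to reduce to the continuity theorem for multivariate Laplace transforms. The only cosmetic differences are that the paper cites Kallenberg rather than Farrell for the Laplace-transform continuity theorem, and you spell out the final identification $\mathcal{H}=\mathcal{H}_{X,\nu}$ via the forward direction, which the paper leaves implicit.
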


\begin{proof}
Suppose that $X_{n} \xrightarrow{d} X$ then, by the Continuous Mapping Theorem for random vectors \cite[p.~336]{ref12}, 
$
A_{\nu}( T X_n) \xrightarrow{d} 
A_{\nu}( T X )
$ 
as $n \to \infty$, for all $T > 0$.  By (\ref{besselineq1_matrixargument}), $A_{\nu}( T X_n)$ is uniformly bounded for all $n \in \mathbb{N}$ and $T > 0$; thus, by the Dominated Convergence Theorem,
$
E
A_{\nu}( T X_n)
\to 
E
A_{\nu}( T X)
$ 
as $n \to \infty$, for all $T > 0$, and therefore \eqref{continuityhankel_matrixargument} holds.  

Conversely, suppose that $Z \sim W_m(\nu+\frac{1}{2}(m+1), I_m)$ where $Z$ is independent of the sequence $\{ X_{n}, n \in \mathbb{N} \}$.  Also, let $\Psi_{X_{n}}$ be the Laplace transform of $X_{n}$. By Example \ref{hankeltransformwishartmixture}, we have 
$$
\Psi_{X_{n}}(T)=E_{Z}[\mathcal{H}_{n}(T^{1/2} Z T^{1/2})], 
$$
for all $T > 0$. Further, by Lemma \ref{existencehankel_matrixargument}, $|\mathcal{H}_{n}(T^{1/2} Z T^{1/2})| \ \le 1$ for all $T > 0$. Thus, by the Dominated Convergence Theorem, as $n \to \infty$, 
$$
\Psi_{X_{n}}( T) \to
E_{Z}[ \mathcal{H}(T^{1/2} Z T^{1/2})]=\Psi(T), 
$$
for all $T > 0$. Since $\mathcal{H}$ is continuous at $0$ and $\mathcal{H} (0)=1$ then $\Psi(T)$ also is continuous at $0$ and $\Psi(0)=1$.  By the continuity theorem for multivariate Laplace transforms \cite[p.~63, Theorem 4.3]{kallenberg}, there is a $m \times m$ positive semi-definite random matrix $X$ whose Laplace transform is $\Psi$, and $X_{n} \xrightarrow{d} X$. 
\end{proof}

\smallskip

The next result constitutes a characterization of the Wishart distributions using the Hankel transform $\mathcal{H}_{X,\nu}$, where Re($\nu) > \tfrac12(m-2)$.  The result enables the extension, to the Wishart case, of some results of Baringhaus and Taherizadeh \cite{BT2013} on a supremum norm test statistic.  

\begin{theorem}
\label{hankelopenset_matrixargument}
Let ${X}$ be an $m \times m$ positive-definite random matrix with an orthogonally invariant distribution and Hankel transform $\mathcal{H}_{{X}, \nu}$.  If there exist $\epsilon > 0$ and $\alpha > \frac{1}{2} (m-1)$ such that for all $T$ satisfying $0 < T \le \epsilon I_m$, 
$$
\mathcal{H}_{{X},\nu}(T) = {}_1F_1(\alpha;\nu+\tfrac12(m+1);-T),
$$
then $\widetilde{X} \sim W_m(\alpha,I_m)$.  
\end{theorem}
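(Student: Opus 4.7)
The plan is to show $X \stackrel{d}{=} W_m(\alpha, I_m)$ by matching all moments of $X$ against those of a Wishart matrix $W \sim W_m(\alpha, I_m)$. By Example~\ref{hankeltransformwishartdistn_example} we have $\mathcal{H}_{W,\nu}(T) = {}_1F_1(\alpha;\nu+\tfrac12(m+1);-T)$ for all $T > 0$, so the hypothesis is precisely that $\mathcal{H}_{X,\nu}$ and $\mathcal{H}_{W,\nu}$ agree on $(0,\epsilon I_m]$.

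First I would promote this localized Hankel equality to an estimate on the difference of Laplace transforms. Let $Z \sim W_m(\nu+\tfrac12(m+1), I_m)$ be independent of $X$ and $W$, and write $\phi := \mathcal{H}_{X,\nu} - \mathcal{H}_{W,\nu}$. Applying Example~\ref{hankeltransformwishartmixture} with shape parameter chosen to equal $\nu+\tfrac12(m+1)$, so that Kummer's formula \eqref{kummer_matrixargument} collapses the resulting ${}_1F_1$ to an exponential, yields
\[
\Psi_X(T) - \Psi_W(T) = E_Z\bigl[\phi(T^{1/2} Z T^{1/2})\bigr], \qquad T > 0.
\]
Because $\phi$ vanishes on $(0,\epsilon I_m]$, $|\phi| \le 2$ by Lemma~\ref{existencehankel_matrixargument}, and $\lambda_{\max}(T^{1/2}ZT^{1/2}) \le \|T\|_{\text{op}}\,\tr(Z)$, the integrand is supported on $\{\tr Z > \epsilon/\|T\|_{\text{op}}\}$. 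Combined with the Wishart MGF identity $E[e^{s\tr Z}] = (1-s)^{-m(\nu+\tfrac12(m+1))}$ for $s<1$, Markov's inequality at $s = 1/2$ gives
\[
|\Psi_X(T) - \Psi_W(T)| \le C\, e^{-\epsilon/(2\|T\|_{\text{op}})}
\]
as $T \to 0^+$, a bound decaying faster than any polynomial.

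Next, fix $T_0 > 0$ and set $\xi := \tr(T_0 X)$, $\eta := \tr(T_0 W)$, so that $L_\xi(t) := \Psi_X(tT_0) = E[e^{-t\xi}]$ and $L_\eta(t) := \Psi_W(tT_0) = \prod_{i=1}^m (1+t\lambda_i(T_0))^{-\alpha}$ are the scalar Laplace transforms of $\xi, \eta$, with $|L_\xi(t) - L_\eta(t)| = O(e^{-c/t})$ as $t \to 0^+$. Since $L_\eta$ is real-analytic at $t = 0$ with Taylor coefficients $(-1)^k E[\eta^k]/k!$ (all finite, as $\eta$ has exponential moments), I would show by induction on $k$ that $E[\xi^k] = E[\eta^k] < \infty$: the Lagrange remainder identity $(-1)^k\bigl(e^{-t\xi} - \sum_{j<k}(-t\xi)^j/j!\bigr)/t^k = \xi^k e^{-\theta t\xi}/k!$ produces a nonnegative integrand bounded by $\xi^k/k!$ and converging pointwise to $\xi^k/k!$ as $t \to 0^+$; Fatou then gives $E[\xi^k] \le E[\eta^k] < \infty$, after which dominated convergence upgrades this to equality.

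Finally, since $\eta$ has an MGF in a neighborhood of $0$, it is moment-determined, whence $\tr(T_0 X) \stackrel{d}{=} \tr(T_0 W)$ for every $T_0 \in \mathcal{P}_+^{m \times m}$; polarizing in $T_0$ recovers all mixed moments of $X$, which then coincide with the Wishart mixed moments (themselves satisfying the multivariate Carleman condition), and so $X \stackrel{d}{=} W_m(\alpha, I_m)$. The main obstacle is the inductive moment-matching step, as $L_\xi$ is not \emph{a priori} smooth at $t = 0$, and one must carefully iterate Fatou and dominated convergence to bootstrap successive moment finiteness from the superpolynomial decay of $L_\xi - L_\eta$.
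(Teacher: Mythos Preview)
Your argument is correct but takes a substantially different route from the paper's. The paper's proof is three lines: since $\mathcal{H}_{X,\nu}(T)$ is holomorphic in $T$ (as an integrated Bessel function) and ${}_1F_1(\alpha;\nu+\tfrac12(m+1);-T)$ is likewise holomorphic, their agreement on the open set $\{0 < T < \epsilon I_m\}$ forces agreement on all of $\mathcal{P}_+^{m\times m}$ by analytic continuation, whereupon Example~\ref{hankeltransformwishartdistn_example} and the Hankel uniqueness Theorem~\ref{uniqueness_matrixargument} finish immediately.

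Your approach instead converts the localized Hankel identity into a super-polynomial estimate on the Laplace-transform difference, extracts matching moments of $\tr(T_0 X)$ via a Fatou/dominated-convergence bootstrap, and closes with moment-determinacy of the Wishart law. This avoids analytic continuation entirely and is fully real-variable; the trade-off is that it is considerably longer and requires care at the polarization step (that $\mathcal{P}_+^{m\times m}$ is open in $\mathcal{S}^{m\times m}$ so that knowing the polynomial $T_0 \mapsto E[(\tr(T_0 X))^k]$ on it recovers all coefficients) and at the moment-problem step (that one of the two candidate laws, here the Wishart, has an MGF and is therefore moment-determined, which suffices). The paper's analyticity argument, by contrast, hides these issues behind the single assertion that $\mathcal{H}_{X,\nu}$ is holomorphic in $T$, which itself deserves a word of justification but is standard from the entire-function nature of $A_\nu$ together with the uniform bound \eqref{besselineq1_matrixargument}.
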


We refer the reader to Hadjicosta \cite{hadjicosta19}, where three proofs of this result are given.  We provide here the third and briefest proof, which uses the principle of analytic continuation. 

\begin{proof}
The Hankel transform, $\mathcal{H}_{{X},\nu}(T)$, of ${X}$ is holomorphic (analytic) in $T$. Also, the hypergeometric function ${}_1F_1(\alpha;\nu+\tfrac12(m+1); -T)$ is holomorphic in $T$. Since these two functions agree on the open neighborhood $\{T: 0 < T < \epsilon I_m\}$ then, by analytic continuation, they agree wherever they both are well-defined. Since they both are well-defined everywhere then we conclude that $\mathcal{H}_{{X},\nu}(T) = {}_1F_1(\alpha;\nu+\tfrac12(m+1); -T)$ for all $T > 0$. By Example \ref{hankeltransformwishartdistn_example} and Theorem \ref{uniqueness_matrixargument}, the uniqueness theorem for Hankel transforms, it follows that ${X} \sim W_m(\alpha,I_m)$.
\end{proof}


\subsection{Orthogonally invariant Hankel transforms of matrix argument}

For $\nu \in \mathbb{C}$ such that $-\nu +\tfrac12(j-m) \notin \mathbb{N}$, for all $j=1, \dotsc, m$, and $X, Y \in \mathcal{S}^{m \times m}$, the \textit{Bessel function (of the first kind) of order $\nu$ with two matrix arguments} is defined as the infinite series 
\begin{equation}
\label{besselseriesdef_2matrixargument}
A_{\nu} ( X, Y )=\frac{1}{\Gamma_m(\nu+\tfrac{1}{2}(m+1))} \ \sum\limits_{k=0}^{\infty} \frac{(-1)^k}{k!} \sum\limits_{|\kappa|=k} \frac{C_{\kappa} (X) C_{\kappa} (Y)}{[\nu+\frac{1}{2}(m+1)]_{\kappa}  C_{\kappa} (I_m)}.
\end{equation}

It is straightforward from \eqref{meanvalue} and \eqref{besselseriesdef_matrixargument} to see that 
\begin{equation}
\label{bessel_2matrixargument}
A_{\nu}(X,Y) = \int_{O(m)} A_{\nu}(HXH'Y) \dd H,
\end{equation}
$X, Y \in \mathcal{S}^{m \times m}$ \cite[p.~260]{muirhead}.  Also, by applying the inequality (\ref{besselineq1_matrixargument}) for $A_\nu(X)$, we obtain 
\begin{equation}
\label{2besselineq_matrixargument}
|A_{\nu}(X,Y)| \le \frac{1}{\Gamma_m(\nu+\tfrac{1}{2}(m+1))}.
\end{equation}

\smallskip

\begin{definition}
{\rm Let $X$ be an $m \times m$ positive-definite random matrix with p.d.f. $f(X)$. For Re$(\nu) > \frac{1}{2}(m-2)$ and $T > 0$, we define the {\it orthogonally invariant Hankel transform of order $\nu$ of $X$} as the function
\begin{equation}
\label{hankeltransformdefinition_2matrixargument}
\mathcal{\widetilde{H}}_{X,\nu}(T) = E_X \, \big[\Gamma_m(\nu+\tfrac{1}{2}(m+1)) A_{\nu}( T, X )\big].
\end{equation}
}
\end{definition}

\smallskip

\begin{remark}
{\rm By (\ref{bessel_2matrixargument}) and the definition (\ref{hankeltransformdefinition_matrixargument}) of $\mathcal{H}_{X,\nu}$, we have 
\begin{equation}
\label{hankel_2matrixargument}
\mathcal{\widetilde{H}}_{X,\nu}(T) = \int_{O(m)} {\mathcal{H}_{X, \nu} (H T H')} \, \dd H.
\end{equation}
Further, since $\int_{O(m)} \dd H = 1$, then $\mathcal{\widetilde{H}}_{X,\nu}$ also satisfies the properties in Lemma \ref{existencehankel_matrixargument}.
}\end{remark}

\medskip

Let  $a$, $b \in \mathbb{C}$, where $-b+\tfrac{1}{2}(j-1) \notin \mathbb{N}$, for all $j=1, \dotsc, m$. The \textit{confluent hypergeometric function of two matrix arguments} is defined, for $X, Y \in \mathcal{S}^{m \times m}$, as the infinite series,
\begin{equation}
\label{1F1seriesdef_2matrixargument}
{_1}F_1(a ; b ; X, Y)=\sum\limits_{k=0}^{\infty} \frac{1}{k!} \sum\limits_{|\kappa|=k} \frac{[a]_{\kappa}}{[b]_{\kappa}} \frac{C_{\kappa}(X) C_{\kappa} (Y)}{ C_{\kappa}(I_m)}.
\end{equation}
It is clear from the definition that 
$
{_1}F_1(a; b; X, I_m)={_1}F_1(a ; b; X).
$ 
Similar to \eqref{bessel_2matrixargument}, it follows from \eqref{meanvalue} that for $X, Y \in \mathcal{S}^{m \times m}$, 
\begin{equation}
\label{1F1_2matrixargument}
{_1}F_1(a;b;X,Y) = \int_{O(m)} {_1}F_1(a;b;HXH'Y) \dd H.
\end{equation}

\smallskip

\begin{example} 
\label{orthogonallyhankeltransformwishartdistn_example}
{\rm 
Let $X \sim W_m (\alpha, \Sigma)$ where $\alpha > \frac{1}{2} (m-1)$ and $\Sigma > 0$.  For $T > 0$, it follows from Example \ref{hankeltransformwishartdistn_example}, \eqref{hankel_2matrixargument}, and \eqref{1F1_2matrixargument} that 
\begin{align}
\label{orthogonallyhankeltransformwishartdistn}
\mathcal{\widetilde{H}}_{X,\nu}( T ) &= \int_{O(m)} {{_1}F_1(\alpha; \nu +\tfrac12(m+1); -H T H' \Sigma^{-1})} \, \dd H \nonumber \\
&= {_1}F_1\big( \alpha; \nu+\tfrac12(m+1) ; -T, \Sigma^{-1} \big).
\end{align}
}\end{example}

\smallskip

\begin{theorem}{ \rm (Uniqueness of orthogonally invariant Hankel transforms).}
\label{uniqueness_hankeltilde}
Let ${X}$ and ${Y}$ be $m \times m$ positive-definite random matrices with orthogonally invariant distributions and orthogonally invariant Hankel transforms $\mathcal{\widetilde{H}}_{{X}, \nu }$ and $\mathcal{\widetilde{H}}_{{Y}, \nu}$, respectively. Then $\mathcal{\widetilde{H}}_{{X}, \nu} = \mathcal{\widetilde{H}}_{{Y}, \nu}$ if and only if ${X} \stackrel{d}{=} {Y}$.
\end{theorem}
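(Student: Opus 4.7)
The forward direction is trivial, so the whole question is the converse: given that $\widetilde{\mathcal{H}}_{X,\nu}(T) = \widetilde{\mathcal{H}}_{Y,\nu}(T)$ for all $T > 0$, we must deduce $X \stackrel{d}{=} Y$. The plan is to reduce this to the already-proved uniqueness theorem for the ordinary Hankel transform (Theorem \ref{uniqueness_matrixargument}) by showing that, under the orthogonal invariance hypothesis, the ordinary and orthogonally invariant Hankel transforms actually coincide.

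The key step is to verify that if $X$ has an orthogonally invariant distribution, then $\mathcal{H}_{X,\nu}$ is itself an orthogonally invariant function of $T$, i.e., $\mathcal{H}_{X,\nu}(HTH') = \mathcal{H}_{X,\nu}(T)$ for every $H \in O(m)$. For this, I would start from the definition
$$
\mathcal{H}_{X,\nu}(HTH') = \Gamma_m(\nu+\tfrac12(m+1)) \, E_X \big[ A_\nu((HTH')X) \big],
$$
and use the convention $A_\nu(YZ) = A_\nu(Y^{1/2}ZY^{1/2})$ together with $(HTH')^{1/2} = HT^{1/2}H'$ to rewrite the argument as $HT^{1/2}H' X HT^{1/2}H'$. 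Since $A_\nu$, being a symmetric function of its eigenvalues, satisfies $A_\nu(H W H') = A_\nu(W)$ for symmetric $W$ and $H \in O(m)$, I can peel off the outer $H$'s and obtain $A_\nu(T^{1/2}(H'XH)T^{1/2}) = A_\nu(T(H'XH))$. Invoking $H'XH \stackrel{d}{=} X$ then yields $\mathcal{H}_{X,\nu}(HTH') = \mathcal{H}_{X,\nu}(T)$.

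Once $\mathcal{H}_{X,\nu}$ is seen to be orthogonally invariant, the identity \eqref{hankel_2matrixargument} together with $\int_{O(m)} \dd H = 1$ gives
$$
\widetilde{\mathcal{H}}_{X,\nu}(T) = \int_{O(m)} \mathcal{H}_{X,\nu}(HTH') \, \dd H = \mathcal{H}_{X,\nu}(T),
$$
and the same identity holds for $Y$. Therefore the hypothesis $\widetilde{\mathcal{H}}_{X,\nu} = \widetilde{\mathcal{H}}_{Y,\nu}$ upgrades to $\mathcal{H}_{X,\nu} = \mathcal{H}_{Y,\nu}$, and Theorem \ref{uniqueness_matrixargument} delivers $X \stackrel{d}{=} Y$.

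The only real obstacle is bookkeeping in the first step: one must correctly apply the convention $A_\nu(YZ) = A_\nu(Y^{1/2}ZY^{1/2})$ (which is essential because $HTH' \cdot X$ is not symmetric) and then use orthogonal invariance of $A_\nu$ on symmetric arguments to slide $H$ past $T^{1/2}$. An alternative route would mimic the proof of Theorem \ref{uniqueness_matrixargument} directly, introducing an independent $Z \sim W_m(\nu+\tfrac12(m+1), I_m)$ and showing $E_Z \widetilde{\mathcal{H}}_{X,\nu}(T^{1/2}ZT^{1/2})$ equals the Laplace transform of $X$, but this is longer and obscures what is really going on, namely that orthogonal averaging is redundant when the distribution is already orthogonally invariant.
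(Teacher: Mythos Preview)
Your proposal is correct and follows essentially the same approach as the paper: both arguments show that, under orthogonal invariance of the distribution of $X$, one has $\widetilde{\mathcal{H}}_{X,\nu}(T)=\mathcal{H}_{X,\nu}(T)$, and then invoke Theorem~\ref{uniqueness_matrixargument}. The paper reaches this identity slightly more directly by writing $\widetilde{\mathcal{H}}_{X,\nu}(T)=E_X E_H\big[\Gamma_m(\nu+\tfrac12(m+1))A_\nu(HTH'X)\big]$ via \eqref{bessel_2matrixargument} and using $X\stackrel{d}{=}H'XH$ inside the $X$-expectation, whereas you first prove $\mathcal{H}_{X,\nu}(HTH')=\mathcal{H}_{X,\nu}(T)$ and then average via \eqref{hankel_2matrixargument}; these are the same computation with the order of the two integrations swapped, and your more explicit eigenvalue bookkeeping for $A_\nu$ fills in exactly the step the paper leaves implicit.
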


\begin{proof}
By Eq. (\ref{bessel_2matrixargument}) and the definition of the orthogonally invariant Hankel transform (\ref{hankeltransformdefinition_2matrixargument}), we have 
$$
\mathcal{\widetilde{H}}_{X,\nu}(T) = E_X E_H \, \Gamma_m(\nu+\tfrac{1}{2}(m+1)) A_{\nu}(HTH'X).
$$
Since the distribution of $\widetilde{X}$ is orthogonally invariant, $X \stackrel{d}{=} HXH'$ for all $H \in O(m)$; therefore, for all $T > 0$, 
$$
\mathcal{\widetilde{H}}_{X,\nu}(T) = E_X \, \Gamma_m(\nu+\tfrac{1}{2}(m+1)) A_\nu(TX) = \mathcal{H}_{X,\nu}(T),
$$
and similarly for $Y$. By applying Theorem \ref{uniqueness_matrixargument}, the Uniqueness Theorem for Hankel transforms, we deduce the desired result. 
\end{proof}

\section{Goodness-of-Fit Tests for the Wishart Distributions}
\label{goodnessoffittests_wishart}

\subsection{The test statistic}
\setcounter{equation}{0}

Let $X_1, \dotsc, X_n$ be independent, identically distributed (i.i.d.), $m \times m$ positive-definite random matrices, each with probability density function $f(X)$ and positive-definite mean $\mu=E(X_1)$. We assume also that the density function of $X_1$ is of the form 
\begin{equation}
\label{assumption_orthogonally_invariant}
f(X_1)=f_0({\mu}^{-1/2} X_1 {\mu}^{-1/2}),
\end{equation}
where $f_0$ is orthogonally invariant. 

\begin{lemma}
\label{lemma_orthogonally_invariant}
Under the assumption (\ref{assumption_orthogonally_invariant}), the distribution of $\mu^{-1/2} X_{1} \mu^{-1/2}$ is orthogonally invariant. 
\end{lemma}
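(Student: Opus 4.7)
The plan is direct: compute the density of $Y := \mu^{-1/2} X_1 \mu^{-1/2}$ using the change-of-variables formula and exhibit it as an orthogonally invariant function. First I would set up the substitution $X_1 = \mu^{1/2} Y \mu^{1/2}$, which is a bijection of $\mathcal{P}_{+}^{m \times m}$ onto itself. The Jacobian for such a linear congruence transformation on the space of symmetric matrices is the standard factor $(\det \mu)^{(m+1)/2}$ (see, e.g., the Jacobian computations in Muirhead), so $\dd X_1 = (\det \mu)^{(m+1)/2}\dd Y$.

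Next I would substitute this into the hypothesized form of $f$. Using assumption \eqref{assumption_orthogonally_invariant}, the density of $Y$ on $\mathcal{P}_{+}^{m \times m}$ is
\begin{equation*}
g(Y) \;=\; f(\mu^{1/2} Y \mu^{1/2})\,(\det \mu)^{(m+1)/2}
\;=\; f_0\bigl(\mu^{-1/2}\,\mu^{1/2} Y \mu^{1/2}\,\mu^{-1/2}\bigr)(\det \mu)^{(m+1)/2}
\;=\; f_0(Y)\,(\det \mu)^{(m+1)/2}.
\end{equation*}

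Finally, for any $H \in O(m)$, since $f_0$ is orthogonally invariant we have $f_0(HYH') = f_0(Y)$; as the Jacobian factor is a constant independent of $Y$, this yields $g(HYH') = g(Y)$ for every $H \in O(m)$. Hence $HYH' \stackrel{d}{=} Y$ for all $H \in O(m)$, which is the definition of orthogonal invariance of the distribution of $Y = \mu^{-1/2} X_1 \mu^{-1/2}$.

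I do not anticipate any real obstacle; the only mild technical point is the justification of the Jacobian factor, but this is a standard result for linear transformations on symmetric matrices and can simply be cited.
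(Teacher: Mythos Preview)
Your proposal is correct and essentially identical to the paper's own proof: both compute the density of $Y=\mu^{-1/2}X_1\mu^{-1/2}$ via the change of variables $X_1=\mu^{1/2}Y\mu^{1/2}$ with Jacobian $(\det\mu)^{(m+1)/2}$ (citing Muirhead), obtain $g(Y)=(\det\mu)^{(m+1)/2}f_0(Y)$, and conclude from the orthogonal invariance of $f_0$.
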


\begin{proof}
Let $\widetilde{Y}=\mu^{-1/2} X_{1} \mu^{-1/2}$; then $X_1=\mu^{1/2} \widetilde{Y} \mu^{1/2}$ and the Jacobian of the transformation from $X_1$ to $\widetilde{Y}$ is $(\det \mu)^{(m+1)/2}$ \cite[p.~58]{muirhead}. Therefore, the p.d.f. of $\widetilde{Y}$ is 
$$
g(\widetilde{Y}) = (\det \mu)^{(m+1)/2} \, f(\mu^{1/2} \widetilde{Y} \mu^{1/2}) 
= (\det \mu)^{(m+1)/2} \, f_0 (\widetilde{Y}).
$$
Since $f_0$ is orthogonally invariant then it follows that $g$ is orthogonally invariant. 
\end{proof}

Denote by $P$ the distribution of $X_1$.  On the basis of the random sample $X_1, \dotsc, X_n$, we wish to test the null hypothesis, $H_0: P \in \{W_m(\alpha, \Sigma), \Sigma > 0 \}$, against the alternative, $H_1: P \cancel{\in} \{W_m(\alpha, \Sigma), \Sigma > 0 \}$, where $\alpha$ is known.

Since $\Sigma$ is unspecified by $H_0$, the data $X_1, \dotsc, X_n$ cannot be used to construct a test statistic. Thus, with $\bar{X}_{n}=n^{-1} \sum_{j=1}^n X_j$ denoting the sample mean, define $Y_j =\bar{X}_{n}^{-1/2} X_j \bar{X}_{n}^{-1/2}$, for $j=1,\dotsc,n$.   Under $H_0$, the distribution of $Y_1,\ldots,Y_n$ does not depend on $\Sigma$, so a test statistic can be based on them.  Let $P_0$ denote the probability measure corresponding to the $W_m(\alpha,I_m)$ distribution. For Re$(\nu) > \frac{1}{2}(m-2)$, define the \textit{empirical orthogonally invariant Hankel transform of order $\nu$} of $Y_1,\ldots,Y_n$ as
\begin{equation}
\label{empiricalhankel_matrixargument}
\mathcal{\widetilde{H}}_{n,\nu}(T) = \Gamma_m(\nu+\tfrac{1}{2}(m+1)) \frac{1}{n}\sum_{j=1}^{n} A_{\nu}(T,Y_j), 
\end{equation}
$T > 0$.  Further, define the test statistic 
\begin{equation}
\label{statisticc_wishart}
\boldsymbol{T}^2_{n,\nu} = n \, \int_{T > 0} {\big[\mathcal{\widetilde{H}}_{n, \nu}(T)-{_1}F_1(\alpha; \nu+\tfrac12 (m+1); -T/\alpha)\big]^2 }\, \dd P_0(T).
\end{equation}

To provide motivation for this test statistic, suppose that $H_0$ is valid; then $E(X_{1})=\alpha \Sigma^{-1}$ and, for large $n$, we can expect that $Y_j=\bar{X}_{n}^{-1/2} X_j \bar{X}_{n}^{-1/2} \simeq \alpha^{-1}{\Sigma}^{1/2} X_j {\Sigma}^{1/2}$, almost surely. 
By the Continuous Mapping Theorem, the sequence of random variables $A_{\nu}( T, Y_j)$ should approximate the i.i.d. sequence $A_{\nu}( T, \alpha^{-1} {\Sigma}^{1/2} X_j {\Sigma}^{1/2})$, $j=1, \dotsc, n$, for each $T > 0$ and for sufficiently large $n$. 
Applying to (\ref{empiricalhankel_matrixargument}) the Strong Law of Large Numbers, we can expect that, for large $n$, $\mathcal{\widetilde{H}}_{n, \nu}(T) \simeq \mathcal{\widetilde{H}}_{\alpha^{-1} \Sigma^{1/2} X_{1} \Sigma^{1/2}, \nu}(T)$, almost surely. 

By Example \ref{orthogonallyhankeltransformwishartdistn_example}, we deduce that  
\begin{align*}
\mathcal{\widetilde{H}}_{\alpha^{-1} \Sigma^{1/2} X_{1} \Sigma^{1/2}, \nu}(T)&={_1}F_1 \bigg(\alpha ; \nu+\tfrac12(m+1) ; -\alpha^{-1} T, I_m \bigg)\\
&={_1}F_1 \bigg(\alpha ; \nu+\tfrac12(m+1) ; -\alpha^{-1} T),
\end{align*}
for $T > 0$. Therefore, by Lemma \ref{lemma_orthogonally_invariant} and Theorem \ref{uniqueness_hankeltilde}, small values of $\boldsymbol{T}_{n,\nu}^2$ provide strong evidence in support of $H_0$, and we will reject $H_0$ for large values of $\boldsymbol{T}^2_{n,\nu}$.

\smallskip

{\it For the remainder of the paper, we set} 
$$
\nu=\alpha-\frac{1}{2}(m+1).
$$
Since $\nu >\frac{1}{2}(m-2)$ then $\alpha >\frac{1}{2}(2m-1)$. We also denote $\boldsymbol{T}^2_{n,\nu}$ and $\mathcal{\widetilde{H}}_{n,\nu}$ by $\boldsymbol{T}^2_n$ and $\mathcal{\widetilde{H}}_n$, respectively. By Kummer's formula (\ref{kummer_matrixargument}), the statistic (\ref{statisticc_wishart}) becomes 

\begin{equation}
\label{statistic_wishart}
\boldsymbol{T}^2_n= n \int_{T > 0} \bigg[\mathcal{\widetilde{H}}_n(T) - \etr(-T/\alpha)\bigg]^2 \, \dd P_0(T).
\end{equation}
This integral represents $\boldsymbol{T}^2_n$ as a weighted integral of the squared difference between the empirical orthogonally invariant Hankel transform $\mathcal{\widetilde{H}}_n$ and its almost sure limit under the null hypothesis.

\medskip

We now evaluate the test statistic $\boldsymbol{T}_n^2$ for a given random sample. 
\begin{proposition}
\label{helpfulrepr_matrixcase}
The test statistic (\ref{statistic_wishart}) is a $V$-statistic of order 2. Specifically,
$$
\boldsymbol{T}^2_{n} = \frac{1}{n}\sum_{i=1}^{n} \sum_{j=1}^{n} h(Y_{i},Y_{j})
$$
where, for $X ,Y > 0$, 
\begin{multline*}
h(X,Y)= \Gamma_m(\alpha) \etr (-X-Y ) \ A_{\nu}( -X, Y)\\
-\bigg( \frac{\alpha}{\alpha+1} \bigg)^{m \alpha} \bigg[ \etr \bigg(-\frac{\alpha}{\alpha+1} X \bigg) +  \etr \bigg(-\frac{\alpha}{\alpha+1} Y \bigg) \bigg] +\bigg( \frac{2}{\alpha}+1 \bigg)^{-m \alpha}.
\end{multline*}
\end{proposition}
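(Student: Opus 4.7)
The plan is to expand the square in \eqref{statistic_wishart} as
\begin{equation*}
\boldsymbol{T}_n^2 \;=\; n\!\int_{T>0}\!\mathcal{\widetilde{H}}_n(T)^2\,\dd P_0(T) \;-\; 2n\!\int_{T>0}\!\mathcal{\widetilde{H}}_n(T)\,\etr(-T/\alpha)\,\dd P_0(T) \;+\; n\!\int_{T>0}\!\etr(-2T/\alpha)\,\dd P_0(T),
\end{equation*}
then evaluate each of the three integrals in closed form using the Bessel/Laplace machinery from Section \ref{propertieshankel_matrix}, and finally rewrite the result as a double sum $n^{-1}\sum_{i,j} h(Y_i,Y_j)$ over the normalized sample, thereby reading off the kernel $h$.

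Substituting the definition \eqref{empiricalhankel_matrixargument} and recalling $\nu+\tfrac12(m+1)=\alpha$, the first integral becomes $\tfrac{\Gamma_m(\alpha)^2}{n^2}\sum_{i,j}\int_{T>0} A_\nu(T,Y_i)A_\nu(T,Y_j)\,\dd P_0(T)$. Using \eqref{bessel_2matrixargument} to write each $A_\nu(T,Y_k)$ as an orthogonal integral $\int_{O(m)} A_\nu(T H_k Y_k H_k')\,\dd H_k$ (together with the cyclic-invariance convention $A_\nu(AB)=A_\nu(BA)$), I can apply Weber's second exponential integral \eqref{besselproductintegral} with $Z=I_m$, $\Lambda=H_1 Y_i H_1'$, $M=H_2 Y_j H_2'$. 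The traces in the resulting $\etr$-factor collapse under the orthogonal change of variable to $\etr(-Y_i-Y_j)$, and the remaining Haar integrals reduce, by the substitution $H=H_2'H_1$ and a second application of \eqref{bessel_2matrixargument}, to $A_\nu(-Y_i,Y_j)/\Gamma_m(\alpha)$. This produces the $\Gamma_m(\alpha)\,\etr(-X-Y)\,A_\nu(-X,Y)$ piece of $h$.

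The second integral similarly decomposes as $\tfrac{\Gamma_m(\alpha)}{n}\sum_j \int_{O(m)}\int_{T>0} A_\nu(T H Y_j H')\,(\det T)^{\alpha-\frac12(m+1)}\etr(-(1+1/\alpha)T)\,\dd T\,\dd H$. The inner $T$-integral is the Laplace transform identity \eqref{besselintegral} with $Z=(1+1/\alpha)I_m$, yielding $(\alpha/(\alpha+1))^{m\alpha}\etr(-(\alpha/(\alpha+1))H Y_j H')$, which simplifies under the orthogonal integral to $(\alpha/(\alpha+1))^{m\alpha}\etr(-(\alpha/(\alpha+1))Y_j)$. Symmetrizing via the identity $\sum_j f(Y_j)=\tfrac{1}{2n}\sum_{i,j}(f(Y_i)+f(Y_j))$ gives precisely the middle bracket in the definition of $h$. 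The third integral is a pure gamma integral handled by \eqref{gammaintegral} with $Z=(1+2/\alpha)I_m$, producing $n(2/\alpha+1)^{-m\alpha}=\tfrac{1}{n}\sum_{i,j}(2/\alpha+1)^{-m\alpha}$.

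The only real obstacle is the double-orthogonal-integral manipulation in the first term: one must keep careful track of the fact that $A_\nu$ depends only on the eigenvalues of its argument, use the cyclic trick $A_\nu(H_1TH_1'Y_i)=A_\nu(TH_1'Y_iH_1)$ to put Weber's formula in its applicable form, and then recognize that after Weber's formula the $\etr$-factor is orthogonally invariant so the two Haar integrals collapse to a single one via the translation $H=H_2'H_1$. Once those three integrals are assembled and the resulting single sums are rewritten as double sums by the symmetrization noted above, the claimed formula for $h(X,Y)$ falls out directly.
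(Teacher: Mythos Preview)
Your proposal is correct and follows essentially the same approach as the paper's own proof: both expand the square into three integrals, evaluate the quadratic term via \eqref{bessel_2matrixargument} plus Weber's second exponential integral \eqref{besselproductintegral} (collapsing the double Haar integral by the same group-translation trick you describe), handle the cross term with \eqref{besselintegral} at $Z=(1+\alpha^{-1})I_m$, and compute the constant term with \eqref{gammaintegral}, then symmetrize the single sums into the $V$-statistic form.
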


\begin{proof}
After squaring the integrand in (\ref{statistic_wishart}), we see that there are three terms to be computed. First, 
\begin{align*}
n \,\int_{T > 0} \mathcal{\widetilde{H}}^2_n(T) \, \dd P_0(T) 
&= \frac{1}{n} \int_{T >0} \bigg( \sum_{i=1}^{n} \Gamma_m(\alpha) A_{\nu}( T, Y_i) \bigg)^2  \, \dd P_0(T) \\
&= \frac{\Gamma_m(\alpha)}{n} \sum_{i=1}^n \sum_{j=1}^n \int_{T > 0} A_{\nu}(T,Y_i) \, A_\nu(T,Y_j) \, (\det T)^\nu \, \etr(-T) \dd T.
\end{align*}
By (\ref{bessel_2matrixargument}) and Fubini's theorem, 
\begin{multline}
\label{helpful_eq1}
\int_{T > 0} A_{\nu}( T, Y_i) A_{\nu} (T, Y_j) (\det T)^{\nu}\etr(-T) \, \dd T \\
= \int_{O(m)} \int_{O(m)} \int_{T > 0} A_{\nu} (H T H' Y_i) A_{\nu}( K T K' Y_j) (\det T)^{\nu} \etr(-T) \, \dd T \dd H \dd K.
\end{multline}
Writing $A_{\nu}( H T H' Y_j)=A_{\nu}( H' Y_j H T)$, $j=1,\ldots,n$, and applying Herz's generalization (\ref{besselproductintegral}) of Weber's second exponential integral, we find that (\ref{helpful_eq1}) equals 
\begin{multline}
\label{helpful_eq2}
\int_{O(m)} \int_{O(m)} \etr(-H' Y_i H- K'Y_j K) A_{\nu}( -H' Y_i H K' Y_j K) \dd H \dd K \\
= \etr(-Y_i - Y_j) \int_{O(m)} \int_{O(m)} A_{\nu}( -H' Y_i H K' Y_j K) \dd H \dd K.
\end{multline}
On the right-hand side of \eqref{helpful_eq2}, we replace $H$ by $HK$ and apply the group invariance of the Haar measure and its normalization; then we find that \eqref{helpful_eq2} reduces to 
$$
\int_{O(m)} A_{\nu}( -H' Y_i HY_j) \dd H \equiv A_{\nu}(-,Y_i,Y_j).
$$
Therefore, 
$$
n \int_{T > 0} \mathcal{\widetilde{H}}^2_n(T) \, \dd P_0(T) = \frac{\Gamma_m(\alpha)}{n} \sum_{i=1}^n \sum_{j=1}^n \etr(-Y_i-Y_j ) A_{\nu} (-Y_i, Y_j).
$$

The second term to be calculated is 
\begin{multline*}
-\frac{2n}{\Gamma_m(\alpha)}  \int_{T > 0} \mathcal{\widetilde{H}}_n(T) \etr(-T/\alpha) \, \dd P_0(T) \\
= -2 \sum_{i=1}^n \int_{T > 0} A_{\nu} (T, Y_i) \, (\det T)^{\nu} \, \etr(-(I_m+\alpha^{-1} I_m)T) \dd T.
\end{multline*}
Similar to the previous calculation, we use (\ref{bessel_2matrixargument}) to express $A_{\nu} (T, Y_i)$ as an average over $O(m)$ and apply Fubini's theorem to reverse the order of integration. The resulting integral is a special case of \eqref{besselintegral}, so we conclude that 
\begin{align*}
-\frac{2n}{\Gamma_m(\alpha)} \int_{T > 0} & \mathcal{\widetilde{H}}_n(T) \etr(-T/\alpha) \, \dd P_0(T) \\
&= -2 \left(\frac{\alpha}{\alpha+1}\right)^{m \alpha}  \sum_{i=1}^n \etr \bigg(-\frac{\alpha}{\alpha+1}Y_i \bigg) \\
&\equiv -\frac{1}{n} \bigg( \frac{\alpha}{\alpha+1}\bigg)^{m \alpha} \sum_{i=1}^{n}\sum_{j=1}^{n} \bigg[ \etr\bigg(-\frac{\alpha}{\alpha+1}Y_i \bigg)+\etr\bigg(-\frac{\alpha}{\alpha+1}Y_j \bigg) \bigg].
\end{align*}

The third and last integral, which we evaluate using the gamma integral (\ref{gammaintegral}) is
\begin{align*}
n \int_{T > 0} { \etr(-2T/\alpha) }\, \dd P_0(T) 
&= n (\det(2\alpha^{-1} I_m +I_m))^{-\alpha} \\
&= n (2\alpha^{-1}+1)^{-m \alpha} 
\equiv \frac{1}{n} \sum_{i=1}^n \sum_{j=1}^n \bigg(\frac{2}{\alpha}+1 \bigg)^{-m \alpha}.
\end{align*}
Collecting together the three terms, we obtain the desired result. 
\end{proof}

\subsection{The limiting null distribution of the test statistic}
\label{sectionlimitingnull_matrixcase}

We denote by $L^2=L^2(P_0)$ the space of (equivalence classes of) orthogonally invariant Borel measurable functions $f: \mathcal{P}_{+}^{m \times m} \rightarrow \mathbb{C}$ that are square-integrable with respect to the probability measure $P_0$, i.e., for which $\int_{X > 0} {|f(X)|^2}\, \dd P_0(X) < \infty$. 
The space $L^2$ is a separable Hilbert space when equipped with the inner product
$$
\langle f, g \rangle_{L^2}=\int_{X > 0} {f(X) \ \overline{g(X)}}\, \dd P_0(X),
$$
and the corresponding norm
$$
||f||_{L^2}=\sqrt{\langle f, f \rangle_{L^2}},
$$
$f, g \in L^2$. Moreover, the set of normalized Laguerre polynomials $\{\mathcal{L}_\kappa^{(\nu)}\}$, with $\kappa$ ranging over all partitions, defined in Section \ref{besselLaguerre_matrixargument}, forms an orthonormal basis for the space $L^2$; see Herz \cite[p.~502, Theorem 4.6]{herz} and Constantine \cite[Section 3]{constantine}.

We now define the stochastic process
 \begin{equation}
 \label{zn_matrixargument}
 \mathcal{Z}_{n}(T)=\frac{1}{\sqrt{n}} \sum_{j=1}^n \bigg[ \Gamma_m(\alpha) A_{\nu}(T, Y_j) - \etr(-T/\alpha) \bigg], 
 \end{equation}
$T > 0$. We view the random field $\mathcal{Z}_{n}:=\{ \mathcal{Z}_{n}(T), T > 0 \}$ as a random element in $L^2$ since, as we now show, its sample paths are in $L^2$.  

\begin{lemma}
\label{lemmazn_matrixargument}
The test statistic (\ref{statistic_wishart}) can be written as 
$$
\boldsymbol{T}^2_{n}=\int_{T >0} { \big( \mathcal{Z}_{n}(T) \big)^2}\, \dd P_0(T)=||\mathcal{Z}_n||^2_{L^2}.
$$
In particular, $||\mathcal{Z}_n||^2_{L^2} \ < \infty$. 
\end{lemma}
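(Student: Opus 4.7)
The plan is a direct unwinding of the two definitions, followed by a one-line boundedness estimate. No step is substantial; the only care needed is in tracking the normalization $\Gamma_m(\nu+\tfrac12(m+1)) = \Gamma_m(\alpha)$ induced by the convention $\nu = \alpha - \tfrac12(m+1)$ imposed just before the lemma.

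First I would rewrite $\mathcal{Z}_n(T)$ in terms of the empirical orthogonally invariant Hankel transform. From the definition (\ref{empiricalhankel_matrixargument}), together with the specialization $\nu = \alpha - \tfrac12(m+1)$, we have
\[
\mathcal{\widetilde{H}}_n(T) = \frac{\Gamma_m(\alpha)}{n}\sum_{j=1}^{n} A_\nu(T,Y_j),
\]
so factoring $n^{-1/2}$ out of (\ref{zn_matrixargument}) yields
\[
\mathcal{Z}_n(T) = \sqrt{n}\,\bigl[\mathcal{\widetilde{H}}_n(T) - \etr(-T/\alpha)\bigr].
\]
Squaring both sides, integrating against $\dd P_0$ over $\{T > 0\}$, and comparing with the definition (\ref{statistic_wishart}) of $\boldsymbol{T}_n^2$ gives the asserted equality $\boldsymbol{T}_n^2 = \int_{T>0} (\mathcal{Z}_n(T))^2 \, \dd P_0(T) = \|\mathcal{Z}_n\|_{L^2}^2$.

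For the second claim, I would invoke the uniform bound (\ref{2besselineq_matrixargument}), which reads $|A_\nu(T,Y_j)| \le 1/\Gamma_m(\alpha)$ and hence $|\Gamma_m(\alpha) A_\nu(T,Y_j)| \le 1$ for all $T > 0$ and all $j$. Since $|\etr(-T/\alpha)| \le 1$ on $\{T > 0\}$, the triangle inequality applied termwise in (\ref{zn_matrixargument}) gives the pointwise bound $|\mathcal{Z}_n(T)| \le 2\sqrt{n}$. Because $P_0$ is a probability measure, integrating the constant $4n$ yields $\|\mathcal{Z}_n\|_{L^2}^2 \le 4n < \infty$, so $\mathcal{Z}_n \in L^2$ and the sample paths lie in $L^2$ as claimed. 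A much sharper bound on $\|\mathcal{Z}_n\|_{L^2}^2$ also follows from Proposition \ref{helpfulrepr_matrixcase}, which expresses $\boldsymbol{T}_n^2$ as an explicit finite sum of bounded summands.

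The main obstacle, if one must be named, is purely clerical: one must keep the specialization $\nu = \alpha - \tfrac12(m+1)$ in mind so that the factor $\Gamma_m(\nu + \tfrac12(m+1))$ in (\ref{empiricalhankel_matrixargument}) coincides with the constant $\Gamma_m(\alpha)$ appearing inside the sum defining $\mathcal{Z}_n$. Once this identification is made, both assertions are immediate from the defining formulas.
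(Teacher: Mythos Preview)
Your proposal is correct and matches the paper's own approach: the paper simply states that the result follows immediately from the definitions \eqref{empiricalhankel_matrixargument}, \eqref{statistic_wishart}, and \eqref{zn_matrixargument}, which is exactly the unwinding you carry out. Your explicit boundedness argument for $\|\mathcal{Z}_n\|_{L^2}^2 < \infty$ is a welcome addition that the paper leaves implicit.
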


This result follows immediately from \eqref{empiricalhankel_matrixargument}, \eqref{statistic_wishart}, and \eqref{zn_matrixargument}.  

\begin{remark}{\rm 
By \cite[Example 1.4]{guptarichards} $(Y_1, \dotsc, Y_n)$ has a matrix Liouville distribution, of the second kind, that does not depend on $\Sigma$. Therefore, without loss of generality, we will set $\Sigma=I_m$ in deriving the limiting null distribution of $\boldsymbol{T}^2_n$. 

We also note that, for each $j=1,\ldots,n$, the matrices $Y_j=\bar{X}_{n}^{-1/2} X_j \bar{X}_{n}^{-1/2}$ and $Z_j=X_{j}^{1/2} \bar{X}_n^{-1} X_{j}^{1/2}$ have the same spectrum; this result is proved by verifying that $Y_j$ and $Z_j$ have the same characteristic polynomial.  Consequently, 
\begin{equation}
\label{remark_sameeigen}
A_{\nu}(T, Y_j)=A_{\nu} (T, Z_j),
\end{equation}
$j=1, \dotsc, n$, so we can replace $Y_j$ by $Z_j$ in the definition (\ref{empiricalhankel_matrixargument}) of the test statistic. 
}\end{remark}

We now state the main result of this section. 

\begin{theorem}
\label{limitingnulldistribution_matrixcase}
Let $m \ge 2$ and $X_1, \dotsc, X_n$ be i.i.d. $P_0$-distributed random matrices, where $\alpha > \max\{ \tfrac12(2m-1), \tfrac12(m+3) \}$, and let $\mathcal{Z}_{n}:=(\mathcal{Z}_{n}(T), T > 0)$ be the random field defined in (\ref{zn_matrixargument}). Then, there exists a centered Gaussian field $\mathcal{Z}:=(\mathcal{Z}(T), T > 0)$, with sample paths in $L^2$ and with covariance function, 
\begin{equation}
\label{covariancefn_matrixcase}
K(S,T) = \etr(-\alpha^{-1}(S+T)) 
\bigg[\Gamma_m(\alpha) A_{\nu}(-\alpha^{-2}S,T) - \frac{1}{\alpha^3 m} (\tr S)(\tr T) - 1\bigg],
\end{equation}
$S, T > 0$, such that $\mathcal{Z}_{n} \xrightarrow{d} \mathcal{Z}$ in $L^2$ as $n \rightarrow \infty$.  Moreover,
$$
\boldsymbol{T}^2_{n} \xrightarrow{d}\int_{T >0} {\mathcal{Z}^2(T)}\, \dd P_0(T).
$$
\end{theorem}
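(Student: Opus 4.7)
The plan is to establish the weak convergence $\mathcal{Z}_n \xrightarrow{d} \mathcal{Z}$ in the Hilbert space $L^2 = L^2(P_0)$ and then to invoke the continuous mapping theorem applied to the continuous functional $\phi \mapsto \|\phi\|_{L^2}^2$, using Lemma \ref{lemmazn_matrixargument} to identify $\boldsymbol{T}_n^2$ with $\|\mathcal{Z}_n\|_{L^2}^2$. By the Remark preceding the theorem, we may set $\Sigma = I_m$ throughout, so that $X_1 \sim W_m(\alpha, I_m)$ and $\bar{X}_n \to \alpha I_m$ almost surely.

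The main step, and the hardest, is a linearization of $\mathcal{Z}_n$ around the null mean. Write $\bar{X}_n = \alpha I_m + n^{-1/2} W_n$, where $W_n := \sqrt n(\bar{X}_n - \alpha I_m)$ converges in distribution to a centered matrix-Gaussian $W$ by the multivariate CLT. Using a matrix Taylor expansion of $\bar{X}_n^{-1/2}$ around $\alpha^{-1/2} I_m$, one obtains $Y_j = \alpha^{-1} X_j - (2\alpha^2)^{-1}[(\bar{X}_n - \alpha I_m)X_j + X_j(\bar{X}_n - \alpha I_m)] + O_p(n^{-1})$; then Taylor-expanding $A_\nu(T, \cdot)$ around $\alpha^{-1}X_j$ and averaging the first-order correction over $j$ (whose expectation, by orthogonal invariance of the $W_m(\alpha, I_m)$ distribution, is a scalar multiple of the identity matrix) leads to the decomposition
\[
\mathcal{Z}_n(T) \;=\; \mathcal{Z}_n^{(0)}(T) \;+\; c(T)\,\tr(W_n) \;+\; R_n(T),
\]
where $\mathcal{Z}_n^{(0)}(T) := n^{-1/2}\sum_{j=1}^n [\Gamma_m(\alpha) A_\nu(T, \alpha^{-1}X_j) - \etr(-T/\alpha)]$, $c(T)$ is an explicit deterministic function arising from the first derivative of $A_\nu(T,\cdot)$, and $\|R_n\|_{L^2} = o_p(1)$. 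Establishing this last claim is the principal obstacle: it requires uniform $L^2$-control of the Taylor remainder of the matrix Bessel function $A_\nu(T, \cdot)$ together with finite second moments for $\|X_1\|$ and its relevant polynomial functionals, and this is precisely where the range $\alpha > \max\{\tfrac12(2m-1), \tfrac12(m+3)\}$ enters.

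Once the linearization is in hand, $\mathcal{Z}_n^{(0)}(T) + c(T)\tr(W_n) = n^{-1/2}\sum_{j=1}^n \psi_j(T)$ is a normalized sum of i.i.d.\ centered random elements of $L^2$, with $\psi_j(T) := \Gamma_m(\alpha) A_\nu(T, \alpha^{-1}X_j) - \etr(-T/\alpha) + c(T)[\tr X_j - m\alpha]$. Verifying $E\|\psi_1\|_{L^2}^2 < \infty$ via Lemma \ref{existencehankel_matrixargument} and the moment hypotheses on $\alpha$, the Hilbert-space central limit theorem yields $\mathcal{Z}_n \xrightarrow{d} \mathcal{Z}$ in $L^2$, where $\mathcal{Z}$ is the centered Gaussian field with covariance $K(S,T) = E[\psi_1(S)\psi_1(T)]$. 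To identify $K(S,T)$ with \eqref{covariancefn_matrixcase}, we compute $\Gamma_m(\alpha)^2 E[A_\nu(S, \alpha^{-1}X_1) A_\nu(T, \alpha^{-1}X_1)]$ using Herz's generalization \eqref{besselproductintegral} of Weber's second exponential integral together with the orthogonal invariance of the Wishart distribution and the integral identity \eqref{bessel_2matrixargument}; this yields the bracketed contribution $\Gamma_m(\alpha) A_\nu(-\alpha^{-2}S, T) - 1$ after subtracting the product of null means $\etr(-S/\alpha)\etr(-T/\alpha)$. The third term $-(\alpha^3 m)^{-1}(\tr S)(\tr T)\etr(-\alpha^{-1}(S+T))$ in $K(S,T)$ then arises from the cross- and auto-covariance contributions of the linearization correction $c(T)\tr(W_n)$, with the prefactor reflecting $\Var(\tr X_1) = m\alpha$ for $X_1 \sim W_m(\alpha, I_m)$.

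Finally, since the map $\phi \mapsto \|\phi\|_{L^2}^2$ is continuous on $L^2$, the continuous mapping theorem yields
\[
\boldsymbol{T}_n^2 \;=\; \|\mathcal{Z}_n\|_{L^2}^2 \;\xrightarrow{d}\; \|\mathcal{Z}\|_{L^2}^2 \;=\; \int_{T > 0} \mathcal{Z}^2(T)\,\dd P_0(T),
\]
completing the proof.
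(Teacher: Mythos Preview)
Your proposal is correct and follows essentially the same route as the paper: Taylor-expand $A_\nu(T,\cdot)$ at $\alpha^{-1}X_j$, use orthogonal invariance (the paper invokes Schur's Lemma explicitly to show the expected gradient term $g(T)$ is a scalar matrix) to reduce the correction to a scalar function of $T$ times $\tr(X_j-\alpha I_m)$, apply the Hilbert-space CLT to the resulting i.i.d.\ sum, and identify $K(S,T)$ via Herz's Weber integral \eqref{besselproductintegral}. The paper organizes the linearization as a three-step chain $\mathcal{Z}_n\to\mathcal{Z}_{n,1}\to\mathcal{Z}_{n,2}\to\mathcal{Z}_{n,3}$ and controls each gap with explicit Frobenius-norm bounds on $\nabla A_\nu$ and on differences of gradients, together with finiteness of $\int (\tr T)^3/[\lambda_{\min}(T)]^2\,\dd P_0(T)$; it is these \emph{inverse}-eigenvalue moments (not polynomial moments of $\|X_1\|$ as you write) that force $\alpha>\tfrac12(m+3)$, while $\alpha>\tfrac12(2m-1)$ is needed for the Poisson integral representation of $A_\nu$ used in those gradient bounds.
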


\medskip

The remainder of this section is devoted to proving Theorem \ref{limitingnulldistribution_matrixcase}, so readers who wish to postpone reading the detailed derivation may continue directly to Section \ref{sec:eigen_matrix}.

\subsubsection{Preliminary details}
\label{prelimforasympdistn}

Here, we provide details on the Frobenius norm of a matrix, the Taylor expansion of functions on the space $\mathcal{S}^{m \times m}$ of symmetric matrices, and various preliminary lemmata necessary for the derivation of the asymptotic distribution of $\boldsymbol{T}^2_n$.  

For $X, Y \in \mathcal{S}^{m \times m}$, the {\it inner product} between $X$ and $Y$ is defined by
$\langle X, Y \rangle = \tr(XY),$ 
and the {\it Frobenius norm} of $X$ is defined by
$\lVert X \rVert^2_F = \langle X, X \rangle = \tr(X^2).$ 
By \cite[Section 5.6, p.~291]{hornjohnson}, the Frobenius norm satisfies the triangle inequality, 
$\lVert X + Y \rVert_F \le \lVert X \rVert_F + \lVert Y \rVert_F,$ 
and moreover, it is sub-multiplicative, 
$\lVert XY \rVert_F \le \lVert X \rVert_F \cdot \lVert Y \rVert_F.$ 

We use the usual notation for Kronecker's delta, viz., $\delta_{ij} = 1$ or $0$ for $i=j$ or $i\neq j$, respectively.  For $Z=(z_{ij}) \in \mathcal{S}^{m \times m}$, the {\it gradient operator} is the $m \times m$ matrix 
$$
\nabla_Z = \left(\tfrac12 (1+\delta_{i j}) \frac{\partial}{\partial z_{i j}}\right)_{i,j=1,\ldots,m}.
$$
For example, is straightforward to see that 
$\nabla_Z e^{\langle T, Z \rangle} = e^{\langle T, Z \rangle} \, T.$ 

Let $F : \mathcal{S}^{m \times m} \rightarrow \mathbb{C}$ be a $C^1$ function; that is, $F$ is differentiable of order $1$ and its partial derivatives are continuous on $\mathcal{S}^{m \times m}$.  The {\it Taylor expansion of order $1$} of the function $F$, at $Z_0 \in \mathcal{S}^{m \times m}$, is 
\begin{equation}
\label{taylorexpansion1}
F(Z) = F(Z_0) + \langle Z-Z_0, \nabla_U F(U) \rangle,
\end{equation}
where $U= t Z +(1-t) Z_0$, for some $t \in [0,1]$.

\smallskip
\begin{lemma}
For $T, Z > 0$, 
\begin{equation}
\label{gradient_bessel}
\nabla_Z A_{\nu}(T, Z) = \int_{O(m)}  M^{1/2} \, \nabla_Y A_{\nu}(Y) \, M^{1/2} \, \dd H ,
\end{equation}
where $M:= H T H'$ and $Y:=M^{1/2} Z M^{1/2}$.
\end{lemma}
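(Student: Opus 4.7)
My plan is to use the integral representation \eqref{bessel_2matrixargument} to write
$$A_\nu(T,Z) = \int_{O(m)} A_\nu(HTH'Z)\,\dd H = \int_{O(m)} A_\nu(M^{1/2} Z M^{1/2})\,\dd H,$$
where I have set $M=HTH'$ and invoked the convention $A_\nu(MZ)=A_\nu(M^{1/2}ZM^{1/2})$. Once differentiation under the integral sign in $Z$ is justified, the task reduces to computing $\nabla_Z [A_\nu(M^{1/2}ZM^{1/2})]$ for $M$ held fixed, which is a chain-rule computation for the gradient operator on $\mathcal{S}^{m\times m}$.

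I would handle the chain rule at the level of differentials. By the definition of $\nabla_Z$ and the first-order Taylor expansion \eqref{taylorexpansion1}, for any $C^1$ function $F:\mathcal{S}^{m\times m}\to\mathbb{C}$ one has $dF = \langle dZ,\nabla_Z F\rangle = \tr((dZ)\,\nabla_Z F)$ along any symmetric direction $dZ$. With $Y = M^{1/2}ZM^{1/2}$ and $M^{1/2}$ symmetric, the symmetric perturbation $dZ$ induces the symmetric perturbation $dY = M^{1/2}(dZ)M^{1/2}$, so
\begin{equation*}
d\,[A_\nu(Y)] \;=\; \tr\!\bigl((dY)\,\nabla_Y A_\nu(Y)\bigr)
\;=\; \tr\!\bigl(M^{1/2}(dZ)M^{1/2}\,\nabla_Y A_\nu(Y)\bigr)
\;=\; \tr\!\bigl((dZ)\,M^{1/2}\nabla_Y A_\nu(Y) M^{1/2}\bigr),
\end{equation*}
by the cyclic property of the trace. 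Since $M^{1/2}\nabla_Y A_\nu(Y) M^{1/2}$ is symmetric and $dZ$ ranges over arbitrary symmetric perturbations, we read off
$$\nabla_Z\bigl[A_\nu(M^{1/2}ZM^{1/2})\bigr] \;=\; M^{1/2}\,\nabla_Y A_\nu(Y)\,M^{1/2}.$$
Inserting this identity back into the integral over $O(m)$ yields \eqref{gradient_bessel}.

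The interchange of $\nabla_Z$ with $\int_{O(m)}$ is not a serious obstacle: the zonal-polynomial series \eqref{besselseriesdef_matrixargument} for $A_\nu$ converges absolutely and, by a term-by-term differentiation, so does the corresponding series for each partial derivative; for $(T,Z)$ in any compact subset of $\mathcal{P}_+^{m\times m}\times\mathcal{P}_+^{m\times m}$ the resulting bounds are uniform in $H\in O(m)$, and $O(m)$ is compact, so dominated convergence applies. The only real subtlety is to keep the gradient convention $(\nabla_Z)_{ij}=\tfrac12(1+\delta_{ij})\partial/\partial z_{ij}$ consistent with $\langle\cdot,\cdot\rangle=\tr(\cdot\,\cdot)$ restricted to symmetric matrices, which is precisely why I would prefer the coordinate-free differential argument above over an entrywise calculation with Jacobian matrices.
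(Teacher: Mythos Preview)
Your proposal is correct and follows essentially the same route as the paper: start from the integral representation \eqref{bessel_2matrixargument}, differentiate under the integral, and apply the chain rule for $\nabla_Z$ under the linear change $Y=M^{1/2}ZM^{1/2}$. The only cosmetic difference is that the paper quotes Maass for the operator identity $\nabla_Z = M^{1/2}\nabla_Y M^{1/2}$, whereas you derive the same fact directly via the differential $dF=\tr((dZ)\nabla_Z F)$ and the cyclic property of the trace; your argument is self-contained but otherwise identical in substance.
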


\begin{proof}
By (\ref{bessel_2matrixargument}), 
$$
A_{\nu}(T, Z)=\int_{O(m)} A_{\nu}(H T H' Z) \, \dd H .
$$
It is straightforward to verify that the conditions given by Burkill and Burkill \cite[p.~289,~Theorem 8.72]{burkill} for interchanging derivatives and integrals are satisfied; therefore, 
\begin{equation}
\label{nabla_bessel_2matrixargument}
\nabla_Z A_{\nu}(T, Z) = \int_{O(m)} \nabla_Z A_{\nu}(H T H' Z) \, \dd H .
\end{equation}
Setting $M = H T H'$ and $Y = M^{1/2} Z M^{1/2}$, we have $Z = M^{-1/2} Y M^{-1/2}$.  By Maass \cite[p.~64]{maass}, $\nabla_Z = M^{1/2} \nabla_{Y} M^{1/2}$; therefore,
\begin{align}
\label{operator_xy}
\nabla_{Z} A_{\nu}(M Z)&=\nabla_{Z} A_{\nu}(M^{1/2} Z M^{1/2})\nonumber\\
&= M^{1/2} \nabla_{Y} M^{1/2} A_{\nu}(Y)
= M^{1/2} \, \nabla_{Y} A_{\nu}(Y) \, M^{1/2},
\end{align}
since $A_{\nu}(Y)$ is scalar-valued. Combining (\ref{nabla_bessel_2matrixargument}) and (\ref{operator_xy}), we obtain \eqref{gradient_bessel}.
\end{proof} 

We note that all further interchanges of derivatives and integrals are justifiable by appeal to \cite[loc. cit.]{burkill}, so we will perform such interchanges without further citation.  Also, various positive constants arise in the following calculations, and we will denote them generically by $c, c_j, C_j$, $j \ge 1$.

\begin{lemma} 
\label{lemma_bound__nabla_trace_squareroot1}
Let $Q$ be an $m \times m$ matrix such that $0 < QQ' < I_m$. Also, let $Y$ be an $m \times m$ positive-definite matrix. Then, there exists a constant $c > 0$ such that
\begin{equation}
\label{bound__nabla_trace_squareroot1}
\lVert \nabla_Y (\tr Q Y^{1/2}) \rVert _F \le 
c \, (\lambda_\min (Y))^{-1/2}.
\end{equation}
\end{lemma}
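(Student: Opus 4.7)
The plan is to obtain an explicit integral representation for $\nabla_Y(\tr Q Y^{1/2})$ and then bound its Frobenius norm by operator-norm estimates on a matrix exponential. Set $G := Y^{1/2}$, so $G^2 = Y$. Differentiating $G^2 = Y$ in a symmetric direction $H$ shows that the Fr\'echet derivative $dG[H]$ solves the Lyapunov equation $G\, dG[H] + dG[H]\, G = H$, whose unique symmetric solution, since $G > 0$, admits the integral representation
$$
dG[H] = \int_0^\infty e^{-Gt} H e^{-Gt}\, \dd t,
$$
as one checks by differentiating $e^{-Gt} H e^{-Gt}$ in $t$ and using $e^{-Gt} \to 0$ as $t \to \infty$. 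Setting $Q_s := \tfrac12(Q + Q')$ and using the symmetry of $dG[H]$ to replace $Q$ by $Q_s$ under the trace, the chain rule yields
$$
d\bigl(\tr Q Y^{1/2}\bigr)[H] = \tr\bigl(Q_s\, dG[H]\bigr) = \left\langle \int_0^\infty e^{-Gt} Q_s e^{-Gt}\, \dd t,\ H \right\rangle,
$$
whence
$$
\nabla_Y\bigl(\tr Q Y^{1/2}\bigr) = \int_0^\infty e^{-Gt} Q_s e^{-Gt}\, \dd t.
$$

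Next, I would bound the Frobenius norm by pulling it inside the integral and applying the standard inequality $\lVert ABC \rVert_F \le \lVert A \rVert_{\mathrm{op}}\, \lVert B \rVert_F\, \lVert C \rVert_{\mathrm{op}}$. Since $G$ is symmetric positive definite with $\lambda_\min(G) = \sqrt{\lambda_\min(Y)}$, the spectral calculus gives $\lVert e^{-Gt} \rVert_{\mathrm{op}} = e^{-\sqrt{\lambda_\min(Y)}\, t}$, so
$$
\bigl\lVert \nabla_Y(\tr Q Y^{1/2}) \bigr\rVert_F \le \lVert Q_s \rVert_F \int_0^\infty e^{-2\sqrt{\lambda_\min(Y)}\, t}\, \dd t = \frac{\lVert Q_s \rVert_F}{2 \sqrt{\lambda_\min(Y)}}.
$$
The hypothesis $0 < QQ' < I_m$ forces $\tr(QQ') < m$, hence $\lVert Q \rVert_F < \sqrt{m}$, and by the triangle inequality $\lVert Q_s \rVert_F \le \lVert Q \rVert_F < \sqrt{m}$; taking $c := \sqrt{m}/2$ yields \eqref{bound__nabla_trace_squareroot1}.

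The one delicate point is the Lyapunov-equation derivation of the derivative of the matrix square-root map together with the justification for interchanging $\nabla_Y$ with the improper integral. The former follows from uniqueness of the solution of $GX + XG = H$ when $G > 0$, and the latter is a routine dominated-convergence argument of the same type already invoked earlier in the paper via the Burkill--Burkill criterion. Everything else is a direct chain of sub-multiplicativity estimates and spectral calculus.
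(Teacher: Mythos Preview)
Your proof is correct and takes a cleaner, more direct route than the paper's. The paper writes $\nabla_Y(\tr Q Y^{1/2})$ as the matrix with entries $\tr\big[Q\,(\nabla_Y \otimes Y^{1/2})_{ij}\big]$, applies Cauchy--Schwarz to pass to $\lVert \nabla_Y \otimes Y^{1/2}\rVert_F$, invokes norm equivalence to replace the Frobenius norm by a multi-linear operator norm $\vertiii{\cdot}$, and then cites the Del Moral--Niclas bound $\vertiii{\nabla_Y \otimes Y^{1/2}} \le \tfrac12 (\lambda_{\min}(Y))^{-1/2}$. The norm-equivalence step leaves the constant $c$ unspecified.

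Your approach instead derives the Lyapunov integral representation $dG[H] = \int_0^\infty e^{-Gt} H e^{-Gt}\,\dd t$ for the Fr\'echet derivative of the square-root map and bounds the resulting integral directly via $\lVert e^{-Gt}\rVert_{\mathrm{op}} = e^{-\sqrt{\lambda_{\min}(Y)}\,t}$. This is essentially the same integral formula the paper appeals to (via Del Moral--Niclas) in the \emph{next} lemma, so you are simply using that machinery one step earlier and more transparently. The payoff is an explicit constant $c = \sqrt{m}/2$ and no need for the norm-equivalence detour. The symmetrization step replacing $Q$ by $Q_s$ is a nice touch that makes the gradient manifestly symmetric, matching the paper's convention for $\nabla_Y$ on $\mathcal{S}^{m\times m}$.
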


\Pro
Since the trace is a linear operator, we have 
\begin{align*}
\nabla_Y (\tr Q Y^{1/2})&=\left(\tfrac12 (1+\delta_{i j}) \frac{\partial}{\partial y_{i j}} \tr Q Y^{1/2}\right)\\
&= \left(\tr Q \bigg(\tfrac12 (1+\delta_{i j}) \frac{\partial}{\partial y_{i j}} Y^{1/2} \bigg) \right) 
= \left(\tr \big[ Q \big(\nabla_Y \otimes Y^{1/2} \big)_{i j} \big] \right),
\end{align*}
where $\nabla_Y \otimes Y^{1/2}$ is the Kronecker product of the gradient $\nabla_Y$ acting on the matrix $Y^{1/2}$, and $V_{ij}:= \big(\nabla_Y \otimes Y^{1/2} \big)_{ij}$ is the $(i,j)$th block matrix in that Kronecker product.  

By the Cauchy-Schwarz inequality, and the fact that $QQ' < I_m$ implies $\tr(Q Q') \le m$, we obtain
\begin{align}
\label{bound_nabla_trace_squareroot2}
\lVert \nabla_Y (\tr Q Y^{1/2}) \rVert ^2_F &= \sum_{i}\sum_{j} \bigg[ \tr(Q V_{ij} ) \bigg]^2\nonumber\\
& \le  \sum_{i}\sum_{j} \tr(Q Q') \tr (V^2_{ij})\nonumber\\
& \le  m \sum_{i}\sum_{j}  \tr (V^2_{ij}) 
=m \lVert (\nabla_Y \otimes Y^{1/2}) \rVert^2_F.
\end{align}
Recall from \cite[p.~13]{bishopmoralniclas} the \textit{multi-linear operator norm}, $\vertiii{\cdot}$, which we define here in the following context: If $K_{ij}$ denotes the $(i,j)$th element of a $m \times m$ matrix $K$ and $(V_{ij})_{kl}$ denotes the $(k,l)$th element of $V_{ij}:= \big(\nabla_Y \otimes Y^{1/2} \big)_{ij}$, the $(i,j)$th block in the tensor product $\nabla_Y \otimes Y^{1/2}$, then 
$$
((\nabla_Y \otimes Y^{1/2}) \cdot K)_{kl} = \sum_{i}\sum_{j} K_{ij} \, (V_{ij})_{kl},
$$
and we define 
$$
\vertiii{\nabla_Y \otimes Y^{1/2}} := \sup_{\lVert K \rVert_F=1} \lVert (\nabla_Y \otimes Y^{1/2}) \cdot K \rVert_F .
$$

Since all norms on a finite-dimensional space are equivalent, there exists a constant $c > 0$ such that
$
\lVert \nabla_Y \otimes Y^{1/2} \rVert_F \le  2c \, \vertiii{\nabla_Y \otimes Y^{1/2}} 
$.
By \cite[p.~262, Eq. (6)]{moralniclas}, there holds the crucial inequality, 
$$\vertiii{ \nabla_Y \otimes Y^{1/2} } \le 2^{-1} (\lambda_\min (Y))^{-1/2}.$$  Hence, 
$$
\lVert (\nabla_Y \otimes Y^{1/2}) \rVert_F \le c \, (\lambda_\min (Y))^{-1/2},
$$ 
so we obtain  
\begin{equation}
\label{bound_norm_squareroot}
\lVert (\nabla_Y \otimes Y^{1/2}) \rVert^2_F =\sum_{i} \sum_{j} \tr (V^2_{ij}) \le c^2 \, (\lambda_\min (Y))^{-1}.
\end{equation}
Combining (\ref{bound_nabla_trace_squareroot2}) and (\ref{bound_norm_squareroot}), we obtain \eqref{bound__nabla_trace_squareroot1}.
$\qed$

\smallskip

\begin{lemma}
\label{lemma_norm_bound_nabla_bessel}
For $T, Z > 0$, there exists a constant $C > 0$ such that 
\begin{equation}
\label{norm_bound_nabla_bessel_2matrixargument}
\lVert \nabla_Z A_{\nu}(T, Z) \rVert_F \le C \, \lVert T \rVert_F \ (\lambda_\min (T))^{-1/2} (\lambda_\min (Z))^{-1/2}.
\end{equation}
\end{lemma}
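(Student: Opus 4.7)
The plan is to combine three ingredients: the orthogonal-average formula (\ref{gradient_bessel}), Herz's Poisson integral (\ref{besselintegraldef_matrixargument}) for $A_\nu$, and the pointwise bound on $\|\nabla_Y \tr(QY^{1/2})\|_F$ supplied by Lemma \ref{lemma_bound__nabla_trace_squareroot1}. First, starting from (\ref{gradient_bessel}) and using the submultiplicativity of the Frobenius norm, I would reduce the problem to a uniform bound on $\|\nabla_Y A_\nu(Y)\|_F$, writing
$$
\|\nabla_Z A_\nu(T,Z)\|_F \le \int_{O(m)} \|M^{1/2}\|_F^2 \, \|\nabla_Y A_\nu(Y)\|_F \, \dd H,
$$
with $M = HTH'$ and $Y = M^{1/2} Z M^{1/2}$. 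Since $\|M^{1/2}\|_F^2 = \tr(HTH') = \tr T \le \sqrt{m}\,\|T\|_F$ for $T > 0$, this prefactor is independent of $H$ and can be pulled outside the integral.

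For the pointwise gradient bound, I would use the Poisson representation (\ref{besselintegraldef_matrixargument}) with $V = Y^{1/2}$, differentiate under the integral sign (the hypotheses for interchange being verified exactly as for (\ref{nabla_bessel_2matrixargument})), and apply the chain rule to the scalar $\tr(QY^{1/2})$, obtaining
$$
\nabla_Y A_\nu(Y) = \frac{2\mi}{\pi^{m^2/2}\Gamma_m(\nu+\tfrac12)} \int_{Q'Q < I_m} \etr(2\mi Y^{1/2}Q) \, \nabla_Y \tr(QY^{1/2}) \, (\det(I_m - Q'Q))^{\nu - m/2} \dd Q.
$$
Because $Q$ is square, $Q'Q < I_m$ is equivalent to $QQ' < I_m$, so Lemma \ref{lemma_bound__nabla_trace_squareroot1} applies almost everywhere in the Poisson domain and delivers the uniform estimate $\|\nabla_Y \tr(QY^{1/2})\|_F \le c\,(\lambda_\min Y)^{-1/2}$. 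Combined with $|\etr(2\mi Y^{1/2}Q)| \le 1$ and the fact that the remaining $Q$-integral equals the finite constant $\pi^{m^2/2}\Gamma_m(\nu+\tfrac12)\,A_\nu(0)$ (finite by (\ref{besselineq1_matrixargument})), this yields $\|\nabla_Y A_\nu(Y)\|_F \le c_1\,(\lambda_\min Y)^{-1/2}$.

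Finally, I would invoke the spectral inequality $\lambda_\min(M^{1/2}ZM^{1/2}) \ge \lambda_\min(M)\,\lambda_\min(Z)$ for positive-definite matrices, together with the orthogonal invariance $\lambda_\min(M) = \lambda_\min(HTH') = \lambda_\min(T)$, to upgrade the pointwise bound to
$$
\|\nabla_Y A_\nu(Y)\|_F \le c_1\,(\lambda_\min T)^{-1/2}(\lambda_\min Z)^{-1/2}
$$
uniformly in $H \in O(m)$. Substituting into the first display and using $\int_{O(m)} \dd H = 1$ produces (\ref{norm_bound_nabla_bessel_2matrixargument}). The main technical subtlety is verifying the interchange of differentiation and integration in the Poisson integral, which is handled by the same Burkill–Burkill dominated-convergence criterion already invoked by the authors just before (\ref{nabla_bessel_2matrixargument}); beyond that, the argument is a routine bookkeeping exercise in norm inequalities.
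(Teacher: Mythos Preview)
Your proposal is correct and follows essentially the same route as the paper: start from (\ref{gradient_bessel}), bound $\|\nabla_Y A_\nu(Y)\|_F$ via the Poisson integral and Lemma \ref{lemma_bound__nabla_trace_squareroot1}, then use $\lambda_{\min}(M^{1/2}ZM^{1/2}) \ge \lambda_{\min}(T)\lambda_{\min}(Z)$ together with the orthogonal invariance of eigenvalues. The only cosmetic difference is in handling the prefactor: you bound $\|M^{1/2}\,\cdot\,M^{1/2}\|_F$ by $\|M^{1/2}\|_F^2\|\cdot\|_F = (\tr T)\|\cdot\|_F \le \sqrt{m}\,\|T\|_F\|\cdot\|_F$, whereas the paper passes through $\|M\,\cdot\|_F \le \|M\|_F\|\cdot\|_F = \|T\|_F\|\cdot\|_F$; the extra $\sqrt{m}$ in your version is harmless and absorbed into $C$.
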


\begin{proof}
By Eq. (\ref{gradient_bessel}),
$$
\nabla_Z A_{\nu}(T, Z) = \int_{O(m)} M^{1/2} \nabla_Z A_{\nu}(Y) M^{1/2} \, \dd H ,
$$
where $M:= H T H'$ and $Y:=M^{1/2} Z M^{1/2}$.  By Minkowski's inequality for integrals, 
\begin{align}
\label{nabla_bessel_bound_2matrix}
\lVert \nabla_Z A_{\nu}(T, Z) \rVert_F & \le \int_{O(m)}  { \rVert M^{1/2} [\nabla_Y A_{\nu}(Y) ] M^{1/2} \lVert_F}\, \, \dd H  \nonumber \\
&= \int_{O(m)}  { \rVert M \ \nabla_Y A_{\nu}(Y) \lVert_F}\, \, \dd H \nonumber \\
&\le \int_{O(m)}  { \lVert M\rVert_F \cdot \lVert \nabla_Y A_{\nu}(Y) \rVert_F}\, \, \dd H,
\end{align}
since the Frobenius norm is sub-multiplicative.

By Herz's generalization, (\ref{besselintegraldef_matrixargument}), of the Poisson integral, 
$$
A_{\nu}(Y) = 
c_1 \int_{Q'Q < I_m} { \etr(2 \, \mi Y^{1/2} Q) (\det(I_m - Q'Q))^{\alpha-\tfrac12(2m+1)}}\, \dd Q,
$$
where $c_1 > 0$.  Therefore, 
$$
\nabla_Y A_{\nu}(Y) = 2 \, \mi c_1 
\int_{Q'Q < I_m} { \etr(2\, \mi Y^{1/2} Q) (\det(I_m - Q'Q))^{\alpha-\tfrac12(2m+1)} \ \nabla_Y (\tr Q Y^{1/2}) }\, \dd Q.
$$
Applying Minkowski's inequality and then using (\ref{bound__nabla_trace_squareroot1}) to bound the integrand, we obtain  
\begin{align}
\lVert \nabla_Y A_{\nu}(Y) \rVert_F &\le 
2 c_1 \int_{Q'Q < I_m} (\det(I_m - Q'Q))^{\alpha-\tfrac12(2m+1)} \ \lVert \nabla_Y (\tr Q Y^{1/2}) \rVert_F \, \dd Q \nonumber \\
\label{bound_nabla_1matrixargument}
&\le 2 c_1 c (\lambda_\min (Y))^{-1/2}
\int_{Q'Q < I_m} (\det(I_m - Q'Q))^{\alpha-\tfrac12(2m+1)} \, \dd Q \nonumber \\
&= 
C (\lambda_\min (Y))^{-1/2}.
\end{align}
Combining (\ref{nabla_bessel_bound_2matrix}) and (\ref{bound_nabla_1matrixargument}), we obtain
$$
\lVert \nabla_Z A_{\nu}(T, Z) \rVert_F \le 
C \int_{O(m)} \lVert M\rVert_F \cdot (\lambda_\min (Y))^{-1/2} \, \dd H .
$$
For $H \in O(m)$, 
$
\lVert M \rVert_F = \lVert HTH' \rVert_F = \lVert T \rVert_F
$ 
and 
\begin{align*}
\lambda_\min(Y) &= \lambda_\min(M^{1/2} Z M^{1/2}) = \lambda_\min(MZ) \\
&= \lambda_\min(HTH'Z) \ge \lambda_\min (H T H') \lambda_\min (Z) = \lambda_\min (T) \lambda_\min (Z).
\end{align*}
Hence, 
\begin{align*}
\lVert \nabla_Z A_{\nu}(T,Z) \rVert_F & \le 
C \, \lVert T \rVert_F (\lambda_\min(T))^{-1/2} \, (\lambda_\min(Z))^{-1/2} \, \int_{O(m)} \dd H  \nonumber\\
&= C \, \lVert T \rVert_F (\lambda_\min(T))^{-1/2} \, (\lambda_\min(Z))^{-1/2},
\end{align*}
which completes the proof.
\end{proof}

\smallskip

\begin{lemma}
\label{lemma_norm_bound_nabladiff_bessel_2matrixarg}
For $T, Z_1, Z_2 > 0$, there exist constants $C_1, C_2 >0$ such that
\begin{multline}
\label{norm_bound_nabladiff_bessel_2matrixargument}
\bigg\lVert \nabla_{Z_1} A_{\nu}(T, Z_1) - \nabla_{Z_2} A_{\nu}(T, Z_2) \bigg\rVert_F \\
\le \frac{\rVert Z_1-Z_2 \lVert^{1/2}_F \ \lVert T \rVert^{3/2}_F}{\lambda_{\min}(Z_2^{1/2})}
\left[\frac{C_1}{\lambda_{\min}(T) \lambda_{\min}(Z_1^{1/2})} + \frac{C_2}{\lambda_{\min}(T^{1/2})} \right].
\end{multline}
\end{lemma}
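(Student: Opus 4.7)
The plan is to extend the proof of Lemma \ref{lemma_norm_bound_nabla_bessel} by telescoping a bilinear difference inside Herz's Poisson integral and using two standard interpolations to produce the H\"older-$1/2$ exponent in $\lVert Z_1 - Z_2 \rVert_F$.

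Setting $M = HTH'$ and $Y_i = M^{1/2} Z_i M^{1/2}$ for $i=1,2$, I would apply \eqref{gradient_bessel} to each $\nabla_{Z_i} A_\nu(T, Z_i)$, subtract, and invoke Minkowski's integral inequality together with the submultiplicative bound
\[
\lVert M^{1/2} X M^{1/2} \rVert_F \le \lVert M \rVert_{\mathrm{op}} \lVert X \rVert_F \le \lVert T \rVert_F \lVert X \rVert_F .
\]
This extracts one factor of $\lVert T \rVert_F$ outside the $O(m)$-integration and reduces the problem to a uniform (in $H$) bound on $\lVert \nabla_{Y_1} A_\nu(Y_1) - \nabla_{Y_2} A_\nu(Y_2) \rVert_F$. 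Following the derivation in Lemma \ref{lemma_norm_bound_nabla_bessel}, I would represent each $\nabla_{Y_i} A_\nu(Y_i)$ via Herz's Poisson integral \eqref{besselintegraldef_matrixargument} as a constant multiple of
\[
\int_{Q'Q < I_m} \etr(2\mi Y_i^{1/2} Q) \, G_Q(Y_i) \, (\det(I_m - Q'Q))^{\alpha - (2m+1)/2} \, \dd Q,
\]
where $G_Q(Y) := \nabla_Y(\tr Q Y^{1/2})$ satisfies $\lVert G_Q(Y) \rVert_F \le c \, (\lambda_{\min}(Y))^{-1/2}$ by Lemma \ref{lemma_bound__nabla_trace_squareroot1}.

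Next I would telescope the integrand difference as
\[
\bigl[\etr(2\mi Y_1^{1/2} Q) - \etr(2\mi Y_2^{1/2} Q)\bigr] G_Q(Y_2) + \etr(2\mi Y_1^{1/2} Q) \bigl[G_Q(Y_1) - G_Q(Y_2)\bigr]
\]
and handle the two pieces separately. For the first piece I combine the Lipschitz bound $|e^{\mi a} - e^{\mi b}| \le |a-b|$, the matrix H\"older-$1/2$ inequality $\lVert Y_1^{1/2} - Y_2^{1/2} \rVert_F \le c \, \lVert Y_1 - Y_2 \rVert_F^{1/2}$ (a consequence, via Powers-St{\o}rmer, of the scalar inequality $(\sqrt{a} - \sqrt{b})^2 \le |a - b|$ on eigenvalues), Cauchy-Schwarz with $\lVert Q \rVert_F \le \sqrt{m}$, the identity $Y_1 - Y_2 = M^{1/2}(Z_1 - Z_2) M^{1/2}$, and the eigenvalue estimate $\lambda_{\min}(Y_2^{1/2}) \ge \lambda_{\min}(T^{1/2}) \lambda_{\min}(Z_2^{1/2})$. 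Multiplying by $\lVert G_Q(Y_2) \rVert_F$ and integrating over $Q$ and $O(m)$ then delivers exactly the $C_2$-bracketed term in \eqref{norm_bound_nabladiff_bessel_2matrixargument}.

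The main obstacle is the second piece, which requires a quantitative Lipschitz-type estimate on $Y \mapsto G_Q(Y)$. I would derive this from the Daleckii-Krein integral representation
\[
G_Q(Y) = \int_0^\infty e^{-t Y^{1/2}} \, Q \, e^{-t Y^{1/2}} \, \dd t,
\]
(obtained by solving the Sylvester equation $(\dd Y^{1/2}) Y^{1/2} + Y^{1/2} (\dd Y^{1/2}) = \dd Y$), then split $G_Q(Y_1) - G_Q(Y_2)$ by Duhamel's formula applied to $e^{-t Y_1^{1/2}} - e^{-t Y_2^{1/2}}$, and finally bound the resulting double integral in terms of $\lVert Y_1 - Y_2 \rVert_F$ and negative powers of $\lambda_{\min}(Y_1^{1/2})$ and $\lambda_{\min}(Y_2^{1/2})$. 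Interpolating this Lipschitz-in-$\lVert Y_1 - Y_2 \rVert_F$ estimate against the trivial bound $\lVert G_Q(Y_1) - G_Q(Y_2) \rVert_F \le \lVert G_Q(Y_1) \rVert_F + \lVert G_Q(Y_2) \rVert_F$ via $\min(x, M) \le \sqrt{xM}$ yields the required $\lVert Y_1 - Y_2 \rVert_F^{1/2}$ factor. Substituting $\lambda_{\min}(Y_i^{1/2}) \ge \lambda_{\min}(T^{1/2}) \lambda_{\min}(Z_i^{1/2})$, collecting the outer $\lVert T \rVert_F$ from Minkowski, and using $\lambda_{\min}(T^{1/2})^2 = \lambda_{\min}(T)$ produces the $C_1$-bracketed term and completes the bound \eqref{norm_bound_nabladiff_bessel_2matrixargument}.
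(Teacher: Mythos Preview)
Your proposal is correct and follows essentially the same architecture as the paper: both reduce via \eqref{gradient_bessel} and Minkowski to bounding $\lVert \nabla_{Y_1} A_\nu(Y_1) - \nabla_{Y_2} A_\nu(Y_2)\rVert_F$, invoke the Poisson integral, use the same telescoping into a phase-difference term and a $G_Q$-difference term, and handle $G_Q(Y_1)-G_Q(Y_2)$ via the Sylvester integral representation (the paper cites it as a result of Del Moral--Niclas) combined with Duhamel's formula for $e^{-tY_1^{1/2}}-e^{-tY_2^{1/2}}$ (the paper phrases this as solving an inhomogeneous ODE following K{\aa}gstr{\"o}m). The one substantive difference is in how the H\"older-$1/2$ exponent emerges for the $G_Q$-difference term: Duhamel already delivers a bound proportional to $\lVert Y_1^{1/2}-Y_2^{1/2}\rVert_F$, and the paper simply applies Wihler's inequality $\lVert Y_1^{1/2}-Y_2^{1/2}\rVert_F \le m^{1/4}\lVert Y_1-Y_2\rVert_F^{1/2}$ --- the very H\"older-$1/2$ bound you already invoke for the phase-difference piece --- so your Lipschitz-then-interpolate step, while it works, is an avoidable detour.
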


\begin{proof}
By (\ref{gradient_bessel}),
\begin{equation}
\label{nabla_diffe_twoargument}
\nabla_{Z_1} A_{\nu}(T, Z_1) - \nabla_{Z_2} A_{\nu}(T, Z_2) 
= \int_{O(m)} H T^{1/2} H' \bigg[ \nabla_{Y_1} A_{\nu}(Y_1) -  \nabla_{Y_2} A_{\nu}(Y_2)\bigg]  H T^{1/2} H'  \dd H ,
\end{equation}
where $Y_j:= M^{1/2} Z_j M^{1/2}$, $j=1,2$, and $M:=HT H'$. 
Applying \eqref{besselintegraldef_matrixargument} and interchanging derivatives and integrals, 
we obtain 
\begin{align*}
&\nabla_{Y_1} A_{\nu}(Y_1) - \nabla_{Y_2} A_{\nu}(Y_2) \\
&\ \ = 2 \, \mi c_1 \int_{Q'Q < I_m} \bigg[\etr(2 \, \mi Y_1^{1/2} Q) \nabla_{Y_1} (\tr Q Y_1^{1/2})- \etr(2 \, \mi Y_2^{1/2} Q) \nabla_{Y_2} (\tr Q Y_2^{1/2}) \bigg] \, \dd\mu(Q),
\end{align*}
where $\dd\mu(Q):= (\det(I_m - Q'Q))^{\alpha-\tfrac12(2m+1)} \dd Q$.  Therefore, 
\begin{align*}
\bigg\lVert & \nabla_{Y_1} A_{\nu}(Y_1) - \nabla_{Y_2} A_{\nu}(Y_2) \bigg\rVert_F \\
&\le 2 c_1 \int_{Q'Q < I_m} { \bigg\lVert \etr(2 \, \mi Y_1^{1/2} Q) \nabla_{Y_1} (\tr Q Y_1^{1/2})- \etr(2 \, \mi Y_2^{1/2} Q) \nabla_{Y_2} (\tr Q Y_2^{1/2}) \bigg\rVert_F}\, \dd \mu(Q).
\end{align*}

Let $\theta_j := 2\tr(Y_j^{1/2}Q)$ 
and $N_j:= \nabla_{Y_j} (\tr Q Y_j^{1/2})$, $j=1,2$; then we observe that
\begin{align*}
\lVert e^{\mi \theta_1} N_1 - e^{\mi \theta_2} N_2 \rVert_F 
&= \lVert e^{\mi \theta_1} (N_1 - N_2) + (e^{\mi \theta_1} - e^{\mi \theta_2}) N_2 \rVert_F\\
&\le \lVert N_1 - N_2 \rVert_F + |e^{\mi \theta_1} - e^{\mi \theta_2}| \cdot \lVert N_2 \rVert_F,
\end{align*}
since $|e^{\mi \theta_1}| = 1$. 
Also, 
using the identity 
\begin{align*}
|e^{\mi \theta_1}-e^{\mi \theta_2}|^2 
= 4 \sin^2 \left(\tfrac12(\theta_1-\theta_2)\right),
\end{align*}
we find that  
\begin{align}
\label{bounddiff}
\bigg\lVert \nabla_{Y_1} & A_{\nu}(Y_1) - \nabla_{Y_2} A_{\nu}(Y_2) \bigg\rVert_F \nonumber\\
&\le 2 c_1 \int_{Q'Q < I_m} { \bigg[ \ \bigg\lVert \nabla_{Y_1} (\tr Q Y_1^{1/2}) - \nabla_{Y_2} (\tr Q Y_2^{1/2}) \bigg\rVert_F} \nonumber\\
& \qquad\qquad\qquad\quad + {2 \, |\sin\big(\tr(Y_1^{1/2}-Y_2^{1/2})Q\big)| \, \cdot \, \lVert \nabla_{Y_2} (\tr Q Y_2^{1/2}) \rVert_F \bigg] }\, \dd \mu(Q).
\end{align}
By applying the same argument as in Lemma \ref{lemma_bound__nabla_trace_squareroot1}, we obtain 
$$
\nabla_{Y_1} (\tr Q Y_1^{1/2}) - \nabla_{Y_2} (\tr Q Y_2^{1/2}) =
\left(\tr \big[ Q \big(\nabla_{Y_1} \otimes Y_1^{1/2} \big)_{ij} \big] \right)- \left(\tr \big[ Q \big(\nabla_{Y_2} \otimes Y_2^{1/2} \big)_{ij} \big] \right);
$$
so, by the Cauchy-Schwarz inequality and the fact that $Q'Q < I_m$ implies $\tr(Q Q') \le m$, we obtain
\begin{align}
\bigg \lVert \nabla_{Y_1} (\tr Q & Y_1^{1/2}) - \nabla_{Y_2} (\tr Q Y_2^{1/2}) \bigg\rVert ^2_F \nonumber \\
&= \sum_{i}\sum_{j} \bigg( \tr(Q [(\nabla_{Y_1} \otimes Y_1^{1/2})_{ij} - (\nabla_{Y_2} \otimes Y_2^{1/2})_{ij} ]) \bigg)^2 \nonumber\\
& \le  \sum_{i}\sum_{j} \tr(Q Q') \tr \bigg(\nabla_{Y_1} \otimes Y_1^{1/2})_{ij}-(\nabla_{Y_2} \otimes Y_2^{1/2})_{ij} \bigg)^2 \nonumber\\
& \le  m \sum_{i}\sum_{j}  \tr \bigg((\nabla_{Y_1} \otimes Y_1^{1/2})_{ij}-(\nabla_{Y_2} \otimes Y_2^{1/2})_{ij} \bigg)^2 \nonumber\\
\label{boundtrace_diff}
&= m \bigg\lVert (\nabla_{Y_1} \otimes Y_1^{1/2})-(\nabla_{Y_2} \otimes Y_2^{1/2}) \bigg\rVert_F^2.
\end{align}
Since the norms $\|\cdot\|_F$ and $\vertiii{\cdot}$ are equivalent, there exists $c > 0$ such that
\begin{align}
\label{frobenius_operator_norm}
\bigg\lVert (\nabla_{Y_1} \otimes Y_1^{1/2}) - (\nabla_{Y_2} \otimes Y_2^{1/2}) \bigg\rVert_F 
& \le c \vertiii{ (\nabla_{Y_1} \otimes Y_1^{1/2})-(\nabla_{Y_2} \otimes Y_2^{1/2})} \nonumber \\
& \equiv c \sup_{\| K \|_F=1} \left\| \bigg( (\nabla_{Y_1} \otimes Y_1^{1/2})-(\nabla_{Y_2} \otimes Y_2^{1/2}) \bigg) \cdot K \right\|_F .
\end{align}
By a result of Del Moral and Niclas \cite[Theorem 1.1, Eq. (4)]{moralniclas}, 
\begin{multline*}
\left((\nabla_{Y_1} \otimes Y_1^{1/2})-(\nabla_{Y_2} \otimes Y_2^{1/2})\right) \cdot K \\
= \int^{\infty}_0 \bigg[\exp(-tY_1^{1/2}) K \exp(-tY_1^{1/2})-\exp(-t Y_2^{1/2}) K \exp(-t Y_2^{1/2}) \bigg]\, \dd t,
\end{multline*}
where 
$
\exp(M) = \sum_{j=0}^\infty M^j/j!
$ 
is the matrix exponential function.  Therefore, 
\begin{multline*}
\bigg\lVert \left( (\nabla_{Y_1} \otimes Y_1^{1/2})-(\nabla_{Y_2} \otimes Y_2^{1/2})\right) \cdot K \bigg\rVert_F \\
\le \int^{\infty}_0 { \bigg\lVert \exp(-tY_1^{1/2}) K \exp(-tY_1^{1/2}) - \exp(-t Y_2^{1/2}) K \exp(-t Y_2^{1/2}) \bigg\rVert_F}\, \dd t.
\end{multline*}
For any $m \times m$ matrices $M_1$ and $M_2$, and for any $K$ such that $\lVert K \rVert_F=1$, 
\begin{align*}
\big\lVert M_1KM_1 - M_2KM_2 \big\lVert_F 
&= \big\lVert M_1K(M_1-M_2) + (M_1-M_2)KM_2 \big\lVert_F \\
&\le \big\lVert M_1\big\lVert_F  \big\lVert M_1-M_2\big\lVert_F  + \big\lVert M_1-M_2\big\lVert_F \big\lVert M_2 \big\lVert_F \\
&= \big(\big\lVert M_1\big\lVert_F + \big\lVert M_2 \big\lVert_F\big) \ \big\lVert M_1-M_2\big\lVert_F.
\end{align*}
Now setting $M_j = \exp(-tY_j^{1/2})$, $j=1,2$, we obtain 
\begin{multline*}
\bigg\lVert \exp(-tY_1^{1/2})  K \exp(-tY_1^{1/2})- \exp(-t Y_2^{1/2}) K \exp(-t  Y_2^{1/2}) \bigg\rVert_F \\
\le \bigg(\bigg\lVert \exp(-tY_1^{1/2}) \bigg\rVert_F+ \bigg\lVert \exp(-tY_2^{1/2}) \bigg\rVert_F \bigg) \ \bigg\lVert \exp(-tY_1^{1/2}) -\exp(-tY_2^{1/2}) \bigg\rVert_F.
\end{multline*}
Therefore, 
\begin{multline}
\label{grady1y2}
\bigg\lVert \bigg( (\nabla_{Y_1} \otimes Y_1^{1/2}) - (\nabla_{Y_2} \otimes Y_2^{1/2})\bigg) \cdot K \bigg\rVert_F \\
\le \int^{\infty}_0 \bigg(\bigg\lVert \exp(-tY_1^{1/2}) \bigg\rVert_F+ \bigg\lVert \exp(-tY_2^{1/2}) \bigg\rVert_F \bigg) 
\bigg\lVert \exp(-tY_1^{1/2}) -\exp(-tY_2^{1/2}) \bigg\rVert_F \, \dd t.
\end{multline}

For any $m \times m$ positive-definite matrix $Y$ and for $t \ge 0$,
$$
\tr(\exp(-2tY)) = \sum_{i=1}^m \exp(-2t\lambda_i(Y)) \le m \exp(-2t\lambda_{\min}(Y));
$$
hence, for $t \ge 0$, and $j=1,2$, 
\begin{equation}
\label{norm_exp_bound}
\bigg\lVert \exp(-tY_j^{1/2})\bigg\rVert_F =\bigg[ \tr(\exp(-2tY_j^{1/2}))\bigg] ^{1/2} \le m^{1/2}\exp(-t\lambda_{\min}(Y_j^{1/2})).
\end{equation}
Therefore, for $\lVert K \rVert_F=1$, the right-hand side of \eqref{grady1y2} is bounded above by 
\begin{align*}
m^{1/2} \int^{\infty}_0 {\hskip -1pt} \bigg[\exp(-t\lambda_{\min}(Y_1^{1/2}))+ \exp(-t \lambda_{\min}(Y_2^{1/2})) \bigg] 
\bigg\lVert \exp(-tY_1^{1/2}) - \exp(-tY_2^{1/2}) \bigg\rVert_F \dd t.
\end{align*}

Define $X(t):=\exp(-tY_1^{1/2})$, $Y(t):= \exp(-t Y_2^{1/2})$, and $\psi(t):= X(t)-Y(t)$, $t \ge 0$.  Notice that
$$
X'(t) = -Y_1^{1/2} \exp(-tY_1^{1/2}) = -Y_1^{1/2} X(t)
$$
and
$$
Y'(t) = -Y_2^{1/2} \exp(-t Y_2^{1/2}) = -Y_2^{1/2} Y(t),
$$
with $X(0) = Y(0) = I_m$.  Then $\psi(t)$ satisfies the inhomogeneous differential equation 
\begin{align*}
\psi'(t) &= -Y_1^{1/2} X(t) + Y_2^{1/2} Y(t) \\
&= -Y_2^{1/2} \psi(t) - (Y_1^{1/2}-Y_2^{1/2}) X(t),
\end{align*}
with boundary condition $\psi(0)=0$.  
By following the approach of K{\aa}gstr{\"o}m \cite[Section 4]{kagstrom}, we find that the solution of this differential equation is
\begin{align*}
\psi(t) 
&= -\int^{t}_0 \exp(-(t-s) Y_2^{1/2}) (Y_1^{1/2}-Y_2^{1/2})\exp(-sY_1^{1/2}) \, \dd s.
\end{align*}
By Minkowski's inequality and the sub-multiplicative property of the Frobenius norm, 
\begin{align*}
\lVert \psi(t) \rVert_F 
& \le  \int^{t}_0{ \bigg\rVert \exp(-(t-s) Y_2^{1/2}) \bigg\lVert_F \cdot \bigg\lVert Y_1^{1/2}-Y_2^{1/2} \bigg\rVert_F \cdot \bigg\lVert \exp(-sY_1^{1/2})  \bigg\rVert_F}\, \dd s.
\end{align*}
Using \eqref{norm_exp_bound} to bound both exponential terms in this integrand, we find that 
$$
\lVert \psi(t) \rVert_F \le m \ \bigg\lVert Y_1^{1/2}-Y_2^{1/2} \bigg\rVert_F  \int^{t}_0{ \exp(-(t-s) \lambda_{\min}(Y_2^{1/2})) \exp(-s \lambda_{\min}(Y_1^{1/2}))}\, \dd s.
$$
Assuming that $\lambda_{\min}(Y_1^{1/2}) \neq \lambda_{\min}(Y_2^{1/2})$, we calculate the latter integral, obtaining 
\begin{align}
\label{bound_k}
\psi(t) &= m \ \bigg\lVert Y_1^{1/2}-Y_2^{1/2} \bigg\rVert_F \cdot \frac{ \exp(-t \lambda_{\min}(Y_1^{1/2})) -\exp(-t \lambda_{\min}(Y_2^{1/2}))}{\lambda_{\min}(Y_2^{1/2}) -\lambda_{\min}(Y_1^{1/2})}.
\end{align}
Combining (\ref{boundtrace_diff})-(\ref{bound_k}), we obtain 
\begin{align*}
\bigg\lVert & \nabla_{Y_1} (\tr Q Y_1^{1/2}) - \nabla_{Y_2} (\tr Q Y_2^{1/2}) \bigg\rVert_F \\
& \le c_2 \, \frac{\bigg\lVert Y_1^{1/2}-Y_2^{1/2} \bigg\rVert_F}{\lambda_{\min}(Y_2^{1/2}) -\lambda_{\min}(Y_1^{1/2})} 
\int^{\infty}_0 {\bigg[ \exp(-2t \lambda_{\min}(Y_1^{1/2})) - \exp(-2t \lambda_{\min}(Y_2^{1/2})) \bigg]}\, \dd t \\
&= c_2 \, \frac{\bigg\lVert Y_1^{1/2}-Y_2^{1/2} \bigg\rVert_F}{\lambda_{\min}(Y_1^{1/2}) \lambda_{\min}(Y_2^{1/2})}.
\end{align*}
By continuity, this result remains valid for $\lambda_{\min}(Y_1^{1/2}) = \lambda_{\min}(Y_2^{1/2})$.

Next, it follows from (\ref{bounddiff}) that 
\begin{align*}
\bigg\lVert \nabla_{Y_1} A_{\nu}(Y_1) & - \nabla_{Y_2} A_{\nu}(Y_2) \bigg\rVert_F \\
& \le 
c_3 \frac{\bigg\lVert Y_1^{1/2}-Y_2^{1/2} \bigg\rVert_F}{\lambda_{\min}(Y_1^{1/2}) \lambda_{\min}(Y_2^{1/2})} \\
& \quad + c_4 \int_{Q'Q < I_m} {|\sin\big(\tr(Y_1^{1/2}-Y_2^{1/2})Q\big)| \, \cdot \, \lVert \nabla_{Y_2} (\tr Q Y_2^{1/2}) \rVert_F }\, \dd \mu(Q). 
\end{align*}
By the Cauchy-Schwarz inequality,
\begin{align*}
|\sin\big(\tr(Y_1^{1/2}-Y_2^{1/2})Q\big)| &\le   | \tr(Y_1^{1/2}-Y_2^{1/2})Q| \\
& \le  \bigg\lVert Y_1^{1/2}-Y_2^{1/2} \bigg\rVert_F \cdot ( \tr(Q Q')) ^{1/2} 
\le  m^{1/2} \ \bigg\lVert Y_1^{1/2}-Y_2^{1/2} \bigg\rVert_F,
\end{align*}
and by (\ref{bound__nabla_trace_squareroot1}), 
$$
\lVert \nabla_{Y_2} (\tr Q Y_2^{1/2}) \rVert_F \le c \, (\lambda_\min (Y_2))^{-1/2}.
$$
Therefore, with $c_5 = m^{1/2} c_4 c \, \int_{Q'Q < I_m} {}\, \dd \mu(Q) < \infty$, we have derived 
\begin{align}
\bigg\lVert \nabla_{Y_1} A_{\nu}(Y_1) & - \nabla_{Y_2} A_{\nu}(Y_2) \bigg\rVert_F \nonumber \\
& \le c_3 \, \frac{\bigg\lVert Y_1^{1/2}-Y_2^{1/2} \bigg\rVert_F}{\lambda_{\min}(Y_1^{1/2}) \lambda_{\min}(Y_2^{1/2})}  + c_5 \bigg\lVert Y_1^{1/2}-Y_2^{1/2} \bigg\rVert_F \ (\lambda_\min (Y_2))^{-1/2} \nonumber \\
\label{nabla_diff_bessel_1matrixargument}
&= \bigg\lVert Y_1^{1/2}-Y_2^{1/2} \bigg\rVert_F \bigg[\frac{c_3}{\lambda_{\min}(Y_1^{1/2}) \lambda_{\min}(Y_2^{1/2})} + \frac{c_5}{\lambda_{\min}(Y_2^{1/2})} \bigg]. 
\end{align}
By (\ref{nabla_diffe_twoargument}), Minkowski's inequality, and the sub-multiplicative property of the Frobenius norm, we obtain
\begin{align*}
\bigg\lVert \nabla_{Z_1} A_{\nu} (T, Z_1) & - \nabla{Z_2} A_{\nu}(T, Z_2) \bigg\rVert_F \\
& \le \int_{O(m)} { \bigg\lVert H T^{1/2} H' \bigg[ \nabla_{Y_1} A_{\nu}(Y_1){\hskip-1.22pt} - {\hskip-1.22pt} \nabla_{Y_2} A_{\nu}(Y_2)\bigg]  H T^{1/2} H' \bigg\rVert_F} \dd H \\
& = \int_{O(m)} { \bigg\lVert H T H'  \bigg[ \nabla_{Y_1} A_{\nu}(Y_1) -  \nabla_{Y_2} A_{\nu}(Y_2)\bigg] \bigg\rVert_F }\, \, \dd H \\
& \le \int_{O(m)} { \lVert T \rVert_F \cdot \bigg \lVert \nabla_{Y_1} A_{\nu}(Y_1) -  \nabla_{Y_2} A_{\nu}(Y_2) \bigg\rVert_F}\, \, \dd H .
\end{align*}
Applying the bound (\ref{nabla_diff_bessel_1matrixargument}), we find that 
\begin{multline}
\bigg\lVert \nabla_{Z_1} A_{\nu} (T, Z_1) - \nabla_{Z_2} A_{\nu}(T, Z_2) \bigg\rVert_F \\
\label{nabla_bound_diff_2matrixargument_final}
\le \int_{O(m)} { \lVert T \rVert_F \cdot \bigg\lVert Y_1^{1/2}-Y_2^{1/2} \bigg\rVert_F \bigg[ \frac{c_3}{ \lambda_{\min}(Y_1^{1/2}) \lambda_{\min}(Y_2^{1/2})} + \frac{c_5}{ \lambda_{\min}(Y_2^{1/2})} \Bigg]} \, \dd H .
\end{multline}
By a result of Wihler \cite[Eq.~(3.2)]{wihler},
\begin{equation}
\label{wihler}
\bigg\lVert Y_1^{1/2}-Y_2^{1/2} \bigg\rVert_F \le  m^{1/4}  \ \lVert Y_1 -Y_2 \rVert^{1/2}_F.
\end{equation}
Since $M = H T H'$, \ $Y_1 = M^{1/2} Z_1 M^{1/2}$, and $Y_2 = M^{1/2} Z_2 M^{1/2}$, then we have
\begin{align}
\lVert Y_1 -Y_2 \rVert_F^{1/2} &=\bigg\lVert H T^{1/2} H' (Z_1-Z_2) H T^{1/2} H' \bigg\rVert^{1/2}_F \nonumber\\
& =  \lVert H T H' (Z_1-Z_2) \lVert^{1/2}_F\nonumber\\
& \le 
\label{diff_y1_y2}
\lVert T \rVert^{1/2}_F \cdot \ \rVert Z_1-Z_2 \lVert^{1/2}_F.
\end{align}
Also, for $j=1,2$, 
\begin{align}
\lambda_{\min}(Y_j^{1/2}) &= (\lambda_{\min}(Y_j))^{1/2} = (\lambda_{\min}(H T H' Z_j))^{1/2} \nonumber\\
& \ge  (\lambda_{\min} (H T H'))^{1/2} (\lambda_{\min} (Z_j))^{1/2} 
\label{min_eigen_bound}
= \lambda_{\min} (T^{1/2}) \lambda_{\min} (Z_j^{1/2}).
\end{align}

Combining (\ref{nabla_bound_diff_2matrixargument_final})-(\ref{min_eigen_bound}), and using the fact that $\dd H$ is normalized, we obtain 
\begin{multline*}
\bigg\lVert \nabla_{Z_1} A_{\nu} (T, Z_1) - \nabla_{Z_2} A_{\nu}(T, Z_2) \bigg\rVert_F \\
\le \lVert T \rVert^{3/2}_F \, { \frac{\rVert Z_1-Z_2 \lVert^{1/2}_F}{ \lambda_{\min} (T^{1/2}) \lambda_{\min} (Z_2^{1/2})} \, \bigg[ \frac{C_1}{ \lambda_{\min} (T^{1/2}) \lambda_{\min} (Z_1^{1/2})} + C_2 \bigg]},
\end{multline*}
which is identical with \eqref{norm_bound_nabladiff_bessel_2matrixargument}.
\end{proof}

\medskip

Let $X$ be a Wishart-distributed random matrix, $X \sim W_m (\alpha, I_m)$, and define for $m \times m$ positive definite matrices $T$ the matrix-valued function 
\begin{equation}
\label{gLambdadef}
g(T) = E \left[\alpha^{-1} X^{1/2} \, \nabla_Z A_{\nu}(T,Z) \, X^{1/2} \Bigg|_{Z=\alpha^{-1}X}\right].
\end{equation}

\begin{lemma} For $T > 0$,
\begin{equation}
\label{trgLambdaformula}
\tr g(T) = -\frac{\alpha^{-1}}{\Gamma_m(\alpha)} \ (\tr T) \ \etr(-\alpha^{-1} T).
\end{equation}
\end{lemma}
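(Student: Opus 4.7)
My plan is to expand $A_{\nu}(T,Z)$ in its zonal polynomial series, compute the trace term-by-term using a directional-derivative identity for zonal polynomials, then take the Wishart expectation using the known formula for $E\,C_{\kappa}(X)$ and sum the resulting series.

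The key algebraic identity is an Euler-type relation for zonal polynomials. Since
$\tr(W \nabla_Z F(Z)) = \tfrac{d}{dt} F(Z + tW)\big|_{t=0}$
for any smooth scalar $F$ (this is consistent with the defining property $\nabla_Z e^{\langle T,Z\rangle} = e^{\langle T,Z\rangle} T$ recorded earlier), applying this with $F = C_{\kappa}$, $W = X$, $Z = \alpha^{-1}X$, and using homogeneity $C_{\kappa}(\lambda Y) = \lambda^{|\kappa|} C_{\kappa}(Y)$ yields
\begin{equation*}
\tr\bigl(X\,\nabla_Z C_{\kappa}(Z)\bigr)\Big|_{Z=\alpha^{-1}X}
= \frac{d}{dt}\bigl[(\alpha^{-1}+t)^{|\kappa|}\bigr]_{t=0}\,C_{\kappa}(X)
= |\kappa|\,\alpha^{-|\kappa|+1}\,C_{\kappa}(X).
\end{equation*}
Differentiating the series \eqref{besselseriesdef_2matrixargument} for $A_{\nu}$ term-by-term and using the identity above gives
\begin{equation*}
\alpha^{-1} \tr\bigl(X\,\nabla_Z A_{\nu}(T,Z)\bigr)\Big|_{Z=\alpha^{-1}X}
= \frac{1}{\Gamma_m(\alpha)} \sum_{k=0}^{\infty} \frac{(-1)^k\,k}{k!}\,\alpha^{-k}
  \sum_{|\kappa|=k} \frac{C_{\kappa}(T)\,C_{\kappa}(X)}{[\alpha]_{\kappa}\,C_{\kappa}(I_m)}.
\end{equation*}

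Taking expectations term-by-term under $X \sim W_m(\alpha,I_m)$ and evaluating
$E\,C_{\kappa}(X) = [\alpha]_{\kappa}\,C_{\kappa}(I_m)$
(which follows from \eqref{zonalintegral} with $M=I_m$, $Z=I_m$) causes the factors $[\alpha]_{\kappa} C_{\kappa}(I_m)$ to cancel. The inner sum then collapses via the trace identity \eqref{trace_zonal}, $\sum_{|\kappa|=k} C_{\kappa}(T) = (\tr T)^k$, leaving
\begin{equation*}
\tr g(T)
= \frac{1}{\Gamma_m(\alpha)} \sum_{k=1}^{\infty} \frac{k\,(-\alpha^{-1}\tr T)^k}{k!}
= -\frac{\alpha^{-1}(\tr T)}{\Gamma_m(\alpha)}\,\exp(-\alpha^{-1}\tr T),
\end{equation*}
which is \eqref{trgLambdaformula} after recognizing $\exp(-\alpha^{-1}\tr T) = \etr(-\alpha^{-1}T)$.

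The main technical obstacle is justifying the two interchanges: derivative with series, and expectation with series. For the former, the Bessel series \eqref{besselseriesdef_2matrixargument} converges absolutely on compact subsets of $\mathcal{S}^{m\times m}\times\mathcal{S}^{m\times m}$ (see \cite{grossrichards}), so termwise application of $\nabla_Z$ is valid. For the latter, one uses the nonnegativity $C_{\kappa}(X) \ge 0$ for $X > 0$ together with $\sum_{|\kappa|=k} C_{\kappa}(X) = (\tr X)^k$ to dominate the tail by a series whose expectation under $W_m(\alpha, I_m)$ converges (reducing to finiteness of $E \exp(c\,\tr X)$ for small $c > 0$, a standard moment generating function bound). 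Alternatively, one may appeal directly to Lemma \ref{lemma_norm_bound_nabla_bessel} to dominate $\|\nabla_Z A_{\nu}(T,Z)|_{Z=\alpha^{-1}X}\|_F$ by an integrable function of $X$ and carry out the interchange at the level of the original integral representation.
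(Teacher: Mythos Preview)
Your proof is correct and takes a genuinely different route from the paper's. The paper proceeds by the method of Laplace transforms: it computes $\widehat g(R)=\int_{T>0}\etr(-TR)(\det T)^{\nu}\,\tr g(T)\,\dd T$, pushes the $T$-integral past the expectation and the gradient, applies Herz's identity \eqref{besselintegral} to evaluate the resulting $\int \etr(-TR)(\det T)^{\nu}A_{\nu}(H'ZHT)\,\dd T$, and then manipulates the outcome until it is recognized as the Laplace transform of $-\alpha^{-1}\Gamma_m(\alpha)^{-1}(\tr T)\etr(-\alpha^{-1}T)$, invoking uniqueness of Laplace transforms to finish.

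Your argument is more direct: you use the Euler homogeneity relation $\tr(X\nabla_Z C_{\kappa}(Z))|_{Z=\alpha^{-1}X}=|\kappa|\,\alpha^{1-|\kappa|}C_{\kappa}(X)$ (obtained from the directional-derivative identity and homogeneity of $C_{\kappa}$), differentiate the zonal series for $A_{\nu}(T,Z)$ termwise, take the Wishart expectation termwise so that the factor $[\alpha]_{\kappa}C_{\kappa}(I_m)$ cancels against $E\,C_{\kappa}(X)$, collapse $\sum_{|\kappa|=k}C_{\kappa}(T)=(\tr T)^k$, and sum the elementary series $\sum_{k\ge 1}k x^k/k!=x e^{x}$. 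This bypasses the Laplace-transform machinery entirely and makes the mechanism---the exact cancellation $E\,C_{\kappa}(X)/([\alpha]_{\kappa}C_{\kappa}(I_m))=1$---completely transparent. The interchange of expectation and series is in fact immediate by Tonelli, since after taking absolute values the termwise expectations sum to $\sum_k k(\alpha^{-1}\tr T)^k/k!<\infty$; your alternative domination via Lemma~\ref{lemma_norm_bound_nabla_bessel} also works. The paper's Laplace-transform route is more in keeping with Herz's original framework and may generalize more readily when the zonal series is less tractable, but for this particular lemma your approach is both shorter and more illuminating.
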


\Pro
We will establish this result by the method of Laplace transforms.  For $R > 0$, the Laplace transform of the function $(\det T)^{\nu} \, \tr g(T)$ is
\begin{align}
\label{laplace1}
\widehat{g}(R) &:= \int_{T > 0}  \etr (-T R) \, (\det T)^{\nu} \, \tr g(T) \, \dd T. 
\end{align}
We substitute \eqref{gLambdadef} into this integral, interchange the trace and expectation, apply Fubini's theorem to interchange the expectation and the integral, and verify the validity of interchanging derivatives and integrals; then we obtain 
\begin{align}
\widehat{g}(R) 
\label{laplace2}
=\alpha^{-1} \tr \, E \bigg[X \, \nabla_{Z} \int_{T > 0} { \etr (-T R) \ (\det T)^{\nu} \  A_{\nu}(T, Z) } \, \dd T  \bigg|_{Z=\alpha^{-1}X} \bigg]. 
\end{align}
Applying \eqref{bessel_2matrixargument} to write $A_{\nu}(T, Z)$ as an average of its single-matrix argument counterpart,  
and reversing the order of integration, we obtain 
\begin{multline}
\widehat{g}(R) 
\label{laplace3}
= \alpha^{-1} \tr \, E \bigg[X \, \nabla_{Z} \, \int_{O(m)} \ { \int_{T > 0} { \etr (-T R) \ (\det T)^{\nu} }} \\
\times {{A_{\nu}(H T H' Z)} \, \dd T } \, \dd H \bigg|_{Z=\alpha^{-1}X} \bigg].
\end{multline}
The inner integral with respect to $T$ is precisely the Laplace transform (\ref{besselintegral}); 
substituting the outcome of that calculation into (\ref{laplace3}), we obtain
$$
\widehat{g}(R) = \alpha^{-1} (\det R)^{-\alpha} \, \tr \ E  \left[X \ \nabla_{Z} \, \int_{O(m)} \etr (-H' Z H R^{-1}) \, \dd H \bigg|_{Z=\alpha^{-1}X} \right].
$$
Interchanging the gradient and the integral, and then the integral and the trace, noting that
\begin{align*}
\nabla_{Z} \etr (-H' Z H R^{-1})\bigg|_{Z=\alpha^{-1}X} 
&= (-H R^{-1} H') \etr (-\alpha^{-1} H' X H R^{-1}),
\end{align*}
we find that
\begin{align}
\widehat{g}(R) 
\label{laplace4}
&= (\det R)^{-\alpha} \, E \int_{O(m)} \, \tr (-\alpha^{-1} H' X H R^{-1} ) \etr (-\alpha^{-1} H' X H R^{-1}) \, \dd H
\end{align}
since the trace and the integral commute.  

Next, we have
\begin{multline}
\int_{O(m)} \tr (-\alpha^{-1} H' X H R^{-1} ) \etr (-\alpha^{-1} H' X H R^{-1}) \dd H \\
\label{laplace6}
= \frac{\dd}{\dd t} \int_{O(m)} \exp(-t \ \tr(\alpha^{-1} H' X H R^{-1}) \, \dd H \bigg|_{t=1},
\end{multline}
by interchanging integral and derivative.  By \cite[p.~279, Eq.~(41)]{muirhead},
$$
\int_{O(m)} \  \exp[-t \, \tr(\alpha^{-1} H' X H R^{-1})] \dd H =\sum_{k=0}^{\infty} \frac{(-t \alpha^{-1})^k} {k!} \ \sum_{|\kappa|=k} \frac{C_{\kappa}(X) C_{\kappa} (R^{-1})}{C_{\kappa} (I_m)};
$$
differentiating this series term-by-term and evaluating the outcome at $t=1$, we find that \eqref{laplace6} equals 
\begin{equation}
\label{laplace7}
\sum_{k=1}^{\infty} \frac{(-\alpha)^{-k}} {(k-1)!} \ \sum_{|\kappa|=k} C_{\kappa}(X) \frac{C_{\kappa} (R^{-1})}{C_{\kappa} (I_m)}.
\end{equation}
By (\ref{zonalintegral}), $E \, C_{\kappa}(X) = [\alpha]_\kappa C_\kappa(I_m)$; therefore, 
by combining (\ref{laplace4})-(\ref{laplace7}), we obtain 
\begin{align}
\widehat{g}(R)
\label{laplace8}
&= -\alpha^{-1} \, (\det R)^{-\alpha} \sum_{k=1}^{\infty} \frac{(-\alpha^{-1})^{k-1}} {(k-1)!} \ \sum_{|\kappa|=k} [\alpha]_{\kappa} C_{\kappa} (R^{-1}).
\end{align}
It is also known from \cite[p.~248]{muirhead} that 
\begin{align*}
(\det(I_m + tR^{-1}))^{-a} &= \sum_{k=0}^{\infty} \frac{(-t)^{k}} {k!} \ \sum_{|\kappa|=k} [a]_{\kappa} C_{\kappa} (R^{-1}),
\end{align*}
for $\lVert t R^{-1} \rVert < 1$, where $\lVert \cdot \rVert$ denotes the maximum of the absolute values of the eigenvalues of $tR^{-1}$.  Differentiating this series term-by-term with respect to $t$, we obtain
\begin{equation*}
-\frac{\dd}{\dd t} (\det(I_m + tR^{-1}))^{-\alpha} = \sum_{k=1}^{\infty} \frac{(-t)^{k-1}} {(k-1)!} \ \sum_{|\kappa|=k} [\alpha]_{\kappa} C_{\kappa} (R^{-1});
\end{equation*}
now setting $t = \alpha^{-1}$ and comparing the outcome with (\ref{laplace8}), we find that 
\begin{align}
\label{laplace_final}
\widehat{g}(R) 
=\alpha^{-1} \ \frac{\dd}{\dd t} (\det(R+tI_m))^{-\alpha} \bigg|_{t=\alpha^{-1}}.
\end{align}
Therefore, by (\ref{gammaintegral}),
\begin{align*}
\widehat{g}(R) 
&= \frac{\alpha^{-1}}{\Gamma_m(\alpha)} \frac{\dd}{\dd t} \int_{T>0} \etr(-T(R+tI_m)) \, (\det T)^{\nu} \dd T \bigg|_{t=\alpha^{-1}} \\
&= \frac{\alpha^{-1}}{\Gamma_m(\alpha)} \int_{T > 0} \etr(-TR) \, (\det T)^\nu \, (-\tr T) \, \etr(-\alpha^{-1} T) \, \dd T,
\end{align*}
evidently a Laplace transform.  Comparing this expression with \eqref{laplace1} then the conclusion follows from the uniqueness theorem for Laplace transforms. 
$\qed$

\medskip

\begin{lemma} For $T > 0$, 
\begin{equation}
\label{gLambdaformula}
g(T) = - \frac{\alpha^{-1}}{m \, \Gamma_m(\alpha)} (\tr T) \etr(-\alpha^{-1} T) \, I_m,
\end{equation}
\end{lemma}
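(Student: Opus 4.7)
The plan is to show that $g(T)$ commutes with every $H\in O(m)$, so that Schur's lemma forces $g(T)$ to be a scalar multiple of $I_m$, at which point the trace formula \eqref{trgLambdaformula} already in hand pins down the scalar.

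First, I would fix $H\in O(m)$ and analyze $Hg(T)H'$. Making the change of variable $Y = HXH'$ in the expectation and using that $X\sim W_m(\alpha,I_m)$ is orthogonally invariant so $Y\stackrel{d}{=}X$, one gets
\begin{equation*}
Hg(T)H' = \alpha^{-1} E\!\left[ Y^{1/2} \bigl( H \, \nabla_Z A_\nu(T,Z)\big|_{Z=\alpha^{-1}H'YH} \, H' \bigr) Y^{1/2} \right],
\end{equation*}
since $HX^{1/2}H' = (HXH')^{1/2} = Y^{1/2}$. The inner factor is now a gradient evaluated at the conjugated point $\alpha^{-1}H'YH$, which I want to convert back to a gradient at $\alpha^{-1}Y$.

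The key lemma I would establish in one short chain-rule paragraph is the transformation identity
\begin{equation*}
\nabla_Z A_\nu(T,Z)\big|_{Z=H'WH} \;=\; H' \bigl[\nabla_W A_\nu(T,W)\bigr] H,
\end{equation*}
for every $H\in O(m)$. This follows by writing $z_{kl}=\sum_{a,b}H_{ak}w_{ab}H_{bl}$, computing $\partial z_{kl}/\partial w_{ij}=H_{ik}H_{jl}$, and using the symmetrized chain rule together with the fact that $A_\nu(T,\,\cdot\,)$ is orthogonally invariant, i.e., $A_\nu(T,H'WH)=A_\nu(T,W)$, which is itself immediate from \eqref{bessel_2matrixargument} after substituting $K\mapsto KH'$ in the $O(m)$-average. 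Plugging this identity into the display above, the two outer $H$'s cancel and we obtain
\begin{equation*}
Hg(T)H' \;=\; \alpha^{-1} E\!\left[ Y^{1/2} \, \nabla_W A_\nu(T,W)\big|_{W=\alpha^{-1}Y} \, Y^{1/2} \right] \;=\; g(T),
\end{equation*}
again because $Y\stackrel{d}{=}X$.

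Thus $Hg(T)H'=g(T)$, equivalently $Hg(T)=g(T)H$, for every $H\in O(m)$. By Schur's lemma (in the elementary form: the only real $m\times m$ matrices commuting with every orthogonal matrix are scalar multiples of $I_m$), there exists $c(T)\in\mathbb{R}$ with $g(T)=c(T)I_m$. Taking traces and invoking \eqref{trgLambdaformula} gives $m\,c(T)=\tr g(T) = -\alpha^{-1}\Gamma_m(\alpha)^{-1}(\tr T)\,\etr(-\alpha^{-1}T)$, which solves for $c(T)$ and yields \eqref{gLambdaformula}. The only non-routine step is the chain-rule identity for the gradient under orthogonal conjugation; the rest is bookkeeping with orthogonal invariance of the Wishart law.
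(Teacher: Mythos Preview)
Your proposal is correct and follows essentially the same approach as the paper: both arguments use the orthogonal invariance of $X\sim W_m(\alpha,I_m)$ together with the gradient transformation law under conjugation (the paper cites Maass for $\nabla_{H'ZH}=H'\nabla_Z H$, while you sketch the chain-rule directly) to conclude $Hg(T)H'=g(T)$, then invoke Schur's lemma and the trace formula \eqref{trgLambdaformula}.
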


\Pro
Define for $Y > 0$ the function 
\begin{equation}
\label{gradientAnu}
\phi(Y) := 
\nabla_Z A_{\nu}(T,Z)\bigg|_{Z=\alpha^{-1}Y} \, .
\end{equation}
By (\ref{gLambdadef}), $g(T) = E \left[\alpha^{-1} X^{1/2} \, \phi(X) \, X^{1/2} \right]$, where $X \sim W_m(\alpha,I_m)$.  
Since the distribution of $X$ is orthogonally invariant, i.e., $X \overset{d}{=} H'XH$ for all $H \in O(m)$, then 
\begin{align}
\label{HgLambdaH}
H g(T) H' &= H E \left[\alpha^{-1} (H'XH)^{1/2} \, \phi(H'XH) \, (H'XH)^{1/2} \right] H' \nonumber \\
&= E \left[\alpha^{-1} X^{1/2} H \, \phi(H'XH) \, H'X^{1/2}\right].
\end{align}

By (\ref{gradientAnu}), 
\begin{align*}
\phi(H'XH) &= \nabla_Z A_{\nu}(T,Z)\bigg|_{Z=\alpha^{-1}H'XH} \\
&= \nabla_{H'ZH} A_{\nu}(T,H'ZH) \bigg|_{H'ZH=\alpha^{-1}H'XH}.
\end{align*}
By Maass \cite[p.~64]{maass}, $\nabla_{H'ZH} = H' \nabla_Z H$; 
so it follows that 
\begin{align*}
\phi(H'XH) &= H' \nabla_Z H A_{\nu}(T,H'ZH) \bigg|_{Z=\alpha^{-1}X} \\
&= H' \, \nabla_Z A_{\nu}(T,H'ZH) \bigg|_{Z=\alpha^{-1}X} \, H.
\end{align*}
However, $A_{\nu}(T,H'ZH) = A_{\nu}(T,Z)$ for all $H \in O(m)$; therefore, 
\begin{align*}
\phi(H'XH) &= H' \, \nabla_Z A_{\nu}(T,Z) \bigg|_{Z=\alpha^{-1}X} \, H = H' \phi(X) H.
\end{align*}
Substituting this result into (\ref{HgLambdaH}) we obtain, for all $H \in O(m)$, 
\begin{align*}
H g(T) H' 
&= E \left[\alpha^{-1} X^{1/2} \phi(X) X^{1/2}\right] = g(T).
\end{align*}

Since $H g(T) H' = g(T)$ for all $H \in O(m)$ then, by {\it Schur's Lemma} \cite[p.~315]{shilov}, $g(T)$ is a scalar matrix, i.e., $g(T) = \gamma_1 I_m$ for some scalar $\gamma_1$.  By taking traces and by applying (\ref{trgLambdaformula}), we obtain 
$$
m \gamma_1 = \tr \gamma_1 I_m = \tr g(T) = -\frac{\alpha^{-1}}{\Gamma_m(\alpha)} \ (\tr T) \ \etr(-\alpha^{-1} T);
$$
therefore, 
$$
\gamma_1 = - \frac{\alpha^{-1}}{m \, \Gamma_m(\alpha)} (\tr \, T) \etr(-\alpha^{-1} T). 
$$
The proof is now complete. $\qed$

\medskip

The final preliminary result needed for the proof of Theorem \ref{limitingnulldistribution_matrixcase} is the following consequence of \cite[Lemma 7, Eq. (20)]{khatri}.

\begin{lemma} 
\label{integral_trace_lambdamin}
The integrals 
$$
\int_{T > 0 } {\frac{ (\tr T)^{2}}{ \lambda_{\min} (T)}}\, \dd P_0(T)\quad\text{and}\quad \int_{T > 0 } {\frac{ (\tr T)^{3}}{ \lambda_{\min} (T)}}\, \dd P_0(T) 
$$
are finite for all $\alpha > \tfrac12(m+1)$. Further, the integral 
$$
\int_{T > 0 } {\frac{ (\tr T)^{3}}{ [\lambda_{\min} (T)]^2}}\, \dd P_0(T)
$$
is finite for all $\alpha > \tfrac12(m+3)$.
\end{lemma}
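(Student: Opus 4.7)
The plan is to reduce all three integrals to moment formulas for the Wishart measure $P_0 = W_m(\alpha, I_m)$. The key reduction rests on the elementary bound
\[
\frac{1}{\lambda_{\min}(T)} \le \tr(T^{-1}) = \sum_{i=1}^m \lambda_i(T)^{-1},
\]
valid for every $T > 0$, which immediately yields
\[
\frac{(\tr T)^k}{\lambda_{\min}(T)^j} \le (\tr T)^k (\tr T^{-1})^j.
\]
It therefore suffices to show that $E_{P_0}[(\tr T)^k (\tr T^{-1})^j]$ is finite for the three pairs $(k,j) \in \{(2,1),(3,1),(3,2)\}$ under the stated hypotheses on $\alpha$.

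To decouple the two factors, I would apply H\"older's inequality with conjugate exponents $(p,q)$ satisfying $1/p + 1/q = 1$:
\[
E_{P_0}[(\tr T)^k (\tr T^{-1})^j] \le \big(E_{P_0}[(\tr T)^{kp}]\big)^{1/p} \big(E_{P_0}[(\tr T^{-1})^{jq}]\big)^{1/q}.
\]
The moment generating function of $\tr T$ under $P_0$ equals $(1-s)^{-m\alpha}$ for $s < 1$, so every positive moment $E_{P_0}[(\tr T)^r]$ is finite and $p$ can be chosen arbitrarily large. The task therefore reduces to verifying that $E_{P_0}[(\tr T^{-1})^{jq}]$ is finite for some $q > 1$.

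The inverse Wishart moments are furnished by Khatri's \cite[Lemma~7, Eq.~(20)]{khatri}, which gives a closed-form expression for $E_{P_0}[(\tr T^{-1})^r]$ that is finite precisely when $r < \alpha - (m-1)/2$; this threshold reflects the behavior of the density $(\det T)^{\alpha-(m+1)/2}\etr(-T)$ as $\lambda_{\min}(T) \to 0$. For $j = 1$ the hypothesis $\alpha > (m+1)/2$ gives $\alpha-(m-1)/2 > 1$, leaving room for a choice of $q > 1$ with $q < \alpha - (m-1)/2$; for $j = 2$ the hypothesis $\alpha > (m+3)/2$ gives $\alpha - (m-1)/2 > 2$, accommodating $q > 1$ with $2q < \alpha - (m-1)/2$. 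The main obstacle is essentially bookkeeping: matching Khatri's integrability threshold to the two regimes stated in the lemma, and confirming (via a tail analysis near the vanishing smallest eigenvalue) that the threshold $\alpha-(m-1)/2$ is indeed sharp, so that the two hypotheses on $\alpha$ cannot be weakened within this method.
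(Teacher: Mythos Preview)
Your opening bound $1/\lambda_{\min}(T)\le\tr(T^{-1})$ matches the paper's approach exactly; both reduce to controlling $E_{P_0}[(\tr T)^k(\tr T^{-1})^j]$. The H\"older decoupling, however, introduces a gap. Khatri's Lemma~7, Eq.~(20) is a zonal-polynomial identity and therefore supplies $E_{P_0}[(\tr T^{-1})^r]$ only for nonnegative \emph{integers} $r$. You require $E_{P_0}[(\tr T^{-1})^{jq}]<\infty$ for some $q>1$; when $j=1$ and $\alpha\in\big(\tfrac12(m+1),\tfrac12(m+3)\big]$, the only integer exponent below the threshold $\alpha-\tfrac12(m-1)\in(1,2]$ is $r=1$, so no admissible $q>1$ is reachable through Khatri. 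The eigenvalue-density tail analysis you mention at the end would establish the non-integer threshold independently, but once that analysis is carried out, neither H\"older nor the appeal to Khatri is needed.

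The paper sidesteps this by applying Khatri directly to the mixed moment with \emph{integer} $j$: writing $(\tr T^{-1})^j=\sum_{|\sigma|=j}C_\sigma(T^{-1})$, one checks (for instance via the rescaling $T\mapsto(1+s)^{-1}T$ and differentiation in $s$) that $E_{P_0}\big[(\tr T)^k\,C_\sigma(T^{-1})\big]$ is a constant multiple of $E_{P_0}\big[C_\sigma(T^{-1})\big]$, and Khatri's closed form for the latter is finite exactly under the stated hypotheses on $\alpha$.
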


\subsubsection{The proof of the limiting distribution}
\label{proofofasympdistn}


In what follows, we will use for various matrices $V$ the shorthand notation 
$$
\nabla A_\nu(T,V) \equiv \nabla_Z A_\nu(T,Z)\Bigg|_{Z=V}
$$

\medskip

\noindent{\it Proof of Theorem \ref{limitingnulldistribution_matrixcase}}.  
By \eqref{taylorexpansion1}, the Taylor expansion of the Bessel function $A_{\nu}(T,Z)$ at $(T,Z_0)$ is 
\begin{equation}
\label{taylorexpansion2}
A_{\nu}(T,Z)=A_{\nu}(T,Z_0) + \langle Z - Z_0, \nabla A_{\nu}(T, U) \rangle,
\end{equation}
where $U= t Z +(1-t) Z_0$, for some $t \in [0,1]$.  
Setting $Z = Z_j$ and $Z_0 = \alpha^{-1} X_j$, $j=1,\dotsc,n$, in (\ref{taylorexpansion2}), we have the Taylor expansion of order 1 of $A_{\nu}(T, Z_j)$ at $(T, \alpha^{-1} X_j)$:
\begin{align}
\label{taylorexp_matrix1}
A_{\nu}(T, Z_j) = A_{\nu}(T, \alpha^{-1} X_j) + \langle Z_j- \alpha^{-1}X_j, \nabla A_{\nu}(T, U_j) \rangle,
\end{align}
where $U_j= t Z_j +(1-t) \alpha^{-1} X_j$, for some $t \in [0,1]$. Define 
$$
M_n=\bar{X}_n^{-1/2} (\alpha I_m-\bar{X}_n) \bar{X}_n^{-1/2};
$$ 
then (\ref{taylorexp_matrix1}) reduces to 
$$
A_{\nu}(T, Z_j) = A_{\nu}(T, \alpha^{-1} X_j) + \langle \alpha^{-1} X_j^{1/2} M_n X_j^{1/2}, \nabla A_{\nu}(T, U_j) \rangle.
$$
Adding and subtracting the term 
$\langle \alpha^{-1} X_j^{1/2} M_n X_j^{1/2}, \nabla A_{\nu}(T, \alpha^{-1} X_j) \rangle$ 
on the right-hand side, we obtain
\begin{align}
A_{\nu} (T, Z_j) &= A_{\nu}(T, \alpha^{-1} X_j) + \langle \alpha^{-1} X_j^{1/2} M_n X_j^{1/2}, \nabla A_{\nu}(T, U_j) \rangle \nonumber \\
& \qquad + \big\langle \alpha^{-1} X_j^{1/2} M_n X_j^{1/2}, \nabla A_{\nu}(T, U_j)-  \nabla A_{\nu}(T, \alpha^{-1} X_j) \big\rangle \nonumber \\
\label{taylorexp_matrix}
&= A_{\nu}(T, \alpha^{-1} X_j) + \langle M_n, \alpha^{-1} X_j^{1/2} \, \nabla A_{\nu}(T, \alpha^{-1} X_j) \, X_j^{1/2} \rangle \nonumber\\
& \qquad + \big\langle M_n, \alpha^{-1} X_j^{1/2} \ \big[ \nabla A_{\nu}(T, U_j)-  \nabla A_{\nu}(T, \alpha^{-1}X_j) \big] \ X_j^{1/2} \big\rangle,
\end{align}
where the second equality is obtained by permuting terms cyclically in the inner product.  
For  $T > 0$ and $X_j > 0$, $j=1,\dotsc,n$, define the function
\begin{equation*}
g(T,X_j):= \alpha^{-1} \ X_j^{1/2} \nabla A_{\nu}(T, \alpha^{-1}X_j) X_j^{1/2}, 
\end{equation*}
We remark that as $X_1,\ldots,X_n$ are i.i.d. then $E_{X_j} g(T,X_j)$ does not depend on $j$; hence, 
\begin{equation*}
g(T) := E_{X_j} g(T,X_j) = E( \alpha^{-1} \ X_j^{1/2} \, \nabla A_{\nu}(T, \alpha^{-1} X_j) X_j^{1/2}).
\end{equation*}
is a function evaluated earlier; by (\ref{gLambdaformula}), 
$$
g(T)=  - \frac{\alpha^{-1}}{m \, \Gamma_m(\alpha)} (\tr T) \etr(-\alpha^{-1} T) \, I_m.
$$

Define the random fields $\mathcal{Z}_{n,1}(T)$, $\mathcal{Z}_{n,2}(T)$ and  $\mathcal{Z}_{n,3}(T)$, $T > 0$, by
\begin{align*}
\mathcal{Z}_{n,1}(T) &= \frac{1}{\sqrt{n}} \sum_{j=1}^{n} \bigg[ \Gamma_m(\alpha) A_{\nu}(T, \alpha^{-1}X_j) + \Gamma_m(\alpha) \langle M_n , g(T,X_j) \rangle - \etr(-\alpha^{-1} T) \bigg], \\
\mathcal{Z}_{n,2}(T) &= \frac{1}{\sqrt{n}}\sum_{j=1}^{n}\bigg[ \Gamma_m(\alpha) A_{\nu}(T, \alpha^{-1}X_j) + \Gamma_m(\alpha) \langle M_n, g(T) \rangle - \etr(-\alpha^{-1} T) \bigg],\\
\mathcal{Z}_{n,3}(T) &= \frac{1}{\sqrt{n}}\sum_{j=1}^{n}\bigg[ \Gamma_m(\alpha) A_{\nu}(T, \alpha^{-1}X_j) 
{\hskip-0.7pt}+{\hskip-0.7pt} \Gamma_m(\alpha) \langle \alpha^{-1}(\alpha I_m-X_j),g(T\rangle - \etr(-\alpha^{-1}T)\bigg].
\end{align*}

The random fields $\mathcal{Z}_{n,k}$, $k=1,2,3$ arise as follows. To define $\mathcal{Z}_{n,1}(T)$, we use the first two terms in (\ref{taylorexp_matrix}).  
To define $\mathcal{Z}_{n,2}(T)$, we use the same expression from $\mathcal{Z}_{n,1}(T)$ except that the term $g(T, X_j)$ is replaced by its expected value $g(T)$, which is given by (\ref{gLambdaformula}). To define $\mathcal{Z}_{n,3}(T)$, we replace the term $M_n$ in $\mathcal{Z}_{n,2}(T)$ by a constant multiple of $\alpha I_m-X_j$, the constant being obtained by applying the Law of Large Numbers to $\bar{X_{n}}^{-1/2}$. We will show that
\begin{eqnarray}
\label{nulldistr_1} &\mathcal{Z}_{n,3} \xrightarrow{d} \mathcal{Z} \ \ \text{in $L^2$},&  \\
\label{nulldistr_2} &\lVert \mathcal{Z}_{n}-\mathcal{Z}_{n,1} \rVert_{L^2} \xrightarrow{p} 0,&\\
\label{nulldistri_3} &\lVert \mathcal{Z}_{n,1}-\mathcal{Z}_{n,2} \rVert_{L^2} \xrightarrow{p} 0,&\\
\label{nulldistr_4} &\lVert \mathcal{Z}_{n,2}-\mathcal{Z}_{n,3} \rVert_{L^2} \xrightarrow{p} 0.
\end{eqnarray}
By writing $\mathcal{Z}_n$ as 
$$
\mathcal{Z}_n = \mathcal{Z}_{n} - \mathcal{Z}_{n,1} + \mathcal{Z}_{n,1}-\mathcal{Z}_{n,2} + \mathcal{Z}_{n,2}-\mathcal{Z}_{n,3} + \mathcal{Z}_{n,3},
$$
it will follow that $\mathcal{Z}_n \xrightarrow{d} \mathcal{Z}$ in $L^2$ (cf. Billingsley \cite[ p.~25, Theorem 4.1]{ref21}).

To establish (\ref{nulldistr_1}), define for $T > 0$, 
\begin{equation}
\label{zn31_matrixcase}
\mathcal{Z}_{n, 3, j }(T) := \Gamma_m(\alpha) A_{\nu}(T, \alpha^{-1}X_j) 
+ \Gamma_m(\alpha)\langle \alpha^{-1}(\alpha I_m-X_j) , g(T) \rangle - \etr(-\alpha^{-1} T),
\end{equation}
$j=1,\dotsc,n$.  Since $X_j \sim W_m(\alpha, I_m)$ then $E(X_j-\alpha I_m) = 0$ and therefore, since the trace and the expectation are linear operators, we deduce that
\begin{align*}
E \bigg[ \langle \alpha^{-1}(\alpha I_m-X_j), \, g(T) \rangle \bigg] =\tr\bigg[ \alpha^{-1} E(\alpha I_m-X_j) \cdot g(T)\bigg] =0.
\end{align*}
Also, by Example \ref{orthogonallyhankeltransformwishartdistn_example} and (\ref{kummer_matrixargument}), we have 
$
E \left[ \Gamma_m (\alpha) A_{\nu}(T, \alpha^{-1}X_j) \right] 
= \etr(-\alpha^{-1}T).
$ 
Therefore, $E(\mathcal{Z}_{n, 3, j}(T))=0$, for all $T > 0$ and $j=1, \dotsc, n$, and it is also clear that  $\mathcal{Z}_{n,3,1},\dotsc,\mathcal{Z}_{n,3,n}$ are independent and identically distributed random elements in $L^2$.  

We now show that $E(\lVert \mathcal{Z}_{n,3,j} \rVert^2_{L^2}) < \infty$ for $j=1,\dotsc,n$.  We have
\begin{align*}
E(\lVert \mathcal{Z}_{n,3,j} \rVert^2_{L^2}) &= E  \int_{T > 0} \mathcal{Z}^2_{n,3,j} (T) \dd P_0(T)  \\
&= E \int_{T > 0} \bigg[ \Gamma_m(\alpha) A_{\nu}(T,\alpha^{-1} X_j) \\
& \qquad\qquad\quad + \Gamma_m(\alpha)\langle \alpha^{-1}(\alpha I_m-X_j) , g(T) \rangle - \etr(-\alpha^{-1}T) \bigg]^2 \dd P_0(T).
\end{align*}
By the Cauchy-Schwarz inequality, $(a+b+c)^2 \le 3(a^2+b^2+c^2)$ for $a,b,c \in \mathbb{R}$; so to prove that $E(\lVert \mathcal{Z}_{n,3,j} \rVert^2_{L^2}) < \infty$, it suffices to prove that 
\begin{equation}
\label{limitingnullterm1_matrix}
E  \int_{T > 0} \bigg[ \Gamma_m(\alpha) A_{\nu}(T, \alpha^{-1}X_j) \bigg]^2 \dd P_0(T)  < \infty,
\end{equation}
\begin{equation}
\label{limitingnullterm2_matrix}
E \int_{T > 0} \bigg[\Gamma_m(\alpha)\langle \alpha^{-1}(\alpha I_m-X_j) , g(T) \rangle \bigg]^2 \dd P_0(T) < \infty,
\end{equation}
and 
\begin{equation}
\label{limitingnullterm3_matrix}
 E  \int_{T > 0} \etr(-2\alpha^{-1}T) \dd P_0(T) < \infty.
\end{equation}

To establish (\ref{limitingnullterm1_matrix}), we apply (\ref{2besselineq_matrixargument}) to obtain
$$
E \int_{T > 0} \bigg[ \Gamma_m(\alpha) A_{\nu}(T, \alpha^{-1}X_j) \bigg]^2 \dd P_0(T) \le E \int_{T > 0} 1 \cdot \dd P_0(T) = 1.
$$
To prove (\ref{limitingnullterm2_matrix}), write 
\begin{align*}
\langle (\alpha I_m-X_j) , g(T) \rangle ^2  &= (\tr[(\alpha I_m-X_j) \cdot g(T)] )^2 \\
&= \bigg( \frac{\alpha^{-1}}{m \Gamma_m(\alpha)} \bigg)^2 \, ( \tr(\alpha I_m-X_j))^2 \, (\tr T )^2 \, \etr(-2\alpha^{-1}T);
\end{align*}
therefore, the integral in \eqref{limitingnullterm2_matrix} is a constant multiple of 
\begin{align*}
E ( \tr(\alpha I_m-X_j))^2 \cdot \int_{T > 0} (\tr T )^2 \ \etr(-2\alpha^{-1}T) \ \dd P_0(T).
\end{align*}

Since $(\tr(\alpha I_m-X_j))^2$ is a polynomial in $X_j$, its expectation is finite because the moment-generating function of $X$ exists.  As for 
\begin{equation}
\label{gLambda_integral}
\int_{T > 0} (\tr T )^2 \ \etr(-2\alpha^{-1}T) \ \dd P_0(T), 
\end{equation}
again this integral is finite because $(\tr T)^2$ is a polynomial and $\etr(-2\alpha^{-1}T) \, \dd P_0(T)$, after normalization, is a Wishart measure.  For the same reason, (\ref{limitingnullterm3_matrix}) is valid.  

In summary, for $T > 0$ and $j=1, \dotsc, n$, $\mathcal{Z}_{n,3,1},\dotsc, \mathcal{Z}_{n,3,n}$ are i.i.d. random elements in $L^2$ with $E(\mathcal{Z}_{n, 3, j}(T)) = 0$ and $E(\lVert \mathcal{Z}_{n,3,j} \rVert^2_{L^2}) < \infty$.  Therefore, by the Central Limit Theorem in $L^2$, 
$$
\frac{1}{\sqrt{n}}\sum_{j=1}^{n} \mathcal{Z}_{n,3,j} \xrightarrow{d} \mathcal{Z},
$$
where $\mathcal{Z}:=(\mathcal{Z}(T), T > 0)$ is a centered Gaussian random element in $L^2$. Moreover, $\mathcal{Z}$ has the same covariance operator as $\mathcal{Z}_{n,3,1}$. 

It is well-known that the covariance operator of the random element $\mathcal{Z}_{n,3,1}$ is uniquely determined by the covariance function of the random field $\mathcal{Z}_{n,3,1}$; cf., G{\=\i}khman and Skorohod \cite[pp.~218-219]{ref20}. 

We now show that the function $K(S, T)$ in (\ref{covariancefn_matrixcase}) is the covariance function of $\mathcal{Z}_{n,3,1}$. Noting that $E[\mathcal{Z}_{n,3,1}(T)]=0$ for all $T > 0$, we obtain 
\begin{align}
\label{covariancefulldef_matrixcase}
K(S, T) &= \Cov [\mathcal{Z}_{n,3,1}(S), \mathcal{Z}_{n,3,1}(T)]\nonumber\\
&=\Cov [\mathcal{Z}_{n,3,1}(S) +\etr(-\alpha^{-1} S), \mathcal{Z}_{n,3,1}(T)+\etr(-\alpha^{-1} T) ] \nonumber\\
&= E [(\mathcal{Z}_{n,3,1}(S) +\etr(-\alpha^{-1}S) ) \cdot (\mathcal{Z}_{n,3,1}(T)+\etr(-\alpha^{-1} T))] \nonumber \\
& {\hskip 3truein} - \etr(-\alpha^{-1} (S+T)).
\end{align}
By (\ref{zn31_matrixcase}), 
\begin{align}
E [(\mathcal{Z}_{n,3,1}(S) & + \etr(-\alpha^{-1} S) ) \cdot (\mathcal{Z}_{n,3,1}(T)+\etr(-\alpha^{-1} T))] \nonumber\\
\label{covariancecalc_matrixcase}
&= E \bigg[ \Gamma_m(\alpha) A_{\nu}(S, \alpha^{-1}X_1) + \Gamma_m(\alpha)\langle \alpha^{-1}(\alpha I_m-X_1) , g(S) \rangle \bigg] \nonumber \\
& \qquad \times E \bigg[ \Gamma_m(\alpha) A_{\nu}(T, \alpha^{-1}X_1) + \Gamma_m(\alpha)\langle \alpha^{-1}(\alpha I_m-X_1) , g(T) \rangle \bigg],
\end{align}
so the calculation of $K(S, T)$ reduces to evaluating the four terms obtained by expanding the product on the right-hand side of (\ref{covariancecalc_matrixcase}).

The first term in the product in (\ref{covariancecalc_matrixcase}) is 
\begin{multline}
\label{boundredcov_matrixcase}
E  [ \Gamma_m(\alpha)]^2 A_{\nu}(S, \alpha^{-1}X_1) \ A_{\nu}(T, \alpha^{-1}X_1) \\
=\Gamma_m(\alpha) \int_{X > 0} {A_{\nu}(S, \alpha^{-1}X) \ A_{\nu}(T, \alpha^{-1}X) }
{(\det X)^{\nu} \etr(-X)}\, \dd X.
\end{multline}
By (\ref{besselproductintegral}), (\ref{bessel_2matrixargument}), and Fubini's theorem, we find that this term equals
\begin{align*}
\Gamma_m&(\alpha) \int_{O(m)} \int_{O(m)} \int_{X > 0} {A_{\nu} (\alpha^{-1} H S H' X) A_{\nu} (\alpha^{-1}K T K' X)} 
{(\det X)^{\nu}\etr(-X) }\, \dd X \, \dd H  \dd K \\
&= \Gamma_m(\alpha) \int_{O(m)} \int_{O(m)} \ A_{\nu} (-\alpha^{-2} H S H'  K T K') 
\etr(-\alpha^{-1}(H S H'+ K T K')) \, \dd H \dd K.
\end{align*}
Since $\etr(-\alpha^{-1}(H S H'+ K T K'))= \etr(-\alpha^{-1} (S +T))$, and 
\begin{align*}
\int_{O(m)} A_{\nu} (-\alpha^{-2} H S H'  K T K') \, \dd H = A_{\nu} (-\alpha^{-2} S, K T K') 
= A_{\nu} (-\alpha^{-2}S, T),
\end{align*}
we conclude that the first term equals 
\begin{equation}
\label{firsttermcovariance_matrix}
\Gamma_m(\alpha) \etr(-\alpha^{-1}(S+ T)) A_{\nu} (-\alpha^{-2}S, T).
\end{equation}

The second term in the product in (\ref{covariancecalc_matrixcase}) is 
\begin{align}
\alpha^{-1} [\Gamma_m&(\alpha)]^2 E \left[ A_{\nu}(S, \alpha^{-1}X_1) \cdot \langle (\alpha I_m-X_1) , g(T) \rangle\right] \nonumber\\
&= \frac{\Gamma_m(\alpha)}{\alpha^2 m} \ E \left[ A_{\nu}(S, \alpha^{-1}X_1) \cdot \langle (X_1-\alpha I_m) , (\tr T) \etr(-\alpha^{-1} T)I_m \rangle\right]\nonumber\\ 
\label{secondterm_covariance}
&=\frac{1}{\alpha^2 m} \Gamma_m(\alpha) (\tr T) \etr(-\alpha^{-1} T) E \left[\Big(\tr X_1) - 1\Big)\cdot A_{\nu}(S,\alpha^{-1}X_1)\right].
\end{align}
We have seen earlier that 
\begin{equation}
\label{secondterm_covfn1}
\Gamma_m(\alpha) \, E \, A_{\nu}(S, \alpha^{-1}X_1) = \etr(-\alpha^{-1} S).
\end{equation}
Also, by (\ref{bessel_2matrixargument}), 
\begin{align}
\label{secondterm_covfn}
E (\tr X_1) \, A_{\nu}(S, \alpha^{-1}X_1) = \int_{O(m)} \tr E \bigg( X_1\cdot A_{\nu}(\alpha^{-1} H S H' X_1) \bigg) \, \dd H .
\end{align}
Since 
$
\Gamma_m(\alpha) \, A_{\nu}(\alpha^{-1} H S H' X_1) = {_0}F_1(\alpha; -\alpha^{-1} H S H' X_1)
$ 
then, by \cite[p.~442]{muirhead}, 
the expectation
$
E \left(X_1\cdot A_{\nu}(\alpha^{-1} HSH'X_1)\right)
$ 
is a multiple of the expected value of a noncentral Wishart distributed random matrix $W_m(\alpha,I_m,\Omega)$, where $\Omega = -\alpha^{-1} H S H'$ is the matrix of noncentrality parameters.  Hence, 
\begin{align*}
E \left( (\tr X_1) \, A_{\nu}(\alpha^{-1}HSH'X_1) \right) &= \tr \, E \left(X_1 \, A_{\nu}(\alpha^{-1}HSH'X_1) \right) \\
&= \frac{1}{\Gamma_m(\alpha)} \, \tr (\alpha I_m - \alpha^{-1} \Omega) \, \etr(-\alpha^{-1} \Omega) \\
&= \frac{1}{\Gamma_m(\alpha)} \, (\alpha m - \alpha^{-1} \tr S) \, \etr(-\alpha^{-1}S) .
\end{align*}
Substituting this result into (\ref{secondterm_covfn}), we obtain
\begin{equation}
\label{secondterm_covfn2}
E \left( (\tr X_1) \, A_{\nu}(S, \alpha^{-1}X_1) \right) 
=\frac{1}{\Gamma_m(\alpha)} \ \etr(-\alpha^{-1} S) \ [\alpha m -\alpha^{-1} (\tr S)].
\end{equation}
Substituting (\ref{secondterm_covfn1}) and (\ref{secondterm_covfn2}) into (\ref{secondterm_covariance}), and simplifying the result, we find that the second term equals 
$$
-\frac{1}{\alpha^{3}m} (\tr S) (\tr T) \etr(-\alpha^{-1} (S+T)).
$$

The third term in the product in (\ref{covariancecalc_matrixcase}) is
$$
\alpha^{-1} \Gamma^2_m(\alpha) \ E \bigg[ A_{\nu}(T, \alpha^{-1}X_1) \cdot \langle (\alpha I_m-X_1) , g(S) \rangle\bigg],
$$
which is the same as the second term but with $S$ and $T$ interchanged. 

The fourth term in the product in (\ref{covariancecalc_matrixcase}) is
\begin{multline*}
[\alpha^{-1} \Gamma_m(\alpha)]^2 \, E \bigg[ \langle (\alpha I_m-X_1), g(S) \rangle \cdot \langle (\alpha I_m-X_1) , g(T) \rangle \bigg] \\
= [\alpha^{-1} \Gamma_m(\alpha)]^2 \ E \bigg[ \tr ((\alpha I_m-X_1) \cdot g(S)) \cdot \tr ((\alpha I_m-X_1) \cdot g(T)) \bigg].
\end{multline*}
Using the explicit formula for $g(T)$ from 
(\ref{trgLambdaformula}) and (\ref{gLambdaformula}), we obtain 
\begin{align}
E \bigg[ \tr & ((\alpha I_m-X_1) \cdot g(S))  \cdot \tr ((\alpha I_m-X_1) \cdot g(T)) \bigg]\nonumber\\
&= \frac{1}{[\Gamma_m(\alpha)]^2}  \bigg[ \ (\tr S) \etr(-\alpha^{-1} S) (\tr T)  \etr(-\alpha^{-1} T )\nonumber\\
& \qquad\qquad\qquad - 2 (m\alpha)^{-1}  \ (\tr S) \etr(-\alpha^{-1} S) (\tr T) \etr(-\alpha^{-1} T) \ E (\tr X_1) \nonumber\\
\label{fifthterm_covfn2}
& \qquad\qquad\qquad + (m\alpha)^{-2} \, (\tr S) \etr(-\alpha^{-1} S) (\tr T) \etr(-\alpha^{-1} T) \ E (\tr X_1)^2 \bigg].
\end{align}
By (\ref{trace_zonal}) and (\ref{zonalintegral}), it follows that
\begin{equation}
\label{expected_trace_fifthterm}
E(\tr X_1) = [\alpha]_{(1)} C_{(1)}(I_m) = \alpha m.
\end{equation}
Also, using \eqref{zonal_identitymatrix}, we obtain 
\begin{align}
\label{expected_trace_squared_fifthterm}
E[(\tr X_1)^2] &= \sum_{|\kappa|=2} [\alpha]_{\kappa} C_{\kappa} (I_m) \nonumber \\
&= [\alpha]_{(2)} C_{(2)}(I_m) + [\alpha]_{(11)} C_{(11)} (I_m) = \alpha m (\alpha m + 1). 
\end{align}
Substituting (\ref{expected_trace_fifthterm}) and (\ref{expected_trace_squared_fifthterm}) into (\ref{fifthterm_covfn2}), we deduce that the fourth term equals 
$$
\frac{1}{\alpha^{3}m} (\tr S) (\tr T) \etr(-\alpha^{-1} (S+T)).
$$
Combining all four terms, we obtain (\ref{covariancefn_matrixcase}).

To establish (\ref{nulldistr_2}), we begin by showing that 
$$
\tr \left[(\sqrt{n} \, M_n)^2 \right]=\rVert \bar{X}_n^{-1/2} \sqrt{n} (\alpha I_m-\bar{X}_n) \bar{X}_n^{-1/2} \lVert^2_F
$$
converges in distribution to a random variable with finite variance. By the multivariate Central Limit Theorem, $\sqrt{n} \vect(\alpha I_m-\bar{X}_n)$ converges in distribution to a multivariate normal random vector.  Also, by the Law of Large Numbers, $\bar{X}_n^{-1} \xrightarrow{p} \alpha^{-1} I_m$.  Therefore, by Slutsky's theorem, $\sqrt{n} \vect(M_n)$ converges in distribution to a multivariate normal random vector, so it follows from the Continuous Mapping Theorem that $\tr \left[(\sqrt{n} \, M_n)^2 \right]$ converges in distribution to a random variable which has finite variance.  

By the Taylor expansion (\ref{taylorexp_matrix}), 
\begin{align*}
\mathcal{Z}_n-\mathcal{Z}_{n,1} 
&=\frac{\Gamma_m(\alpha)}{\sqrt{n}} \sum_{j=1}^{n} \left\langle M_n , \alpha^{-1} X_j^{1/2} \left( \nabla A_{\nu}(T, U_j) - \nabla A_{\nu}(T, \alpha^{-1}X_j) \right) X_j^{1/2} \right\rangle \\
&=\frac{\alpha^{-1} \Gamma_m(\alpha)}{n} \sum_{j=1}^{n} \left\langle  \sqrt{n} M_n , X_j^{1/2} \left( \nabla A_{\nu}(T, U_j) - \nabla A_{\nu}(T, \alpha^{-1}X_j) \right) \ X_j^{1/2} \right\rangle.
\end{align*}
Define
$$
V_n := \frac{1}{n^2} \int_{T >0} {\tr \bigg[\sum_{j=1}^n  X_j^{1/2} \ \bigg( \nabla A_{\nu}(T, U_j)} - {\nabla A_{\nu}(T, \alpha^{-1} X_j) \bigg) \ X_j^{1/2}  \bigg]^2} \, \dd P_0(T).
$$
By the Cauchy-Schwarz inequality,
\begin{equation}
\label{zn_zn1}
\lVert \mathcal{Z}_n - \mathcal{Z}_{n,1} \rVert^2_{L^2} \le [\alpha^{-1} \Gamma_m(\alpha)]^2 \tr \left[(\sqrt{n} \, M_n)^2 \right] \cdot V_n ;
\end{equation}
so we will establish (\ref{nulldistr_2}) by proving that $V_n \xrightarrow{p} 0$.  

By the triangle inequality and the sub-multiplicative property of the Frobenius norm, we have
\begin{align*}
\tr \bigg[\sum_{j=1}^n  X_j^{1/2} & \bigg( \nabla A_{\nu}(T, U_j)- \nabla A_{\nu}(T, \alpha^{-1} X_j) \bigg) \ X_j^{1/2}  \bigg]^2 &&\nonumber\\
& = \bigg\lVert  \sum_{j=1}^n X_j^{1/2} \ \bigg( \nabla A_{\nu}(T, U_j)-  \nabla A_{\nu}(T, \alpha^{-1}X_j) \bigg) \ X_j^{1/2} \bigg\rVert^2_F &&\nonumber\\
& \le \bigg(\sum_{j=1}^n \rVert X_j \lVert_F \rVert \nabla A_{\nu}(T, U_j)-  \nabla A_{\nu}(T, \alpha^{-1} X_j) \rVert_F \bigg)^2. &&
\end{align*}
Applying (\ref{norm_bound_nabladiff_bessel_2matrixargument}), we obtain
\begin{multline*}
\rVert \nabla A_{\nu}(T, U_j) - \nabla A_{\nu}(T, \alpha^{-1}X_j) \rVert_F \\
\le \frac{\lVert T \rVert^{3/2}_F \, \rVert U_j -\alpha^{-1} X_j \lVert^{1/2}_F}{\lambda_{\min}(X_j^{1/2})} \, \bigg[ \frac{C_1}{ \lambda_{\min}(T) \, \lambda_{\min} (U_j^{1/2})} + \frac{C_2}{\lambda_{\min} (T^{1/2}) } \bigg].
\end{multline*}
Also, since $U_j = X_j^{1/2} [ \alpha^{-1} I_m +t (\bar{X}_n^{-1} -\alpha^{-1} I_m) ] X_j^{1/2}$, $t \in [0,1]$, then 
\begin{align*}
\rVert U_j-\alpha^{-1} X_j \lVert^{1/2}_F &= \| X_j^{1/2} [t (\bar{X}_n^{-1} -\alpha^{-1} I_m)] X_j^{1/2} \|^{1/2}_F \nonumber\\
&=\, \| t X_j (\bar{X}_n^{-1} -\alpha^{-1} I_m) \|^{1/2}_F \nonumber\\
&\le \| X_j \|^{1/2}_F \ \| \bar{X}_n^{-1} -\alpha^{-1} I_m \|^{1/2}_F.
\end{align*}
Define
$$
V_{n,1} := C_1^2 \lVert \bar{X}_n^{-1} -\alpha^{-1} I_m \rVert_F \ \bigg( \frac{1}{n} \sum_{j=1}^n \frac{\rVert X_j  \lVert^{3/2}_F} {\lambda_{\min}(X^{1/2}_j) \lambda_{\min} (U_j^{1/2})} \bigg)^2 \cdot \int_{T > 0 } {\frac{ \lVert T \rVert^{3}_F}{ [\lambda_{\min} (T)]^2}}\, \dd P_0(T),
$$
and
$$
V_{n,2} := C_2^2 \lVert \bar{X}_n^{-1} -\alpha^{-1} I_m \rVert_F \ \bigg( \frac{1}{n} \sum_{j=1}^n \frac{\rVert X_j  \lVert^{3/2}_F} {\lambda_{\min}(X^{1/2}_j)} \bigg)^2 \cdot \int_{T > 0 } {\frac{ \lVert T \rVert^{3}_F}{\lambda_{\min} (T)}}\, \dd P_0(T).
$$
By the Cauchy-Schwarz inequality, $V_n \le V_{n,1} +V_{n,2}$. Thus, it suffices to show that $V_{n,1}, V_{n, 2} \xrightarrow{p} 0$. 

We first establish that $V_{n,1} \xrightarrow{p} 0$. By the Cauchy-Schwarz inequality, 
\begin{align*}
\bigg(\frac{1}{n} \sum_{j=1}^n \frac{\rVert X_j  \lVert^{3/2}_F} {\lambda_{\min}(X^{1/2}_j) \lambda_{\min} (U_j^{1/2})} \bigg)^2 
& \le \frac{1}{n} \sum_{j=1}^n \frac{ \rVert X_j  \lVert^{3}_F} {\lambda_{\min}(X_j) \lambda_{\min} (U_j)} \\
& = \frac{1}{n} \sum_{j=1}^n \frac{ (\tr(X^2_j))^{3/2}} {\lambda_{\min}(X_j) \lambda_{\min} (U_j)}.
\end{align*}
By Weyl's inequality for the smallest eigenvalue of the sum of two symmetric matrices, 
\begin{align*}
\lambda_{\min} (U_j) 
& \ge t \lambda_{\min} (X_j \bar{X}_n^{-1}) + (1-t) \alpha^{-1} \lambda_{\min} (X_j) \\
& \ge t \lambda_{\min} (X_j) \lambda_{\min} (\bar{X}_n^{-1}) + (1-t) \alpha^{-1} \lambda_{\min} (X_j) \\
& \ge  \min \{  \lambda_{\min} (X_j) \lambda_{\min} (\bar{X}_n^{-1}), \alpha^{-1} \lambda_{\min} (X_j) \} \\
&= \lambda_{\min} (X_j) \ \min \{  \lambda_{\min} (\bar{X}_n^{-1}), \alpha^{-1} \};
\end{align*}
therefore, 
$$
\frac{1}{n} \sum_{j=1}^n \frac{ (\tr(X^2_j))^{3/2}} {\lambda_{\min}(X_j) \lambda_{\min} (U_j)} \le \ \frac{1} {\min \{  \lambda_{\min} (\bar{X}_n^{-1}), \alpha^{-1} \}} \ \frac{1}{n} \sum_{j=1}^n \frac{ (\tr(X^2_j))^{3/2}} {[\lambda_{\min}(X_j)]^2}.
$$
By the Law of Large Numbers and the Continuous Mapping Theorem, we have
$$
\frac{\lVert \bar{X}_n^{-1} -\alpha^{-1} I_m \rVert_F}{\min \{\lambda_{\min} (\bar{X}_n^{-1}), \alpha^{-1} \}} \xrightarrow{p} 0.
$$
Again by the Law of Large Numbers, 
$$
\frac{1}{n} \sum_{j=1}^n \frac{ (\tr(X^2_j))^{3/2}} {[\lambda_{\min}(X_j)]^2} \xrightarrow{p} E_{P_0} \bigg(\frac{ (\tr(X^2))^{3/2}} {[\lambda_{\min}(X)]^2}\bigg).
$$
Therefore, to complete the proof of $V_{n, 1} \xrightarrow{p} 0$, we need to establish that 
$$
\int_{T > 0 } {\frac{ \lVert T \rVert^{3}_F}{ [\lambda_{\min} (T)]^2}}\, \dd P_0(T) < \infty \quad\text{and}\quad E_{P_0} \bigg(\frac{ (\tr(X^2))^{3/2}} {[\lambda_{\min}(X)]^2}\bigg) < \infty.
$$
Since $\lVert T \rVert^{3}_F=(\tr T^2)^{3/2}$ then these criteria are the same, so we show that the first one holds.  
For $T > 0$, we have $ \tr T^2  \le (\tr T)^2$ and hence $(\tr T^2)^{3/2} \le (\tr T)^3$.  By Lemma \ref{integral_trace_lambdamin}, 
$$
\int_{T > 0 } {\frac{ (\tr T)^{3}}{ [\lambda_{\min} (T)]^2}}\, \dd P_0(T) < \infty,
$$
for $\alpha > \tfrac12(m+3)$, so it follows that $V_{n, 1} \xrightarrow{p} 0$.  

As for $V_{n, 2} \xrightarrow{p} 0$, the proof is similar.  By the Cauchy-Schwarz inequality, 
$$
\bigg(\frac{1}{n} \sum_{j=1}^n \frac{\rVert X_j  \lVert^{3/2}_F} {\lambda_{\min}(X^{1/2}_j)} \bigg)^2 \ \le \ \frac{1}{n} \sum_{j=1}^n \frac{ \rVert X_j  \lVert^{3}_F} {\lambda_{\min}(X_j)}=\frac{1}{n} \sum_{j=1}^n \frac{ (\tr X^2_j)^{3/2}} {\lambda_{\min}(X_j)}.
$$
Applying the Law of Large Numbers and the Continuous Mapping Theorem, we obtain $\lVert \bar{X}_n^{-1} -\alpha^{-1} I_m \rVert_F \xrightarrow{p} 0$ and
$$
\frac{1}{n} \sum_{j=1}^n \frac{ (\tr X^2_j)^{3/2}} {\lambda_{\min}(X_j)} \xrightarrow{p} E_{P_0} \bigg( \frac{ (\tr X^2)^{3/2}} {\lambda_{\min}(X)}\bigg).
$$
Thus, to complete the proof of $V_{n, 2} \xrightarrow{p} 0$, we need to establish that 
$$
\int_{T > 0 } {\frac{ \lVert T \rVert^{3}_F}{ \lambda_{\min} (T)}}\, \dd P_0(T) < \infty \quad\text{and}\quad E_{P_0} \bigg( \frac{ (\tr X^2)^{3/2}} {\lambda_{\min}(X)}\bigg) < \infty,
$$
which are identical criteria.  Since $\lVert T \rVert^{3}_F=(\tr T^2)^{3/2}$, it suffices to show that
$$
\int_{T > 0 } {\frac{ (\tr T^2)^{3/2}}{\lambda_{\min} (T)}}\, \dd P_0(T) < \infty.
$$
However, $\tr T^2 \le (\tr T)^2$ so $(\tr T^2)^{3/2} \le  (\tr T)^3$ so, by Lemma \ref{integral_trace_lambdamin},
$$
\int_{T > 0 } {\frac{ (\tr T)^{3}}{\lambda_{\min} (T)}}\, \dd P_0(T) < \infty
$$
for all $\alpha > \tfrac12(m+1)$.  Therefore, $V_{n,2} \xrightarrow{p} 0$ for all $\alpha > \tfrac12(2m-1)$. 

Since $0 \le V_n \le V_{n,1} +V_{n,2}$, we conclude that $V_n \xrightarrow{p} 0$ for all $\alpha > \max\{ \tfrac12(2m-1), \tfrac12(m+3)\}$.  By Slutsky's theorem,
$
[\alpha^{-1} \Gamma_m(\alpha)]^2 \tr \left[(\sqrt{n} \, M_n)^2 \right] \cdot V_n \xrightarrow{d} 0;
$ 
and therefore 
$
[\alpha^{-1} \Gamma_m(\alpha)]^2 \tr \left[(\sqrt{n} \, M_n)^2 \right] \cdot V_n \xrightarrow{p} 0.
$ 
Hence, by (\ref{zn_zn1}), 
$\lVert \mathcal{Z}_n - \mathcal{Z}_{n,1} \rVert_{L^2} \xrightarrow{p} 0$, for $\alpha > \max\{ \tfrac12(2m-1), \tfrac12(m+3)\}$.

To establish (\ref{nulldistri_3}), define $V_j := g(T,X_j) - \ g(T)$ for $T > 0$ and $j=1,\dotsc,n$. Then it is straightforward to verify that 
$$
\mathcal{Z}_{n,1}-\mathcal{Z}_{n,2} = \Gamma_m(\alpha) \bigg\langle M_n, \frac{1}{\sqrt{n}} \sum_{j=1}^{n} V_j \bigg\rangle
$$
and therefore
\begin{equation}
\label{bound_zn1_zn2}
\lVert \mathcal{Z}_{n,1}-\mathcal{Z}_{n,2} \rVert^2_{L^2}  \le [\Gamma_m(\alpha)]^2 \tr (M_n^2)  \cdot \int_{T > 0} \bigg[\tr \bigg( \frac{1}{\sqrt{n}} \sum_{j=1}^{n} V_j \bigg)^2 \ \bigg] \, \dd P_0(T).
\end{equation}
By the Law of Large Numbers and the Continuous Mapping theorem, $\tr (M_n^2) \xrightarrow{p} 0$.  Since $g(T)= E[g(T,X_j)]$ then $E(V_j)=0$, $j=1,\dotsc,n$; also, $V_1, \dotsc, V_n$ are i.i.d.  

We now show that $E_{X_j} E_{T} \rVert V_j \lVert^2_F < \infty$.  First, 
$$
E_{X_j} E_{T} ( \rVert V_j \lVert^2_F ) = E_{X_j} \bigg( \int_{T>0} \lVert g(T,X_j) - \ g(T) \rVert^2_F \ \dd P_0(T) \bigg).
$$
By the triangle inequality, 
\begin{align*}
\lVert g(T,X_j) - \ g(T) \rVert^2_F & \le  \bigg( \lVert g(T,X_j) \rVert_F + \lVert g(T) \rVert_F \bigg)^2 \\
 & \le  2 \bigg( \lVert g(T,X_j) \rVert^2_F + \lVert g(T) \rVert^2_F \bigg).
\end{align*}
Therefore, it suffices to show that
$E_{X_j} E_T \lVert g(T,X_j) \rVert^2_F$ 
and 
$E_T \lVert g(T) \rVert^2_F$ 
are finite.  

Applying the sub-multiplicative property of the Frobenius norm, and the inequality (\ref{norm_bound_nabla_bessel_2matrixargument}), we have 
\begin{align*}
\lVert g(T,X_j) \rVert^2_F &=\lVert X_j^{1/2} \, \nabla A_{\nu}(T, \alpha^{-1} X_j) X_j^{1/2} \rVert^2_F \\
&\le \, \lVert X_j \lVert^2_F \ \lVert \nabla A_{\nu}(T, \alpha^{-1} X_j) \rVert^2_F \\
&= 
c \, (\tr X^2_j) (\lambda_\min (X_j))^{-1} (\tr T^2) (\lambda_\min (T))^{-1},
\end{align*}
$c > 0$; therefore, 
$$
E_{X_j} E_T \lVert g(T,X_j) \rVert^2_F \le c \,
E_{X_j} \left[(\tr X^2_j) \, (\lambda_\min (X_j))^{-1}\right] \, E_T \left[(\tr T^2) (\lambda_\min (T))^{-1}\right].
$$
By Lemma \ref{integral_trace_lambdamin}, $E_T \left[(\tr T^2) (\lambda_\min (T))^{-1}\right] < \infty$ for $\alpha > \tfrac12(m+1)$.  Since $X_j \sim W_m(\alpha, I_m)$, $j=1,\dotsc,n$, then the same holds for $E_{X_j} \left[(\tr X^2_j) \, (\lambda_\min (X_j))^{-1}\right]$.  
Therefore, it follows that $E_{X_j} E_T \lVert g(T,X_j) \rVert^2_F < \infty$ for all $\alpha > \tfrac12(2m-1)$.  

To show that $E_T \lVert g(T) \rVert^2_F < \infty$, $T \sim W_m(\alpha,I_m)$, we observe that $\lVert g(T) \rVert^2_F = \tr[(g(T))^2]$ is a polynomial in $T$ and therefore its expectation is finite since the moment-generating function of $T$ exists.  


Next, we vectorize the matrices $V_1, \dotsc, V_n$ and denote the corresponding vectors by $\vect(V_1),\dotsc, \vect(V_n)$.  Then, $\vect(V_1),\dotsc, \vect(V_n)$ are i.i.d. zero-mean random vectors with finite covariance matrices.  By the multivariate Central Limit Theorem, $n^{-1/2} \sum_{j=1}^n \vect(V_j)$ converges in distribution to a multivariate normal random vector.  Define 
$$
\mathcal{V} (T):= \bigg\lVert \frac{1}{\sqrt{n}} \sum_{j=1}^{n} V_j \bigg\rVert_F, 
$$
for $T > 0$; we regard $\mathcal{V}$ as a random element in $L^2$. Since $ \lVert \cdot \rVert_F$ is a continuous function, it follows from the Continuous Mapping theorem that $\mathcal{V}$ converges to a random element in $L^2$ and also that 
\begin{align*}
\lVert \mathcal{V} \rVert^2_{L^2} & := \int_{T > 0} \bigg\lVert \frac{1}{\sqrt{n}} \sum_{j=1}^{n} V_j \bigg\rVert_F^2 \ \dd P_0(T) \\
&=\int_{T > 0} \tr \bigg(\frac{1}{\sqrt{n}} \sum_{j=1}^{n} V_j \bigg)^2 \ \dd P_0(T)
\end{align*}
converges in distribution to a random variable that has finite variance.  Since $\tr(M_n^2) \xrightarrow{p} 0$, by (\ref{bound_zn1_zn2}) then, by Slutsky's theorem, we obtain $\lVert \mathcal{Z}_{n,1}-\mathcal{Z}_{n,2} \rVert^2_{L^2} \xrightarrow{d} 0$; therefore $\lVert \mathcal{Z}_{n,1}-\mathcal{Z}_{n,2} \rVert_{L^2} \xrightarrow{p} 0$. 

To establish (\ref{nulldistr_4}), we observe that
\begin{align*}
\mathcal{Z}_{n,2}-\mathcal{Z}_{n,3}&= \frac{\Gamma_m(\alpha)}{\sqrt{n}} \sum_{j=1}^n \bigg( \langle M_n, g(T) \rangle - \langle \alpha^{-1}(\alpha I_m-X_j) ,g(T) \rangle \bigg)\\
&= \frac{\Gamma_m(\alpha)}{\sqrt{n}} \bigg( \langle n M_n, g(T) \rangle - \langle \alpha^{-1} \sum_{j=1}^n (\alpha I_m-X_j) ,g(T) \rangle \bigg)\\
&= \Gamma_m(\alpha) \tr \bigg[ \bigg( \bar{X}_n^{-1/2} \sqrt{n} (\alpha I_m-\bar{X}_n) \bar{X}_n^{-1/2} - \alpha^{-1} \sqrt{n}  (\alpha I_m-\bar{X}_n) \bigg) g(T) \bigg].
\end{align*}
Substituting the now-familiar explicit formula for $g(T)$ from \eqref{gLambdaformula}, 
we obtain
\begin{multline*}
\lVert \mathcal{Z}_{n,2}-\mathcal{Z}_{n,3} \rVert^2_{L^2} 
= \frac{1}{\alpha^2 m^2} \ \bigg[ \tr \bigg( \bar{X}_n^{-1/2} \sqrt{n} (\alpha I_m-\bar{X}_n) \bar{X}_n^{-1/2} - \alpha^{-1} \sqrt{n}  (\alpha I_m-\bar{X}_n) \bigg) \bigg]^2 \\
\times \int_{T > 0} (\tr T)^2 \etr(-2\alpha^{-1} T) \ \dd P_0(T),
\end{multline*}
and as we have seen before, the latter integral 
is finite.  

Now, we observe that
\begin{align*}
\bar{X}_n^{-1/2} \sqrt{n} (\alpha I_m-\bar{X}_n) \bar{X}_n^{-1/2} - \alpha^{-1} \sqrt{n} (\alpha I_m-\bar{X}_n) 
\equiv \sqrt{n}  (\alpha I_m-\bar{X}_n) (\bar{X}_n^{-1}-\alpha^{-1} I_m).
\end{align*}
By the multivariate Central Limit Theorem, $ \sqrt{n} \vect (\alpha I_m-\bar{X}_n)$ converges in distribution to a multivariate normal random vector; and by the Law of Large Numbers for random vectors, $\bar{X}_n^{-1} \xrightarrow{p} \alpha^{-1} I_m$.
By Slutsky's theorem, $\sqrt{n} (\alpha I_m-\bar{X}_n) (\bar{X}_n^{-1}-\alpha^{-1} I_m) \xrightarrow{d} 0$, and so $\sqrt{n} (\alpha I_m-\bar{X}_n) (\bar{X}_n^{-1}-\alpha^{-1} I_m) \xrightarrow{p} 0$. Hence, by the Continuous Mapping Theorem,
$$
\bigg[ \tr \bigg( \bar{X}_n^{-1/2} \sqrt{n} (\alpha I_m-\bar{X}_n) \bar{X}_n^{-1/2} - \alpha^{-1} \sqrt{n}  (\alpha I_m-\bar{X}_n) \bigg) \bigg]^2 \xrightarrow{p} 0;
$$
and so $\lVert \mathcal{Z}_{n,2}-\mathcal{Z}_{n,3} \rVert_{L^2} \xrightarrow{p} 0$. 

Finally, by the Continuous Mapping Theorem in $L^2$ (\cite[p.~67]{chowteicher}, \cite[p.~31]{ref21}), $\lVert \mathcal{Z}_{n} \rVert^2_{L^2} \xrightarrow{d} \lVert \mathcal{Z} \rVert^2_{L^2}$, i.e., 
$$
\boldsymbol{T}^2_{n}= \int_{T > 0} {\mathcal{Z}^2_{n}(T)}\, \dd P_0(T) \xrightarrow{d} \int_{T>0} {\mathcal{Z}^2(T)}\, \dd P_0(T).
$$
The proof now is complete.
$\qed$

\subsection{Eigenvalues and eigenfunctions of the covariance operator}
\label{sec:eigen_matrix}

The covariance operator $\mathcal{S}:L^2 \rightarrow L^2$ of the random element $\mathcal{Z}$ is defined for $S > 0$ and $f \in L^2$ by 
$$
\mathcal{S} f(S) = \int_{S > 0} K(S, T) f(T) \, \dd P_0(T),
$$
where $K(S, T)$ is the covariance function defined in equation (\ref{covariancefn_matrixcase}).  Let $\{\delta_k: k \ge 1\}$ be the positive eigenvalues, listed in non-increasing order according to their multiplicities, of $\mathcal{S}$; also, let $\{\chi^2_{1k}: k \ge 1\}$ be i.i.d. $\chi^2_{1}$-distributed random variables.  It is well-known that the integrated squared process, $\int_{T > 0} \mathcal{Z}^2(T) \, \dd P_0(T)$, has the same distribution as $\sum_{k=1}^\infty \delta_k\chi^2_{1k}$.  This result follows from the Karhunen-Lo\'eve expansion of the Gaussian random field $\mathcal{Z}(T)$; see Le Ma{\^\i}tre and Knio \cite[Chapter 2]{lemaitreknio} or Vakhania \cite[p.~58]{ref22}.  Therefore, the limiting null distribution of $\boldsymbol{T}^2_n$ is the same as $\sum_{k=1}^\infty \delta_k \chi^2_{1k}$.  Let us also denote by $\tilde{\delta}_k$, $k \ge 1$, an enumeration, listed in non-increasing order, of the {\it distinct} values of the eigenvalues $\delta_k$.  Further, we denote by $N(\tilde{\delta}_k)$ the corresponding multiplicities of the distinct eigenvalues $\tilde{\delta}_k$.  Then, $\boldsymbol{T}_n^2$ converges in distribution to $\sum_{k \ge 1} \tilde{\delta}_k \chi^2_{N(\tilde{\delta}_k)}$, where $\{\chi^2_{N(\tilde{\delta}_k)} \}$ are i.i.d. $\chi^2_{N(\tilde{\delta}_k)}$-distributed random variables.

For $S, T > 0$, define 
\begin{equation}
\label{K0kernel_matrixcase}
K_0(S, T)=\Gamma_m(\alpha) \ \etr(-\alpha^{-1}(S+T)) \ A_{\nu} (-\alpha^{-2} S, T), 
\end{equation}
the first term in the covariance function defined in equation (\ref{covariancefn_matrixcase}); by (\ref{boundredcov_matrixcase}) and (\ref{firsttermcovariance_matrix}),
\begin{align}
\label{K0integral_matrix}
K_0(S,T)= [\Gamma_m(\alpha)]^2 \int_{X >0} { A_{\nu}(S, \alpha^{-1}X) \ A_{\nu}(T, \alpha^{-1}X)}\, \dd P_0(X). 
\end{align}

We will first find the eigenvalues and eigenfunctions of the integral operator $\mathcal{S}_0: L^2 \rightarrow L^2$, defined for $S > 0$ and $f$ in $L^2$ by 
\begin{align}
\label{defeigen_matrixcase}
\mathcal{S}_0 f(S)=\int_{T > 0} {K_0(S,T) f(T)}\, \dd P_0(T).
\end{align}

Recall that $m \ge 2$ and $\alpha > \max\{ \tfrac12(2m-1), \tfrac12(m+3) \}$. Throughout the remainder of this work, we use the notation
\begin{equation}
\label{betaandbalpha_matrixcase}
\beta = \bigg(\frac{\alpha+4}{\alpha}\bigg)^{1/2} \quad \hbox{and} \quad b_{\alpha} = \big(1+\tfrac12\alpha(1-\beta)\big)^{1/2}.
\end{equation}
We also set 
\begin{equation}
\label{defrho_matrixcase}
\rho_{\kappa}=\alpha^{m\alpha} b_{\alpha}^{4|\kappa|+2m\alpha}
\end{equation}
for $\kappa$ ranging over all partitions, and
\begin{equation}
\label{deflen_matrixcase}
\textgoth{L}_{\kappa}^{(\nu)} (S) := \beta^{m\alpha/2} \ \etr \big( (1-\beta) S/2 \big)  \ \mathcal{L}_{\kappa}^{(\nu)} \big( \beta S \big).
\end{equation}

\begin{theorem}
\label{theoremreducedcov_matrixcase}
The collection $\{(\rho_\kappa,\textgoth{L}_{\kappa}^{(\nu)})\}$, where $\kappa$ ranges over the set of all partitions, is a complete enumeration of the eigenvalues and eigenfunctions, respectively, of the oerator $\mathcal{S}_0$.  Further, the eigenfunctions $\{\textgoth{L}_{\kappa}^{(\nu)}\}$, for $\kappa$ ranging over all partitions, form an orthonormal basis in $L^2$, and $\mathcal{S}_0$ is positive and of trace-class.
\end{theorem}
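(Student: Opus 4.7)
\medskip
\noindent\textbf{Proof plan.}
The cleanest route is to establish the bilinear (Mehler-type) expansion
\begin{equation*}
K_0(S,T)=\sum_\kappa \rho_\kappa\,\textgoth{L}_\kappa^{(\nu)}(S)\,\textgoth{L}_\kappa^{(\nu)}(T),\qquad S,T>0,
\end{equation*}
together with the completeness of $\{\textgoth{L}_\kappa^{(\nu)}\}$ as an orthonormal basis of $L^2$; the two together yield, by standard spectral theory of Hilbert--Schmidt integral operators whose kernels admit $L^2$-convergent bilinear expansions in an orthonormal basis, the full spectral decomposition of $\mathcal{S}_0$ as stated.

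For the orthonormal-basis claim, I would define the scaling map $U\colon L^2\to L^2$ by $(Uf)(T):=\beta^{m\alpha/2}\,\etr((1-\beta)T/2)\,f(\beta T)$, so that $\textgoth{L}_\kappa^{(\nu)}=U\mathcal{L}_\kappa^{(\nu)}$. A change of variable $T\mapsto\beta T$ in $\|Uf\|_{L^2}^2$, combined with the identities $\etr((1-\beta)T)\,\etr(-T)=\etr(-\beta T)$ and $m\nu+\tfrac12 m(m+1)=m\alpha$, shows that $\|Uf\|_{L^2}=\|f\|_{L^2}$; since $U$ is clearly invertible it is unitary, and since $\{\mathcal{L}_\kappa^{(\nu)}\}$ is already known to be an orthonormal basis of $L^2$ (Herz \cite{herz}; Constantine \cite{constantine}), so is its image $\{\textgoth{L}_\kappa^{(\nu)}\}$.

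For the bilinear expansion, I would invoke the zonal-polynomial (matrix-argument) analogue of the Hille--Hardy/Mehler formula,
\begin{equation*}
\sum_\kappa r^{|\kappa|}\,\mathcal{L}_\kappa^{(\nu)}(X)\,\mathcal{L}_\kappa^{(\nu)}(Y)=(1-r)^{-m\alpha}\,\etr\bigg(-\frac{r(X+Y)}{1-r}\bigg)\,\Gamma_m(\alpha)\,A_\nu\bigg(-\frac{rX}{(1-r)^2},Y\bigg),
\end{equation*}
valid for $0<r<1$ and $X,Y>0$, which can be derived from the generating-function theory of matrix Laguerre polynomials (cf.\ Muirhead \cite[Ch.~7]{muirhead}) together with the series \eqref{besselseriesdef_2matrixargument} defining the two-matrix Bessel function. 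Applying it with $r=b_\alpha^4$, $X=\beta S$, $Y=\beta T$ and then multiplying both sides by $\alpha^{m\alpha}b_\alpha^{2m\alpha}\beta^{m\alpha}\etr((1-\beta)(S+T)/2)$ produces $\sum_\kappa \rho_\kappa\,\textgoth{L}_\kappa^{(\nu)}(S)\,\textgoth{L}_\kappa^{(\nu)}(T)$ on the left-hand side. On the right-hand side, three elementary identities that all follow from $\beta^2=1+4/\alpha$ and \eqref{betaandbalpha_matrixcase} -- namely $b_\alpha^2=(\beta-1)/(\beta+1)$, $1-b_\alpha^4=4\beta/(\beta+1)^2$, and $(\beta^2-1)/4=1/\alpha$ -- collapse the constant prefactor to $\Gamma_m(\alpha)$, the exponential to $\etr(-(S+T)/\alpha)$, and the Bessel argument to $-S/\alpha^2$, recovering $K_0(S,T)$ exactly.

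Positivity of $\mathcal{S}_0$ is then immediate from \eqref{K0integral_matrix}: for $f\in L^2$, $\langle\mathcal{S}_0 f,f\rangle_{L^2}$ equals $[\Gamma_m(\alpha)]^2$ times the squared $L^2(P_0)$-norm of the integral transform $X\mapsto\int_{S>0}A_\nu(S,X/\alpha)f(S)\,\dd P_0(S)$, hence nonnegative. Trace-class follows because $\sum_\kappa\rho_\kappa=\alpha^{m\alpha}b_\alpha^{2m\alpha}\sum_\kappa b_\alpha^{4|\kappa|}<\infty$: the count of partitions $\kappa$ with $\ell(\kappa)\le m$ and $|\kappa|=k$ is polynomial in $k$, while $0<b_\alpha<1$ because $\beta>1$. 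The main obstacle in this plan is the careful verification of the matrix Mehler identity and the three algebraic identities linking $\alpha$, $\beta$, and $b_\alpha$; once those are in place, the spectral decomposition follows essentially by bookkeeping.
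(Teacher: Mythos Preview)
Your approach is essentially the same as the paper's: both rely on the matrix Poisson kernel (Mehler--Hille--Hardy) identity with the substitution $r=b_\alpha^4$, $X=\beta S$, $Y=\beta T$ to obtain the bilinear expansion of $K_0$, and both establish positivity from the factorization \eqref{K0integral_matrix}.

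There are two mild differences worth noting. First, for completeness of $\{\textgoth{L}_\kappa^{(\nu)}\}$ you observe that the map $U$ is unitary and carries the known basis $\{\mathcal{L}_\kappa^{(\nu)}\}$ to $\{\textgoth{L}_\kappa^{(\nu)}\}$; this is cleaner than the paper's route, which re-proves completeness from scratch via the uniqueness theorem for orthogonally invariant Hankel transforms (arguing that $\langle f,\textgoth{L}_\kappa^{(\nu)}\rangle=0$ for all $\kappa$ forces $\langle\mathcal{S}_0 f,f\rangle=0$, hence $f=0$). Second, for the trace-class property you sum the eigenvalues directly, whereas the paper factors $\mathcal{S}_0=\mathcal{T}_0^2$ with $\mathcal{T}_0$ Hilbert--Schmidt; both are valid, and yours is more direct once the spectral decomposition is in hand. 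Your algebraic identities relating $\alpha$, $\beta$, $b_\alpha$ are correct and match the paper's computations in \eqref{r_matrixcase}--\eqref{xyridentities_matrixcase}.
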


\begin{proof}
Recall from \cite[p.~290, Problem 7.21]{muirhead} the \textit{Poisson kernel}: For $r \in (0,1)$ and $X , Y > 0$, 
\begin{multline}
\label{poissonkernel_matrixcase}
A_{\nu}\bigg(-\frac{r}{(1-r)^2} X, Y \bigg) \\
= (1-r)^{m \alpha} \etr \Bigg(\frac{r}{1-r} (X+Y) \Bigg) 
\frac{1}{\Gamma_m(\alpha)} \ \sum_{k=0}^{\infty} \sum_{|\kappa|=k} \mathcal{L}_{\kappa}^{(\nu)} (X) \mathcal{L}_{\kappa}^{(\nu)} (Y) \ r^k.
\end{multline}
In this expansion, set 
\begin{equation}
\label{r_matrixcase}
r = b_{\alpha}^4 = \big(1+\tfrac12\alpha(1-\beta)\big)^2,
\end{equation}
so that $r \in (0,1)$.  Note that $r^{1/2} = 1+\tfrac12\alpha(1-\beta)$ satisfies the quadratic equation 
$$
r - (\alpha+2)r^{1/2} + 1 = 0
$$
and also that this equation is equivalent to the identity 
\begin{align}
\frac{1-r}{\alpha r^{1/2}} &= 1 + \frac{2}{\alpha}(1 - r^{1/2}). \nonumber \\
\intertext{On the right-hand side of this identity, substitute for $r^{1/2}$ in terms of $\alpha$ and $\beta$ to obtain }
\label{defbeta_matrixcase}
\frac{1-r}{\alpha r^{1/2}} &= 1 + \frac{2}{\alpha}\big[1 - \big(1+\tfrac12\alpha(1-\beta)\big)\big] = \beta.
\end{align}
In (\ref{poissonkernel_matrixcase}), also set 
\begin{equation*}
X = \frac{1-r}{\alpha r^{1/2}} S \equiv \beta S \quad \hbox{and} \quad Y = \frac{1-r}{\alpha r^{1/2}} T \equiv \beta T.
\end{equation*}
Then, 
\begin{equation}
\label{xyridentities_matrixcase}
\frac{r(X+Y)}{1-r} = \frac{(r^{1/2}-1)(S+T)}{\alpha} + \frac{(S+T)}{\alpha}.
\end{equation}
Applying (\ref{defrho_matrixcase}),(\ref{deflen_matrixcase}) and (\ref{r_matrixcase})-(\ref{xyridentities_matrixcase}) to (\ref{poissonkernel_matrixcase}), and substituting the result in (\ref{K0kernel_matrixcase}), we obtain for $S, T > 0$, the pointwise convergent series expansion, 
\begin{equation}
\label{reducedcovariancepoisson2_matrixcase}
K_0(S, T) = \sum_{k=0}^{\infty} \sum_{|\kappa|=k} \rho_{\kappa} \, \textgoth{L}_{\kappa}^{(\nu)} (S) \textgoth{L}_{\kappa}^{(\nu)} (T).
\end{equation}
By (\ref{laguerreorthog_matrixargument}), the generalized Laguerre polynomials $\{\mathcal{L}_{\kappa}^{(\nu)}\}$ form an orthonormal system; then it is straightforward to verify that the system $\{\textgoth{L}_{\kappa}^{(\nu)}\}$ also is orthonormal in $L^2$, for $\kappa$ ranging over all partitions, i.e., 
\begin{equation}
\label{orthonormal_matrixcase}
\int_{S>0} {\textgoth{L}_{\kappa}^{(\nu)} (S) \textgoth{L}_{\sigma}^{(\nu)} (S)}\, \dd P_0(S)=
\begin{cases}
    1, & \kappa=\sigma \\
    0, & \kappa \neq \sigma
  \end{cases}
\end{equation}

Now we verify that the series (\ref{reducedcovariancepoisson2_matrixcase}) converges in the separable tensor product Hilbert space $L^2 \otimes L^2:=L^2 (P_0 \times P_0)$. By the Cauchy criterion, it suffices to prove that for each $\epsilon > 0$, there exists $N \in \mathbb{N}$ such that 
\begin{align*}
\int_{\mathcal{P}_{+}^{m \times m} \times \mathcal{P}_{+}^{m \times m}} {\bigg[ \sum_{k=l_1}^{l_2}\sum_{|\kappa|=k} \rho_{\kappa} \textgoth{L}_{\kappa}^{(\nu)} (S) \textgoth{L}_{\kappa}^{(\nu)} (T) \bigg]^2}\, \dd (P_0 \otimes P_0) (S, T) < \epsilon,
\end{align*}
for all $l_1, l_2 \in \mathbb{N}$ such that $l_2 \ge l_1 \ge N$. By squaring the integrand, it suffices by Fubini's theorem to consider 
\begin{align*}
&\sum_{k=l_1}^{l_2}  \ \int_{S> 0} \int_{T>0} { \bigg[ \sum_{|\kappa|=k} \rho_{\kappa} \textgoth{L}_{\kappa}^{(\nu)} (S) \textgoth{L}_{\kappa}^{(\nu)} (T) \bigg]^2} \, \dd P_0(T) \, \dd P_0(S) \\
& + 2 \sum_{k_1=l_1}^{l_2-1} \sum _{k_2=l_1+1}^{l_2} \ \int_{S > 0} \int_{T>0} { \bigg[ \sum_{|\kappa|=k_1} \rho_{\kappa} \textgoth{L}_{\kappa}^{(\nu)} (S) \textgoth{L}_{\kappa}^{(\nu)} (T) \bigg]} \\
& \qquad\qquad\qquad\qquad\qquad\qquad \times 
{\bigg[\sum_{|\kappa|=k_2} \rho_{\kappa} \textgoth{L}_{\kappa}^{(\nu)} (S) \textgoth{L}_{\kappa}^{(\nu)} (T) \bigg] }\, \dd P_0 (T) \ \dd P_0(S).
\end{align*}
Since the system $\{\textgoth{L}_{\kappa}^{(\nu)}\}$ is orthonormal, the latter sum reduces to 
\begin{align*}
\sum_{k=l_1}^{l_2} \sum_{|\kappa|=k} \rho_{\kappa}^2 &= \alpha^{2m\alpha} b_{\alpha}^{4m\alpha} \, \sum_{k=l_1}^{l_2} b_{\alpha}^{8k}\sum_{|\kappa|=k} 1 \\
&= \alpha^{2m\alpha} b_{\alpha}^{4m\alpha} \ \sum_{k=l_1}^{l_2} b_{\alpha}^{8k} p_m(k),
\end{align*}
where $p_m(k)$ represents the number of partitions of $k$ into at most $m$ parts.  It is well-known that 
$$
\sum_{k=0}^{\infty} b_{\alpha}^{8k} p_m(k) = \prod_{k=1}^m (1-b_{\alpha}^{8k})^{-1}.
$$
Therefore, $\sum_{k=0}^{\infty} b_{\alpha}^{8k} p_m(k)$ is a convergent series. Since every convergent series in any metric space is Cauchy, it follows that for each $\epsilon >0$, there exists $N \in \mathbb{N}$ such that $\sum_{k=l_1}^{l_2} b_{\alpha}^{8k} p_m(k) < \epsilon$, for all $l_1, l_2 \in \mathbb{N}$ such that $l_2 \ge l_1 \ge N$. Therefore, the series (\ref{reducedcovariancepoisson2_matrixcase}) is Cauchy in $L^2 \otimes L^2$ and hence, 
$$
\lim_{l \rightarrow \infty} \int_{\mathcal{P}_{+}^{m \times m} \times \mathcal{P}_{+}^{m \times m}}  {\bigg[ K_0(S, T)} 
{-\sum_{k=0}^{l} \sum_{|\kappa|=k} \rho_{\kappa} \textgoth{L}_{\kappa}^{(\nu)} (S) \textgoth{L}_{\kappa}^{(\nu)} (T)  \bigg] ^2}\, \dd (P_0 \otimes P_0) (S, T) =0.
$$
By Fubini's theorem, the latter expression equals 
\begin{equation}
\label{seriesoconvarianceconv_matrixcase}
\lim_{l \rightarrow \infty} \int_{S>0} \int_{T>0}  {\bigg[ K_0(S, T)} 
-\sum_{k=0}^{l} \sum_{|\kappa|=k} \rho_{\kappa} \textgoth{L}_{\kappa}^{(\nu)}(S) 
\textgoth{L}_{\kappa}^{(\nu)}(T) \bigg] ^2 \, \dd P_0(T) \, \dd P_0(S) = 0.
\end{equation}
It follows from the orthonormality, (\ref{orthonormal_matrixcase}), of the system $\{\textgoth{L}_{\kappa}^{(\nu)}\}$ that, for $l \in \mathbb{N}$ and partition $\sigma$ such that $l \ge |\sigma|$, 
\begin{equation}
\label{propertyeigen_matrixcase}
\int_{T > 0} \sum_{k=0}^{l} \ \sum_{|\kappa|=k}  \rho_{\kappa} \textgoth{L}_{\kappa}^{(\nu)} (S) \textgoth{L}_{\kappa}^{(\nu)} (T) \  \textgoth{L}_{\sigma}^{(\nu)}(T) \, \dd P_0 (T) = \rho_{\sigma}  \textgoth{L}_{\sigma}^{(\nu)}(S).
\end{equation}
By (\ref{defeigen_matrixcase}) and (\ref{propertyeigen_matrixcase}),
\begin{align*}
\int_{S > 0} & { \bigg| \mathcal{S}_0\textgoth{L}_{\sigma}^{(\nu)}(S)-\rho_{\sigma}  \textgoth{L}_{\sigma}^{(\nu)}(S)\bigg| }\, \dd P_0(S) \nonumber\\
&= \int_{S > 0} {\bigg| \int_{T >0}{ \bigg[ K_0(S, T)- \sum_{k=0}^{l}  \ \sum_{|\kappa|=k} \rho_{\kappa} \textgoth{L}_{\kappa}^{(\nu)} (S) \textgoth{L}_{\kappa}^{(\nu)} (T) \bigg] }}
{{\textgoth{L}_{\sigma}^{(\nu)}(T)}\, \dd P_0 (T)  \bigg| }\, \dd P_0(S).\nonumber
\end{align*}
By the Cauchy-Schwarz inequality, this latter expression is bounded by 
\begin{multline}
\label{propertyeigen2_matrixcase}
\bigg( \int_{S > 0} { \int_{T > 0}{ \bigg| K_0(S, T)}} 
{{- \sum_{k=0}^{l} \sum_{|\kappa|=k} \rho_{\kappa}\textgoth{L}_{\kappa}^{(\nu)} (S) \textgoth{L}_{\kappa}^{(\nu)} (T) \bigg|^2}}\, \dd P_0(T) \ \dd P_0(S) \bigg)^{1/2} \\
\times  \bigg( \int_{S > 0} \int_{T > 0} {\bigg| \textgoth{L}_{\sigma}^{(\nu)}(T) \bigg|^2 }\, \dd P_0 (T) \ \dd P_0(S) \bigg)^{1/2}.
\end{multline}
By the orthonormality property (\ref{orthonormal_matrixcase}) and the fact that $P_0$ is a probability distribution, the second term in (\ref{propertyeigen2_matrixcase}) equals 1; therefore, 
\begin{multline}
\label{propertyeigen3_matrixcase}
\int_{S > 0} { \bigg| \mathcal{S}_0\textgoth{L}_{\sigma}^{(\nu)}(S)-\rho_{\sigma}  \textgoth{L}_{\sigma}^{(\nu)}(S)\bigg| }\, \dd P_0(S) \\
\le \bigg( \int_{S > 0} { \int_{T > 0}{ \bigg| K_0(S, T)}} 
{{- \sum_{k=0}^{l} \sum_{|\kappa|=k} \rho_{\kappa}\textgoth{L}_{\kappa}^{(\nu)} (S) \textgoth{L}_{\kappa}^{(\nu)} (T) \bigg|^2}}\, \dd P_0(T) \ \dd P_0(S) \bigg)^{1/2}.
\end{multline}
Since $l$ is arbitrary, we now let $l \rightarrow \infty$. By (\ref{seriesoconvarianceconv_matrixcase}), the right-hand side of (\ref{propertyeigen3_matrixcase}) converges to $0$, so we obtain 
$$
\int_{S > 0} { \bigg| \mathcal{S}_0\textgoth{L}_{\sigma}^{(\nu)}(S)-\rho_{\sigma}  \textgoth{L}_{\sigma}^{(\nu)}(S)\bigg| }\, \dd P_0(S)=0,
$$
which proves that $\mathcal{S}_0\textgoth{L}_{\sigma}^{(\nu)}(S)=\rho_\sigma \textgoth{L}_{\sigma}^{(\nu)}(S)$, for $P_0$-almost every $S$. Therefore, $\rho_{\kappa}$ is an eigenvalue of $S_0$ with corresponding eigenfunction $\textgoth{L}_{\kappa}^{(\nu)}$.

Since the kernel $K_0(S,T)$ is symmetric in $(S,T)$, it follows that $\mathcal{S}_0$ is symmetric. To show that $\mathcal{S}_0$ is positive, we observe that for $f \in L^2$, 
\begin{align*}
\langle \mathcal{S}_0 f, f \rangle_{L^2}&=\int_{S >0} {\mathcal{S}_0 f(S) \overline{f(S)}}\, \dd P_0(S) \nonumber\\
&=\int_{S>0} {\bigg[ \int_{T>0} {K_0(S, T) \ f(T)}\, \dd P_0(T)\bigg] \overline{f(S)}}\, \dd P_0(S).
\end{align*}
Substituting for $K_0(S,T)$ from (\ref{K0integral_matrix}), we obtain
\begin{multline*}
\langle \mathcal{S}_0 f, f \rangle_{L^2} = \int_{S >0} { \bigg[ \int_{T>0} { \bigg( \int_{X> 0} {[\Gamma_m(\alpha)]^2 A_{\nu}(S, \alpha^{-1}X) }}} \\
\times {{{A_{\nu}(T, \alpha^{-1}X)}\, \dd P_0(X) \bigg) f(T) } \dd P_0(T) \bigg] \overline{f(S)} }\, \dd P_0(S).
\end{multline*}
Applying Fubini's theorem to reverse the order of the integration, we find that the inner integrals with respect to $S$ and $T$ are complex conjugates of each other; therefore, 
\begin{align}
\label{innerproduct_matrixcase}
\langle \mathcal{S}_0 f, f \rangle_{L^2}&=\int_{X>0} { \bigg| \int_{S >0} { \Gamma_m(\alpha) A_{\nu}(S, \alpha^{-1}X) f(S) }\, \dd P_0(S)  \bigg|^2}\, \dd P_0(X),
\end{align}
which is positive. Thus, $\mathcal{S}_0$ is positive. 

Next, we prove that $\mathcal{S}_0$ is of trace-class. For $f \in L^2$, $S > 0$, it again follows by (\ref{K0integral_matrix}) and Fubini's theorem that
\begin{align}
\label{S0traceclass_matrix}
\mathcal{S}_0 f(S) &= \int_{T>0} {K_0(S, T) \ f(T)}\, \dd P_0(T) \nonumber\\
&=\int_{X >0} { \int_{T>0} { [\Gamma_m(\alpha)]^2 A_{\nu}(T, \alpha^{-1}X) f(T)}\, \dd P_0(T) } {A_{\nu}(S, \alpha^{-1}X)}\, \dd P_0(X).
\end{align}
Denote by $\mathcal{T}_0:L^2 \rightarrow L^2$ the integral operator,
$$
\mathcal{T}_0f(T)=\int_{X>0} {\Gamma_m(\alpha) \ A_{\nu}(T, \alpha^{-1}X) f(X)}\, \dd P_0(X),
$$
$T > 0$. By (\ref{2besselineq_matrixargument}), $|\Gamma_m(\alpha) \ A_{\nu}(T, \alpha^{-1}X)|\, \le 1$ and therefore 
$$
\left\lvert \Gamma_m(\alpha) \ A_{\nu}(T, \alpha^{-1}X)\right\rvert_{L^2 \otimes L^2}^2 < \infty,
$$
for $T, X > 0$.  By \cite[p.~93, Theorem 8.8]{ref17}, it follows that $\mathcal{T}_0$ is a Hilbert-Schmidt operator. Now, we can write (\ref{S0traceclass_matrix}) as
\begin{align*}
\mathcal{S}_0 f(S) &= \int_{X >0} \mathcal{T}_0 f(X) \left[\Gamma_m(\alpha) \, A_{\nu}(S, \alpha^{-1}X)\right] \, \dd P_0(X) \\
&= \mathcal{T}_0(\mathcal{T}_0 f)(S),
\end{align*}
$S > 0$, which proves that $\mathcal{S}_0$ is of trace-class.

To complete the proof, we now show that the set $\{\textgoth{L}_{\kappa}^{(\nu)}\}$ is complete. It is sufficient to show that if $f \in L^2$ with $\langle f, \textgoth{L}_{\kappa}^{(\nu)} \rangle _{L^2}=0$ for all partitions $\kappa$, then $f=0$ $P_0$-almost everywhere. First, we note that 
\begin{align}
\label{completeness_matrixcase}
\int_{S>0} & { \bigg| (\mathcal{S}_0 f)(S) \overline{f(S)}-\sum_{k=0}^{l} \sum_{|\kappa|=k} \rho_{\kappa} \langle f, \textgoth{L}_{\kappa}^{(\nu)} \rangle _{L^2} \ \textgoth{L}_{\kappa}^{(\nu)} (S) \overline{f(S)} \bigg| }\, \dd P_0(S)\nonumber\\
= \ & \int_{S > 0} {\bigg| \int_{T>0} { \bigg[ K_0(S, T)-\sum_{k=0}^{l} \sum_{|\kappa|=k} \rho_{\kappa} \textgoth{L}_{\kappa}^{(\nu)} (S) \ \textgoth{L}_{\kappa}^{(\nu)} (T)  \bigg]}} {{ f(T) \overline{f(S)}} \, \dd P_0(T) \bigg| }\, \dd P_0(S) \nonumber\\
\le \ & \bigg( \int_{S > 0} {\int_{T>0} { \bigg| K_0(S, T) }} {{-\sum_{k=0}^{l} \sum_{|\kappa|=k} \rho_{\kappa} \textgoth{L}_{\kappa}^{(\nu)} (S) \ \textgoth{L}_{\kappa}^{(\nu)} (T)  \bigg|^2}}\, \dd P_0(T) \dd P_0(S) \bigg)^{1/2} \nonumber\\
& \qquad\qquad\qquad\qquad\qquad \times  \bigg( \int_{S > 0} {\int_{T>0} {|f(S)|^2 \  |f(T)|^2}}\, \dd P_0(T) \dd P_0 (S) \bigg)^{1/2},
\end{align}
by the Cauchy-Schwarz inequality. Since $f \in L^2$, the second term on the right-hand side of (\ref{completeness_matrixcase}) is finite. Taking the limit on both sides of  (\ref{completeness_matrixcase}) as $l \rightarrow \infty$ and applying (\ref{seriesoconvarianceconv_matrixcase}), we obtain
\begin{equation}
\label{completeness2_matrixcase}
\lim_{l \rightarrow \infty} \int_{S>0} { \bigg| (\mathcal{S}_0 f)(S) \overline{f(S)} } 
- {\sum_{k=0}^{l} \sum_{|\kappa|=k} \rho_{\kappa} \langle f, \textgoth{L}_{\kappa}^{(\nu)} \rangle _{L^2} \ \textgoth{L}_{\kappa}^{(\nu)} (S) \overline{f(S)} \bigg| }\, \dd P_0(S)=0.
\end{equation}
Since $\langle f, \textgoth{L}_{\kappa}^{(\nu)} \rangle _{L^2}=0$ for all partitions $\kappa$ then (\ref{completeness2_matrixcase}) reduces to 
$$
\langle \mathcal{S}_0 f, f \rangle_{L^2}=\int_{S>0} {(\mathcal{S}_0 f)(S) \overline{f(S)}}\, \dd P_0(S)=0.
$$

Therefore, by (\ref{innerproduct_matrixcase}), we obtain for $P_0$-almost every $X$, 
\begin{align}
\label{completenesseigen2_matrixcase}
\int_{S >0} { \Gamma_m(\alpha) A_{\nu}(S, \alpha^{-1}X) f(S) }\, \dd P_0(S)=0.
\end{align}
Since the function $\Gamma_m(\alpha) A_{\nu}(S, \alpha^{-1}X)$ is continuous for all $X >0$ and fixed $S >0$ and by (\ref{2besselineq_matrixargument}),
$$
|\Gamma_m(\alpha) A_{\nu}(S, \alpha^{-1}X)| \ \le \ 1, 
$$
for $X, S > 0$, then by the Dominated Convergence Theorem, the integral on the left-hand side of (\ref{completenesseigen2_matrixcase}) is a continuous function of $X$. If two continuous functions are equal $P_0$-almost everywhere then they are equal everywhere; hence (\ref{completenesseigen2_matrixcase}) holds for all $X > 0$. 

Henceforth, without loss of generality, we assume that $f$ is real-valued. Let $f^{+}$ and $f^{-}$ denote the positive and negative parts of $f$, respectively. Then, $f=f^{+}-f^{-}$, $f^{+}$ and $f^{-}$ are nonnegative, and since $f \in L^2$ then by the Cauchy-Schwarz inequality, $f^{+}$ and $f^{-}$ are $P_0$-integrable. Also, by (\ref{completenesseigen2_matrixcase}), 
$$
\int_{S >0} { \Gamma_m(\alpha) A_{\nu}(S, \alpha^{-1}X) f^{+}(S) }\, \dd P_0(S) = \int_{S >0} { \Gamma_m(\alpha) A_{\nu}(S, \alpha^{-1}X) f^{-}(S) }\, \dd P_0(S),
$$
$X > 0$. By the Uniqueness Theorem for orthogonally invariant Hankel transforms, Theorem \ref{uniqueness_hankeltilde}, we notice that there are only two possible cases. Either
$$
\int_{S>0}{f^{+} (S)}\, \dd P_0(S)=\int_{S>0}{f^{-} (S)}\, \dd P_0(S)=0,
$$
or
$$
\int_{S>0}{f^{+} (S)}\, \dd P_0(S)=\int_{S>0}{f^{-} (S)}\, \dd P_0(S)=C > 0.
$$
For the first case, we have $f^{+}=f^{-}=0$ and so $f=0$ $P_0$-almost everywhere. 
As for the second case, we have
$$
\int_{S >0} { 
A_{\nu}(S, \alpha^{-1}X) C^{-1} f^{+}(S)}\, \dd P_0(S) 
=\int_{S >0} { 
A_{\nu}(S, \alpha^{-1}X) C^{-1} f^{-}(S)}\, \dd P_0(S),
$$
$X > 0$. By the Uniqueness Theorem for orthogonally invariant Hankel transforms, we obtain $f^{+}=f^{-}$ and hence $f=0$ $P_0$-almost everywhere. This proves that the orthonormal set $\{\textgoth{L}_{\kappa}^{(\nu)}\}$ is complete, and therefore, it forms a basis in the separable Hilbert space $L^2$. 
\end{proof}

\medskip

The proof of the following theorem is similar to the proof of Theorem \ref{theoremreducedcov_matrixcase}, and the
complete details are provided by Hadjicosta \cite{hadjicosta19}.

\begin{theorem} 
\label{properties_covariance_operator_matrix}
Let $\mathcal{S}:L^2 \rightarrow L^2$ be the covariance operator of the random element $\mathcal{Z}$ defined as
$$
\mathcal{S} f(S)=\int_{T > 0} {K(S,T) f(T)}\, \dd P_0(T)
$$
for all $S > 0$ and for all functions $f$ in $L^2$, where $K(S, T)$ is the covariance function defined in equation (\ref{covariancefn_matrixcase}). Then, $\mathcal{S}$ is positive and of trace-class.
\end{theorem}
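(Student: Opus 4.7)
The plan is to decompose the kernel $K(S,T)$ of \eqref{covariancefn_matrixcase} into the piece already analyzed in Theorem \ref{theoremreducedcov_matrixcase} plus two rank-one corrections, and to read off the two properties from this decomposition. Write $K(S,T) = K_0(S,T) + K_1(S,T) + K_2(S,T)$, where $K_0$ is defined in \eqref{K0kernel_matrixcase},
\begin{align*}
K_1(S,T) &= -\frac{1}{\alpha^3 m} (\tr S)(\tr T) \, \etr\!\big(-\alpha^{-1}(S+T)\big), \\
K_2(S,T) &= -\etr\!\big(-\alpha^{-1}(S+T)\big),
\end{align*}
and correspondingly $\mathcal{S} = \mathcal{S}_0 + \mathcal{S}_1 + \mathcal{S}_2$. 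By Theorem \ref{theoremreducedcov_matrixcase} the operator $\mathcal{S}_0$ is positive and of trace-class, so only $\mathcal{S}_1$ and $\mathcal{S}_2$ require additional attention.

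Next, I would recognize $\mathcal{S}_1$ and $\mathcal{S}_2$ as rank-one operators on $L^2$. Setting $\phi_1(T) := (\tr T) \, \etr(-\alpha^{-1}T)$ and $\phi_2(T) := \etr(-\alpha^{-1}T)$, both functions are orthogonally invariant (each depends only on the eigenvalues of $T$) and they lie in $L^2$: since $\etr(-2\alpha^{-1}T) \dd P_0(T)$ is, up to a normalizing constant, a Wishart measure, and since $(\tr T)^{2}$ is polynomial, the moment calculations used in \eqref{gLambda_integral} and \eqref{limitingnullterm3_matrix} give $\|\phi_1\|_{L^2}, \|\phi_2\|_{L^2} < \infty$. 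One then checks directly from the definition that
$$
\mathcal{S}_1 f = -\frac{1}{\alpha^3 m} \, \langle f, \phi_1 \rangle_{L^2} \, \phi_1, \qquad \mathcal{S}_2 f = -\langle f, \phi_2 \rangle_{L^2} \, \phi_2,
$$
so each $\mathcal{S}_i$ is a bounded finite-rank operator and hence trace-class. As trace-class operators form an ideal closed under sums, $\mathcal{S} = \mathcal{S}_0 + \mathcal{S}_1 + \mathcal{S}_2$ is trace-class.

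For positivity, the most transparent route is to exploit the probabilistic interpretation of $K$ as the covariance function of the centered Gaussian random element $\mathcal{Z}$ in $L^2$ constructed in Theorem \ref{limitingnulldistribution_matrixcase}. For real $f \in L^2$, the trace-class conclusion above gives $E\|\mathcal{Z}\|_{L^2}^2 = \tr \mathcal{S} < \infty$, which justifies the Fubini interchange in
$$
\langle \mathcal{S} f, f \rangle_{L^2} = \iint K(S,T) f(S) f(T) \, \dd P_0(S) \, \dd P_0(T) = E \bigg[ \bigg( \int_{T>0} \mathcal{Z}(T) f(T) \, \dd P_0(T) \bigg)^{\! 2} \, \bigg] \ge 0.
$$
The extension to complex-valued $f$ is immediate since $K$ is real and symmetric. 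An alternative, purely analytic route would mimic the argument given for \eqref{innerproduct_matrixcase}: write $K_0$ via \eqref{K0integral_matrix}, show that the two rank-one corrections rearrange into $-E[|\cdots|^2]$ terms involving $\tr X$ and $\etr(-X)$ under $P_0$, and combine into a single manifestly non-negative squared expectation; this mirrors the derivation of \eqref{covariancefn_matrixcase} from the variance of $\mathcal{Z}_{n,3,1}$.

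The main obstacle is checking that the two correction operators are genuinely trace-class contributions and not mere formal adjustments; once $\phi_1, \phi_2 \in L^2$ has been verified (which the Wishart moment estimates make routine), the rest of the argument is a direct transcription of standard Hilbert-space facts. Positivity is then essentially automatic from the covariance structure and needs only the finiteness of $\tr \mathcal{S}$ to legitimize the Fubini step.
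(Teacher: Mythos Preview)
Your proposal is correct and essentially matches the paper's intended approach. The paper does not spell out the proof but points to Theorem~\ref{theoremreducedcov_matrixcase} and to Hadjicosta's thesis; your rank-one decomposition $\mathcal{S}=\mathcal{S}_0+\mathcal{S}_1+\mathcal{S}_2$ is exactly the decomposition the paper itself uses later in Proposition~\ref{interlacing_matrix} (there called $\mathcal{U}_0,\mathcal{U}_1$), and your positivity argument via the covariance interpretation of $K$ is the natural analogue of the integral-representation argument \eqref{innerproduct_matrixcase} used for $\mathcal{S}_0$.
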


Recall here that a non-trivial function $\phi \in L^2$ is an \textit{eigenfunction} of $\mathcal{S}$ if there exists an \textit{eigenvalue} $\delta \in \bC$ such that $\mathcal{S}\phi = \delta \phi$.  As $\mathcal{S}$ is self-adjoint and positive, its eigenvalues are real and nonnegative.  In the next result, we find the positive eigenvalues (that are not eigenvalues of $\mathcal{S}_0$) and corresponding eigenfunctions of the operator $\mathcal{S}$, and we will show in Subsection \ref{decayrate_matrix} that $0$ is not an eigenvalue of $\mathcal{S}$.  

\begin{theorem}
\label{thmeigenS_matrix}
Let $\delta \in \mathbb{R}$ with $\delta \neq \rho_{\kappa}$ for any partition $\kappa$. Also, denote by $\tilde{\rho}_k$, $k \ge 1$, an enumeration, listed in non-increasing order, of the {\it distinct} values of the eigenvalues $\rho_\kappa$ and define the functions
\begin{align*}
A(\delta)&=1- \beta^{m\alpha} m \ \sum_{k=0}^{\infty} \frac{ (m\alpha)_k}{k!( \tilde{\rho}_k-\delta)} \tilde{\rho}_k^2,\\
B(\delta)&=1-\alpha \beta^{m\alpha} m \sum_{k=0}^{\infty} \frac{ (m\alpha)_k}{ k! (\tilde{\rho}_k -\delta)} \tilde{\rho}_k^2 \ (b_{\alpha}^{2} -m^{-1} k \beta)^2,\\
\intertext{and}
D(\delta)&=\alpha^{2} \beta^{m\alpha} m \sum_{k=0}^{\infty}  \frac{ (m\alpha)_k}{k ! (\tilde{\rho}_k -\delta)} \tilde{\rho}_k^2 \ ( b_{\alpha}^{2} -m^{-1} k\beta).
\end{align*}
Then, the positive eigenvalues of $\mathcal{S}$ are the positive roots of $G(\delta) = \alpha^3 A(\delta)B(\delta) - D^2(\delta)$. The eigenfunction corresponding to an eigenvalue $\delta$ has Fourier-Laguerre expansion
$$
\beta^{m\alpha/2} \ \sum_{k=0}^{\infty} \frac{ \tilde{\rho}_k }{\sqrt{k!}(\tilde{\rho}_k -\delta)} \big(C_1+C_2 \alpha^{-1}(b_{\alpha}^{2} -m^{-1} k \beta)\big) \sum_{|\kappa|=k}  \big( C_{\kappa} (I_m) \ \big[ \alpha \big]_{\kappa}  \big)^{1/2}  \ \textgoth{L}_{\kappa}^{(\nu)},
$$
where $C_1C_2 \neq 0$, $\alpha^3 C_1A(\delta)=C_2 D(\delta)$ and  $C_2B(\delta)=C_1D(\delta)$.
\end{theorem}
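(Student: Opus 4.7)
My plan is to exploit the fact that the covariance operator $\mathcal{S}$ is a rank-two perturbation of $\mathcal{S}_0$, since the kernel \eqref{covariancefn_matrixcase} decomposes as
\[
K(S,T) = K_0(S,T) - u_1(S)u_1(T) - \frac{1}{\alpha^3 m}\, u_2(S)\, u_2(T),
\]
with $u_1(S) := \etr(-\alpha^{-1}S)$ and $u_2(S) := (\tr S)\etr(-\alpha^{-1}S)$. An eigenpair $(\delta,\phi)$ of $\mathcal{S}$ therefore satisfies $(\mathcal{S}_0-\delta)\phi = c_1 u_1 + c_2 u_2$ with $c_1 := \langle u_1,\phi\rangle_{L^2}$ and $c_2 := (\alpha^3 m)^{-1}\langle u_2,\phi\rangle_{L^2}$. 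Since by hypothesis $\delta \neq \rho_\kappa$ for any partition $\kappa$, Theorem \ref{theoremreducedcov_matrixcase} implies that $\mathcal{S}_0-\delta$ is invertible on $L^2$, so $\phi = c_1 R u_1 + c_2 R u_2$ where $R:=(\mathcal{S}_0-\delta)^{-1}$. Inserting this representation back into the definitions of $c_1$ and $c_2$ yields a homogeneous $2\times 2$ linear system in $(c_1,c_2)$, and $\delta$ is an eigenvalue of $\mathcal{S}$ iff the determinant of this system vanishes.

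To compute the three independent entries of that system I will expand $u_1$ and $u_2$ in the orthonormal basis $\{\textgoth{L}_\kappa^{(\nu)}\}$ given by \eqref{deflen_matrixcase}. After the rescaling $U = \beta S$, both inner products $\langle u_i,\textgoth{L}_\kappa^{(\nu)}\rangle$ reduce to Laplace integrals of $\mathcal{L}_\kappa^{(\nu)}(U)$ at a common value $w$ of the exponential parameter; the case $i=1$ is handled by \eqref{laguerreintegral_matrixcase}, while the extra factor $\tr S$ in $u_2$ is treated by the weighted identity \eqref{integrallemmalaguerre_matrixcase} of Lemma \ref{lemmalaguerreintegral_matrixcase}. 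The algebraic relations $\alpha\beta b_\alpha^2 = 1-b_\alpha^4$ and $(w-1)/w = b_\alpha^4$, both deducible from \eqref{betaandbalpha_matrixcase} and the quadratic $b_\alpha^4-(\alpha+2)b_\alpha^2+1=0$, collapse the Laplace outputs into closed forms proportional to $\rho_\kappa$ and to $\rho_\kappa(b_\alpha^2 - m^{-1}|\kappa|\beta)$ respectively, with a common prefactor $\sqrt{[\alpha]_\kappa C_\kappa(I_m)/|\kappa|!}$.

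The matrix entries $\langle u_i, R u_j\rangle = \sum_\kappa (\rho_\kappa-\delta)^{-1}\langle u_i,\textgoth{L}_\kappa^{(\nu)}\rangle\langle u_j,\textgoth{L}_\kappa^{(\nu)}\rangle$ then consist of three double sums indexed by partitions $\kappa$. Grouping by weight $|\kappa| = k$ and using \eqref{sum_zonalfactorial} to evaluate $\sum_{|\kappa|=k}[\alpha]_\kappa C_\kappa(I_m) = (m\alpha)_k$ reduces each double sum to a single series in $k$. After matching algebraic prefactors, these three series are precisely the functions $A(\delta)$, $B(\delta)$, $D(\delta)$ in the statement, so the secular determinant of the $2\times 2$ system becomes $G(\delta) = \alpha^3 A(\delta)B(\delta) - D^2(\delta)$. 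The Fourier-Laguerre coefficients of the corresponding eigenfunction are then read off directly from $\phi = c_1 R u_1 + c_2 R u_2$, expanded in the $\{\textgoth{L}_\kappa^{(\nu)}\}$ basis; the two linear relations $\alpha^3 C_1 A(\delta) = C_2 D(\delta)$ and $C_2 B(\delta) = C_1 D(\delta)$ appear as the two rows of the secular system after rescaling $c_i$ to $C_i$, and either row determines the ratio $C_1:C_2$ on the eigenvalue locus.

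The main technical obstacle will be the $u_2$ computation: Lemma \ref{lemmalaguerreintegral_matrixcase} returns a factor $m\alpha(w-1)-|\kappa|$ which must be converted, using the Jacobian factor $\beta^{-1}$ and the identity $w-1 = b_\alpha^2/(\alpha\beta)$, into the final multiplier $b_\alpha^2 - m^{-1}|\kappa|\beta$. A secondary issue is the convergence of the three series defining $A$, $B$, $D$ for every $\delta \notin \{\rho_\kappa\}$; this follows from the geometric decay $\tilde\rho_k = \alpha^{m\alpha} b_\alpha^{4k+2m\alpha}$ with $b_\alpha^4 \in (0,1)$, which dominates the polynomial growth $(m\alpha)_k/k! = O(k^{m\alpha-1})$ and, through the same estimate, also ensures that the candidate eigenfunction $\phi$ has square-summable Fourier-Laguerre coefficients, so genuinely lies in $L^2$.
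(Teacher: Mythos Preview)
Your proposal is correct and follows essentially the same approach as the paper: both exploit the rank-two structure $K = K_0 - u_1\otimes u_1 - (\alpha^3 m)^{-1} u_2\otimes u_2$, compute the inner products $\langle u_i,\textgoth{L}_\kappa^{(\nu)}\rangle_{L^2}$ via the Laplace identities \eqref{laguerreintegral_matrixcase} and \eqref{integrallemmalaguerre_matrixcase}, collapse the partition sums to single series in $k$ using \eqref{sum_zonalfactorial}, and reduce the eigenvalue condition to a $2\times2$ homogeneous system whose determinant is $G(\delta)$. The only difference is packaging---you phrase the argument through the resolvent $R=(\mathcal{S}_0-\delta)^{-1}$, whereas the paper works directly with the Fourier--Laguerre coefficients of $\phi$ and matches them term by term; the paper also spells out in more detail the converse direction (constructing $\tilde\phi$ from a root of $G$), including pointwise convergence of the series and justification of the interchange of summation and integration, which you should be prepared to supply.
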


\begin{proof}
Since the set $\{\textgoth{L}_{\kappa}^{(\nu)}\}$, for $\kappa$ ranging over all partitions, is an orthonormal basis for $L^2$, the eigenfunction $\phi \in L^2$ corresponding to an eigenvalue $\delta$ can be written as
$$
\phi=\sum_{k=0}^{\infty} \sum_{|\kappa|=k} \langle \phi, \textgoth{L}_{\kappa}^{(\nu)} \rangle_{L^2} \ \textgoth{L}_{\kappa}^{(\nu)}.
$$
We restrict ourselves temporarily to eigenfunctions for which this series is pointwise convergent.  Substituting this series into the equation $\mathcal{S} \phi =\delta \phi$, we obtain 
\begin{equation}
\label{eigen_matrixcase}
\int_{T>0} {K(S,T) \sum_{k=0}^{\infty} \sum_{|\kappa|=k} \langle \phi, \textgoth{L}_{\kappa}^{(\nu)} \rangle_{L^2} \textgoth{L}_{\kappa}^{(\nu)} (T) }\, \dd P_0(T) 
= \delta \ \sum_{k=0}^{\infty} \sum_{|\kappa|=k} \langle \phi, \textgoth{L}_{\kappa}^{(\nu)} \rangle_{L^2} \textgoth{L}_{\kappa}^{(\nu)}(S).
\end{equation}
Substituting the covariance function $K(S,T)$ in the left-hand side of (\ref{eigen_matrixcase}), writing $K$ in terms of $K_0$, and assuming that we can interchange the order of integration and summation, we obtain
\begin{align}
\label{equationeigen_matrixcase}
\sum_{k=0}^{\infty} & \sum_{|\kappa|=k} \langle \phi, \textgoth{L}_{\kappa}^{(\nu)} \rangle_{L^2} \nonumber\\ 
\times & \int_{T>0} { \bigg[ K_0(S,T) -\etr(-\alpha^{-1} (S+T)) (\alpha^{-3} m^{-1} (\tr S) (\tr T) +1) \bigg]} 
{\textgoth{L}_{\kappa}^{(\nu)} (T) }\, \dd P_0(T) \nonumber\\
&=\delta \ \sum_{k=0}^{\infty} \sum_{|\kappa|=k} \langle \phi, \textgoth{L}_{\kappa}^{(\nu)} \rangle_{L^2} \textgoth{L}_{\kappa}^{(\nu)}(S).
\end{align}

By Theorem \ref{theoremreducedcov_matrixcase}, 
$$
\int_{T> 0} {K_0(S,T) \textgoth{L}_{\kappa}^{(\nu)} (T)}\, \dd P_0(T)=\rho_{\kappa} \textgoth{L}_{\kappa}^{(\nu)} (S). 
$$
On writing $\textgoth{L}_{\kappa}^{(\nu)}$ in terms of $L_{\kappa}^{(\nu)}$, the generalized Laguerre polynomial, applying (\ref{laguerreintegral_matrixcase}) for the Laplace transform of $L_{\kappa}^{(\nu)}$, and making use of (\ref{r_matrixcase}) and (\ref{defbeta_matrixcase}), we obtain
\begin{align}
\langle \etr(-\alpha^{-1} T), \textgoth{L}_{\kappa}^{(\nu)} \rangle_{L^2} &:= \int_{T >0} {\etr(-\alpha^{-1} T) \textgoth{L}_{\kappa}^{(\nu)} (T) }\, \dd P_0(T) \nonumber\\
\label{integral1_eigenS_matrixcase}
&= \bigg(\frac{ C_{\kappa} (I_m) \ \big[ \alpha \big]_{\kappa} }{| \kappa | !} \bigg)^{1/2} \ \beta^{m\alpha/2} \rho_{\kappa}.
\end{align}

Again writing $\textgoth{L}_{\kappa}^{(\nu)}$ in terms of $L_{\kappa}^{(\nu)}$, applying \eqref{integrallemmalaguerre_matrixcase}, and making use of (\ref{r_matrixcase}) and (\ref{defbeta_matrixcase}), we obtain
\begin{align}
\langle \etr(-\alpha^{-1} T) (\tr T), \textgoth{L}_{\kappa}^{(\nu)} \rangle_{L^2} &: =\int_{T>0} {\etr(-\alpha^{-1} T) (\tr T) \textgoth{L}_{\kappa}^{(\nu)} (T) }\, \dd P_0(T)\nonumber\\
\label{secondintegral2_matrixcase}
&= \bigg( \frac{ C_{\kappa} (I_m) \ \big[ \alpha \big]_{\kappa} }{|\kappa!|} \bigg)^{1/2}  \ \alpha^2 \beta^{m \alpha/2} \rho_{\kappa} (m b_{\alpha}^{2} -|\kappa| \beta).
\end{align}
In summary, (\ref{equationeigen_matrixcase}) reduces to
\begin{multline*}
\sum_{k=0}^{\infty} \sum_{|\kappa|=k} \rho_{\kappa} \langle \phi, \textgoth{L}_{\kappa}^{(\nu)} \rangle_{L^2}  \bigg[ \textgoth{L}_{\kappa}^{(\nu)}(S) \\
-\etr(-\alpha^{-1} S)  \bigg( \frac{C_{\kappa} (I_m) \ \big[ \alpha \big]_{\kappa} }{|\kappa!|} \bigg)^{1/2} \beta^{m\alpha/2} (\alpha^{-1}m^{-1} (\tr S) (m b_{\alpha}^{2} -|\kappa| \beta) +1 )\bigg]\\
=\delta \ \sum_{k=0}^{\infty} \sum_{|\kappa|=k} \langle \phi, \textgoth{L}_{\kappa}^{(\nu)} \rangle_{L^2} \textgoth{L}_{\kappa}^{(\nu)}(S).
\end{multline*}

By applying (\ref{integral1_eigenS_matrixcase}), we obtain the Fourier-Laguerre expansion of $\etr(-\alpha^{-1}S)$ with respect to the orthonormal basis $\{ \textgoth{L}_{\kappa}^{(\nu)} \}$; indeed,
\begin{align*}
\etr(-\alpha^{-1} S) &=\sum_{k=0}^{\infty} \sum_{|\kappa|=k} \langle \etr(-\alpha^{-1} S), \textgoth{L}_{\kappa}^{(\nu)} \rangle_{L^2} \ \textgoth{L}_{\kappa}^{(\nu)} (S)\\
&= \beta^{m\alpha/2} \sum_{k=0}^{\infty} \sum_{|\kappa|=k} \bigg( \frac{C_{\kappa} (I_m) \ \big[ \alpha \big]_{\kappa} } {| \kappa | !} \bigg)^{1/2} \ \rho_{\kappa} \ \textgoth{L}_{\kappa}^{(\nu)} (S).
\end{align*}
Similarly, by applying (\ref{secondintegral2_matrixcase}), we have 
\begin{align*}
\etr(-\alpha^{-1} S) (\tr S)
&= \sum_{k=0}^{\infty} \sum_{|\kappa|=k} \langle \etr(-\alpha^{-1} S) (\tr S), \textgoth{L}_{\kappa}^{(\nu)} \rangle_{L^2} \ \textgoth{L}_{\kappa}^{(\nu)} (S)\nonumber\\
&=\alpha^2 \beta^{m \alpha/2} \sum_{k=0}^{\infty} \sum_{|\kappa|=k} \bigg( \frac{C_{\kappa} (I_m) \ \big[ \alpha \big]_{\kappa} } {| \kappa | !} \bigg)^{1/2} \ \rho_{\kappa} (m b_{\alpha}^{2} -|\kappa| \beta) \ \textgoth{L}_{\kappa}^{(\nu)} (S).
\end{align*}

Let
\begin{align}
\label{C1_matrixcase}
C_1&=\int_{T>0} {\etr(-\alpha^{-1} T) \phi(T)}\, \dd P_0(T) \nonumber\\
&=\beta^{m\alpha/2} \sum_{k=0}^{\infty} \sum_{|\kappa|=k} \langle \phi, \textgoth{L}_{\kappa}^{(\nu)} \rangle_{L^2} \ \bigg( \frac{C_{\kappa} (I_m) \ \big[ \alpha \big]_{\kappa} } {| \kappa | !} \bigg)^{1/2} \ \rho_{\kappa},
\end{align}
and
\begin{align}
\label{C2_matrixcase}
C_2&:=\int_{T>0} {\etr(-\alpha^{-1} T) (\tr T) \phi(T) }\, \dd P_0(T)\nonumber\\
&=\alpha^2 \beta^{m \alpha/2} \sum_{k=0}^{\infty} \sum_{|\kappa|=k} \langle \phi, \textgoth{L}_{\kappa}^{(\nu)} \rangle_{L^2}  \ \bigg( \frac{C_{\kappa} (I_m) \ \big[ \alpha \big]_{\kappa} } {| \kappa | !} \bigg)^{1/2} \ \rho_{\kappa} (m b_{\alpha}^{2} -|\kappa| \beta).
\end{align}

Combining (\ref{equationeigen_matrixcase})-(\ref{C2_matrixcase}), we find that (\ref{eigen_matrixcase}) reduces to 
\begin{align*}
& \delta \ \sum_{k=0}^{\infty} \sum_{|\kappa|=k} \langle \phi, \textgoth{L}_{\kappa}^{(\nu)} \rangle_{L^2} \textgoth{L}_{\kappa}^{(\nu)}(S) \nonumber\\
& \quad = \sum_{k=0}^{\infty} \sum_{|\kappa|=k} \rho_{\kappa}  \bigg[ \langle  \phi, \textgoth{L}_{\kappa}^{(\nu)} \rangle_{L^2} \nonumber\\
& \qquad\qquad\qquad\qquad - \beta^{m\alpha/2} \bigg( \frac{C_{\kappa} (I_m) \ \big[ \alpha \big]_{\kappa} } {| \kappa | !} \bigg)^{1/2}  (C_1 +C_2 \alpha^{-1}(b_{\alpha}^{2} -m^{-1} |\kappa| \beta)) \bigg]  \ \textgoth{L}_{\kappa}^{(\nu)} (S),
\end{align*}
and by comparing the coefficients of $\textgoth{L}_{\kappa}^{(\nu)} (S)$, we obtain
$$
\delta \langle \phi, \textgoth{L}_{\kappa}^{(\nu)} \rangle_{L^2} 
=\rho_{\kappa}  \bigg[ \langle  \phi, \textgoth{L}_{\kappa}^{(\nu)} \rangle_{L^2}- \beta^{m\alpha/2} \bigg( \frac{C_{\kappa} (I_m) \ \big[ \alpha \big]_{\kappa} } {| \kappa | !} \bigg)^{1/2}  (C_1 +C_2 \alpha^{-1}(b_{\alpha}^{2} -m^{-1} |\kappa| \beta)) \bigg],
$$
for all partitions $\kappa$. Since we have assumed that $\delta \neq \rho_{\kappa}$ for any $\kappa$ then we can solve the equation for $\langle \phi, \textgoth{L}_{\kappa}^{(\nu)} \rangle_{L^2}$ to obtain
\begin{equation}
\label{coefficients_eigen2_matrixcase}
\langle \phi, \textgoth{L}_{\kappa}^{(\nu)} \rangle_{L^2} 
= \beta^{m\alpha/2} \frac{\rho_{\kappa}}{\rho_{\kappa}-\delta} \bigg( \frac{C_{\kappa} (I_m) \ \big[ \alpha \big]_{\kappa} } {| \kappa | !} \bigg)^{1/2}  (C_1 +C_2 \alpha^{-1}(b_{\alpha}^{2} -m^{-1} |\kappa| \beta)).
\end{equation}

Substituting (\ref{coefficients_eigen2_matrixcase}) into (\ref{C1_matrixcase}), and applying Lemma \ref{sum_zonalfactorial}, we get
\begin{align*}
C_1 &= C_1 \beta^{m\alpha} \ \sum_{k=0}^{\infty} \sum_{|\kappa|=k} \frac{ C_{\kappa} (I_m) \ \big[ \alpha \big]_{\kappa}}{| \kappa | ! (\rho_{\kappa} -\delta)}\rho_{\kappa}^2 \\
& \qquad
+ C_2 \alpha^{-1} \beta^{m\alpha} \ \sum_{k=0}^{\infty} \sum_{|\kappa|=k} \frac{ C_{\kappa} (I_m) \ \big[ \alpha \big]_{\kappa}}{| \kappa | ! (\rho_{\kappa}-\delta)} \rho_{\kappa}^2 \ (b_{\alpha}^{2} -m^{-1} |\kappa| \beta)\\
&= C_1(1-m^{-1}A(\delta))+C_2 \alpha^{-3} m^{-1} D(\delta);
\end{align*}
therefore,
\begin{align}
\label{C1C2_eq_matrixcase}
\alpha^3 C_1 A(\delta)=C_2 D(\delta).
\end{align}

Similarly, by substituting (\ref{coefficients_eigen2_matrixcase}) into (\ref{C2_matrixcase}) and applying Lemma \ref{sum_zonalfactorial}, we get
\begin{align*}
C_2 &= C_1 \ \alpha^{2} \beta^{m\alpha} m \sum_{k=0}^{\infty} \sum_{|\kappa|=k} \frac{ C_{\kappa} (I_m) \ \big[ \alpha \big]_{\kappa}}{| \kappa | ! (\rho_{\kappa} -\delta)} \rho_{\kappa}^2 \ ( b_{\alpha}^{2} -m^{-1} |\kappa| \beta) \\
& \quad\quad\quad\quad +C_2 \ \alpha \beta^{m\alpha} m \sum_{k=0}^{\infty} \sum_{|\kappa|=k} \frac{ C_{\kappa} (I_m) \ \big[ \alpha \big]_{\kappa}}{| \kappa | ! (\rho_{\kappa}-\delta)} \rho_{\kappa}^2 \ (b_{\alpha}^{2} -m^{-1} |\kappa| \beta)^2\\
&=C_1 D(\delta)+C_2(1-B(\delta));
\end{align*}
hence
\begin{align}
\label{C1C2_eq2_matrixcase}
C_2 B(\delta)=C_1 D(\delta).
\end{align}

Suppose $C_1=C_2=0$; then it follows from (\ref{coefficients_eigen2_matrixcase}) that $\langle \phi, \textgoth{L}_{\kappa}^{(\nu)} \rangle_{L^2}=0$ for all partitions $\kappa$, which implies that $\phi=0$, which is a contradiction since $\phi$ is a non-trivial eigenfunction. Hence, $C_1$ and $C_2$ cannot be both equal to 0. 

Combining (\ref{C1C2_eq_matrixcase}) and (\ref{C1C2_eq2_matrixcase}, and using the fact that $C_1$ and $C_2$ are not both $0$, it is straightforward to establish that $\alpha^3 A(\delta) B(\delta)= D^2(\delta)$ : If $C_1 \neq 0$ and $C_2 \neq 0$, then we obtain $\alpha^3 C_1 C_2 A(\delta) B(\delta)= C_1 C_2 D^2(\delta)$ so $\alpha^3 A(\delta) B(\delta)= D^2(\delta)$. If $C_1=0$ and $C_2 \neq 0$, then we obtain $D(\delta)=B(\delta)=0$ and again $\alpha^3 A(\delta) B(\delta)= D^2(\delta)$ is true. If $C_1 \neq 0$ and $C_2=0$, then we obtain $D(\delta)=A(\delta)=0$ and again $\alpha^3 A(\delta) B(\delta)= D^2(\delta)$ is true. Therefore, if $\delta$ is a positive eigenvalue of $\mathcal{S}$ then it is a positive root of the function 
$G(\delta) = \alpha^3 A(\delta) B(\delta)-D^2(\delta)$.  

Conversely, suppose that $\delta$ is a positive root of $G(\delta)$ with $\delta \neq \rho_{\kappa}$ for any partition $\kappa$. Define
\begin{align}
\label{gamman_matrixcase}
\gamma_{\kappa}:= \beta^{m\alpha/2} \bigg( \frac{C_{\kappa} (I_m) \ \big[ \alpha \big]_{\kappa} } {| \kappa | !} \bigg)^{1/2} \frac{ \rho_{\kappa}}{\rho_{\kappa} -\delta}  \bigg(C_1+C_2 \alpha^{-1}(b_{\alpha}^{2} -m^{-1} |\kappa| \beta)\bigg),
\end{align}
where $C_1$ and $C_2$ are real constants that are not both equal to 0 and which satisfy (\ref{C1C2_eq_matrixcase}) and (\ref{C1C2_eq2_matrixcase}). That such constants exist can be shown by following a case-by-case argument similar to \cite[p.~48]{ref27}: If $D \neq 0$, $A \neq 0$, and $B \neq 0$, then we can choose $C_2$ to be any non-zero number then set $C_1= C_2 B/D$. If $D=0$, $A=0$, and $B \neq 0$, then we can choose $C_1$ to be any non-zero number and then set $C_2=0$. If $D=0$, $A \neq 0$, and $B=0$, then we can choose $C_2$ to be any non-zero number and then set $C_1=0$. Last, if $D=0$, $A=0$, and $B=0$, then we can choose $C_1$ and $C_2$ to be any non-zero numbers.

Now define, for $S > 0$, the function 
\begin{align}
\label{eigenfunctionfull_matrixcase}
\tilde{\phi} (S) =\sum_{k=0}^{\infty} \sum_{|\kappa|=k} \gamma_{\kappa} \ \textgoth{L}_{\kappa}^{(\nu)} (S). 
\end{align}
By applying the ratio test, we obtain $\sum_{k=0}^{\infty} \sum_{|\kappa|=k}  \gamma_{\kappa}^2 < \infty$; therefore $\tilde{\phi} \in L^2$. 

We also verify that the series (\ref{eigenfunctionfull_matrixcase}) converges pointwise. By (\ref{laguerre2_matrixcase}) and (\ref{deflen_matrixcase}),
$$
|\textgoth{L}_{\kappa}^{(\nu)} (S) | = \beta^{m\alpha/2} \ \etr ((1-\beta) S/2) \ ( | \kappa | ! \ C_{\kappa} (I_m) \ \big[ \alpha \big]_{\kappa} )^{-1/2} \ | L^{\nu}_{\kappa} (\beta S )|,
$$
$S > 0$. By inequality (\ref{boundlaguerre_matrixcase}), 
$$
| L^{(\nu)}_{\kappa} ( \beta S )| \ \le \etr(\beta S) \ C_{\kappa}(I_m) \ \big[ \alpha \big]_{\kappa},
$$
$S > 0$. Therefore,  
\begin{align}
\label{ineqeigen_matrixcase}
| \textgoth{L}_{\kappa}^{(\nu)} (S) | &\le \ \beta^{m\alpha/2} \ \etr ((1+\beta)S/2) \ \bigg( \frac{C_{\kappa}(I_m) \ \big[ \alpha \big]_{\kappa}}{ | \kappa | !} \bigg)^{1/2}.
\end{align}
Thus, to establish the pointwise convergence of the series (\ref{eigenfunctionfull_matrixcase}), we need to show that
\begin{align}
\label{seriesconvpointwise_matrixcase}
\sum_{k=0}^{\infty} \sum_{|\kappa|=k} \bigg( \frac{C_{\kappa}(I_m) \ \big[ \alpha \big]_{\kappa}}{ | \kappa | !} \bigg)^{1/2} \ |\gamma_{\kappa}| \ < \infty .
\end{align}
The convergence of the above series follows from the ratio test. 

Next, we justify the interchange of summation and integration in our calculations. By a corollary to Theorem 16.7 in Billingsley \cite[p.~224]{billi2}, we need to verify that
\begin{align}
\label{interchange_sumint_matrixcase}
\sum_{k=0}^{\infty} \sum_{|\kappa|=k} | \gamma_{\kappa} | \int_{S>0} {  K(S,T) \ | \textgoth{L}_{\kappa}^{(\nu)} (T) | }\, \dd P_0(T) < \infty.
\end{align}
First, we find a bound for $K_0(S,T)$. By (\ref{2besselineq_matrixargument}), 
$|\Gamma_m(\alpha) A_{\nu} (-\alpha^{-2}S, T) | \ \le 1$ for
$S, T > 0$. Thus, by (\ref{K0kernel_matrixcase}),
\begin{align}
\label{ineqK0_matrixcase}
0 \le K_0(S,T) \le \etr(-\alpha^{-1}(S+ T))
\end{align}
By the triangle inequality and by (\ref{ineqK0_matrixcase}), we have 
\begin{align*}
0 \le K(S,T) & \le K_0(S,T) + \etr(-\alpha^{-1}(S+ T))(\alpha^{-3}m^{-1} (\tr S) (\tr T) +1) \\
& \le \etr(-\alpha^{-1}(S+T)) \ ( 2+ \alpha^{-3}m^{-1} (\tr S) (\tr T)).
\end{align*}
Thus, to prove (\ref{interchange_sumint_matrixcase}), we need to establish that 
$$
\sum_{k=0}^{\infty} \sum_{|\kappa|=k} | \gamma_{\kappa} | 
\int_{T >0} { \etr(-\alpha^{-1}T) \, ( 2+ \alpha^{-3}m^{-1} (\tr S) (\tr T)) \ | \textgoth{L}_{\kappa}^{(\nu)} (T) | }\, \dd P_0(T) < \infty.
$$
By applying the bound (\ref{ineqeigen_matrixcase}), we see that it suffices to prove that 
$$
\sum_{k=0}^{\infty} \sum_{|\kappa|=k} \bigg( \frac{C_{\kappa}(I_m) \ \big[ \alpha \big]_{\kappa}}{ | \kappa | !} \bigg)^{1/2} \ | \gamma_{\kappa} | \int_{T>0} { \etr(-\alpha^{-1}T)\ \etr ((1+\beta)T/2) }\, \dd P_0(T) < \infty,
$$
and 
$$
\sum_{k=0}^{\infty} \sum_{|\kappa|=k} \bigg( \frac{C_{\kappa}(I_m) \ \big[ \alpha \big]_{\kappa}}{ | \kappa | !} \bigg)^{1/2} \ | \gamma_{\kappa} | 
\int_{T>0} { (\tr T) \etr(-\alpha^{-1}T) \ \etr ((1+\beta)T/2) }\, \dd P_0(T) < \infty.
$$
As these integrals are finite, the convergence of both series follows from (\ref{seriesconvpointwise_matrixcase}).

To calculate $\mathcal{S} \tilde{\phi}(S)$ from (\ref{eigenfunctionfull_matrixcase}), we follow the same steps as before to obtain
\begin{align*}
\mathcal{S} \tilde \phi (S)&=\int_{S>0} {K(S,T) \sum_{k=0}^{\infty} \sum_{|\kappa|=k}\gamma_{\kappa} \textgoth{L}_{\kappa}^{(\nu)} (T)}\, \dd P_0(T) \\
&=\sum_{k=0}^{\infty} \sum_{|\kappa|=k} \rho_{\kappa} \gamma_{\kappa} \textgoth{L}_{\kappa}^{(\nu)}(S) 
- C_1 \beta^{m\alpha/2} \sum_{k=0}^{\infty} \sum_{|\kappa|=k} \bigg( \frac{C_{\kappa} (I_m) \ \big[ \alpha \big]_{\kappa} } {| \kappa | !} \bigg)^{1/2} \ \rho_{\kappa} \textgoth{L}_{\kappa}^{(\nu)}(S) \\
& \qquad - C_2 \alpha^{-1} \beta^{m \alpha/2} \sum_{k=0}^{\infty} \sum_{|\kappa|=k} \bigg( \frac{C_{\kappa} (I_m) \ \big[ \alpha \big]_{\kappa} } {| \kappa | !} \bigg)^{1/2} \ \rho_{\kappa} (b_{\alpha}^{2} -m^{-1} |\kappa| \beta) \textgoth{L}_{\kappa}^{(\nu)} (S).
\end{align*}
By the definition (\ref{gamman_matrixcase}) of $\gamma_{\kappa}$, and noting that 
$$
\frac{\rho_{\kappa}}{\rho_{\kappa} -\delta} -1 =\frac{\delta}{\rho_{\kappa} -\delta},
$$
we have 
\begin{align*}
\mathcal{S} \tilde \phi (S)&=\beta^{m \alpha/2} \sum_{k=0}^{\infty} \sum_{|\kappa|=k}  \bigg[\frac{\rho_{\kappa}}{\rho_{\kappa} -\delta} -1 \bigg]  \bigg(\frac{C_{\kappa} (I_m) \ \big[ \alpha \big]_{\kappa} } {| \kappa | !} \bigg)^{1/2} \rho_{\kappa} \\
& \qquad\qquad\qquad\qquad \times \bigg(C_1+C_2 \alpha^{-1}(b_{\alpha}^{2} -m^{-1} |\kappa| \beta)\bigg) \textgoth{L}_{\kappa}^{(\nu)}(S)\\
&=\beta^{m \alpha/2} \delta \sum_{k=0}^{\infty} \sum_{|\kappa|=k} \frac{\rho_{\kappa}^2}{\rho_{\kappa} -\delta}  \bigg(\frac{C_{\kappa} (I_m) \ \big[ \alpha \big]_{\kappa} } {| \kappa | !} \bigg)^{1/2} 
\bigg(C_1+C_2 \alpha^{-1}(b_{\alpha}^{2} -m^{-1} |\kappa| \beta)\bigg) \textgoth{L}_{\kappa}^{(\nu)}(S)\\
&=\delta \sum_{k=0}^{\infty} \sum_{|\kappa|=k}  \gamma_{\kappa} \textgoth{L}_{\kappa}^{(\nu)}(S)\\
&=\delta \tilde{\phi} (S).
\end{align*}
Therefore, $\delta$ is an eigenvalue of $\mathcal{S}$ with corresponding eigenfunction $\tilde{\phi}$. 
\end{proof}

\begin{remark}
{\rm
In \cite{hadjicostarichards}, where we studied goodness-of-fit testing for the gamma distributions, we have conjectured that the eigenvalues of $\mathcal{S}$ are not eigenvalues of $\mathcal{S}_0$.  However, as shown in the next subsection, this is not valid in the case of the Wishart distributions.
}
\end{remark}

\subsection{An interlacing property of the eigenvalues}
\label{decayrate_matrix}

A difficulty of the eigenvalues $\delta_k$ is that they have no closed form expression; hence there is no simple formula for $N$, the number of terms in the truncated series $\sum_{k=1}^N  \delta_k \chi^2_{1k}$ that should be used in practice to approximate the asymptotic distribution, $\sum_{k=1}^{\infty} \delta_k \chi^2_{1k}$, of the test statistic $\boldsymbol{T}_n^2$.  

Since $\mathcal{S}_0$ is of trace-class then, by \cite[p.~237, Corollary 3.2]{brislawn}, $Tr(\mathcal{S}_0)$ can be calculated by integrating the kernel $K_0$ or by evaluating the sum of all eigenvalues $\rho_{\kappa}$: 
\begin{align}
\label{TraceS0_matrix}
\int_{S > 0} { K_0(S,S)}\, \dd P_0(S) &= Tr(\mathcal{S}_0) \nonumber \\
& = \sum_{k=0}^{\infty} \sum_{|\kappa|=k} \rho_{\kappa} = \alpha^{m\alpha} b_{\alpha}^{2m\alpha} \prod_{k=1}^{m} (1-b_{\alpha}^{4k})^{-1}.
\end{align}
Since $\mathcal{S}$ also is of trace-class then 
\begin{align}
\label{TraceS1_matrix}
\sum_{k=1}^{\infty} \delta_k &= Tr(\mathcal{S}) 
= \int_{S > 0} K(S, S) \, \dd P_0(S) \nonumber \\
&= \int_{S >0} \bigg [ K_0(S, S) - (\alpha^{-3} m^{-1}(\tr S)^2 +1) \etr(-2 \alpha^{-1} S)  \bigg] \, \dd P_0(S) \nonumber \\
&=\alpha^{m\alpha} b_{\alpha}^{2m\alpha} \prod_{k=1}^{m} (1-b_{\alpha}^{4k})^{-1}-\alpha^{-3} m^{-1} \sum_{|\kappa|=2} \int_{S >0} {\etr(-2 \alpha^{-1} S) C_{\kappa} (S) }\, \dd P_0(S) \nonumber\\
& \quad\quad -\int_{S >0} {\etr(-2 \alpha^{-1} S)}\, \dd P_0(S).
\end{align}
All of these integrals can be evaluated using (\ref{zonalintegral}) and (\ref{gammaintegral}), and the resulting sum can be simplified using Lemma \ref{sum_zonalfactorial}.  Consequently, we obtain 
\begin{align}
\label{sum_of_delta_k}
\sum_{k=1} ^{\infty} \delta_k 
&= \alpha^{m\alpha} b_{\alpha}^{2m\alpha} \prod_{k=1}^{m} (1-b_{\alpha}^{4k})^{-1} - \bigg(\frac{\alpha}{\alpha+2}\bigg)^{m\alpha} \bigg(1 + \frac{m\alpha+1}{(\alpha+2)^2} \bigg).
\end{align}

To determine the number of terms in the truncated series $\sum_{k=1}^N \delta_k \chi^2_{1k}$ that should be used in practice to approximate the asymptotic distribution of $\boldsymbol{T}_n^2$, we derive bounds for the eigenvalues $\delta_k$ in terms of the $\rho_\kappa$ and then obtain a general formula for $N$ as a function of $\alpha$. We refer to the ratio $(\sum_{k=1}^N  \delta_k)/Tr(\mathcal{S})$ as the $N$th {\it scree ratio} for $\boldsymbol{T}_n^2$.  

Since the operator $\mathcal{S}$ is compact and positive then the set of all its eigenvalues is countable and contains only nonnegative values \cite[Theorem 8.12, p.~98]{ref17}. The next result shows that the eigenvalues indeed are positive.

\bigskip

\begin{proposition}
\label{injectiveoperators}
The operators $\mathcal{S}$ and $\mathcal{S}_0$ are injective; that is, $\mathcal{S}f = \mathcal{S}g$ if and only if $f = g$, and the same holds for $\mathcal{S}_0$.  In particular, $0$ is not an eigenvalue of $\mathcal{S}$ or $\mathcal{S}_0$.  
\end{proposition}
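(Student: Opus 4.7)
The plan is to prove injectivity of $\mathcal{S}_0$ via its eigenfunction expansion, and of $\mathcal{S}$ via the stochastic representation of $\mathcal{S}$ as the covariance operator of $\mathcal{Z}_{n,3,1}$, reducing the problem to uniqueness of a multivariate Laplace transform.

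For $\mathcal{S}_0$ the argument is immediate from Theorem \ref{theoremreducedcov_matrixcase}: the normalized Laguerre polynomials $\{\textgoth{L}_\kappa^{(\nu)}\}$ form an orthonormal basis of $L^2$ consisting of eigenfunctions of $\mathcal{S}_0$ with strictly positive eigenvalues $\rho_\kappa = \alpha^{m\alpha} b_\alpha^{4|\kappa| + 2m\alpha}$. Expanding $f = \sum_\kappa a_\kappa \textgoth{L}_\kappa^{(\nu)}$ gives $\mathcal{S}_0 f = \sum_\kappa \rho_\kappa a_\kappa \textgoth{L}_\kappa^{(\nu)}$, so $\mathcal{S}_0 f = 0$ forces $a_\kappa = 0$ for every partition $\kappa$ and hence $f = 0$ in $L^2$.

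For $\mathcal{S}$, since $\mathcal{S}$ is the covariance operator of $\mathcal{Z}_{n,3,1}$ we have $\langle \mathcal{S}f, f\rangle_{L^2} = E\bigl[(\int f(T)\mathcal{Z}_{n,3,1}(T)\,dP_0(T))^2\bigr]$. If $\mathcal{S}f = 0$ this variance vanishes, and since $\mathcal{Z}_{n,3,1}$ is mean-zero we obtain $\int f(T)\mathcal{Z}_{n,3,1}(T)\,dP_0(T) = 0$ almost surely in $X_1 \sim W_m(\alpha, I_m)$. Substituting the explicit definition (\ref{zn31_matrixcase}) of $\mathcal{Z}_{n,3,1}$ together with the formula (\ref{gLambdaformula}) for $g(T)$, and simplifying the trace pairing, this identity rearranges as
\[
h(X_1) = a - b\,\tr X_1 \quad \text{a.s.,}
\]
where $h(X) := \Gamma_m(\alpha)\int A_\nu(T,\alpha^{-1}X)\,f(T)\,dP_0(T)$ and the constants $a, b$ depend on $f$ (with $b$ a nonzero multiple of $\int(\tr T)\etr(-\alpha^{-1}T)f\,dP_0$). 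By (\ref{2besselineq_matrixargument}), $|h(X)| \le \int|f|\,dP_0 < \infty$ and $h$ is continuous on $\mathcal{P}_{+}^{m\times m}$ by dominated convergence, so the identity extends to every $X > 0$; the boundedness of $h$ against the unboundedness of $\tr X$ then forces $b = 0$, leaving $h \equiv a$ constant.

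Expanding $A_\nu(T, \alpha^{-1}X)$ by (\ref{besselseriesdef_2matrixargument}) and using linear independence of the zonal polynomials $\{C_\kappa : |\kappa|=k\}$, the constancy of $h$ is equivalent to $\int C_\kappa(T)f(T)\,dP_0(T) = 0$ for every partition with $|\kappa| \ge 1$. Orthogonal invariance of $f$ now lets us average $\etr(TX)$ over $O(m)$ and invoke (\ref{meanvalue}) to obtain
\[
\int_{X > 0} \etr(TX)\,f(X)\,dP_0(X) \;=\; \int f\,dP_0 \;=:\; a
\]
for every symmetric $T$ in the domain of convergence of the Laplace transform. Taking $T = -tI_m$ and letting $t \to \infty$, dominated convergence forces $a = 0$; the Laplace transforms of the positive finite measures $f^{\pm}\,dP_0$ therefore coincide on an open set, and the multivariate Laplace uniqueness theorem \cite[Theorem 2.1.9]{farrell} yields $f^{+} = f^{-}$ almost everywhere, i.e., $f = 0$ in $L^2$. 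Since $\mathcal{S}$ and $\mathcal{S}_0$ are positive self-adjoint, injectivity is equivalent to $0$ not being an eigenvalue, completing the proof.

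The main obstacle is the passage from the almost-sure linear identity $h(X_1) = a - b\,\tr X_1$ to a pointwise identity on all of $\mathcal{P}_{+}^{m\times m}$, together with the careful use of orthogonal invariance needed to transform the vanishing zonal moments of $f$ into a genuine Laplace transform identity; once these two bridges are crossed, the remaining dominated-convergence and Laplace-uniqueness arguments are routine.
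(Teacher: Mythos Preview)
Your proof is correct but takes a different route from the paper's. For $\mathcal{S}_0$ you use the spectral decomposition from Theorem~\ref{theoremreducedcov_matrixcase}, which is shorter and more conceptual than the paper's approach; the paper instead treats $\mathcal{S}_0$ by the same integral-transform machinery it develops for $\mathcal{S}$. For $\mathcal{S}$, the paper multiplies the identity $\int K(S,T)f(T)\,\dd P_0(T)=0$ by $\etr(-(U-I_m)S/\alpha)(\det S)^{\nu}$, integrates over $S$ using the Bessel Laplace transform~(\ref{besselintegral}), and then differentiates twice in $U$ to strip away the two rank-one pieces of $K-K_0$, arriving at $\int \etr(\alpha^{-1}UT)\,(T\otimes T)\,f(T)\,\dd P_0(T)=0$ and hence $f=0$ by Laplace uniqueness. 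You instead exploit the probabilistic representation $\langle \mathcal{S}f,f\rangle_{L^2}=E[(\int f\,\mathcal{Z}_{n,3,1}\,\dd P_0)^2]$, obtain the affine identity $h(X_1)=a-b\,\tr X_1$ a.s., kill $b$ by boundedness of $h$, and then pass from vanishing zonal moments to a constant Laplace transform via orthogonal invariance of $f$ and the mean-value property~(\ref{meanvalue}). Your approach highlights the covariance structure and avoids the explicit double differentiation, at the cost of needing the analytic continuation/term-by-term justification you flag in your final paragraph; the paper's computation is more hands-on but sidesteps that analyticity step by working directly with integrals. Both proofs ultimately reduce to the uniqueness of the multivariate Laplace transform.
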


\Pro
By linearity, it suffices to assume that $g = 0$.  So, suppose that $\mathcal{S}f = 0$, that is, 
$$
\int_{S > 0} K(S, T) f(T) \, \dd P_0(T) = 0
$$
for all $S > 0$.  Then for $U > I_m$, by Fubini's theorem,
\begin{align}
\label{eq:injective1}
0 &= \int_{ S > 0} \etr(-(U-I_m) S/\alpha) (\det S)^{\alpha -\tfrac12(m+1)} \int_{T > 0} K(S, T) f(T) \, \dd P_0(T) \, \dd S \nonumber \\
&= \int_{ T > 0} f(T) \bigg[ \int_{S > 0} \etr(-(U-I_m) S/\alpha) (\det S)^{\alpha -\tfrac12(m+1)} K(S, T) \dd S \bigg] \, \dd P_0(T),
\end{align}
By the definition of the covariance function $K$ in (\ref{covariancefn_matrixcase}), 
\begin{align*}
\int_{S > 0} & \etr(-(U-I_m) S/\alpha) (\det S)^{\alpha -\tfrac12(m+1)} K(S, T) \dd S \nonumber\\
& = \etr( -T/\alpha) \int_{S > 0} {\etr(-US/\alpha) (\det S)^{\alpha -\tfrac12(m+1)} }\nonumber\\
& \qquad\qquad\qquad\quad\quad \times{ \bigg [\Gamma_m(\alpha) A_{\nu} (-\alpha^{-2} S, T)-\alpha^{-3}m^{-1} (\tr S) (\tr T) -1 \bigg]}\, \dd S.
\end{align*}
By (\ref{bessel_2matrixargument}), (\ref{besselintegral}), and Fubini's theorem, we have 
\begin{align*}
\int_{S > 0} \etr&(-US/\alpha) (\det S)^{\alpha -\tfrac12(m+1)}  A_{\nu} (-\alpha^{-2} S, T) \, \dd S\\
&=\int_{O(m)} {\int_{S > 0} {\etr(-US/\alpha) (\det S)^{\alpha -\tfrac12(m+1)}  A_{\nu} (-\alpha^{-2} H S H' T)}\, \dd S}\, \, \dd H \\
&=\int_{O(m)} {\etr(\alpha^{-1} H' T H U^{-1}) (\det (\alpha^{-1} U))^{-\alpha}}\, \dd H\\
&= \alpha^{m\alpha} (\det U)^{-\alpha} \int_{O(m)} {\etr(\alpha^{-1} H' T H U^{-1})}\, \dd H.
\end{align*}
Also, by (\ref{trace_zonal}) and (\ref{zonalintegral}), we have
\begin{align*}
\int_{S > 0} {\etr(-US/\alpha)  (\det S)^{\alpha -\tfrac12(m+1)} (\tr S)}\, \dd S 
&= \alpha^{m\alpha+2} \Gamma_m(\alpha) (\det U)^{-\alpha} \tr(U^{-1}),
\end{align*}
and, by (\ref{gammaintegral}), 
\begin{equation*}
\int_{S > 0} {\etr(-US/\alpha)  (\det S)^{\alpha -\tfrac12(m+1)}}\, \dd S=\alpha^{m\alpha} \Gamma_m(\alpha) (\det U)^{-\alpha}.
\end{equation*}

Substituting these results into (\ref{eq:injective1}) and discarding extraneous factors, we obtain
\begin{multline}
\label{eq:injective3}
\int_{T >0} \bigg[\int_{O(m)} {\etr(\alpha^{-1} H' T H U^{-1})}\, \dd H - \alpha^{-1}m^{-1} \tr(U^{-1}) (\tr T)-1 \bigg] \\
\times \etr(-T/\alpha) f(T) \, \dd P_0(T) = 0.
\end{multline}
Replacing $U$ by $U^{-1}$, we find that (\ref{eq:injective3}) is equivalent to 
\begin{multline}
\label{eq:injective4}
\int_{T >0} \bigg[ \int_{O(m)} {\etr(\alpha^{-1} H' T H U)}\, \dd H - 1 \bigg] \etr(-T/\alpha) f(T) \, \dd P_0(T) \\
= \alpha^{-1} m^{-1} (\tr U) \int_{T > 0} (\tr T) \etr(-T/\alpha) f(T) \, \dd P_0(T).
\end{multline}
Differentiating both sides of (\ref{eq:injective4}) with respect to $U$, we obtain 
\begin{multline*}
\int_{O(m)} \int_{T > 0} {\etr(\alpha^{-1} H' T H U) (\alpha^{-1} H' T H) f(T)}\, \dd P_0(T) \, \, \dd H \\
=\alpha^{-1} m^{-1} I_m \int_{T > 0} (\tr T) \etr(-T/\alpha) f(T) \, \dd P_0(T).
\end{multline*}
Since $T \stackrel{d}{=} H T H'$ for all $H \in O(m)$, and $f(HTH')=f(T)$, then 
$$
\int_{T > 0} {\etr(\alpha^{-1} H' T H U) (\alpha^{-1} H' T H) f(T)}\, \dd P_0(T)
=\int_{T > 0} {\etr(\alpha^{-1} U T) (\alpha^{-1} T ) f(T)}\, \dd P_0(T).
$$
Therefore, 
\begin{multline}
\label{eq:injective5}
\int_{T > 0} {\etr(\alpha^{-1} U T) (\alpha^{-1} T ) f(T)}\, \dd P_0(T) \\
= \alpha^{-1} m^{-1} I_m \int_{T > 0} (\tr T) \etr(-T/\alpha) f(T) \, \dd P_0(T).
\end{multline}
Differentiating both sides of (\ref{eq:injective5}) with respect to $U$, we find that
\begin{equation*}
\alpha^{-2} \int_{T > 0} {\etr(\alpha^{-1} U T) \ (T \otimes T) \ f(T)}\, \dd P_0(T)=0.
\end{equation*}
As this latter integral is a Laplace transform, we obtain $f = 0$, $P_0$-almost everywhere.  Also, the same argument may be used in the case of $\mathcal{S}_0$.  

Consequently, $0$ is not an eigenvalue of $\mathcal{S}$.
$\qed$

\medskip

We now derive an interlacing property of the eigenvalues $\delta_k$ and $\rho_{\kappa}$.   To state this property, denote by $\xi_k$, $k=1,2,3\ldots$ the partitions of all nonnegative integers, listed in increasing lexicographic order, e.g., $\xi_1 = (0)$, $\xi_2= (1)$, $\xi_3= (2)$, $\xi_4 = (1^2)$, $\xi_5 = (3)$, $\xi_6 = (21)$, $\xi_7 = (1^3), \ldots$  

\begin{proposition}
\label{interlacing_matrix}
For all $k \ge 1$, $\rho_{\xi_k} \ge \delta_k \ge \rho_{\xi_{k+2}}$.  
%
%
Further, for $k \ge 3$, every eigenvalue of $\mathcal{S}_0$ is an eigenvalue of $\mathcal{S}$ with multiplicity $p_m(k)-2$, $p_m(k)-1$, or $p_m(k)$.
\end{proposition}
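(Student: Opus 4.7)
The plan is to exploit the fact that $\mathcal{S}$ and $\mathcal{S}_0$ differ by a rank-$2$ negative semi-definite operator. Using (\ref{covariancefn_matrixcase}) and (\ref{K0kernel_matrixcase}), the difference of kernels factors as
\[
(K-K_0)(S,T) = -\etr(-\alpha^{-1}(S+T))\bigl[1 + \alpha^{-3}m^{-1}(\tr S)(\tr T)\bigr] = -f_1(S)f_1(T) - f_2(S)f_2(T),
\]
where $f_1(T) := \etr(-\alpha^{-1}T)$ and $f_2(T) := \alpha^{-3/2}m^{-1/2}(\tr T)\etr(-\alpha^{-1}T)$. Consequently $\mathcal{R} := \mathcal{S} - \mathcal{S}_0$ is negative semi-definite with $\mathrm{rank}\,\mathcal{R} \le 2$ and vanishes on $V^\perp$, where $V := \mathrm{span}(f_1,f_2)$.

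Next I would derive the interlacing from the Courant--Fischer min-max characterization. Because $\rho_\kappa$ depends only on $|\kappa|$ and the ordering $\xi_k$ lists partitions of equal weight consecutively, the sequence $(\rho_{\xi_k})_{k\ge 1}$ enumerates the eigenvalues of $\mathcal{S}_0$ in non-increasing order with multiplicity. The inequality $\delta_k \le \rho_{\xi_k}$ is then immediate from $\mathcal{S} \le \mathcal{S}_0$. For $\rho_{\xi_{k+2}} \le \delta_k$, let $U$ be a $(k-1)$-dimensional subspace of $L^2$ realizing the min in $\delta_k = \min_U \max_{v \in U^\perp,\,\|v\|=1}\langle\mathcal{S}v,v\rangle$, and set $\widetilde U := U + V$, so $\dim \widetilde U \le k+1$. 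Each $v \in \widetilde U^\perp$ is orthogonal to $V$, whence $\mathcal{R}v = 0$ and $\langle \mathcal{S}v,v\rangle = \langle \mathcal{S}_0 v,v\rangle$; therefore
\[
\rho_{\xi_{k+2}} \le \max_{v \in \widetilde U^\perp,\,\|v\|=1}\langle\mathcal{S}_0 v,v\rangle = \max_{v \in \widetilde U^\perp,\,\|v\|=1}\langle\mathcal{S} v,v\rangle \le \max_{v \in U^\perp,\,\|v\|=1}\langle\mathcal{S} v,v\rangle = \delta_k.
\]

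For the multiplicity of $\tilde\rho_k$ as an eigenvalue of $\mathcal{S}$, the lower bound $p_m(k)-2$ follows from a direct containment: every vector in $E_{\tilde\rho_k} \cap V^\perp$, a subspace of dimension at least $p_m(k) - 2$ (positive for $k \ge 3$), is annihilated by $\mathcal{R}$ and hence is a $\tilde\rho_k$-eigenvector of $\mathcal{S}$. For the upper bound $p_m(k)$, I would decompose an arbitrary $\tilde\rho_k$-eigenvector as $v = v_k + v_{\neq k}$ with $v_k \in E_{\tilde\rho_k}$ and $v_{\neq k}$ in the closed span of the remaining eigenspaces of $\mathcal{S}_0$. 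Since $\mathcal{S}_0 - \tilde\rho_k$ is invertible on $E_{\tilde\rho_k}^\perp$ and $(\mathcal{S}_0 - \tilde\rho_k)v_{\neq k} = -\mathcal{R}v \in V$, the component $v_{\neq k}$ is uniquely determined by $(u_1,u_2) := (\langle v,f_1\rangle, \langle v,f_2\rangle)$. Projecting the eigenvalue equation onto $E_{\tilde\rho_k}$ gives $P_{E_{\tilde\rho_k}}\mathcal{R}v = 0$; and by (\ref{integral1_eigenS_matrixcase})--(\ref{secondintegral2_matrixcase}) the two projections $P_{E_{\tilde\rho_k}}f_1$ and $P_{E_{\tilde\rho_k}}f_2$ are proportional---both are scalar multiples of $\sum_{|\kappa|=k}(C_\kappa(I_m)[\alpha]_\kappa / k!)^{1/2}\textgoth{L}_\kappa^{(\nu)}$---so this vanishing condition collapses to a single linear equation in $(u_1,u_2)$, cutting the space of admissible $(u_1,u_2)$ down to one dimension.

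The main obstacle will lie in this last step: to convert the naive rank-$2$ upper bound $p_m(k) + 2$ into the sharper $p_m(k)$ asserted in the proposition, one must show that the two self-consistency relations $\langle v,f_j\rangle = \langle v_k,f_j\rangle + \langle v_{\neq k},f_j\rangle$, $j=1,2$, supply only a single additional constraint linking $v_k$ to $(u_1,u_2)$. This collapse is again a consequence of the proportionality of $P_{E_{\tilde\rho_k}}f_1$ and $P_{E_{\tilde\rho_k}}f_2$, which itself stems from the orthogonal invariance of $f_1,f_2$ and the partition-by-weight structure of their Fourier-Laguerre coefficients; careful bookkeeping then shows that $v_k$ contributes at most $p_m(k) - 1$ parameters and $(u_1,u_2)$ at most one additional parameter, yielding the upper bound $p_m(k)$.
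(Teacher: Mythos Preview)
Your proof is correct and takes a genuinely different route from the paper's. The paper does \emph{not} work with the rank-$2$ perturbation directly; instead it factors $\mathcal{S}-\mathcal{S}_0=\mathcal{U}_0+\mathcal{U}_1$ into two rank-$1$ self-adjoint pieces (with kernels $-f_1\otimes f_1$ and $-f_2\otimes f_2$ in your notation), then applies the rank-$1$ interlacing theorem of Hochstadt / Dancis--Davis twice: once to pass from $\mathcal{S}_0$ to $\mathcal{S}_0+\mathcal{U}_0$ (eigenvalues $\omega_k$ with $\rho_{\xi_k}\ge\omega_k\ge\rho_{\xi_{k+1}}$), and once more to reach $\mathcal{S}$ (giving $\omega_k\ge\delta_k\ge\omega_{k+1}$). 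Chaining the two yields $\rho_{\xi_k}\ge\delta_k\ge\rho_{\xi_{k+2}}$, and the multiplicity statement comes for free from the same citation, since each rank-$1$ step can lower the multiplicity by at most one.

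What each approach buys: the paper's two-step argument is shorter and offloads all the work to a literature citation, but it treats the upper multiplicity bound $p_m(k)$ as a black box (and in fact that bound is not a general rank-$1$ perturbation fact---it relies on $f_1$ not being orthogonal to the eigenspace $E_{\tilde\rho_k}$, which holds here because the coefficient in (\ref{integral1_eigenS_matrixcase}) is nonzero). Your direct Courant--Fischer argument is self-contained, and your eigenspace analysis actually uncovers more structure: the proportionality of $P_{E_{\tilde\rho_k}}f_1$ and $P_{E_{\tilde\rho_k}}f_2$ shows that $E_{\tilde\rho_k}\cap V^\perp$ already has dimension $p_m(k)-1$, so your method in fact yields the sharper range $\{p_m(k)-1,\,p_m(k)\}$ rather than the three values stated. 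One small point to tighten: in your min-max step you assume the minimizing $(k-1)$-dimensional subspace $U$ exists, but the argument goes through unchanged with $\inf$ in place of $\min$, since the key inequality $\sup_{v\in(U+V)^\perp}\langle\mathcal{S}_0v,v\rangle\ge\rho_{\xi_{k+2}}$ holds for \emph{every} $U$.
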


\Pro
Define the kernels $k_0(S, T) = - \etr(-(S+T)/\alpha)$ and $$k_1(S, T) = - \alpha^{-3} m^{-1} \etr(-(S+T)/\alpha) (\tr S) (\tr T),$$ where $S, T > 0$.  Also, define on $L^2$ the corresponding integral operators, 
$$
\mathcal{U}_j f(S) = \int_{T > 0} k_j(S, T) f(T) \dd P_0(T),
$$
$j=0,1$, $S > 0$.  Then it follows from (\ref{covariancefn_matrixcase}) that $\mathcal{S} = \mathcal{S}_0 + \mathcal{U}_0 + \mathcal{U}_1$.  

It is clear that each $\mathcal{U}_j$ is self-adjoint and of rank one, i.e., the range of $\mathcal{U}_j$ is a one-dimensional subspace of $L^2$.  Also, $\mathcal{S}_0 + \mathcal{U}_0$ is self-adjoint, and by following the same steps as in Theorem \ref{properties_covariance_operator_matrix}, we see that it is positive and compact. 

By the same argument as in the proof of Proposition \ref{injectiveoperators}, we find that the operator $\mathcal{S}_0 + \mathcal{U}_0$ is injective; hence, the eigenvalues of $\mathcal{S}_0 + \mathcal{U}_0$ are positive.  

Denote by $\omega_k$, $k  \ge 1$, the eigenvalues of $\mathcal{S}_0 + \mathcal{U}_0$, where $\omega_1 \ge \omega_2 \ge \cdots$, repeated according to their multiplicities. Since $\mathcal{S}_0$ is compact, self-adjoint, and injective, and since $\mathcal{U}_0$ is self-adjoint and of rank one, it follows from Hochstadt \cite{hochstadt} or Dancis and Davis \cite{dancisdavis} that the eigenvalues of $\mathcal{S}_0$ interlace the eigenvalues of $\mathcal{S}_0 + \mathcal{U}_0$, i.e., $\rho_{\xi_1} \ge \omega_1 \ge \rho_{\xi_2} \ge \omega_2 \ge \rho_{\xi_3} \ge \omega_3 \ge \rho_{\xi_4} \ge \dotsc$.   Further, by Hochstadt \cite{hochstadt}, every eigenvalue of multiplicity $p_m(k)$, $k \ge 2$, of $\mathcal{S}_0$, where $p_m(k)$ denotes the number of partitions of $k$ in at most $m$ parts, is an eigenvalue of $\mathcal{S}_0 + \mathcal{U}_0$ with multiplicity $p_m(k)$ or $p_m(k)-1$. 

Since $\mathcal{U}_1$ is self-adjoint and of rank one then by applying again Hochstadt's, or Dancis and Davis', theorem we find that the eigenvalues of $\mathcal{S}_0 + \mathcal{U}_0$ interlace the eigenvalues of $\mathcal{S}_0 + \mathcal{U}_0 + \mathcal{U}_1 \equiv \mathcal{S}$, i.e, $\omega_k \ge \delta_k \ge \omega_{k+1}$ for all $k \ge 1$. 

Combining the conclusions of the preceding paragraphs, we deduce that $
\rho_{\xi_k} \ge \delta_k \ge \rho_{\xi_{k+2}}$, $k \ge 1$. Further, by Hochstadt \cite{hochstadt}, we have for $k \ge 3$, every eigenvalue of $\mathcal{S}_0$ is an eigenvalue of $\mathcal{S}$ with multiplicity $p_m(k)-2$, $p_m(k)-1$, or $p_m(k)$.
$\qed$

\bigskip

For $\epsilon \in (0,1)$, we can now determine a value for $N$ such that the $N$th scree ratio of $\boldsymbol{T}_n^2$ exceeds $1-\epsilon$.  Applying the interlacing inequalities for $\delta_k$, we obtain 
$\sum_{k=1}^N \delta_k \ge \sum_{2 \le |\kappa| \le r} \rho_\kappa$, where $N=\sum_{k=2}^r p_m(k)$. Since $Tr(\mathcal{S}_0) > Tr(\mathcal{S})$, we advise that $N$ be chosen so that 
$$
\sum_{0 \le |\kappa| \le r} \rho_\kappa \ge (1-\epsilon) Tr(\mathcal{S}_0).
$$
This criterion leads to a value for $N$ that is readily applicable in the analysis of data.  Substituting $\rho_\kappa = \alpha^{m\alpha} b_\alpha^{4|\kappa|+2m\alpha}$ and the value of $Tr(\mathcal{S}_0)$ from (\ref{TraceS0_matrix}), we obtain 
\begin{align}
\label{noeigenvalues_matrix}
\alpha^{m\alpha} b_\alpha^{2m\alpha} \sum_{k=0}^r b_{\alpha}^{4k} p_m(k) &\ge (1-\epsilon) Tr(\mathcal{S}_0) \nonumber \\
&= (1-\epsilon) \alpha^{m\alpha} b_\alpha^{2m\alpha} \prod_{k=1}^m (1-b_{\alpha}^{4k})^{-1}.
\end{align}

For $m=2,3$ and $\epsilon = 10^{-10}$, which represents accuracy to ten decimal places, we present in Tables \ref{screeratiotable_matrix1} and \ref{screeratiotable_matrix2} the values of the lower bounds on $r$ and $N$ for various values of $\alpha$.

\medskip

\begin{table}[!ht]
\caption{Values of the lower bounds on $r$ and $N$ for $m=2$.}
\medskip
\label{screeratiotable_matrix1}
\centering
\begin{tabular}{|r|rrrrrrr|}
  \hline
  $\alpha$ & 2.5 &  3 &  5 & 10 & 20 & 50 & 100 \\
  $r$      &   8 &  7 &  6 &  4 &  3 &  3 &   2 \\
  $N$      &  23 & 18 & 14 &  7 &  4 &  4 &   2 \\
  \hline
  \end{tabular}
\end{table}

\begin{table}[!ht]
\caption{Values of the lower bounds on $r$ and $N$ for $m=3$.}
\medskip
\label{screeratiotable_matrix2}
\centering
\begin{tabular}{|r|rrrrrrr|}
  \hline
  $\alpha$ &  3 &  4 &  5 & 10 & 20 & 50 & 100 \\
  $r$      &  8 &  7 &  6 &  4 &  3 &  3 &   2 \\
  $N$      & 39 & 29 & 21 &  9 &  5 &  5 &   2 \\
  \hline
  \end{tabular}
\end{table}

\medskip

As indicated by Tables \ref{screeratiotable_matrix1} and \ref{screeratiotable_matrix2}, fewer eigenvalues appear to be needed to approximate the distribution of $\mathcal{S}$ as $\alpha$ increases.  As we show in the following result, which is partly a consequence of the interlacing property of the eigenvalues, all but one of the $\delta_k$ and $\rho_\kappa$ converge to $0$ as $\alpha \to \infty$, a result that is consistent with the decreasing values of $r$ and $N$ in the tables.

\begin{corollary}
\label{limiting_deltas_and_rhos}
As $\alpha \to \infty$, $\rho_\kappa \to 0$ for all $\kappa \neq (0)$, $\delta_k \to 0$ for all $k \ge 2$, and $\delta_1 \to e^{-m}(1-e^{-m})$.
\end{corollary}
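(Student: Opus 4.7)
The plan is to address the three limits separately, with the first following from a direct asymptotic expansion of $b_\alpha$, the second from the interlacing inequality of Proposition \ref{interlacing_matrix}, and the third from the trace formula \eqref{sum_of_delta_k} combined with the vanishing of the remaining $\delta_k$.

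For the first assertion, I would begin by rationalizing $1-\beta$ using $\beta^2 = 1+4/\alpha$, giving $1-\beta = -4/(\alpha(1+\beta))$ and hence the clean identities
\[
b_\alpha^2 \;=\; 1 + \tfrac12\alpha(1-\beta) \;=\; \frac{\beta-1}{\beta+1}, \qquad \alpha b_\alpha^2 \;=\; \frac{4}{(1+\beta)^2}.
\]
As $\alpha\to\infty$, $\beta\to 1$, so $b_\alpha\to 0$ while $\alpha b_\alpha^2\to 1$. The factorization $\rho_\kappa = (\alpha b_\alpha^2)^{m\alpha}\, b_\alpha^{4|\kappa|}$ then reduces the first claim to showing that $(\alpha b_\alpha^2)^{m\alpha}$ converges to a finite limit (a short Taylor expansion of $\log(\alpha b_\alpha^2)$ in powers of $1/\alpha$), whereas $b_\alpha^{4|\kappa|}\to 0$ for every $|\kappa|\ge 1$.

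For the second assertion, in the enumeration $(\xi_k)$ only $\xi_1 = (0)$ has weight zero, so for $k\ge 2$ the partition $\xi_k$ is nonzero. By Proposition \ref{interlacing_matrix}, $0\le \delta_k\le \rho_{\xi_k}$, and the right-hand side converges to $0$ by the first assertion; hence $\delta_k\to 0$.

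For the third assertion I would rewrite \eqref{sum_of_delta_k} as $\delta_1 = \mathrm{Tr}(\mathcal{S}) - \sum_{k\ge 2}\delta_k$ and handle the tail by the uniform interlacing bound
\[
0 \;\le\; \sum_{k\ge 2}\delta_k \;\le\; \sum_{k\ge 2}\rho_{\xi_k} \;=\; \mathrm{Tr}(\mathcal{S}_0) - \rho_{(0)} \;=\; \rho_{(0)}\Bigl[\prod_{k=1}^m (1-b_\alpha^{4k})^{-1} - 1\Bigr];
\]
since $\rho_{(0)}$ stays bounded while the bracket tends to $0$ (as each $b_\alpha^{4k}\to 0$), this forces $\sum_{k\ge 2}\delta_k\to 0$. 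Consequently $\delta_1$ has the same limit as $\mathrm{Tr}(\mathcal{S})$, which one then evaluates by carefully expanding both terms on the right of \eqref{sum_of_delta_k}.

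The main obstacle will be this final asymptotic analysis: the two terms of \eqref{sum_of_delta_k} share the same leading $e^{-2m}$ behavior, so the limit of their difference depends on higher-order corrections in $1/\alpha$ of $(\alpha b_\alpha^2)^{m\alpha}$, of $\prod_{k=1}^m(1-b_\alpha^{4k})^{-1}$, of $(\alpha/(\alpha+2))^{m\alpha}$, and of $(m\alpha+1)/(\alpha+2)^2$. Matching these expansions to sufficient precision before the cancellation is carried out is the delicate point that delivers the target value $e^{-m}(1-e^{-m})$.
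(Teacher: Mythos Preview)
Your three-step outline matches the paper's argument exactly: factor $\rho_\kappa=(\alpha b_\alpha^2)^{m\alpha}b_\alpha^{4|\kappa|}$ for the first claim, use interlacing $\delta_k\le\rho_{\xi_k}$ for the second, and read off $\delta_1$ from the trace identity \eqref{sum_of_delta_k} for the third. Your explicit tail bound $\sum_{k\ge2}\delta_k\le\mathrm{Tr}(\mathcal{S}_0)-\rho_{(0)}\to0$ is a welcome addition that the paper leaves implicit.

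The substantive divergence is at the third step. The paper sees \emph{no} cancellation: it asserts $\alpha b_\alpha^2 = 1-\alpha^{-1}+O(\alpha^{-2})$, so that $(\alpha b_\alpha^2)^{m\alpha}\to e^{-m}$; then the first term of \eqref{sum_of_delta_k} tends to $e^{-m}$, the second to $e^{-2m}$, and the stated limit drops out in one line. Your exact identity $\alpha b_\alpha^2=4/(1+\beta)^2$ is correct (it follows from $(\beta-1)(\beta+1)=4/\alpha$), but expanding $\beta=1+2\alpha^{-1}-2\alpha^{-2}+\cdots$ in it gives $\alpha b_\alpha^2=1-2\alpha^{-1}+O(\alpha^{-2})$, not $1-\alpha^{-1}$. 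Hence $(\alpha b_\alpha^2)^{m\alpha}\to e^{-2m}$, both terms of \eqref{sum_of_delta_k} have leading behaviour $e^{-2m}$, and the cancellation you anticipate is genuine.

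Here is the gap to watch: if you carry that cancellation through, you will not recover $e^{-m}(1-e^{-m})$. Pushing each term of \eqref{sum_of_delta_k} to order $1/\alpha$ one finds
\[
(\alpha b_\alpha^2)^{m\alpha}\prod_{k=1}^{m}(1-b_\alpha^{4k})^{-1}=e^{-2m}\bigl(1+3m\alpha^{-1}+O(\alpha^{-2})\bigr)
\]
and likewise $\bigl(\tfrac{\alpha}{\alpha+2}\bigr)^{m\alpha}\bigl(1+\tfrac{m\alpha+1}{(\alpha+2)^2}\bigr)=e^{-2m}\bigl(1+3m\alpha^{-1}+O(\alpha^{-2})\bigr)$, so $\mathrm{Tr}(\mathcal{S})=O(\alpha^{-2})\to0$; combined with your tail bound this would force $\delta_1\to0$. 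Independently, the interlacing upper bound $\delta_1\le\rho_{(0)}=(\alpha b_\alpha^2)^{m\alpha}\to e^{-2m}$ already rules out any limit exceeding $e^{-2m}$, whereas $e^{-m}(1-e^{-m})>e^{-2m}$ for every $m\ge1$. The discrepancy traces back to the coefficient in the expansion of $\alpha b_\alpha^2$; you should reconcile your (algebraically exact) identity with the paper's $1-\alpha^{-1}$ before expecting the higher-order matching to land on the stated value.
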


\Pro
By \eqref{betaandbalpha_matrixcase}, $\beta = (1 + 4\alpha^{-1})^{1/2}$.  Expanding this expression as a power series in $\alpha^{-1}$, we obtain  
$$
\alpha b_\alpha^2 = \alpha (1 + \tfrac12\alpha(1-\beta)) = 1 - \alpha^{-1} + O(\alpha^{-2}).
$$
Therefore, $(\alpha b_\alpha^2)^\alpha \to e^{-1}$ and $b_\alpha \to 0$ as $\alpha \to \infty$.  By \eqref{defrho_matrixcase}, $\rho_\kappa = (\alpha b_\alpha^2)^{m\alpha} b_\alpha^{4|\kappa|}$, so it follows that if $\kappa \neq 0$ then $\rho_\kappa \to 0$.

By Proposition \ref{interlacing_matrix}, $\delta_2 \le \rho_{(1)}$, so it follows that $\delta_2 \to 0$ as $\alpha \to \infty$.  Since the $\delta_k$ are nonnegative and listed in non-increasing order then it follows that, as $\alpha \to \infty$, $\delta_k \to 0$ for all $k \ge 2$.

Finally, the limiting value of $\delta_1$ is obtained by taking limits in (\ref{sum_of_delta_k}).
$\qed$


\subsection{An application to financial data}

In applying our test to a financial data set, we follow in part an example given by Haff, et al. \cite[Example 5.3]{minimaxest}.  Let us denote by $S_{j,k}$, for $k=1,2,3$ the daily closing stock prices of Johnson \& Johnson (JNJ), Berkshire Hathaway Inc., Class B (BRK-B), and JPMorgan Chase \& Co. (JPM) respectively, from November 26, 2017 to November 23, 2018. If a day were a trading holiday, we repeated the observation of the previous day; thus we had 260 observations in total. Then, we computed the daily logarithmic returns $\log (S_{j+1, k}/S_{j, k})$, for $j=1, \dotsc, 260$ and $k=1,2,3$; graphs of these logarithmic returns are given in Figure \ref{logreturns}.  
Finally, we partitioned the daily logarithmic returns into biweekly periods and calculated the $3 \times 3$ covariance matrix for each biweekly period, resulting in the matrices $X_1, \dotsc, X_{26}$. 

\begin{figure}[!ht]
\captionsetup{width=0.8\textwidth}
\includegraphics[width=\textwidth]{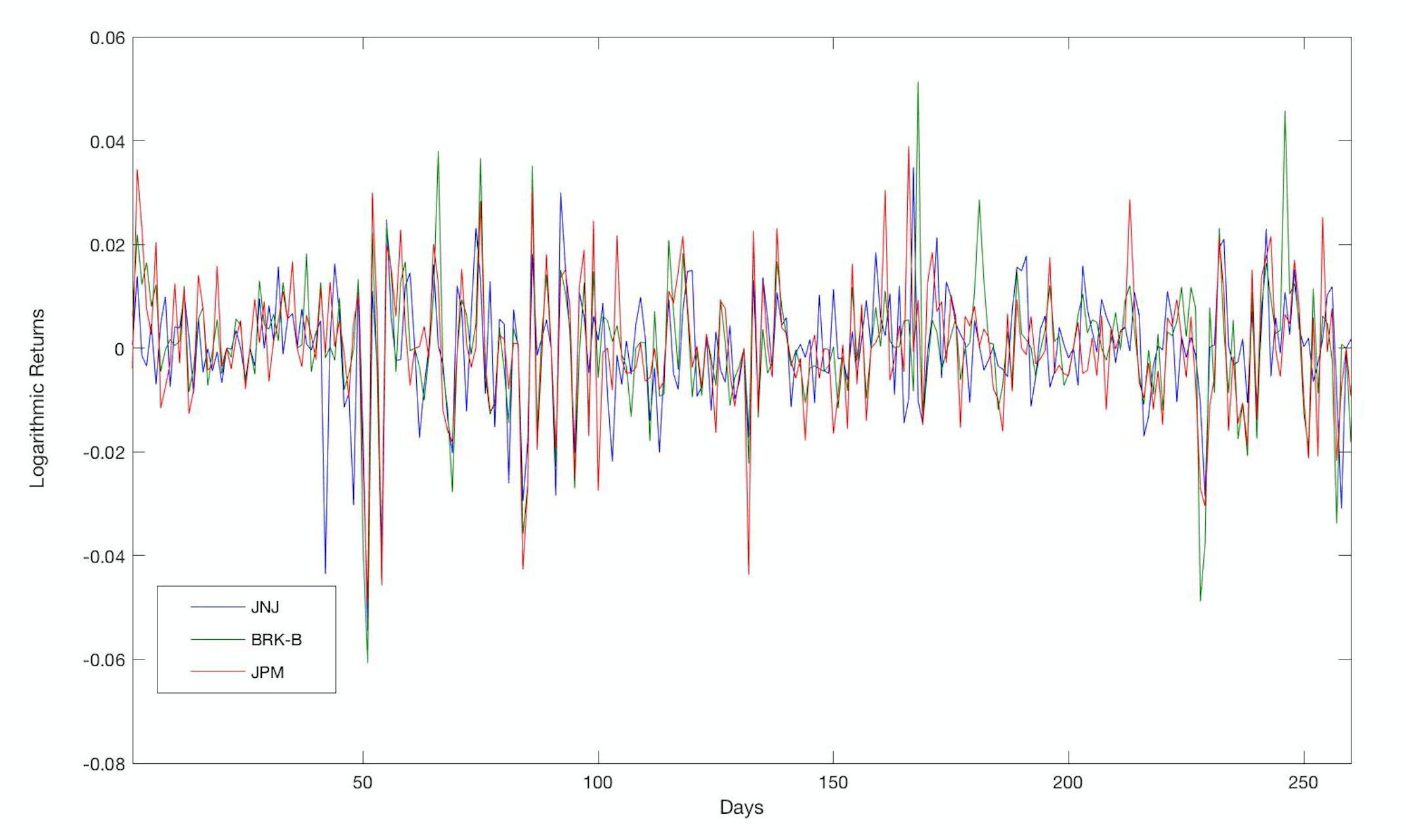}
\caption{Graphs of the logarithmic returns from the stock prices of {\textcolor{blue}{JNJ}}, {\textcolor{green}{BRK-B}}, and {\textcolor{red}{JPM}} over the period November 26, 2017 to November 23, 2018.}
\label{logreturns}
\end{figure}

A common assumption in the literature on stochastic volatility models is that the three-dimensional vectors of daily logarithmic returns, 
$$(\log(S_{j+1,1}/S_{j,1}), \log(S_{j+1,2}/S_{j,2}), \log(S_{j+1,3}/S_{j,3})),$$ $j=1,\ldots,260$, are mutually independent and identically distributed from a trivariate normal distribution. If this assumption were valid then the corresponding biweekly covariance matrices would be independent and identically distributed with Wishart distributions.  Thus, we will test the hypothesis that the biweekly covariance matrices are Wishart-distributed with $9$ degrees-of-freedom, i.e., $\alpha=4.5$.

To apply the test statistic $\boldsymbol{T}^2_n$ to test the hypothesis that the data are drawn from a Wishart distribution with $9$ degrees of freedom and unspecified scale matrix $\Sigma$, we use an algorithm developed by Koev and Edelman \cite{koevedelman} in \textsc{Matlab} \cite{matlab}
to evaluate the Bessel functions of two matrix arguments. Applying that algorithm to the data on the stock prices, we find that the observed value of the test statistic $\boldsymbol{T}^2_n$ is $0.127$.

We conducted a simulation study to approximate $\boldsymbol{T}^2_{n\,;\,0.05}$\,, the 95th-percentile of the null distribution of $\boldsymbol{T}_n^2$.  We generated $10,000$ random samples of size $n=26$ from the Wishart distribution with $\alpha=4.5$ and scale matrix $\Sigma = I_3$, calculated the value of $\boldsymbol{T}^2_n$ for each sample, and recorded the 95th-percentile of all 10,000 simulated values of $\boldsymbol{T}_n^2$.   We repeated this process a total of ten times, finally approximating $\boldsymbol{T}^2_{n\,;\,0.05}$ as the mean of all 10 simulated 95th-percentiles, viz., $\boldsymbol{T}^2_{n\,;\,0.05} = 0.002$.  Since  the observed value of $\boldsymbol{T}_n^2$ exceeds the critical value then we reject the null hypothesis that the random matrices $X_1, . . . . ., X_{26}$ are Wishart-distributed at the 5\% level of significance. Moreover, we derived from our simulation study an approximate P-value of $0.000$ for the test. Therefore, we have strong evidence that the three-dimensional vectors of logarithmic returns, $(\log(S_{j+1,1}/S_{j,1}), \log(S_{j+1,2}/S_{j,2}), \log(S_{j+1,3}/S_{j,3}))$, $j=1,\ldots,260$, do not have a trivariate normal distribution or are not mutually independent.

For an alternative approach to approximating $\boldsymbol{T}^2_{n\,;\,0.05}$, one can use the limiting null distribution of $\boldsymbol{T}^2_n$. For $\alpha = 4.5$, from (\ref{noeigenvalues_matrix}), we obtain the approximation $\boldsymbol{T}_n^2 \approx \sum_{k=1}^{21} \delta_k \chi^2_{1k}$.  This requires that we first calculate the $\delta_k$ (that are not equal to $\rho_\kappa$) and their multiplicities, numerically, using the results of Theorem \ref{thmeigenS_matrix}, and then we would apply the results of Kotz, et al. \cite{KJB} to derive the distribution of $\sum_{k=1}^{21} \delta_k \chi^2_{1k}$ and carry out the test.  We recommend in practice the one-term approximation \cite[Eqs.~(71),~(79)]{KJB},
$$
P\bigg(\sum_{k=1}^M \delta_k \chi^2_{1k} \ge t\bigg) \simeq P\big(\chi^2_M \ge 2t/(\delta_1 + \delta_M)\big)
$$
which leads to the explicit expression, $\boldsymbol{T}^2_{n\,;\,0.05} \simeq \frac12 (\delta_1+\delta_M)\chi^2_{M\,;\,0.05}$, for an approximate critical value of $\boldsymbol{T}_n^2$.  

As an alternative to calculating $\delta_1, \dotsc, \delta_M$, we can apply the interlacing inequalities in Proposition \ref{interlacing_matrix} to obtain a stochastic upper bound, $\sum_{k=1}^{M} \delta_k \chi^2_{1k} \le \sum_{0 \le |\kappa| \le r} \rho_\kappa \chi^2_{1 \kappa}$.  If we carry out the test by using the upper bound, $\sum_{0 \le |\kappa| \le r} \rho_\kappa \chi^2_{1 \kappa}$, with its exact distribution or a one-term approximation obtained from Kotz, et al. \cite[loc.~cit.]{KJB}, we will obtain a conservative test of the null hypothesis, i.e., with a level of significance at most 5\%.

\subsection{Consistency of the test}
\label{sec: consistency_matrix}

Before stating the theorem, we provide a lemma which will be helpful for establishing consistency of the test.  The proof of the following result is similar to the proof of Lemma \ref{lemma_norm_bound_nabladiff_bessel_2matrixarg}.

\begin{lemma}
\label{lemma_lipschitz_bessel}
For $T > 0$, $Y_1 > 0$, and $Y_2 > 0$, 
\begin{equation}
\label{lipschitz_bessel_2matrixargument}
\Gamma_m(\alpha) \, \bigg\lVert A_{\nu}(T,Y_1) - A_{\nu}(T,Y_2) \bigg\rVert_F \le 
2 m^{3/4} \, \lVert T \rVert^{1/2}_F \ \rVert Y_1-Y_2\lVert^{1/2}_F. 
\end{equation}
\end{lemma}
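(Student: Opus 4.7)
My plan is to mimic the structure of the proof of Lemma \ref{lemma_norm_bound_nabladiff_bessel_2matrixarg}: reduce to the one-matrix-argument Bessel function via \eqref{bessel_2matrixargument}, apply Herz's Poisson integral representation \eqref{besselintegraldef_matrixargument}, and then bound the difference of complex exponentials by the difference of their arguments. The key observation that makes the bound come out with $\lVert T\rVert_F^{1/2}$ rather than the larger $(\tr T)^{1/2}$ is to use the spectral norm in the sub-multiplicative step.

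First I would use \eqref{bessel_2matrixargument} to write
\begin{equation*}
A_\nu(T,Y_1) - A_\nu(T,Y_2) = \int_{O(m)} \big[ A_\nu(M^{1/2} Y_1 M^{1/2}) - A_\nu(M^{1/2} Y_2 M^{1/2}) \big] \, \dd H,
\end{equation*}
where $M = HTH'$. Applying the Poisson integral representation \eqref{besselintegraldef_matrixargument} with $V_j = Y_j^{1/2} M^{1/2}$ and using $|e^{\mi \theta_1} - e^{\mi \theta_2}| \le |\theta_1 - \theta_2|$, as in the derivation preceding \eqref{bounddiff}, the integrand for fixed $H$ is bounded in absolute value by
\begin{equation*}
2 c_1 \int_{Q'Q < I_m} \big| \tr\bigl( M^{1/2} (Y_1^{1/2} - Y_2^{1/2}) Q \bigr) \big| \, (\det(I_m - Q'Q))^{\nu - m/2} \, \dd Q,
\end{equation*}
where $c_1 = [\pi^{m^2/2}\Gamma_m(\nu + \tfrac12)]^{-1}$.

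Next I would bound the trace factor. By the Cauchy--Schwarz inequality for the Frobenius inner product, $|\tr(AB)| \le \lVert A\rVert_F \, \lVert B\rVert_F$, combined with the sharper sub-multiplicative estimate $\lVert A B\rVert_F \le \lVert A\rVert_2 \, \lVert B\rVert_F$ (where $\lVert\cdot\rVert_2$ denotes the spectral norm), one obtains
\begin{equation*}
\big| \tr\bigl( M^{1/2} (Y_1^{1/2} - Y_2^{1/2}) Q \bigr) \big| \le \lVert M^{1/2}\rVert_2 \, \lVert Y_1^{1/2} - Y_2^{1/2} \rVert_F \, \lVert Q\rVert_F.
\end{equation*}
Since $Q'Q < I_m$ gives $\lVert Q\rVert_F \le m^{1/2}$, and since $\lVert M^{1/2}\rVert_2 = (\lambda_{\max}(T))^{1/2} \le \lVert T\rVert_F^{1/2}$, and by Wihler's inequality \eqref{wihler} $\lVert Y_1^{1/2} - Y_2^{1/2}\rVert_F \le m^{1/4} \lVert Y_1 - Y_2\rVert_F^{1/2}$, the three factors combine to give $m^{3/4} \lVert T\rVert_F^{1/2} \lVert Y_1 - Y_2\rVert_F^{1/2}$.

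Finally, the remaining integral with respect to $Q$ reduces via $A_\nu(0) = 1/\Gamma_m(\nu + \tfrac12(m+1)) = 1/\Gamma_m(\alpha)$ to
\begin{equation*}
c_1 \int_{Q'Q < I_m} (\det(I_m - Q'Q))^{\nu - m/2} \, \dd Q = \frac{1}{\Gamma_m(\alpha)}.
\end{equation*}
Putting these ingredients together yields the pointwise (in $H$) estimate $\Gamma_m(\alpha) \, | A_\nu(M^{1/2} Y_1 M^{1/2}) - A_\nu(M^{1/2} Y_2 M^{1/2}) | \le 2 m^{3/4} \lVert T\rVert_F^{1/2} \lVert Y_1 - Y_2\rVert_F^{1/2}$. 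Because this bound is independent of $H$, integrating over $O(m)$ with the normalized Haar measure and applying Minkowski's inequality delivers \eqref{lipschitz_bessel_2matrixargument}. The only genuinely delicate point is the sharpness of the matrix-norm bookkeeping: a careless use of Frobenius sub-multiplicativity would yield $(\tr T)^{1/2}$ instead of $\lVert T\rVert_F^{1/2}$ and lose the desired constant, so invoking the spectral-norm variant of sub-multiplicativity is essential.
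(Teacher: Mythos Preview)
Your argument is correct and follows the same template as the paper's (suppressed) proof: Poisson integral representation, the bound $|e^{\mi\theta_1}-e^{\mi\theta_2}|\le|\theta_1-\theta_2|$, Cauchy--Schwarz on the trace, $\lVert Q\rVert_F\le m^{1/2}$, Wihler's inequality, and the evaluation $c_1\int_{Q'Q<I_m}\dd\mu(Q)=1/\Gamma_m(\alpha)$.

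The only structural difference is the order in which $T$ is introduced. The paper first proves the one-argument H\"older bound
\[
\Gamma_m(\alpha)\,|A_\nu(W_1)-A_\nu(W_2)|\le 2m^{3/4}\,\lVert W_1-W_2\rVert_F^{1/2},
\]
and only afterward passes to two arguments via \eqref{bessel_2matrixargument}, bounding $\lVert HTH'(Y_1-Y_2)\rVert_F\le \lVert T\rVert_F\,\lVert Y_1-Y_2\rVert_F$ by ordinary Frobenius sub-multiplicativity. You instead carry $M=HTH'$ through the Poisson integral from the start via the factorization $V_j=Y_j^{1/2}M^{1/2}$ and peel off $\lVert M^{1/2}\rVert_2$ with the spectral-norm variant of sub-multiplicativity. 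Both routes land on exactly $2m^{3/4}\lVert T\rVert_F^{1/2}\lVert Y_1-Y_2\rVert_F^{1/2}$. Your closing remark that the spectral-norm step is ``essential'' is therefore a slight overstatement: the paper's approach reaches $\lVert T\rVert_F^{1/2}$ using only Frobenius sub-multiplicativity, because the factor $\lVert HTH'\rVert_F=\lVert T\rVert_F$ appears directly once the one-argument bound has been established.
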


\smallskip

\begin{theorem}
\label{consistency_matrixcase}
Let $X_1,X_2,\dotsc$ be a sequence of $m \times m$ positive-definite, i.i.d. random matrices with mean $\mu $. Assume also that the p.d.f. of $X_1$ is of the form:
\begin{equation}
\label{assumption1_alternatives}
f(X_1)=f_0({\mu}^{-1/2} X_1 {\mu}^{-1/2}),
\end{equation}
where $f_0$ is orthogonally invariant. Let $\gamma \in (0,1)$ denote the level of significance of the test and $c_{n,\gamma}$ be the $(1-\gamma)$-quantile of the test statistic $\boldsymbol{T}^2_n$ under $H_0$. If $X_1,X_2,\dotsc$ are not Wishart-distributed then  
$$
\lim_{n \rightarrow \infty} P(\boldsymbol{T}^2_n > c_{n,\gamma})=1.
$$
\end{theorem}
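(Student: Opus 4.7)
The plan is to execute the standard three-step consistency argument for integral-type goodness-of-fit statistics: show that $\boldsymbol{T}_n^2/n$ converges almost surely to a deterministic limit $\Delta$; identify $\Delta$ as $0$ if and only if the data is Wishart; and use the weak convergence under $H_0$ to conclude that the critical values $c_{n,\gamma}$ are $o(n)$. Set $\widetilde{Y}_j := \mu^{-1/2}X_j\mu^{-1/2}$, which by Lemma \ref{lemma_orthogonally_invariant} is orthogonally invariant. The first step is to prove
\[
\frac{\boldsymbol{T}_n^2}{n} \xrightarrow{\mathrm{a.s.}} \Delta := \int_{T > 0}\bigl[\widetilde{\mathcal{H}}_{\widetilde{Y}_1,\nu}(T) - \etr(-T/\alpha)\bigr]^2 \, \dd P_0(T).
\]
I would decompose
\begin{align*}
\widetilde{\mathcal{H}}_n(T) - \widetilde{\mathcal{H}}_{\widetilde{Y}_1,\nu}(T)
&= \frac{\Gamma_m(\alpha)}{n}\sum_{j=1}^n \bigl[A_\nu(T, Y_j) - A_\nu(T, \widetilde{Y}_j)\bigr] \\
&\quad + \Gamma_m(\alpha)\left[\frac{1}{n}\sum_{j=1}^n A_\nu(T, \widetilde{Y}_j) - E\, A_\nu(T, \widetilde{Y}_1)\right].
\end{align*}
The bracketed term vanishes a.s.\ at each $T$ by the strong law. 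For the first term, Lemma \ref{lemma_lipschitz_bessel} yields $\Gamma_m(\alpha)|A_\nu(T, Y_j) - A_\nu(T, \widetilde{Y}_j)| \le 2m^{3/4}\|T\|_F^{1/2}\|Y_j - \widetilde{Y}_j\|_F^{1/2}$, and $\|Y_j - \widetilde{Y}_j\|_F \to 0$ a.s.\ because the identity $Y_j - \widetilde{Y}_j = (\bar{X}_n^{-1/2} - \mu^{-1/2})X_j\bar{X}_n^{-1/2} + \mu^{-1/2}X_j(\bar{X}_n^{-1/2} - \mu^{-1/2})$ combined with the SLLN and the continuity of the matrix square root on $\mathcal{P}_+^{m\times m}$ forces $\bar{X}_n^{-1/2}\to \mu^{-1/2}$ a.s. Pointwise a.s.\ convergence $\widetilde{\mathcal{H}}_n(T) \to \widetilde{\mathcal{H}}_{\widetilde{Y}_1,\nu}(T)$ follows. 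Since both functions are bounded by $1$ in modulus (Lemma \ref{existencehankel_matrixargument}), the squared integrand is dominated by the constant $4$, and Bounded Convergence delivers the claim.

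The second step identifies $\Delta$. By Example \ref{orthogonallyhankeltransformwishartdistn_example}, $\widetilde{\mathcal{H}}_{W_m(\alpha, \alpha I_m),\nu}(T) = {}_1F_1(\alpha;\alpha;-T,\alpha^{-1}I_m)$, and the homogeneity $C_\kappa(cI_m) = c^{|\kappa|}C_\kappa(I_m)$ collapses this to ${}_1F_1(\alpha;\alpha;-\alpha^{-1}T) = \etr(-T/\alpha)$. Hence $\Delta = 0$ forces $\widetilde{\mathcal{H}}_{\widetilde{Y}_1,\nu}(T) = \widetilde{\mathcal{H}}_{W_m(\alpha,\alpha I_m),\nu}(T)$ for every $T > 0$ by continuity. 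Since both $\widetilde{Y}_1$ and $W_m(\alpha, \alpha I_m)$ have orthogonally invariant distributions, Theorem \ref{uniqueness_hankeltilde} yields $\widetilde{Y}_1 \sim W_m(\alpha,\alpha I_m)$, and inverting the rescaling via \cite[Theorem 3.2.5]{muirhead} gives $X_1 \sim W_m(\alpha, \alpha\mu^{-1})$, contradicting the standing assumption. Therefore $\Delta > 0$ under the alternative.

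For the final step, under $H_0$, Theorem \ref{limitingnulldistribution_matrixcase} gives $\boldsymbol{T}_n^2 \xrightarrow{d} \int_{T > 0}\mathcal{Z}^2(T)\, \dd P_0(T)$, a finite random variable, so $c_{n,\gamma}$ is a bounded sequence and consequently $c_{n,\gamma}/n \to 0$. Combining the three steps,
\[
P(\boldsymbol{T}_n^2 > c_{n,\gamma}) = P\big(\boldsymbol{T}_n^2/n > c_{n,\gamma}/n\big) \longrightarrow P(\Delta > 0) = 1,
\]
as required.

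The main obstacle is the first step: the transformed observations $Y_j$ are coupled through $\bar{X}_n$, so one cannot apply the SLLN to them directly. The uniform Lipschitz control of $A_\nu(T,\cdot)$ provided by Lemma \ref{lemma_lipschitz_bessel}, together with the a.s.\ convergence $\bar{X}_n \to \mu$, is exactly what is needed to decouple the empirical averaging from the random rescaling; the uniform bound $|\widetilde{\mathcal{H}}_n(T)| \le 1$ then permits Bounded Convergence to exchange limit and integral in the ambient $P_0$-integral.
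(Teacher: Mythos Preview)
Your three-step strategy matches the paper's, but two implementation choices differ. For the almost-sure limit of $n^{-1}\boldsymbol{T}_n^2$, the paper expands the square into cross-terms and handles the piece $\int_{T>0}\bigl[\tfrac{1}{n}\sum_j \Gamma_m(\alpha)A_\nu(T,\widetilde Y_j) - \widetilde{\mathcal H}_{\widetilde Y_1,\nu}(T)\bigr]^2\,\dd P_0(T)$ by the Banach-space SLLN in $L^2$ \cite[Corollary 7.10]{ref19}; you instead use the scalar SLLN at each fixed $T$ followed by Bounded Convergence. Your route is more elementary but, as written, needs one step you omit: passing from ``for each $T$, a.s.'' to ``a.s., for $P_0$-a.e.\ $T$'' via Fubini on $\Omega\times\mathcal P_+^{m\times m}$ before invoking Bounded Convergence. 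A second imprecision: saying ``$\|Y_j-\widetilde Y_j\|_F\to 0$ a.s.'' for fixed $j$ does not by itself control the average $\tfrac{1}{n}\sum_j\|Y_j-\widetilde Y_j\|_F^{1/2}$; your identity does, once you factor out $\|\bar X_n^{-1/2}-\mu^{-1/2}\|_F^{1/2}$ (which $\to 0$ a.s.) and apply the SLLN to $\tfrac{1}{n}\sum_j\|X_j\|_F^{1/2}$ --- this is exactly what the paper does, working instead with $\|\bar X_n^{-1}-\mu^{-1}\|_F$. For the critical values, the paper shows $c_{n,\gamma}/n\to 0$ directly from $n^{-1}\boldsymbol{T}_n^2\to 0$ a.s.\ under $H_0$ (which follows from the same Step~1 argument with $\Delta=0$); your appeal to Theorem~\ref{limitingnulldistribution_matrixcase} is shorter but imports that theorem's additional restriction $\alpha>\tfrac12(m+3)$, which the paper's consistency proof does not need.
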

\medskip

\Pro
By the definition (\ref{statistic_wishart}) of the test statistic and \eqref{remark_sameeigen}, we have
\begin{align*}
n^{-1} \boldsymbol{T}^2_n&=\int_{T > 0} {\bigg[ \frac{1}{n}\sum_{j=1}^{n} \Gamma_m(\alpha) A_{\nu} (T, Z_j)-\etr(-\alpha^{-1} T) \bigg]^2}\, \dd P_0(T),
\end{align*}
where $Z_j=X_{j}^{1/2} \bar{X}_n^{-1} X_{j}^{1/2}$. By subtracting and adding the quantity 
$$
\frac{1}{n}\sum_{j=1}^{n} \Gamma_m(\alpha) A_{\nu} (T, X_{j}^{1/2} \mu^{-1} X_{j}^{1/2})
$$
inside the squared term, and then expanding the integrand, we obtain
\begin{align}
n^{-1}  \boldsymbol{T}^2_n 
\label{consistency1_matrixcase}
&=\int_{T > 0} {\bigg[ \ \frac{\Gamma_m(\alpha)}{n}\sum_{j=1}^{n} A_{\nu} (T, X_{j}^{1/2} \mu^{-1} X_{j}^{1/2})-\etr(-\alpha^{-1} T) \bigg]^2}\, \dd P_0(T) \\
\label{consistency2_matrixcase}
& \quad + \int_{T > 0} {\bigg[ \frac{\Gamma_m(\alpha)}{n} \sum_{j=1}^n \bigg(A_{\nu} (T, Z_j)-A_{\nu} (T, X_{j}^{1/2} \mu^{-1} X_{j}^{1/2})\bigg) \bigg]^2}\, \dd P_0(T) \\
\label{consistency3_matrixcase}
& \quad + 2 \int_{T > 0} { \bigg[ \frac{\Gamma_m(\alpha)}{n} \sum_{j=1}^n \bigg( A_{\nu} (T, Z_j)-A_{\nu} (T, X_{j}^{1/2} \mu^{-1} X_{j}^{1/2})\bigg) \bigg]}\nonumber\\
& \qquad\qquad \times {\bigg[ \frac{\Gamma_m(\alpha)}{n}\sum_{j=1}^{n} A_{\nu} (T, X_{j}^{1/2} \mu^{-1} X_{j}^{1/2})-\etr(-\alpha^{-1} T) \bigg] }\, \dd P_0(T).
\end{align}

We begin by proving that the integral (\ref{consistency2_matrixcase}) converges almost surely to $0$. By (\ref{lipschitz_bessel_2matrixargument}), there exists a constant $C > 0$ such that 
\begin{align*}
\frac{\Gamma_m(\alpha)}{n}  \sum_{j=1}^n \bigg| A_{\nu} (T, Z_j) & - A_{\nu} (T, X_{j}^{1/2} \mu^{-1} X_{j}^{1/2})\bigg| \nonumber\\
& \le C \, \lVert T \lVert_F^{1/2} \ \frac{1}{n} \sum_{j=1}^n  \lVert Z_j - X_{j}^{1/2} \mu^{-1} X_{j}^{1/2} \rVert_F^{1/2} \nonumber\\
& = C \, \lVert T \lVert_F^{1/2} \ \frac{1}{n} \sum_{j=1}^n  \lVert X_{j}^{1/2} (\bar{X}^{-1}_n-\mu^{-1}) X_{j}^{1/2} \rVert_F^{1/2} \nonumber\\
& \le C \, \lVert T \lVert_F^{1/2} \ \lVert \bar{X}^{-1}_n-\mu^{-1} \rVert_F^{1/2} \ \frac{1}{n} \sum_{j=1}^n \lVert X_j \rVert_F^{1/2},
\end{align*}
since the Frobenius norm is sub-multiplicative. By the triangle inequality, we conclude that the integral (\ref{consistency2_matrixcase}) is bounded above by 
$$
C^2 \ \lVert \bar{X}^{-1}_n-\mu^{-1} \rVert_F \ \bigg( \frac{1}{n} \sum_{j=1}^n \lVert X_j \rVert_F^{1/2} \bigg)^2 \ \int_{T > 0} {\lVert T \lVert_F}\, \dd P_0(T).
$$
By the Cauchy-Schwarz inequality, 
$$
\bigg( n^{-1} \sum_{j=1}^n \lVert X_j \rVert_F^{1/2} \bigg)^2 \le n^{-1} \sum_{j=1}^n \lVert X_j \rVert_F.
$$
Since $T > 0$, then $(\tr T^2) \le (\tr T)^2$, so we have 
\begin{align*}
\int_{T > 0} {\lVert T \rVert_F }\, \dd P_0(T) &= \int_{T > 0} {(\tr T^2 )^{1/2} }\, \dd P_0(T) \\
& \le \int_{T > 0} {(\tr T) }\, \dd P_0(T)
< \infty,
\end{align*}
by (\ref{trace_zonal}) and (\ref{zonalintegral}).  

Moreover, by the Strong Law of Large Numbers and the Continuous Mapping Theorem, $\lVert \bar{X}^{-1}_n-\mu^{-1} \rVert_F \rightarrow 0$, almost surely. Also, again by the Strong Law of Large Numbers, $n^{-1} \sum_{j=1}^n \lVert X_j \rVert_F \rightarrow E \lVert X_1 \rVert_F$, almost surely. It is elementary to verify that $E \lVert X_1 \rVert_F < \infty$. Since $X_1 > 0$ and $\mu=E(X_1) > 0$, we have $ \lVert X_1 \rVert_F \le \tr X_1$ and so $E \lVert X_1 \rVert_F \le E(\tr X_1) =\tr\mu < \infty$. Therefore, (\ref{consistency2_matrixcase}) converges to 0, almost surely.

Second, we show that (\ref{consistency3_matrixcase}) tends to 0, almost surely. By (\ref{2besselineq_matrixargument}), the fact that $ \etr(-\alpha^{-1} T) \le 1$ for $T > 0$, and the triangle inequality, we have 
$$
\bigg| \frac{\Gamma_m(\alpha)}{n}\sum_{j=1}^{n} A_{\nu} (T, X_{j}^{1/2} \mu^{-1} X_{j}^{1/2})-\etr(-\alpha^{-1} T) \bigg| \le 2. 
$$
Further, by the triangle inequality, the absolute value of (\ref{consistency3_matrixcase}) is less than or equal to
\begin{align}
\label{cons3_matrixcase2}
& 2 \int_{T > 0} { \bigg| \frac{\Gamma_m(\alpha)}{n} \sum_{j=1}^n \bigg(A_{\nu} (T, Z_j)-A_{\nu} (T, X_{j}^{1/2} \mu^{-1} X_{j}^{1/2})\bigg) \bigg| }\, \dd P_0(T). 
\end{align}
By the Cauchy-Schwarz inequality and the fact that $\int_{T > 0} \dd P_0(T)=1$, (\ref{cons3_matrixcase2}) is seen to be less than or equal to 
\begin{align*}
2 \bigg( \int_{T > 0} { \bigg[ \frac{\Gamma_m(\alpha)}{n} \sum_{j=1}^n \bigg(A_{\nu} (T, Z_j)-A_{\nu} (T, X_{j}^{1/2} \mu^{-1} X_{j}^{1/2})\bigg) \bigg]^2 }\, \dd P_0(T) \bigg)^{1/2}.
\end{align*}
Following the same argument as for integral (\ref{consistency2_matrixcase}), we conclude that integral (\ref{consistency3_matrixcase}) converges to $0$, almost surely. 

Since 
$
A_{\nu} (T, X_{j}^{1/2} \mu^{-1} X_{j}^{1/2})=A_{\nu} (T, \mu^{-1/2} X_j \mu^{-1/2}),
$ 
we see that the integral (\ref{consistency1_matrixcase}) equals  
$$
\int_{T > 0} {\bigg[ \frac{\Gamma_m(\alpha)}{n}\sum_{j=1}^{n} A_{\nu} (T, \mu^{-1/2} X_j \mu^{-1/2})-\etr(-\alpha^{-1} T) \bigg]^2}\, \dd P_0(T).
$$
We subtract and add inside the squared term the orthogonally invariant Hankel transform of $\mu^{-1/2} X_1 \mu^{-1/2}$, i.e., the quantity 
$
E [\Gamma_m(\alpha) A_{\nu} (T, \mu^{-1/2} X_1 \mu^{-1/2})],
$ 
and expand the integrand. Then we find that (\ref{consistency1_matrixcase}) equals 
\begin{align}
\label{term1_consistency1_matrixcase}
& \int_{T > 0} \bigg[\frac{\Gamma_m(\alpha)}{n} \sum_{j=1}^{n} A_{\nu} (T, \mu^{-1/2} X_j \mu^{-1/2}) 
{- E [\Gamma_m(\alpha) A_{\nu} (T, \mu^{-1/2} X_1 \mu^{-1/2})] \bigg]^2}\, \dd P_0(T)\\
& + \int_{T > 0} {\bigg[ E [\Gamma_m(\alpha) A_{\nu} (T, \mu^{-1/2} X_1 \mu^{-1/2})] -\etr(-\alpha^{-1} T) \bigg]^2}\, \dd P_0(T) \nonumber\\
& + 2 \int_{T > 0} {\bigg[ \frac{\Gamma_m(\alpha)}{n}\sum_{j=1}^{n} A_{\nu} (T, \mu^{-1/2} X_j \mu^{-1/2})} 
{- E [\Gamma_m(\alpha) A_{\nu} (T, \mu^{-1/2} X_1 \mu^{-1/2})] \bigg] }\nonumber\\
\label{term3_consistency1_matrixcase}
& \qquad\quad \times {\bigg[ E [\Gamma_m(\alpha) A_{\nu} (T, \mu^{-1/2} X_1 \mu^{-1/2})] -\etr(-\alpha^{-1} T) \bigg]}\, \dd P_0(T).
\end{align}
By the Strong Law of Large Numbers in $L^2$ \cite[p.~189, Corollary 7.10]{ref19}, we conclude that the term (\ref{term1_consistency1_matrixcase}) converges to 0, almost surely. 

Next, we show that (\ref{term3_consistency1_matrixcase}) converges to 0, almost surely. By (\ref{2besselineq_matrixargument}) and the fact that $\etr(-\alpha^{-1} T) \le 1$ for $T >0$, we have 
$$
\bigg| E [\Gamma_m(\alpha) A_{\nu} (T, \mu^{-1/2} X_1 \mu^{-1/2})] -\etr(-\alpha^{-1} T) \bigg| \le 2.
$$
Therefore, the absolute value of the integral (\ref{term3_consistency1_matrixcase}) is less than or equal to
\begin{align*}
2 \int_{T > 0} \ & {\bigg| \frac{\Gamma_m(\alpha)}{n}\sum_{j=1}^{n} A_{\nu} (T, \mu^{-1/2} X_j \mu^{-1/2})} {- E [\Gamma_m(\alpha) A_{\nu} (T, \mu^{-1/2} X_1 \mu^{-1/2})] \ \bigg| }\, \dd P_0(T)\\
& \le \ 2 \bigg( \int_{T > 0} {\bigg[ \frac{\Gamma_m(\alpha)}{n}\sum_{j=1}^{n} A_{\nu} (T, \mu^{-1/2} X_j \mu^{-1/2})} \\
& \qquad\qquad\qquad\qquad\qquad {- E [\Gamma_m(\alpha) A_{\nu} (T, \mu^{-1/2} X_1 \mu^{-1/2})] \ \bigg]^2 }\, \dd P_0(T) \bigg)^{1/2},
\end{align*}
where the latter bound follows from the Cauchy-Schwarz inequality. Again, by the Strong Law of Large Numbers in $L^2$, we conclude that the integral (\ref{term3_consistency1_matrixcase}) converges to 0, almost surely. 

We have now shown that 
\begin{align}
\label{almostsurelyconv_matrixcase}
\frac{1}{n}\boldsymbol{T}^2_n \xrightarrow{a.s.} \int_{T > 0} {\bigg[ E [\Gamma_m(\alpha) A_{\nu} (T, \mu^{-1/2} X_1 \mu^{-1/2})] -\etr(-\alpha^{-1} T) \bigg]^2}\, \dd P_0(T).
\end{align}
Denote by $\Delta$ the right-hand side of (\ref{almostsurelyconv_matrixcase}); then $\Delta \ge 0$. Suppose that $\Delta=0$, then 
$$
E [\Gamma_m(\alpha) A_{\nu} (T, \mu^{-1/2} X_1 \mu^{-1/2})] -\etr(-\alpha^{-1} T)=0,
$$
equivalently, $\mathcal{\widetilde{H}}_{\mu^{-1/2} X_1 \mu^{-1/2}}(T)-\etr(-\alpha^{-1} T)=0$, $P_0$-almost everywhere. By continuity, we obtain $\mathcal{\widetilde{H}}_{\mu^{-1/2} X_1 \mu^{-1/2}}(T)-\etr(-\alpha^{-1} T)=0$ for all $T > 0$. By the Uniqueness Theorem for orthogonally invariant Hankel transforms, it follows that $\mu^{-1/2} X_1 \mu^{-1/2}$ has a Wishart distribution. By Muirhead \cite[p.~92, Theorem 3.2.5]{muirhead}, $X_1$ has also a Wishart distribution, which contradicts the assumption that $X_1$ does not have a Wishart distribution. Therefore, $\Delta > 0$. 

Under $H_0$, $n^{-1} \boldsymbol{T}^2_n \xrightarrow{a.s.} 0$, and therefore $n^{-1} \boldsymbol{T}^2_n\xrightarrow{p} 0$, i.e., for any $\epsilon >0$,
$$
\lim_{n \rightarrow \infty} P_{H_0} \big( n^{-1} \boldsymbol{T}^2_n \ge \epsilon \big)=0.
$$
Thus, for any $\epsilon > 0$ and $\gamma>0$, there exists $n_0(\epsilon,\gamma) \in \mathbb{N}$ such that
$$
P_{H_0} \big( n^{-1} \boldsymbol{T}^2_n \ge \epsilon \big) \le \gamma,
$$
for all $n \ge n_0(\epsilon,\gamma)$.  Let $c_{n,\gamma}$ be the $(1-\gamma)$-quantile of the test statistic $\boldsymbol{T}^2_n$ under $H_0$.  Then $0 \le c_{n,\gamma} \le n\epsilon$ for all $n \ge n_0(\epsilon)$ since, by definition, $c_{n,\gamma} := \inf\{ x \ge 0: P_{H_0}(\boldsymbol{T}^2_n > x) \le \gamma\}$.  Therefore, $0 \le n^{-1} c_{n,\gamma} \le \epsilon$ for all $n \ge n_0 (\epsilon)$.  In summary, for any $\epsilon > 0$, there exists $n_0(\epsilon) \in \bN$ such that $n^{-1} c_{n,\gamma} \le \epsilon$ for all $n \ge n_0(\epsilon)$, i.e., 
\begin{eqnarray}
\label{quantileconv_matrixcase}
\lim_{n \rightarrow \infty} n^{-1} c_{n,\gamma} =0.
\end{eqnarray}

By (\ref{almostsurelyconv_matrixcase}) and (\ref{quantileconv_matrixcase}), we have $n^{-1} \boldsymbol{T}^2_n-n^{-1} c_{n,\gamma} \xrightarrow{a.s.} \Delta$, and therefore $n^{-1} \boldsymbol{T}^2_n-n^{-1}c_{n,\gamma} \xrightarrow{p} \Delta$. Thus, by Severini \cite[ p.~340, Corollary 11.3 (i)]{ref12}), we conclude that $n^{-1} \boldsymbol{T}^2_n-n^{-1}c_{n,\gamma} \xrightarrow{d} \Delta$.
Further, 
\begin{align*}
\lim_{n \rightarrow \infty} P(\boldsymbol{T}^2_n > c_{n,\gamma})&=\lim_{n \rightarrow \infty} P\big( n^{-1}\boldsymbol{T}^2_n -n^{-1} c_{n,\gamma} > 0 \big)\\
&=1-\lim_{n \rightarrow \infty} P\big( n^{-1}\boldsymbol{T}^2_n -n^{-1}c_{n,\gamma} \le 0 \big).
\end{align*}
Since the distribution function of the constant positive random variable $\Delta$ is continuous at 0, we conclude that 
$$
\lim_{n \rightarrow \infty} P(\boldsymbol{T}^2_n > c_{n,\gamma})=1-P(\Delta \le 0)=1-0=1.
$$
This concludes the proof.
$\qed$

\begin{remark}
\rm{
We show that the assumption (\ref{assumption1_alternatives}), made in Theorem \ref{consistency_matrixcase}, holds for two alternative distributions. 

First, the \textit{matrix $F$-distribution} \cite[Section 4, part (c)]{khatri} or \cite[Eqs. (65), (72)]{james}:  Let $X$ be a positive-definite random matrix with p.d.f.
$$
f(X)=\frac{\Gamma_m(a+b)}{\Gamma_m(a)\Gamma_m(b)} \ (\det X)^{a-(m+1)/2} \ (\det (I_m+X))^{-(a+b)},
$$
where $a > \tfrac12 (m-1)$ and $b > \tfrac12 (m +1)$.   Since $f(X)$ is orthogonally invariant then, by Schur's Lemma, there exists a constant $c$ such that $\mu = E(X) = c I_m$.  

Last, a \textit{linear combination of two Wishart matrices}: 
Let $X$ be a positive-definite random matrix with p.d.f.
$$
f(X)=\frac{\delta^{mb} (\delta-1)^{ma}}{\Gamma_m(a+b)} \ (\det X)^{a+b-(m+1)/2} \ \etr(-\delta X) \ {_1}F_1(a; a+b; X),
$$
where $a > \tfrac12 (m-1)$, $b > \tfrac12 (m-1)$, and $\delta > 1$. 
By \cite[Section 4.4]{guptarichards2}, it is known that $X$ is equal in distribution to $X_1+ \delta^{-1}X_2$, where $X_1$ and $X_2$ are independent, $X_1 \sim W_m(a,I_m)$ and $X_2 \sim W_m(b,I_m)$.  Again, the distribution of $X$ is orthogonally invariant, therefore it satisfies (\ref{assumption1_alternatives}).
}\end{remark}

\section{Contiguous Alternatives to the Null Hypothesis}
\label{contiguousmatrix}

In this section, we derive the limiting distribution of the test statistic under a sequence of contiguous alternatives. 

\subsection{Assumptions}
\label{assumptionscontiguous_matrixcase}

For $n \in \mathbb{N}$ and $m \ge 2$, let $X_{n1},\dotsc,X_{nn}$ be a triangular array of row-wise independent $m \times m$ random matrices. As usual, let $P_0=W_m(\alpha,I_m)$, $\alpha > \max\{ \tfrac12(2m-1), \tfrac12(m+3) \}$, and let $Q_{n1}$ be a probability measure dominated by $P_0$. 

We wish to test the hypothesis
$$
H_0: \ \text{The marginal distribution of each} \ X_{ni}, \ i=1,\dotsc,n, \ \text{is} \ P_0
$$
against the alternative
$$
H_1: \ \text{The marginal distribution of each} \ X_{ni}, \ i=1,\dotsc,n, \ \text{is} \ Q_{n1}.
$$ 

We write the Radon-Nikodym derivative of $Q_{n1}$ with respect to $P_0$ in the form
\begin{align}
\label{radon_nikodym_matrixcase}
\frac{\dd Q_{n1}}{\dd P_0}=1+n^{-1/2}h_n.
\end{align}
We will need two assumptions in the sequel.

\begin{assumption}
\label{2assumptions_matrixcase}
{\rm
We assume that: 
\begin{itemize}
\item[(A1)] The functions $\{h_n: n \in \bN \}$ form a sequence of $P_0$-integrable functions converging pointwise, $P_0$-almost everywhere, to a function $h$, and 
\item[(A2)] $\sup_{n \in \bN} E_{P_0} |h_n|^4 < \infty$. 
\end{itemize}
}\end{assumption}

Note that since $ \int (\dd Q_{n1}/ \dd P_0) \, \dd P_0 = 1$ then we also have $\int {h_n}\, \dd P_0 = 0$, for all $n \in \bN$.  Denote the indicator function of an event $A$ by $I(A)$. By applying (A2), we deduce the uniform integrability of $|h_{n}|^2$: 
\begin{align*}
\lim_{k \rightarrow \infty} \sup_{n \in \mathbb{N}} E_{P_0} \big( |h_n|^2 I( |h_n|^2 >k) \big)&=\lim_{k \rightarrow \infty} \sup_{n \in \mathbb{N}} \int {|h_n|^2 I( |h_n|^2 >k)}\, \dd P_0\\
&\le \lim_{k \rightarrow \infty} \sup_{n \in \mathbb{N}} \int { k^{-1} |h_n|^4}\, \dd P_0\\ 
&=\lim_{k \rightarrow \infty} k^{-1} \sup_{n \in \mathbb{N}} E_{P_0} |h_n|^4=0.
\end{align*}
By Bauer \cite[p.~95, Theorem 2.11.4]{bauer}, the $P_0$-almost everywhere convergence of $h_n$ to $h$ implies the $P_0$-stochastic convergence of $h_n$ to $h$. Again by Bauer \cite[p.~104, Theorem 2.12.4]{bauer}, the uniform integrability of $|h_n|^2$ along with the $P_0$-stochastic convergence of $h_n$ to $h$ imply the convergence of $h_n$ in mean square, i.e., 
$$
\lim_{n \rightarrow \infty} \int {|h_n-h|^2}\, \dd P_0=0,
$$
and therefore
\begin{align*}
\lim_{n \rightarrow \infty} \int {|h_n|^2}\, \dd P_0 = \int {|h|^2}\, \dd P_0.
\end{align*}
Since convergence in mean square implies convergence in mean, we have
$$
\lim_{n \rightarrow \infty} \int {|h_n-h|}\, \dd P_0=0,
$$
and thus, 
$$
\lim_{n \rightarrow \infty} \int {h_n}\, \dd P_0=\int {h}\, \dd P_0.
$$
Now, due to the fact that $\int {h_n}\, \dd P_0=0$ for all $n \in \mathbb{N}$, we obtain
$$
\lim_{n \rightarrow \infty} \int {h_n}\, \dd P_0=\int {h}\, \dd P_0=0.
$$

\subsection{Examples}

In this subsection, we verify that Assumptions \ref{2assumptions_matrixcase} are valid for a broad collection of sequences of contiguous alternatives.  

\subsubsection{Wishart alternatives with contiguous scale matrices}

Let $Q_{n1}:=W_m(\alpha, \Sigma_n)$ with $\alpha > \max\{ \tfrac12(2m-1), \tfrac12(m+3) \}$ and $\Sigma_n=(1+\frac{1}{\sqrt{n}})I_m$. Then,
\begin{align*}
\frac{\dd Q_{n1}}{\dd P_0}&=\left( 1+n^{-1/2} \right)^{m\alpha}  \etr( -n^{-1/2} X),
\end{align*}
$X>0$. We equate the Radon-Nikodym derivative to $1+n^{-1/2}h_n(X)$, obtaining
$$
h_n(X)=n^{1/2} \bigg[\left( 1+n^{-1/2} \right)^{m\alpha}  \etr( -n^{-1/2}X) -1\bigg], 
$$
for $X > 0$. By applying L'Hospital's rule, we obtain
\begin{align*}
h(X) &:=\lim_{n \rightarrow \infty} h_n(X)=m\alpha -\tr X,
\end{align*}
for $X > 0$. Next, we find $E_{P_0} |h_n^4|$. Define
\begin{align}
\label{remainderterm_matrixcase}
R_n(X)&=\etr(-n^{-1/2}X)-( 1-n^{-1/2} (\tr X)\\
&=\sum_{k=2}^{\infty} \frac{1}{k!} \big(-n^{-1/2} (\tr X) \big)^{k} \nonumber,
\end{align}
the remainder term of the Taylor series expansion of $\etr(-n^{-1/2}X)$, $X > 0$. Then, by elementary algebraic manipulations, we obtain
\begin{align*}
h_n(X) &= n^{1/2} (1+n^{-1/2})^{m\alpha} \etr (-n^{-1/2} X ) - n^{1/2} \\
&= n^{1/2} (1+n^{-1/2})^{m\alpha} \big(R_n(X) + 1-n^{-1/2} (\tr X) \big) - n^{1/2} \\
&= (1+n^{-1/2})^{m\alpha-1} \big[1+(1+n^{1/2})R_n(X) - (1+n^{-1/2}) (\tr X) \big] \\
& \hskip 2.5truein + n^{1/2}[(1+n^{-1/2})^{m\alpha-1} - 1].
\end{align*}
By (\ref{remainderterm_matrixcase}), the triangle inequality, and the Lipschitz continuity of the exponential function, we have
\begin{align*}
|R_n(X)| &\le n^{1/2} |R_n(X) | 
\nonumber \\
&\le n^{1/2} \big[ |\etr(-n^{-1/2} X) - 1| + n^{-1/2} (\tr X) \big] \nonumber \\
&\le n^{1/2} \big[n^{-1/2} (\tr X) + n^{-1/2}  (\tr X)\big] 
= 2 \tr X,
\end{align*}
$X > 0$.  Therefore, 
\begin{align*}
|h_n(X)| &\le (1+n^{-1/2})^{m\alpha-1} \bigg[1+(1+n^{1/2})|R_n(X)| + (1+n^{-1/2}) (\tr X) \bigg] \\
& \hskip 2.5truein + \big| n^{1/2}\big((1+n^{-1/2})^{m\alpha-1}  - 1\big)\big| \\
&\le (1+n^{-1/2})^{m\alpha-1} (1+4\tr X + 2\tr X ) + \big| n^{1/2}\big((1+n^{-1/2})^{m\alpha-1}  - 1\big)\big| \\
&= (1+n^{-1/2})^{m\alpha-1} (1+6 \tr X) + \big| n^{1/2}\big((1+n^{-1/2})^{m \alpha-1}  - 1\big)\big|.
\end{align*}
It is elementary that 
$
(1+n^{-1/2})^{m\alpha-1} \to 1
$ 
and 
$
n^{1/2}\big((1+n^{-1/2})^{m\alpha-1} - 1\big) \to m\alpha-1
$ 
as $n \to \infty$; therefore, there exists a positive constant $M$ such that 
$
(1+ n^{-1/2})^{m\alpha-1} \le M
$  
and 
$
\big| n^{1/2}\big((1+n^{-1/2})^{m\alpha-1} - 1\big) \big| \le M
$ 
for all $n$. Therefore, $|h_n(X)| \le  M(1 + 6 \tr X) + M = M(2+6 \tr X)$, $X > 0$, so we obtain 
\begin{align*}
E_{P_0}|h_n|^4 \le M^4 \int_{X > 0} (2 + 6 \tr X)^4 \, \dd P_0(X), 
\end{align*}
and this bound does not depend on $n$. By (\ref{trace_zonal}) and (\ref{zonalintegral}), the above integral is finite; thus, $\sup_{n \in \bN} E_{P_0} |h_n|^4 < \infty$.

\subsubsection{Wishart alternatives with contiguous shape parameters} 

Let $Q_{n1}:=W_m(\alpha_n, I_m)$ with $\alpha_n=\alpha+\frac{1}{\sqrt{n}}$, $\alpha > \max\{ \tfrac12(2m-1), \tfrac12(m+3) \}$. We have
$$
\frac{\dd Q_{n1}}{\dd P_0}=\frac{\Gamma_m(\alpha)}{\Gamma_m(\alpha_n)} (\det X)^{1/\sqrt{n}},
$$
$ X > 0$. Following (\ref{radon_nikodym_matrixcase}), we equate this Radon-Nikodym derivative to $1+n^{-1/2}h_n(X)$, obtaining
$$
h_n(X)=n^{1/2} \bigg( \frac{\Gamma_m(\alpha)}{\Gamma_m(\alpha_n)} (\det X)^{1/\sqrt{n}} -1 \bigg ), 
$$
for $X > 0$. Recall the \textit{multivariate digamma function} 
$$
\psi_m(z):=\frac{\dd}{\dd z} \log \Gamma_m(z)=\frac{\Gamma_m'(z)}{\Gamma_m(z)},
$$
$z > 0$. Applying L'Hospital's rule, we obtain
\begin{align*}
h(X) &: =\lim_{n \rightarrow \infty} h_n(X) \\
&= \lim_{n \rightarrow \infty} n^{1/2} \bigg( \frac{\Gamma_m(\alpha)}{\Gamma_m(\alpha + n^{-1/2})} (\det X)^{1/\sqrt{n}} -1 \bigg)\\
&= \log (\det X)-\psi_m(\alpha), 
\end{align*}
$X > 0$. To calculate $E_{P_0} |h_n|^4$, we apply the binomial expansion, obtaining 
\begin{multline*}
\bigg| n^{1/2} \bigg( \frac{\Gamma_m(\alpha)}{\Gamma_m(\alpha + n^{-1/2})} (\det X)^{1/\sqrt{n}}-1 \bigg) \bigg|^4 \\
= n^{2} \sum_{j=0}^4 (-1)^j \binom{4}{j} \bigg(\frac{\Gamma_m(\alpha)}{\Gamma_m(\alpha+n^{-1/2})} \bigg)^j (\det X)^{j/\sqrt{n}} ,
\end{multline*}
thus, 
\begin{align}
\label{fourthmoment_matrixcase}
E_{P_0} |h_n|^4 &=n^{2} \sum_{j=0}^4 (-1)^j \binom{4}{j} \bigg(\frac{\Gamma_m(\alpha)}{\Gamma_m(\alpha+n^{-1/2})} \bigg)^j \, \frac{\Gamma_m(\alpha + jn^{-1/2})}{\Gamma_m(\alpha)}.
\end{align}
Next, the Taylor expansion of $\Gamma_m(\alpha)/\Gamma_m(\alpha+n^{-1/2})$ for sufficiently large values of $n$ is 
\begin{eqnarray}
\label{taylorexpwishartalter}
\frac{\Gamma_m(\alpha)}{\Gamma_m(\alpha+n^{-1/2})}= \sum_{j=0}^4 a_j n^{-j/2} + o(n^{-2}),
\end{eqnarray}
where $a_0 = 1$.  

After lengthy but straightforward calculations, we obtain
\begin{align*}
a_1&=-\psi_m(\alpha),\\ 
a_2&=\frac{1}{2} \psi_m^2(\alpha)-\frac{1}{2}\psi'_m(\alpha),\\
a_3&=-\frac{1}{6}\psi_m^3(\alpha)  -\frac{1}{6}\psi''_m(\alpha)+\frac{1}{2}\psi_m(\alpha)\psi'_m(\alpha),\\
a_4&=-\frac{\psi'''_m(\alpha)}{24}+\frac{1}{8}(\psi'_m(\alpha))^2+\frac{1}{6}\psi_m(\alpha)\psi''_m(\alpha)-\frac{1}{4}\psi^2_m(\alpha)\psi'_m(\alpha)+\frac{1}{24}\psi^4_m(\alpha).
\end{align*}
Next, we substitute the Taylor expansion (\ref{taylorexpwishartalter}) in (\ref{fourthmoment_matrixcase}) and then take the limit as $n \to \infty$. By applying L'Hospital's rule four times then, after some lengthy but straightforward calculations, we obtain 
$$
\lim_{n \to \infty} E_{P_0} |h_n|^4 = 9a_1^4 + 24a_2^2 + 24a_1a_3 - 36a_1^2a_2 - 24a_4.
$$
Thus, $E_{P_0} |h_n^4|$ is a bounded sequence, and therefore $\sup_{n \in \bN} E_{P_0} |h_n|^4 < \infty$.

\subsubsection{Contaminated Wishart models}

Consider the contamination model, 
\begin{equation}
\label{Qn1contamination_matrixcase}
Q_{n1}:=(1-n^{-1/2}) P_0 + n^{-1/2} W_m(2\alpha,I_m),
\end{equation}
where, as usual, $ \alpha > \max\{ \tfrac12(2m-1), \tfrac12(m+3) \}$. We note that contaminated Wishart models appear also in the analysis of diffusion tensor images \cite{jian2}.

We have
\begin{align*}
\frac{\dd Q_{n1}}{\dd P_0}&=n^{-1/2} \bigg( \frac{\Gamma_m(\alpha)}{\Gamma_m(2\alpha)} (\det X)^{\alpha}-1 \bigg) +1,
\end{align*}
for $X > 0$. Following (\ref{radon_nikodym_matrixcase}), we equate this Radon-Nikodym derivative to $1+n^{-1/2}h_n(X)$, obtaining
$$
h_n(X)=\frac{\Gamma_m(\alpha)}{\Gamma_m(2\alpha)} (\det X)^{\alpha}-1,
$$
for $X > 0$. Thus, 
$$
h(X):=\lim_{n \rightarrow \infty} h_n(X)=\frac{\Gamma_m(\alpha)}{\Gamma_m(2\alpha)} (\det X)^{\alpha}-1,
$$
$X > 0$. Since 
\begin{align*}
E_{P_0} |h_n^4| &=\int_{X>0} {  \bigg( \frac{\Gamma_m(\alpha)}{\Gamma_m(2\alpha)} (\det X)^{\alpha}-1  \bigg)^4}\, \dd P_0(X)\\
\end{align*}
clearly is finite and does not depend on $n$ then $\sup_{n \in \bN} E_{P_0} |h_n|^4 < \infty$.

We note also that the model (\ref{Qn1contamination_matrixcase}) is a special case of the contamination model 
$$
Q_{n2} = (1-n^{-1/2}) P_0 + n^{-1/2} P_1,
$$
where $P_1$ is a probability measure dominated by $P_0$, and $\int {(\dd P_1/\dd P_0)^4}\, \dd P_0 < \infty$.  The preceding calculations can also be done for many choices of $P_1$. 

For example, consider the case in which $P_1$ is the probability measure corresponding to the matrix generalized inverse Gaussian distribution \cite{butler} with density function
$$
f_1(X)=c_1 \ (\det X)^{b-\tfrac{1}{2}(m+1)} \etr(-\Phi X^{-1} -\Psi X),
$$
$X > 0$, where $c_1$ is the normalizing constant, $\Phi$ and $\Psi$ are symmetric non-negative definite matrices, and  $b \in \mathbb{R}$. Then
\begin{align*}
\int {(\dd P_1/\dd P_0)^4}\, \dd P_0 &= \int_{X > 0} \frac{c_1^4 (\det X)^{4b-\tfrac{4}{2}(m+1)} \etr(-4\Phi X^{-1} -4\Psi X)}{c_0^3 (\det X)^{3\alpha-\tfrac{3}{2}(m+1)} \etr(-3X)} \dd X \\
&=c \ \int_{X > 0} (\det X)^{4b-3\alpha-\tfrac{1}{2}(m+1)} \etr(-4\Phi X^{-1}-(4 \Psi-3 I_m)X) \dd X,
\end{align*}
where $c_0=1/\Gamma_m(\alpha)$ is the normalizing constant of $W_m(\alpha, I_m)$ and $c = c_1^4/c_0^3$.  By \cite[p.~506]{herz} and \cite[Eq.~(2)]{butler}, we deduce that $\int {(\dd P_1/\dd P_0)^4}\, \dd P_0 < \infty$ in the following cases:
\begin{itemize}
\item[(i)]$\Phi \ge 0$, $\Psi-\tfrac{3}{4} I_m > 0$, $b \ge \tfrac{1}{4}(3\alpha + \tfrac{1}{2}m)$
\item[(ii)]$\Phi > 0$, $\Psi-\tfrac{3}{4} I_m > 0$, $b \in \mathbb{R}$
\item[(iii)]$\Phi > 0$, $\Psi-\tfrac{3}{4} I_m \ge 0$, $b < \tfrac{1}{4}(3\alpha - \tfrac{1}{2}(m-1))$
\end{itemize}
Therefore, we deduce that the Assumptions \ref{2assumptions_matrixcase} also hold for broad classes of the model $Q_{n2}$.

\subsection{The distribution of the test statistic under contiguous alternatives}

Let $P_0=W_m(\alpha,I_m)$, $\alpha > \max\{ \tfrac12(2m-1), \tfrac12(m+3) \}$; and denote by $\boldsymbol{P_n}= P_0 \otimes \cdots \otimes P_0$ and $\boldsymbol{Q_n}=Q_{n1} \otimes \cdots \otimes Q_{n1}$ the $n$-fold product probability measures of $P_0$ and $Q_{n1}$, respectively. 
 
\begin{theorem}
\label{theoremcontiguous_matrixcase}
Let $m \ge 2$ and $X_{n1},\dotsc, X_{nn}$, $n \in \mathbb{N}$, be a triangular array of $m \times m$ positive-definite row-wise i.i.d. random matrices, where $X_{nj}=X_j$, $j=1,\dotsc,n$. We assume that the distribution of $X_{nj}$ is $Q_{n1}$, for every $j=1,\dotsc,n$. Further, let $\mathcal{Z}_n=(\mathcal{Z}_n(T), T > 0)$ be a random field with 
$$
\mathcal{Z}_{n}(T)=\frac{1}{\sqrt{n}} \sum_{j=1}^n  \bigg[ \Gamma_m(\alpha) A_{\nu}(T, X_{nj}^{1/2} \bar{X}_n^{-1} X_{nj}^{1/2})-\etr(-T/\alpha) \bigg],
$$
$T > 0$. Under the Assumptions \ref{2assumptions_matrixcase}, 
there exists a centered Gaussian field $\mathcal{Z}:=(\mathcal{Z}(T), T > 0)$ with sample paths in $L^2$ and the covariance function $K(S, T)$ in (\ref{covariancefn_matrixcase}), and a function
\begin{multline*}
c(T)=\int_{X>0} { \bigg[ \Gamma_m(\alpha)A_{\nu}(T, \alpha^{-1} X)+\frac{\tr(X-\alpha I_m)}{\alpha^{2}} (\tr T) \etr(-\alpha^{-1} T) }\\
{-\etr(-\alpha^{-1} T) \bigg]  h(X)}\, \dd P_0(X),
\end{multline*}
$ T > 0$, such that $\mathcal{Z}_n \xrightarrow{d} \mathcal{Z} + c$ in $L^2$. Moreover, as $n \rightarrow \infty$,
$$ 
\boldsymbol{T}^2_{n} \xrightarrow{d} \int_{T > 0} {\big( \mathcal{Z}(T)+c(T) \big)^2}\, \dd P_0(T).
$$ 
\end{theorem}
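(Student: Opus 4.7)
The plan is to leverage the Taylor-expansion decomposition $\mathcal{Z}_n = (\mathcal{Z}_n - \mathcal{Z}_{n,1}) + (\mathcal{Z}_{n,1} - \mathcal{Z}_{n,2}) + (\mathcal{Z}_{n,2} - \mathcal{Z}_{n,3}) + \mathcal{Z}_{n,3}$ that was introduced in the proof of Theorem \ref{limitingnulldistribution_matrixcase}, and then to show that under $\boldsymbol{Q_n}$ only the leading term $\mathcal{Z}_{n,3}$ acquires a deterministic shift while the three remainders continue to vanish in $L^2$-probability. First, I would verify that $\boldsymbol{Q_n}$ is contiguous to $\boldsymbol{P_n}$. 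Using the Radon--Nikodym derivative \eqref{radon_nikodym_matrixcase}, the representation $\log(\dd\boldsymbol{Q_n}/\dd\boldsymbol{P_n}) = \sum_{j=1}^n \log(1 + n^{-1/2} h_n(X_j))$, together with Assumptions \ref{2assumptions_matrixcase} (pointwise convergence $h_n \to h$, $P_0$-integrability of $h_n$ and uniform fourth-moment control implying uniform square-integrability), would let me invoke Le Cam's first lemma to conclude contiguity via the usual LAN expansion, with limiting log-likelihood distribution $N(-\tfrac12 \sigma_h^2, \sigma_h^2)$ where $\sigma_h^2 = \int h^2 \dd P_0$.

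Second, since $\|\mathcal{Z}_n - \mathcal{Z}_{n,1}\|_{L^2}$, $\|\mathcal{Z}_{n,1} - \mathcal{Z}_{n,2}\|_{L^2}$ and $\|\mathcal{Z}_{n,2} - \mathcal{Z}_{n,3}\|_{L^2}$ all converge to $0$ in $\boldsymbol{P_n}$-probability (as shown in the proof of Theorem \ref{limitingnulldistribution_matrixcase}), contiguity transfers these $o_p(1)$ statements verbatim to $\boldsymbol{Q_n}$. Therefore it suffices to analyse $\mathcal{Z}_{n,3}$ under $\boldsymbol{Q_n}$, and this is where the deterministic shift appears. For each $n$ the summands $\mathcal{Z}_{n,3,1}, \dotsc, \mathcal{Z}_{n,3,n}$ are i.i.d. random elements of $L^2$ under $\boldsymbol{Q_n}$; since $E_{P_0} \mathcal{Z}_{n,3,j}(T) = 0$, the change-of-measure computation gives
\begin{align*}
E_{Q_{n1}} \mathcal{Z}_{n,3,1}(T) &= n^{-1/2} \int \mathcal{Z}_{n,3,1}(T) \, h_n(X) \, \dd P_0(X) =: n^{-1/2} c_n(T),
\end{align*}
so that the mean function of $\mathcal{Z}_{n,3}$ under $\boldsymbol{Q_n}$ equals $c_n$. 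Using Assumption (A1), Assumption (A2), the Cauchy--Schwarz inequality, and the $L^2$-integrability of $\mathcal{Z}_{n,3,1}$ established in the $L^2$-bound calculations of Section \ref{proofofasympdistn}, I would show that $c_n \to c$ in $L^2$, where $c$ is the function written in the statement (after substituting the explicit formula \eqref{gLambdaformula} for $g(T)$ into the definition of $\mathcal{Z}_{n,3,1}$).

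Third, to identify the limiting distribution of $\mathcal{Z}_{n,3} - c_n$ under $\boldsymbol{Q_n}$, I would apply a triangular-array Central Limit Theorem in the separable Hilbert space $L^2$. The tightness/covariance-matching arguments already carried out under $\boldsymbol{P_n}$ combined with $E_{Q_{n1}} |h_n - 1|^2 = O(n^{-1})$ give: (i) the covariance operator of $\mathcal{Z}_{n,3,1}$ under $Q_{n1}$ converges to the same operator $\mathcal{S}$ determined by $K(S,T)$ in \eqref{covariancefn_matrixcase}; (ii) a Lindeberg-type condition holds, provided by the uniform fourth-moment bound $\sup_n E_{P_0} |h_n|^4 < \infty$ together with the $L^2$-integrability results of Section \ref{proofofasympdistn}. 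Hence $\mathcal{Z}_{n,3} - c_n \xrightarrow{d} \mathcal{Z}$ in $L^2$ under $\boldsymbol{Q_n}$, and combining with $c_n \to c$ gives $\mathcal{Z}_{n,3} \xrightarrow{d} \mathcal{Z} + c$. Applying Slutsky's theorem to the decomposition yields $\mathcal{Z}_n \xrightarrow{d} \mathcal{Z} + c$ in $L^2$, and then the continuous mapping theorem applied to $f \mapsto \|f\|_{L^2}^2$, together with Lemma \ref{lemmazn_matrixargument}, gives the assertion for $\boldsymbol{T}_n^2$.

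The main obstacle will be the Lindeberg-type verification of the $L^2$-CLT for the triangular array under $\boldsymbol{Q_n}$: since the underlying measure changes with $n$, I cannot simply invoke the i.i.d. CLT in $L^2$ used in Theorem \ref{limitingnulldistribution_matrixcase}. I expect to handle this by using the contiguity-based observation that any uniformly integrable family under $\boldsymbol{P_n}$ remains uniformly integrable under $\boldsymbol{Q_n}$, reducing the Lindeberg bound for $\|\mathcal{Z}_{n,3,1}\|_{L^2}^2$ under $Q_{n1}$ to the corresponding (and already controlled) bound under $P_0$ multiplied by $1 + n^{-1/2} h_n$, where the cross term is absorbed via Cauchy--Schwarz and Assumption (A2). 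A secondary technical point is to verify, when transporting $o_p(1)$ statements via contiguity, that the relevant sequences are indeed measurable functionals of the sample, which follows once the Taylor expansion is written with measurable intermediate points.
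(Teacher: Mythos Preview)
Your overall plan is sound, but it diverges from the paper's argument at the central step. Both you and the paper establish contiguity of $\boldsymbol{Q_n}$ to $\boldsymbol{P_n}$ via Le~Cam's first lemma and then transfer the three remainder bounds $\|\mathcal{Z}_n-\mathcal{Z}_{n,1}\|_{L^2}$, $\|\mathcal{Z}_{n,1}-\mathcal{Z}_{n,2}\|_{L^2}$, $\|\mathcal{Z}_{n,2}-\mathcal{Z}_{n,3}\|_{L^2}\xrightarrow{p}0$ from $\boldsymbol{P_n}$ to $\boldsymbol{Q_n}$. Where you differ is in handling $\mathcal{Z}_{n,3}$ under $\boldsymbol{Q_n}$: you propose to compute the mean shift $E_{Q_{n1}}\mathcal{Z}_{n,3,1}=n^{-1/2}c_n$ directly, show $c_n\to c$ in $L^2$, and then run a triangular-array Lindeberg CLT in $L^2$ for the centred process under the changing measure $Q_{n1}$. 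The paper instead stays entirely under $\boldsymbol{P_n}$: it proves, via Cram\'er--Wold and Lindeberg--Feller for scalar sums drawn from the \emph{fixed} measure $P_0$, the \emph{joint} convergence of $(\mathcal{Z}_{n,3}(T_1),\dots,\mathcal{Z}_{n,3}(T_k),\Lambda_n)$ to a $(k+1)$-variate normal (Lemma~\ref{lemma4contiguous_matrixcase}), and then invokes Le~Cam's \emph{third} lemma to read off the finite-dimensional limits of $\mathcal{Z}_{n,3}$ under $\boldsymbol{Q_n}$ as $\mathcal{N}_k(\boldsymbol{c},\Sigma)$; tightness is transferred by contiguity, and finite-dimensional convergence plus tightness (via Cremers--Kadelka) yields $\mathcal{Z}_{n,3}\xrightarrow{d}\mathcal{Z}+c$ in $L^2$ under $\boldsymbol{Q_n}$.

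The trade-off is clear. Your route is conceptually direct but, as you correctly flag, places the technical burden on a Hilbert-space CLT for row-wise i.i.d.\ arrays whose law varies with $n$: you must check both a Lindeberg condition and convergence of the covariance operator under $Q_{n1}$, and your proposed control via $(1+n^{-1/2}h_n)$ and Cauchy--Schwarz with (A2) is workable but requires care. The paper's route sidesteps this entirely: all limit computations are done under the fixed measure $P_0$, where the summands are genuinely i.i.d.\ and the Lindeberg--Feller verification is standard; the shift $c$ then falls out of the cross-covariance between $\mathcal{Z}_{n,3,1}$ and $h_n$, and no CLT under $\boldsymbol{Q_n}$ is ever needed. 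This is the canonical device for contiguous alternatives and is the simpler path here.
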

 
\bigskip

We note that the proof of this theorem and the subsequent results can be obtained by following the approach in \cite[pp.~79--91]{ref27} and Theorem 4.3 in \cite{hadjicostarichards}.  In order to maintain a relatively self-contained presentation, we provide some of the details here.

Before proceeding to those details, we will present some preliminary results.  Consider the log-likelihood ratio,
\begin{align*}
\Lambda_n(X_{n1},\dotsc,X_{nn}) := \log \frac{ \dd \boldsymbol{Q}_n(X_{n1},\dotsc,X_{nn})}{\dd\boldsymbol{P}_n(X_{n1},\dotsc,X_{nn})}.
\end{align*}
From the definition of $\boldsymbol{P}_n$ and $\boldsymbol{Q}_n$, we obtain 
\begin{align*}
\Lambda_n(X_{n1},\dotsc,X_{nn}) &= \log \prod_{j=1}^n (1+n^{-1/2}h_n(X_{nj})) \\
&=\sum_{j=1}^n \log(1+n^{-1/2}h_n(X_{nj})).
\end{align*}
Since $\Lambda_n=-\infty$ if and only if $1+n^{-1/2}h_n(X_{nj}) = 0$ for some $j$, we obtain 
\begin{align*}
\boldsymbol{P}_n (\Lambda_n = -\infty) &= \boldsymbol{P}_n\bigg(\bigcup_{j=1}^n \{1+n^{-1/2}h_n(X_{n1})=0\}\bigg) \\
&\le n \boldsymbol{P}_n(1+n^{-1/2}h_n(X_{n1})=0) \\
&= n \boldsymbol{P}_n(h_n(X_{n1})=-n^{1/2}).
\end{align*}
Since $h_n(X_{n1})=-n^{1/2}$ if and only if $|n^{-1/2}h_n(X_{n1})|^4=1$ then 
\begin{align*}
n \boldsymbol{P}_n(h_n(X_{n1}) = -n^{1/2}) &= n E_{P_0}  ( |n^{-1/2}h_n(X_{n1})|^4 I(h_n(X_{n1})=-n^{-1/2}))\\
&\le n^{-1} E_{P_0}  ( |h_n(X_{n1})|^4)\\
&\le n^{-1} \sup_{n \in \mathbb{N}} E_{P_0} ( |h_n|^4).
\end{align*}
Under the assumption that $\sup_{n \in \bN} E_{P_0} (|h_n|^4) < \infty$, we obtain 
$$
n \boldsymbol{P}_n(h_n(X_{n1}) = -n^{1/2}) \to 0,
$$
as $n \to \infty$.  Therefore, without loss of generality, we shall assume that $\Lambda_n > -\infty$ and $1+n^{-1/2}h_n(X_{nj}) > 0$ for all $j=1,\dotsc,n$ and $n \ge 1$ (see \cite[p.~140, Appendix D.2]{ref27} or \cite[p.~303, Example 6.118]{wittingmuller}).

The Taylor expansion of order $2$ of the function $\log(1+n^{-1/2}h_n(X_{nj}))$, at $1$ is 
$$
\log(1+n^{-1/2}h_n(X_{nj}))=n^{-1/2}h_n(X_{nj})-2^{-1}n^{-1}h_n^2(X_{nj}) + R(h_n(X_{nj})),
$$ 
with remainder term 
$$
R(h_n(X_{nj})) = \frac13 n^{-3/2}\big(1 + n^{-1/2} t_{nj}(X_{nj}) h_n(X_{nj})\big)^{-3} h_n^3(X_{nj}),
$$
where $t_{nj} : \mathcal{P}_{+}^{m \times m} \to [0,1]$ is a measurable function.  Therefore, 
$$
\Lambda_n(X_{n1},\dotsc,X_{nn})=\sum_{j=1}^{n} \big[ n^{-1/2}h_n(X_{nj})-2^{-1}n^{-1}h_n^2(X_{nj}) + R(h_n(X_{nj})) \big].
$$
In the following result, we use the notation $\sigma^2:=\int {|h|^2}\, \dd P_0$. 

\smallskip

\begin{lemma}
\label{lemmacontiguous_matrixcase} As $n \to \infty$, 
\begin{itemize}
\item[(i)]
$n^{-1/2} \sum_{j=1}^{n} h_n(X_{nj}) \xrightarrow{d} \mathcal{N}(0,\sigma^2)$ in $\boldsymbol{P}_n$-distribution.
\item[(ii)]
$n^{-1} \sum_{j=1}^{n} h_n^2(X_{nj}) \rightarrow \sigma^2$ in $\boldsymbol{P}_n$-probability.
\item[(iii)]
$\sum_{j=1}^{n} R(h_n(X_{nj})) \rightarrow 0$ in $\boldsymbol{P}_n$-probability.
\end{itemize}
\end{lemma}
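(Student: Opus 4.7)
My plan is to treat the three parts independently, since each involves i.i.d.\ row variables $h_n(X_{nj})$ (or their squares or remainders) under $\boldsymbol{P}_n$, but with two different limit regimes: a central limit theorem for (i), a law of large numbers for (ii), and a direct negligibility argument for (iii). The preliminary discussion in Section \ref{assumptionscontiguous_matrixcase} already yields the crucial tools: $\int h_n \, \dd P_0 = 0$ for every $n$, $\sigma_n^2 := \int h_n^2 \, \dd P_0 \to \sigma^2$, and $M := \sup_{n} \int |h_n|^4 \, \dd P_0 < \infty$.

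For part (i), I would invoke the Lindeberg--Feller CLT for triangular arrays of row-wise i.i.d.\ random variables. The row variables $h_n(X_{n1}),\dots,h_n(X_{nn})$ are centered with variance $\sigma_n^2 \to \sigma^2$, so only the Lindeberg condition needs to be verified. On the event $\{|h_n| > \epsilon\sqrt{n}\}$ one has $h_n^2 \le h_n^4/(\epsilon^2 n)$, hence
\begin{equation*}
E_{P_0}\big[h_n^2 \, I(|h_n| > \epsilon\sqrt{n})\big] \;\le\; \frac{1}{\epsilon^2 n}\, E_{P_0}[h_n^4] \;\le\; \frac{M}{\epsilon^2 n} \;\longrightarrow\; 0,
\end{equation*}
which yields the Lindeberg condition and delivers the stated normal limit. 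For part (ii), I would apply Chebyshev's inequality to $S_n := n^{-1}\sum_{j=1}^n h_n^2(X_{nj})$. The mean $E_{P_0}(S_n) = \sigma_n^2 \to \sigma^2$ by the mean-square convergence of $h_n$ to $h$ established in Section \ref{assumptionscontiguous_matrixcase}, and $\mathrm{Var}_{P_0}(S_n) = n^{-1}\mathrm{Var}_{P_0}(h_n^2) \le n^{-1} E_{P_0}[h_n^4] \le M/n \to 0$; so $S_n \to \sigma^2$ in $\boldsymbol{P}_n$-probability.

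Part (iii) is the main obstacle, because the factor $(1 + n^{-1/2} t_{nj}(X_{nj}) h_n(X_{nj}))^{-3}$ in the Lagrange remainder need not be bounded when $h_n(X_{nj})$ is very negative. I plan to handle this by a two-region decomposition. Define $A_n := \bigcap_{j=1}^n \{|h_n(X_{nj})| \le \tfrac12 \sqrt{n}\}$; on $A_n$ one has $|n^{-1/2} t_{nj} h_n(X_{nj})| \le \tfrac12$, so $(1 + n^{-1/2} t_{nj} h_n(X_{nj}))^{-3} \le 8$, giving the deterministic bound
\begin{equation*}
\bigg|\sum_{j=1}^n R(h_n(X_{nj}))\bigg| \, I(A_n) \;\le\; \frac{8}{3}\, n^{-1/2} \cdot \frac{1}{n}\sum_{j=1}^n |h_n(X_{nj})|^3.
\end{equation*}
Since $E_{P_0}|h_n|^3 \le (E_{P_0} h_n^4)^{3/4} \le M^{3/4}$, Markov's inequality shows the right-hand side tends to $0$ in probability. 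On the complement, a union bound together with Markov's inequality gives
\begin{equation*}
\boldsymbol{P}_n(A_n^c) \;\le\; \sum_{j=1}^n P_{P_0}\big(|h_n| > \tfrac12 \sqrt{n}\big) \;\le\; n \cdot \frac{16\, E_{P_0}[h_n^4]}{n^2} \;\le\; \frac{16 M}{n} \;\longrightarrow\; 0.
\end{equation*}
Combining these two estimates yields $\sum_{j=1}^n R(h_n(X_{nj})) \to 0$ in $\boldsymbol{P}_n$-probability, completing the proof. The hardest part of the argument is precisely this careful control of the cubic remainder, because it is where the non-uniform behavior of the Taylor remainder is felt; every other step is a routine application of the fourth-moment bound (A2) combined with the $L^2$-convergence derived from Assumptions \ref{2assumptions_matrixcase}.
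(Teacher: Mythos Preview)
Your proof is correct and follows the standard route for this type of LAN-style lemma: Lindeberg--Feller for (i), Chebyshev combined with the variance bound $E_{P_0}h_n^4 \le M$ for (ii), and the two-region decomposition (truncation at $\tfrac12\sqrt{n}$ plus a union bound) for the cubic Lagrange remainder in (iii). The paper itself does not supply a proof of this lemma; it only cites \cite[pp.~80--83]{ref27} and \cite{hadjicosta19}, where the argument is the same one you have written out, so your proposal and the paper's intended proof coincide.
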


\medskip

\noindent
The proofs of these results are given in \cite[pp.~80-83]{ref27} and in \cite{hadjicosta19}. Combining these three results, we conclude that under $\boldsymbol{P}_n$, 
\begin{equation}
\label{convln_contiguous_matrixcase}
\Lambda_n(X_{n1}, \dotsc, X_{nn}) \xrightarrow{d} \mathcal{N}(-\tfrac12 \sigma^2,\sigma^2).
\end{equation}

We introduced in Section \ref{sectionlimitingnull_matrixcase} the random field
$$
\mathcal{Z}_{n}(T)=\frac{1}{\sqrt{n}} \sum_{j=1}^n \big[ \Gamma_m(\alpha) A_{\nu}(T, X_{j}^{1/2} \bar{X}_n^{-1} X_{j}^{1/2})-\etr(-T/\alpha) \big], 
$$
$T > 0$. Also, we introduced in Theorem \ref{limitingnulldistribution_matrixcase}, the centered random field
\begin{multline*}
\mathcal{Z}_{n,3}(T)=\frac{1}{\sqrt{n}}\sum_{j=1}^{n} \big[ \Gamma_m(\alpha) A_{\nu}(T, \alpha^{-1}X_j)\\
+ \Gamma_m(\alpha) \langle \alpha^{-1}(\alpha I_m-X_j) , g(T)  \rangle - \etr(-\alpha^{-1} T) \big], 
\end{multline*}
$T > 0$, where 
$$
g(T)=- \frac{\alpha^{-1}}{m \, \Gamma_m(\alpha)} (\tr T) \etr(-\alpha^{-1} T) \, I_m.
$$
We proved that there exists a centered Gaussian field $\mathcal{Z}:=(\mathcal{Z}(T), T > 0)$ with sample paths in $L^2$ and with covariance function $K(S, T)$ given in (\ref{covariancefn_matrixcase}) such that, under $\boldsymbol{P_n}$, $|| \mathcal{Z}_n -\mathcal{Z}_{n,3}||_{L^2} \xrightarrow{p} 0$ and $\mathcal{Z}_{n,3} \xrightarrow{d} \mathcal{Z}$ in $L^2$. For $k \in \mathbb{N}$ and $T_1,\dotsc, T_k \in \mathcal{P}_{+}^{m \times m}$, it follows from the multivariate Central Limit Theorem that $\big(\mathcal{Z}_{n,3}(T_1),\dotsc,\mathcal{Z}_{n,3}(T_k)\big)' \xrightarrow{d} \mathcal{N}_k \big(0,\Sigma\big)$ under $\boldsymbol{P_n}$, where $\Sigma= \big( K(T_i, T_j) \big)_{1 \le i,j \le k}$ is the $k \times k$ positive definite matrix with $(i, j)$th entry $K(T_i, T_j)$. 

Let $\|\cdot\|_{\bR^{k+1}}$ denote the standard Euclidean norm on $\bR^{k+1}$.  Then, by Lemma \ref{lemmacontiguous_matrixcase}(iii),
\begin{align}
\bigg\|\big(&\mathcal{Z}_{n,3}(T_1),\dotsc,\mathcal{Z}_{n,3}(T_k),\Lambda_n\big)' \nonumber\\
& \quad - \bigg(\mathcal{Z}_{n,3}(T_1),\dotsc,\mathcal{Z}_{n,3}(T_k), \sum_{j=1}^n \big[ n^{-1/2} h_n(X_{nj})-2^{-1} n^{-1}h_n^2(X_{nj})\big] \bigg)' \bigg\|^2_{\mathbb{R}^{k+1}}\nonumber\\
\label{convprobzn3ln_matrixcase}
&=\bigg( \Lambda_n - \sum_{j=1}^n \big[ n^{-1/2} h_n(X_{nj})-2^{-1} n^{-1}h_n^2(X_{nj})\big] \bigg)^2 \nonumber\\
& =\bigg(\sum_{j=1}^{n} R(h_n(X_{nj})\bigg)^2 \rightarrow 0,
\end{align}
in $\boldsymbol{P}_n$-probability.  
 
\medskip

\begin{lemma}
\label{lemma4contiguous_matrixcase}
For $T > 0$, define 
\begin{align}
\label{cofti_matrixcase}
c(T)=\lim_{n \rightarrow \infty} \Cov \bigg(\mathcal{Z}_{n,3}(T), \sum_{j=1}^n \big[ n^{-1/2}  h_n(X_{nj})-\frac{1}{2}n^{-1}h_n^2(X_{nj})\big] \bigg), 
\end{align}
and set $\boldsymbol{c}=\big( c(T_1),\dotsc, c(T_k) \big)'$. Then, under $\boldsymbol{P_n}$,
\begin{multline}
\label{lemma4contiguouseq_matrixcase}
\bigg(\mathcal{Z}_{n,3}(T_1),\dotsc,\mathcal{Z}_{n,3}(T_k), \sum_{j=1}^n \big[ n^{-1/2} h_n(X_{nj})-2^{-1} n^{-1}h_n^2(X_{nj})\big] \bigg)' \\
\xrightarrow{d} \mathcal{N}_{k+1} \left( (0,\dotsc,0,-2^{-1}\sigma^2)', \begin{bmatrix} \Sigma & \boldsymbol{c} \\ \boldsymbol{c'} & \sigma^2 \end{bmatrix} \right)
\end{multline}
and 
\begin{align}
\label{convz3ln_contiguous_matrixcase}
\big(&\mathcal{Z}_{n,3}(T_1),\dotsc,\mathcal{Z}_{n,3}(T_k),\Lambda_n\big)' \xrightarrow{d} \mathcal{N}_{k+1} \left( (0,\dotsc,0,-2^{-1}\sigma^2)', \begin{bmatrix} \Sigma & \boldsymbol{c} \\ \boldsymbol{c'} & \sigma^2 \end{bmatrix} \right).
\end{align}
\end{lemma}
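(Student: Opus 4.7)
The plan is to establish \eqref{lemma4contiguouseq_matrixcase} by a multivariate central limit theorem for an i.i.d.\ triangular array, and then to deduce \eqref{convz3ln_contiguous_matrixcase} from \eqref{lemma4contiguouseq_matrixcase} together with \eqref{convprobzn3ln_matrixcase} via Slutsky's theorem. Since $X_{n1},\ldots,X_{nn}$ are i.i.d.\ $P_0$-distributed under $\boldsymbol{P}_n$, I would first write
$$
V_{nj}^{(i)} = \Gamma_m(\alpha) A_{\nu}(T_i,\alpha^{-1}X_{nj}) + \Gamma_m(\alpha)\bigl\langle \alpha^{-1}(\alpha I_m - X_{nj}),\, g(T_i)\bigr\rangle - \etr(-\alpha^{-1}T_i),
$$
for $i=1,\ldots,k$, so that $\mathcal{Z}_{n,3}(T_i) = n^{-1/2}\sum_{j=1}^n V_{nj}^{(i)}$, and decompose the random vector on the left-hand side of \eqref{lemma4contiguouseq_matrixcase} as
$$
\frac{1}{\sqrt n}\sum_{j=1}^n \eta_{nj} \;-\; \Bigl(0,\ldots,0,\,\tfrac{1}{2n}\sum_{j=1}^n h_n^2(X_{nj})\Bigr)',
$$
where $\eta_{nj}:=(V_{nj}^{(1)},\ldots,V_{nj}^{(k)},h_n(X_{nj}))'\in\mathbb{R}^{k+1}$.

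For the leading term, the Cram\'er--Wold device reduces the problem to a univariate CLT for each projection $\lambda'\eta_{nj}$, $\lambda\in\mathbb{R}^{k+1}$. Within each row these are i.i.d.\ with zero mean, because $E_{P_0}V^{(i)}=0$ (verified in the proof of Theorem \ref{limitingnulldistribution_matrixcase}) and $E_{P_0}h_n=0$ (Section \ref{assumptionscontiguous_matrixcase}). The $(i,l)$-entries of $\Cov(\eta_{n1})$ equal $K(T_i,T_l)$ (also from Theorem \ref{limitingnulldistribution_matrixcase}) and hence do not depend on $n$; the bottom-right entry $E_{P_0}h_n^2$ converges to $\sigma^2$ by the mean-square convergence $h_n\to h$ recorded in Section \ref{assumptionscontiguous_matrixcase}; and the off-diagonal entries $E_{P_0}(V^{(i)}h_n)$ converge to $\int V^{(i)}h\,\dd P_0$ by Cauchy--Schwarz, since $V^{(i)}\in L^2(P_0)$ (as shown in \eqref{limitingnullterm1_matrix}--\eqref{limitingnullterm3_matrix}) and $\|h_n-h\|_{L^2(P_0)}\to 0$. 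A Lyapunov condition with $\delta=2$ closes the argument: assumption (A2) gives $\sup_n E_{P_0}|h_n|^4<\infty$, and \eqref{2besselineq_matrixargument} together with the finiteness of all Wishart moments yields $E_{P_0}|V^{(i)}|^4<\infty$, so by Minkowski's inequality $\sup_n E_{P_0}|\lambda'\eta_{n1}|^4<\infty$. Consequently $n^{-1/2}\sum_j\eta_{nj}\xrightarrow{d}\mathcal{N}_{k+1}(0,\Sigma^\star)$, where $\Sigma^\star$ is the block matrix with diagonal blocks $\Sigma$ and $\sigma^2$ and off-diagonal vector $\tilde{\boldsymbol{c}}=\bigl(\int V^{(i)}h\,\dd P_0\bigr)_{i=1}^k$. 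Lemma \ref{lemmacontiguous_matrixcase}(ii) then gives $(2n)^{-1}\sum_j h_n^2(X_{nj})\xrightarrow{P}\sigma^2/2$ under $\boldsymbol{P}_n$, so Slutsky's theorem yields \eqref{lemma4contiguouseq_matrixcase} provided $\tilde{\boldsymbol{c}}=\boldsymbol{c}$.

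The identification $\tilde{\boldsymbol{c}}=\boldsymbol{c}$ follows from a direct covariance computation. By independence of the $X_{nj}$,
$$
\Cov\Bigl(\mathcal{Z}_{n,3}(T_i),\,\sum_{j=1}^n\bigl[n^{-1/2}h_n(X_{nj})-\tfrac12 n^{-1}h_n^2(X_{nj})\bigr]\Bigr) = E_{P_0}(V^{(i)}h_n) \,-\, \tfrac12 n^{-1/2}E_{P_0}(V^{(i)}h_n^2),
$$
and the first term tends to $\int V^{(i)}h\,\dd P_0$, while the second is $O(n^{-1/2})$ since $|E_{P_0}(V^{(i)}h_n^2)|\le \|V^{(i)}\|_{L^2(P_0)}\,(E_{P_0}h_n^4)^{1/2}$ is bounded by (A2). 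To pass from \eqref{lemma4contiguouseq_matrixcase} to \eqref{convz3ln_contiguous_matrixcase}, I would invoke \eqref{convprobzn3ln_matrixcase}, which asserts that the $\mathbb{R}^{k+1}$-norm of the difference between the two vectors converges to $0$ in $\boldsymbol{P}_n$-probability, and apply Slutsky's theorem once more. The main technical obstacle is the Lyapunov step, specifically the uniform fourth-moment control of $V^{(i)}$; this reduces to the observation that the only unbounded piece of $V^{(i)}$ is $\Gamma_m(\alpha)\langle\alpha^{-1}(\alpha I_m-X),g(T_i)\rangle$, which is linear in the entries of $X$ and therefore has all Wishart moments finite.
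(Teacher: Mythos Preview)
Your proof is correct and follows the same overarching strategy as the paper: reduce the joint convergence to a triangular-array CLT via the Cram\'er--Wold device, then pass to \eqref{convz3ln_contiguous_matrixcase} using \eqref{convprobzn3ln_matrixcase} and Slutsky. There are, however, two organisational differences worth noting. First, the paper does not separate off the quadratic part $(2n)^{-1}\sum_j h_n^2(X_{nj})$; instead it folds the full summand $h_n(X_{nj})-\tfrac12 n^{-1/2}h_n^2(X_{nj})$ into the array and applies the CLT directly, so that the nonzero mean $-\tfrac12\sigma^2$ appears as the limit of $\sqrt n\,\mu_n$ rather than via a Slutsky step. Second, the paper verifies the Lindeberg condition (using Pratt's dominated convergence theorem with the bounds \eqref{result1contiguous_matrixcase}--\eqref{result2contiguous_matrixcase}), whereas you check the stronger Lyapunov condition with exponent $4$, which is available here precisely because of (A2) and the polynomial growth of $V^{(i)}$. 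Your route is slightly shorter and more transparent, at the cost of using the Lyapunov condition where Lindeberg would suffice; the paper's route avoids the second Slutsky application but requires the extra bookkeeping of the nonzero mean $\mu_n$ and the a.s.\ convergence \eqref{asconvcontiguous_matrixcase}. Both arguments rest on the same moment bounds and on the $L^2(P_0)$-convergence $h_n\to h$ recorded in Section~\ref{assumptionscontiguous_matrixcase}, so there is no substantive gap between them.
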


\begin{proof}
Substituting for $\mathcal{Z}_{n,3}$ in (\ref{cofti_matrixcase}), applying Assumptions \ref{2assumptions_matrixcase}, and carrying out some straightforward calculations, we obtain 
\begin{multline*}
c(T)=\int_{X>0} { \bigg[ \Gamma_m(\alpha)A_{\nu}(T, \alpha^{-1} X)+\frac{\tr(X-\alpha I_m)}{\alpha^{2}} (\tr T) \etr(-\alpha^{-1} T) }\\
{-\etr(-\alpha^{-1} T) \bigg]  h(X)}\, \dd P_0(X),
\end{multline*}
for $T > 0$. Letting 
\begin{multline*}
w(T_i,X_{nj}) :=\Gamma_m(\alpha)A_{\nu}(T_i, \alpha^{-1} X_{nj})\\
-\alpha^{-2} (\tr T_i) \etr(-\alpha^{-1} T_i) \tr(\alpha I_m-X_{nj})-\etr(-\alpha^{-1} T_i); 
\end{multline*}
then 
$$
\mathcal{Z}_{n,3}(T_i)=\frac{1}{\sqrt{n}} \sum_{j=1}^n w(T_i, X_{nj}).
$$
To establish (\ref{lemma4contiguouseq_matrixcase}), we will apply the Cram\'er-Wold device. Then it suffices to establish that for every $\boldsymbol{u}=(u_1,\dotsc, u_{k+1})' \in \mathbb{R}^{k+1}$,
\begin{multline}
\frac{1}{\sqrt{n}} \sum_{j=1}^n \bigg(w(T_1,X_{nj}),\dotsc, w(T_k,X_{nj}), h_n(X_{nj})-2^{-1}n^{-1/2} h_n^2(X_{nj}) \bigg)' \boldsymbol{u} \\
\label{cramerwoldconv_matrixcase}
\xrightarrow{d} \mathcal{N}_{k+1} \left( (0,\dotsc,0,-2^{-1}\sigma^2) \boldsymbol{u}, \boldsymbol{u}'\begin{bmatrix} \Sigma & \boldsymbol{c} \\ \boldsymbol{c'} & \sigma^2 \end{bmatrix} \boldsymbol{u} \right).
\end{multline}
Now, let $Y_1,Y_2,\dotsc$ be i.i.d. $P_0$-distributed $m \times m$ random matrices, and define
$$
k_n(Y_j)=\sum_{l=1}^k w(T_l,Y_j) u_l + \big( h_n(Y_j)-2^{-1}n^{-1/2}h_n^2(Y_j) \big) u_{k+1}.
$$
Under $\boldsymbol{P_n}$, $n^{-1/2} \sum_{j=1}^n k_n(Y_j)$ has the same distribution as
$$
\frac{1}{\sqrt{n}} \sum_{j=1}^n \bigg(w(T_1,X_{nj}),\dotsc, w(T_k,X_{nj}), h_n(X_{nj})-2^{-1} n^{-1/2}h_n^2(X_{nj}) \bigg) \boldsymbol{u},
$$
$\boldsymbol{u} \in \mathbb{R}^{k+1}$. Since $E(w(T_i, Y_1))=0$, $i=1,\dotsc, k$ then 
$$
\mu_n := E(k_n(Y_1))=-2^{-1} n^{-1/2} u_{k+1} E(h_n^2(Y_1)).
$$
Denote by $\tau^2_n$ the variance of $k_n(Y_1)$.  Then, 
\begin{align*}
\tau^2_n &=\sum_{i=1}^k \sum_{j=1}^k \Cov \big(w(T_i,Y_1),w(T_j,Y_1)\big) u_i u_j + \Var (h_n(Y_1)) u^2_{k+1} \\
& \qquad + (4n)^{-1} \Var (h^2_n(Y_1)) u^2_{k+1} - n^{-1/2} \Cov (h_n(Y_1), h^2_n(Y_1)) u^2_{k+1} \\
& \qquad\qquad + 2 u_{k+1} \sum_{i=1}^k \Cov \bigg(w(T_i, Y_1),\big( h_n(Y_1)-2^{-1} n^{-1/2}h_n^2(Y_1) \big)\bigg) u_i.
\end{align*}
By Assumptions \ref{2assumptions_matrixcase}, we obtain $\Var (h^2_n(Y_1)) < \infty$ and $\Cov \big( h_n(Y_1), h^2_n(Y_1) \big) < \infty.$  Thus, as $n \rightarrow \infty$, 
\begin{equation}
\label{tao_contiguous_matrixcase}
\tau^2_n \to \sum_{i=1}^k \sum_{j=1}^k K(T_i, T_j) u_i u_j + \sigma^2 u^2_{k+1} + 2 u_{k+1}\sum_{i=1}^k u_i c(T_i):= \tau^2.
\end{equation}
Similarly, it can be shown that, as $n \rightarrow \infty$, 
\begin{align}
\label{asconvcontiguous_matrixcase}
\big(k_n(Y_1)- \mu_n\big)^2 & \to \sum_{i=1}^k \sum_{j=1}^k w(T_i, Y_1) w(T_j,Y_1) u_i u_j \nonumber \\
& \qquad + h(Y_1) \bigg(h(Y_1) u_{k+1} + \sum_{i=1}^k w(T_i,Y_1) u_i \bigg) u_{k+1},
\end{align}
$P_0$-almost surely. In addition, we notice that 
\begin{multline}
\label{expected_asconvcontiguous_matrixcase}
E \bigg[\sum_{i=1}^k \sum_{j=1}^k w(T_i, Y_1) w(T_j,Y_1) u_i u_j + h(Y_1) \bigg(h(Y_1) u_{k+1} + \sum_{i=1}^k w(T_i, Y_1) u_i \bigg) u_{k+1} \bigg] \\
\ = \ \sum_{i=1}^k \sum_{j=1}^k K(T_i, T_j) u_i u_j + \sigma^2 u^2_{k+1} + 2 u_{k+1} \sum_{i=1}^k c(T_i) u_i \ \equiv \ \tau^2.
\end{multline}
For every $\epsilon > 0$, 
\begin{eqnarray}
\label{result1contiguous_matrixcase}
0 \le (k_n(Y_1)- \mu_n)^2  \ I(|(k_n(Y_1)- \mu_n| > \epsilon \sqrt{n} \tau_n) \le (k_n(Y_1)- \mu_n)^2.
\end{eqnarray}
Also, for every $\epsilon > 0$, 
$$
0 \le (k_n(Y_1)- \mu_n)^2 \ I(|(k_n(Y_1)- \mu_n| > \epsilon \sqrt{n} \tau_n) \le  \frac{(k_n(Y_1)- \mu_n)^4}{ \epsilon^2 n \tau^2_n},
$$
from which we conclude that as $n \rightarrow \infty$, 
\begin{eqnarray}
\label{result2contiguous_matrixcase}
(k_n(Y_1)- \mu_n)^2 \  I(|(k_n(Y_1)- \mu_n| > \epsilon \sqrt{n} \tau_n) \rightarrow 0,
\end{eqnarray}
$P_0$-almost surely.  As the results (\ref{tao_contiguous_matrixcase}) -- (\ref{result2contiguous_matrixcase}) are the sufficient conditions in Pratt's version of the Dominated Convergence Theorem \cite[p.~221, Theorem 5.5]{gut}, we conclude that as $n \to \infty$, 
$$
E\big( (k_n(Y_1)- \mu_n)^2  \ I(|(k_n(Y_1)- \mu_n| > \epsilon \sqrt{n} \tau_n) \big) \to 0.
$$
This result is equivalent to the Lindeberg condition, i.e., for every $\epsilon > 0$, 
$$
\lim_{n \to \infty} \frac{1}{n \tau^2_n} \sum_{j=1}^n E\big( (k_n(Y_j)- \mu_n)^2 \ I(|(k_n(Y_j)- \mu_n| > \epsilon \sqrt{n} \tau_n) \big) = 0.
$$
Thus, we deduce from the Lindeberg-Feller Central Limit Theorem that 
$$
\frac{1}{\sqrt{n} \tau_n} \sum_{j=1}^n (k_n(Y_j)-\mu_n) \xrightarrow{d} \mathcal{N}(0,1),
$$
therefore,
$$
\frac{1}{\sqrt{n}} \sum_{j=1}^n k_n(Y_j) \xrightarrow{d} \mathcal{N} \big( -2^{-1} \sigma^2 u_{k+1}, \tau^2 \big).
$$
Note also that $(0,\dotsc,0,-2^{-1}\sigma^2)\boldsymbol{u} = -2^{-1} \sigma^2 u_{k+1}$ and that 
\begin{align*}
\tau^2 &= \sum_{i=1}^k \sum_{j=1}^k u_i u_j K(T_i, T_j) + u^2_{k+1}\sigma^2 +2 u_{k+1}\sum_{i=1}^k u_i c(T_i)\\
&= \boldsymbol{u}' \begin{bmatrix} \Sigma & \boldsymbol{c} \\ \boldsymbol{c'} & \sigma^2 \end{bmatrix} \boldsymbol{u}.
\end{align*}
Therefore, (\ref{cramerwoldconv_matrixcase}) is proved. Finally, (\ref{convz3ln_contiguous_matrixcase}) follows from (\ref{convprobzn3ln_matrixcase}), (\ref{cramerwoldconv_matrixcase}), and \cite[p.~25, Theorem 4.1]{ref21}.
\end{proof}

Now, we proceed to the proof of Theorem \ref{theoremcontiguous_matrixcase}.

\medskip

\begin{proof}[Proof of Theorem \ref{theoremcontiguous_matrixcase}]
By (\ref{convln_contiguous_matrixcase}) and Le Cam's first lemma (see \cite[p.~140, Theorem D.5]{ref27} or \cite[p.~311, Corollary 6.124]{wittingmuller}), $\boldsymbol{P}_n$ and $\boldsymbol{Q}_n$ are mutually contiguous.  Also, by (\ref{convz3ln_contiguous_matrixcase}) and Le Cam's third lemma (see \cite[p.~141, Theorem D.6]{ref27} or  \cite[p.~329, Corollary 6.139]{wittingmuller}), under $\boldsymbol{Q}_n$, 
\begin{eqnarray}
\label{convz3_contiguous_matrixcase}
\big(\mathcal{Z}_{n,3}(T_1),\dotsc,\mathcal{Z}_{n,3}(T_k) \big)' \xrightarrow{d} \mathcal{N}_{k} (\boldsymbol{c},\Sigma).
\end{eqnarray}
By \cite[p.~138, Theorem D.2]{ref27} or \cite[p.~56, Theorem 5.51]{wittingmuller}, the convergence in distribution of $\mathcal{Z}_{n,3}$ under $\boldsymbol{P}_n$ in $L^2$ implies that $\mathcal{Z}_{n,3}$ is tight in $L^2$ under $\boldsymbol{P}_n$. Further, since $\boldsymbol{Q}_n$ is contiguous to $\boldsymbol{P}_n$, by \cite[p.~139, Theorem D.4]{ref27} or \cite[p.~295, Theorem 6.113 (a)]{wittingmuller}, $\mathcal{Z}_{n,3}$ is tight in $L^2$ under $\boldsymbol{Q}_n$. 

By (\ref{convz3_contiguous_matrixcase}) and the tightness of $\mathcal{Z}_{n,3}$ in $L^2$ under $\boldsymbol{Q}_n$, we obtain $\mathcal{Z}_{n,3} \xrightarrow{d} \mathcal{Z} + c$ under $\boldsymbol{Q}_n$ (see \cite[Theorem 2, Example 4]{cremerskadelka}).  Moreover, since $\| \mathcal{Z}_n - \mathcal{Z}_{n,3}\|_{L^2} \xrightarrow{p} 0$ under $\boldsymbol{P}_n$ and $\boldsymbol{Q}_n$ is contiguous to $\boldsymbol{P}_n$, we have under $\boldsymbol{Q}_n$, $\| \mathcal{Z}_n - \mathcal{Z}_{n,3}\|_{L^2} \xrightarrow{p} 0$.  Thus, by Billingsley \cite[p.~25, Theorem 4.1]{ref21}, we obtain $\mathcal{Z}_n \xrightarrow{d} \mathcal{Z} + c$ under $\boldsymbol{Q}_n$.  

Finally, by the Continuous Mapping Theorem \cite[p.~31, Corollary 1]{ref21}, we have $\lVert \mathcal{Z}_{n} \rVert^2_{L^2} \xrightarrow{d} \lVert \mathcal{Z} +c \rVert^2_{L^2}$ under $\boldsymbol{Q}_n$, i.e., 
$$
\boldsymbol{T}^2_{n} \xrightarrow{d} \int_{T > 0} {(\mathcal{Z}(T)+c(T))^2}\, \dd P_0(T)
$$
under $\boldsymbol{Q}_n$.  The proof now is complete. 
\end{proof}

\section{The Efficiency of the Test}
\label{efficiency_matrix}

In this Section, we investigate the approximate Bahadur slope of the test statistic $\boldsymbol{T}_n^2$ under local alternatives.  Further, we show the validity of a modified Wieand condition. The proof of Wieand's condition, under which the Bahadur and Pitman efficiencies agree, remains an open problem.  By applying the results of this section, we are able to calculate the approximate asymptotic relative efficiency (ARE) of the proposed test relative to potential alternative tests. 

For $m \ge 2$, let $X_1, X_2, \dotsc $ be i.i.d., $m \times m$ positive-definite random matrices with unknown distribution $P$. We assume that $P$ is indexed by a parameter $\theta \in \Theta :=(-\eta, \eta)$, for some $\eta >0$. We let $\theta \in \Theta_0=\{\theta_0 \}=\{ 0 \}$ to represent the null hypothesis and $\theta \in \Theta_1=\Theta \setminus \{0\}$ to represent the alternative hypothesis. In Section \ref{goodnessoffittests_wishart}, we showed that $\boldsymbol{T}^2_n$ is scale-invariant, i.e., it does not depend on the unknown scale matrix $\Sigma$. Thus, under the null hypothesis $\theta_0=0$, we assume that $X_1, X_2,\dotsc $ are i.i.d., $m \times m$ positive-definite $P_0$-distributed random matrices and under the local alternatives, represented by $\theta \in \Theta_1$, we assume that $X_1, X_2,\dotsc $ are i.i.d., $m \times m$ positive-definite $P_\theta$-distributed random matrices. 

The Radon-Nikodym derivative of $P_{\theta}$ with respect to $P_0$ is $d P_{\theta}/{d P_0}=1+\theta h_{\theta}$. We assume that as $\theta \rightarrow 0$, the function $h_{\theta}$ converges to some function $h$ in mean square, i.e., 
\begin{align}
\label{meanconvefficiency_matrix}
\lim_{\theta \rightarrow 0} \int_{X > 0} {|h_{\theta}(X)-h (X)|^2}\, \dd P_0(X)=0.
\end{align}
 Since $ \int {(\dd P_{\theta}/ \dd P_0)}\, \dd P_0=1$, we have 
\begin{equation}
\label{meanzero_matrix} 
\int_{X >0} {h_{\theta} (X)}\, \dd P_0(X)=0,
\end{equation}
for $\theta \in \Theta_1$. Further, we shall assume that for $\theta \in \Theta_1$,
\begin{equation}
\label{assumption_efficiency_matrix}
\int_{X > 0} {X h_{\theta}(X) }\, \dd P_0(X)=0.
\end{equation}

\subsection{The approximate Bahadur slope of the test}
For a description of the approximate Bahadur slope of a test under local alternatives and for the definition of a standard sequence, we refer to Bahadur \cite{bahadur1, bahadur2}, Taherizadeh \cite[Chapter 5]{ref27} or to Section 5 in \cite{hadjicostarichards}. 

We have the following result for the test statistic $\boldsymbol{T}_n^2$.

\begin{theorem}
The sequence of test statistics $\{\boldsymbol{T}_n : n \in \bN \}$ is a standard sequence.  Further, $a = \tilde{\delta}_1^{-1}$, the inverse of the largest eigenvalue of the covariance operator $\mathcal{S}$, 
\begin{equation}
\label{bsquaredfunction_matrix}
b^2(\theta) = \theta^2 \int_{ T >0} {\bigg[\int_{ X >0} { \Gamma_m(\alpha) A_{\nu}(T, \alpha^{-1}X) h_{\theta} (X)}\, \dd P_0(X) \bigg]^2}\, \dd P_0(T)
\end{equation}
and 
$$
\lim_{\theta \rightarrow 0} \frac{c^{(a)}(\theta)}{\theta^2} = \tilde{\delta}_1^{-1} \int_{ T >0} {\bigg[\int_{ X >0} { \Gamma_m(\alpha) A_{\nu}(T, \alpha^{-1}X) h (X)}\, \dd P_0(X) \bigg]^2}\, \dd P_0(T).
$$
\end{theorem}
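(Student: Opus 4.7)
The plan is to verify the three defining properties of a standard sequence in the sense of Bahadur: (i) $\boldsymbol{T}_n/\sqrt{n} \to b(\theta)$ in $P_\theta$-probability for $\theta \in \Theta_1$, with $0 < b(\theta) < \infty$; (ii) under $H_0$, $\boldsymbol{T}_n$ converges in distribution to a continuous limit $F$; and (iii) $\log(1-F(t)) \sim -a t^{2}/2$ as $t \to \infty$. Once (i)--(iii) are established, the approximate Bahadur slope is $c^{(a)}(\theta) = a\, b^{2}(\theta)$ and the final limit reduces to an $L^2(P_0)$-convergence argument for $h_\theta \to h$.

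For (i), I would adapt the decomposition in the consistency proof of Theorem \ref{consistency_matrixcase}. Under $P_\theta$, assumption (\ref{assumption_efficiency_matrix}) together with $E_0(X_1) = \alpha I_m$ gives
$$E_\theta(X_1) = \int_{X>0} X\,\dd P_0(X) + \theta \int_{X>0} X\, h_\theta(X)\,\dd P_0(X) = \alpha I_m,$$
so $\bar{X}_n \xrightarrow{P_\theta} \alpha I_m$ and the normalization matrix of the consistency proof becomes $\mu = \alpha I_m$. Splitting $n^{-1}\boldsymbol{T}_n^2$ exactly as in (\ref{consistency1_matrixcase})--(\ref{consistency3_matrixcase}), the remainders (\ref{consistency2_matrixcase}) and (\ref{consistency3_matrixcase}) vanish in $P_\theta$-probability via the Lipschitz bound (\ref{lipschitz_bessel_2matrixargument}) exactly as before, while (\ref{consistency1_matrixcase}) converges, by the Strong Law of Large Numbers in $L^2(P_0)$, to
$$\int_{T>0}\left[\,E_\theta\bigl(\Gamma_m(\alpha) A_{\nu}(T,\alpha^{-1}X_1)\bigr) - \etr(-T/\alpha)\,\right]^{2}\,\dd P_0(T).$$
Writing $\dd P_\theta = (1+\theta h_\theta)\,\dd P_0$ and using Example \ref{hankeltransformwishartdistn_example} with $\Sigma = I_m$ to kill the unperturbed term, the bracket collapses to $\theta \int \Gamma_m(\alpha) A_{\nu}(T,\alpha^{-1}X)\, h_\theta(X)\,\dd P_0(X)$, producing exactly $b^{2}(\theta)$ of (\ref{bsquaredfunction_matrix}).

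For (ii) and (iii), Theorem \ref{limitingnulldistribution_matrixcase} together with the Karhunen-Lo\`eve identification in Section \ref{sec:eigen_matrix} yields $\boldsymbol{T}_n^{2} \xrightarrow{d} Y := \sum_{k\ge 1}\delta_k\chi^{2}_{1k}$ under $P_0$. Since $\mathcal{S}$ is positive, trace-class, and injective (Proposition \ref{injectiveoperators}), all $\delta_k > 0$ and $Y$ has a continuous distribution on $(0,\infty)$, so $\boldsymbol{T}_n \xrightarrow{d} \sqrt{Y}$ with continuous limit distribution $F$. Zolotarev's classical tail asymptotic for infinite weighted sums of independent $\chi^{2}_1$ variables gives $\log P(Y > s) \sim -s/(2\tilde\delta_1)$ as $s \to \infty$; setting $s = t^{2}$ yields $\log(1-F(t)) \sim -t^{2}/(2\tilde\delta_1)$, which identifies $a = \tilde\delta_1^{-1}$, the reciprocal of the largest eigenvalue of $\mathcal{S}$. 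Consequently $\{\boldsymbol{T}_n\}$ is a standard sequence with approximate Bahadur slope $c^{(a)}(\theta) = \tilde\delta_1^{-1}\, b^{2}(\theta)$.

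For the final limit, by (\ref{2besselineq_matrixargument}) we have $|\Gamma_m(\alpha) A_{\nu}(T,\alpha^{-1}X)| \le 1$ uniformly in $T,X$, so the Cauchy-Schwarz inequality combined with (\ref{meanconvefficiency_matrix}) yields
$$\sup_{T>0}\left|\int_{X>0} \Gamma_m(\alpha) A_{\nu}(T,\alpha^{-1}X)\,[h_\theta(X)-h(X)]\,\dd P_0(X)\right|^{2} \le \int_{X>0}|h_\theta-h|^{2}\,\dd P_0 \to 0$$
as $\theta \to 0$. Hence the inner integral in (\ref{bsquaredfunction_matrix}) converges in $L^{2}(P_0,\dd T)$ to the corresponding integral with $h$; dividing by $\theta^{2}$ and multiplying by $a = \tilde\delta_1^{-1}$ gives the stated formula for $\lim_{\theta\to 0}c^{(a)}(\theta)/\theta^{2}$. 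The main obstacle I anticipate is the transfer of the integrability bounds in Lemma \ref{integral_trace_lambdamin}, which are written under $P_0$, to the alternative law $P_\theta$ in step (i); this can be handled uniformly in a neighborhood of $\theta = 0$ by controlling $\int(\dd P_\theta/\dd P_0)^{2}\,\dd P_0 = 1 + \theta^{2}\int h_\theta^{2}\,\dd P_0$, which is bounded by (\ref{meanconvefficiency_matrix}).
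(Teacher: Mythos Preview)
Your proposal is correct and follows essentially the same route as the paper: you invoke the consistency argument of Theorem~\ref{consistency_matrixcase} under $P_\theta$ (using assumption~(\ref{assumption_efficiency_matrix}) to force $E_\theta X_1=\alpha I_m$) to identify $b^2(\theta)$, you apply Zolotarev's tail asymptotic to the Karhunen--Lo\`eve representation to obtain $a=\tilde\delta_1^{-1}$, and you close with the Cauchy--Schwarz/bounded-Bessel argument based on~(\ref{2besselineq_matrixargument}) and~(\ref{meanconvefficiency_matrix}) for the limiting slope. Your closing concern about transferring Lemma~\ref{integral_trace_lambdamin} to $P_\theta$ is unnecessary, since the consistency proof you are adapting never invokes that lemma---it relies only on the Lipschitz bound~(\ref{lipschitz_bessel_2matrixargument}) and first-moment bounds on $\|X_1\|_F$, both of which carry over to $P_\theta$ directly.
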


\Pro
The proof of this theorem follows along the lines of the proof of Theorem 5.1  in \cite{hadjicostarichards}. For completeness, we provide the details here. 

First, we will establish that $\{\boldsymbol{T}_n : n \in \bN \}$ is a standard sequence. In Section \ref{goodnessoffittests_wishart}, we showed that the limiting null distribution of the test statistic $\boldsymbol{T}^2_n$ is the same as that of  $\sum_{k \ge 1} \tilde{\delta}_k \chi^2_{N(\tilde{\delta}_k)}$, where $\tilde{\delta}_k$, $k \ge 1$ is an enumeration, listed in non-increasing order, of the distinct eigenvalues of $\mathcal{S}$ with corresponding multiplicities $N(\tilde{\delta}_k)$, and $\{\chi^2_{N(\tilde{\delta}_k)} \}$ are i.i.d. $\chi^2_{N(\tilde{\delta}_k)}$-distributed random variables. From the Monotone Convergence Theorem, we have 
$$
\lim_{M \rightarrow \infty} E \bigg( \sum_{k=1}^{M} \tilde{\delta}_k \chi^2_{N(\tilde{\delta}_k)} \bigg)=E \bigg(\sum_{k \ge 1} \tilde{\delta}_k \chi^2_{N(\tilde{\delta}_k)} \bigg) =\sum_{k \ge 1} \tilde{\delta}_k \ N(\tilde{\delta}_k),
$$
which is finite since $\mathcal{S}$ is of trace-class.  Thus, $\sum_{k \ge 1} \tilde{\delta}_k \chi^2_{N(\tilde{\delta}_k)}$ is almost surely a positive random variable with continuous probability distribution function. 

By Zolotarev \cite{zolotarev}, 
\begin{align*}
1-F(t) &=P \bigg( \sum_{k \ge 1} \tilde{\delta}_k \chi^2_{N(\tilde{\delta}_k)} > t^2 \bigg)\\
&= \frac{1}{\Gamma(N(\tilde{\delta_1})/2)} \ \bigg[ \prod_{k \ge 2} \bigg( 1-\frac{\tilde{\delta}_k}{\tilde{\delta}_1} \bigg)^{-N(\tilde{\delta}_k)} \ \bigg] \ \bigg( \frac{t^2}{2\tilde{\delta}_1} \bigg)^{(N(\tilde{\delta}_1)/2)-1} \\
& \qquad\qquad\qquad\qquad\qquad\qquad\qquad\qquad \times \exp(-t^2/2 \tilde{\delta}_1) \ [1+ o_{p}(1)],
\end{align*}
where $o_{p}(1) \xrightarrow{t \rightarrow \infty} 0$. Therefore, 
\begin{align*}
- & 2 t^{-2} \log[1-F(t)] \\
&= 2 t^{-2} \bigg[ \log[\Gamma(N(\tilde{\delta_1})/2)] + \sum_{k \ge 2} N(\tilde{\delta}_k) \log \bigg( 1-\frac{\tilde{\delta}_k}{\tilde{\delta}_1} \bigg) - \bigg( \frac{N(\tilde{\delta}_1)}{2}-1 \bigg) \log \bigg( \frac{t^2}{2\tilde{\delta}_1} \bigg) \bigg]
+ \tilde{\delta}_1^{-1},
\end{align*}
which converges to $\tilde{\delta}_1^{-1}$ as $t \rightarrow \infty$.  

By assumption (\ref{assumption_efficiency_matrix}), for $\theta \in \Theta_1$, 
\begin{align*}
E_\theta (X_1):=\int_{X >0} { X }\, \dd P_{\theta}(X) = \int_{X > 0} { X (1+\theta h_{\theta}(X))}\, \dd P_0(X)=\alpha I_m.
\end{align*}
From the proof of Theorem \ref{consistency_matrixcase}, we have
$$
n^{-1}\boldsymbol{T}^2_n \xrightarrow{p} \int_{T > 0} {\bigg( E_{\theta} [\Gamma_m(\alpha) A_{\nu} (T, \alpha^{-1} X_1)] -\etr(-\alpha^{-1} T) \bigg)^2}\, \dd P_0(T).
$$
Since $\dd P_\theta/\dd P_0 = 1 + \theta h_\theta$ then, by (\ref{meanzero_matrix}), 
$$
E_{\theta} [\Gamma_m(\alpha) A_{\nu} (T, \alpha^{-1} X_1)]
=\etr(-\alpha^{-1} T) + \theta \int_{X >0} { \Gamma_m(\alpha) A_{\nu} (T, \alpha^{-1} X) h_{\theta}(X)}\, \dd P_{0}(X),
$$
and then it follows that $n^{-1} \boldsymbol{T}^2_n \xrightarrow{p} b^2(\theta)$, the function defined in (\ref{bsquaredfunction_matrix}).  Therefore, $n^{-1/2} \boldsymbol{T}_n \xrightarrow{p} b(\theta)$ in $P_\theta$-probability, so the sequence of test statistics $\{\boldsymbol{T}_n: n \in \bN\}$ is a standard sequence.  

Finally, we find the limiting approximate Bahadur slope, as $\theta \rightarrow 0$. By applying the Cauchy-Schwarz inequality, (\ref{2besselineq_matrixargument}), and assumption (\ref{meanconvefficiency_matrix}), it is straightforward to establish that
\begin{align*}
\limsup_{\theta \rightarrow 0} \bigg| \frac{b^2(\theta)}{\theta^2}-\int_{T >0} {\bigg[\int_{X > 0} { \Gamma_m(\alpha) A_{\nu} (T, \alpha^{-1} X) h(X)}\, \dd P_0(X) \bigg]^2}\, \dd P_0(T) \bigg|=0.
\end{align*}
Therefore, 
\begin{align*}
\lim_{\theta \rightarrow 0} \bigg| \frac{b^2(\theta)}{\theta^2}-\int_{T >0} {\bigg[\int_{X > 0} { \Gamma_m(\alpha) A_{\nu} (T, \alpha^{-1} X) h(X)}\, \dd P_0(X) \bigg]^2}\, \dd P_0(T) \bigg|=0.
\end{align*}
The proof is now complete. $\qed$

\subsection{A modified form of Wieand's condition}

Wieand \cite{wieand} showed that if two standard sequences of test statistics satisfy an additional condition, now called the \textit{Wieand condition}, then the limiting approximate Bahadur efficiency is in accord with the limiting Pitman efficiency, as the level of significance decreases to $0$. For a description about Pitman's asymptotic relative efficiency, we refer to Taherizadeh \cite[Chapter 5]{ref27} or to Section 5 in \cite{hadjicostarichards}.  Although the proof of Wieand's condition remains an open problem in the matrix setting, we show that a modified form of Wieand's condition is valid for the test statistics $ \{ \boldsymbol{T}_n : n \in \bN \}$. 

\begin{theorem}
\label{wieandTn_matrix}
There exists a constant $\theta^{*} > 0$ such that for any $\epsilon > 0$ and $\gamma \in (0,1)$, there exists a constant $C > 0$ such that 
$$
P(|n^{-1/2}\boldsymbol{T}_n - b(\theta)| \ \le \epsilon b(\theta)) > 1-\gamma
$$
for any $\theta \in \Theta_1 \cap (-\theta^{*}, \theta^{*})$ and $n^{1/2} > C/b^2(\theta)$.  
\end{theorem}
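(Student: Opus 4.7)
The plan is to use the reverse triangle inequality in $L^2$ to reduce the problem to controlling an empirical process in mean square, and then apply Chebyshev's inequality. Setting
$$\mu_\theta(T) := E_\theta[\Gamma_m(\alpha) A_\nu(T, \alpha^{-1} X_1)] - \etr(-T/\alpha),$$
a direct calculation using $\dd P_\theta/\dd P_0 = 1+\theta h_\theta$ together with the identity $E_{P_0}[\Gamma_m(\alpha) A_\nu(T, \alpha^{-1} X_1)] = \etr(-T/\alpha)$ (which follows from Example \ref{hankeltransformwishartdistn_example}, Kummer's formula \eqref{kummer_matrixargument}, and \eqref{bessel_2matrixargument}) shows that $\mu_\theta(T) = \theta \int \Gamma_m(\alpha) A_\nu(T, \alpha^{-1} X) h_\theta(X) \, \dd P_0(X)$, so $b^2(\theta) = \|\mu_\theta\|_{L^2}^2$. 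Writing $\boldsymbol{T}_n = \|\mathcal{Z}_n\|_{L^2}$ and invoking the reverse triangle inequality yields
$$\bigl| n^{-1/2}\boldsymbol{T}_n - b(\theta)\bigr| \le n^{-1/2} \bigl\|\mathcal{Z}_n - n^{1/2}\mu_\theta\bigr\|_{L^2},$$
so it suffices to control the right-hand side.

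Decompose $\mathcal{Z}_n - n^{1/2}\mu_\theta = \mathcal{Y}_{n,1} + \mathcal{Y}_{n,2}$, where
$$\mathcal{Y}_{n,1}(T) = \frac{1}{\sqrt{n}}\sum_{j=1}^n \Bigl[\Gamma_m(\alpha) A_\nu(T, \alpha^{-1} X_j) - E_\theta \Gamma_m(\alpha) A_\nu(T, \alpha^{-1} X_1)\Bigr]$$
is a centered i.i.d. sum, and
$$\mathcal{Y}_{n,2}(T) = \frac{\Gamma_m(\alpha)}{\sqrt{n}}\sum_{j=1}^n \bigl[A_\nu(T, Z_j) - A_\nu(T, \alpha^{-1} X_j)\bigr]$$
is the Lipschitz residual arising from replacing $Z_j = X_j^{1/2}\bar{X}_n^{-1} X_j^{1/2}$ by $\alpha^{-1} X_j$. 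The bound $|\Gamma_m(\alpha) A_\nu(T, \alpha^{-1} X_j)| \le 1$ from \eqref{2besselineq_matrixargument} gives
$$E_\theta \|\mathcal{Y}_{n,1}\|_{L^2}^2 = \int_{T > 0} \Var_\theta\bigl(\Gamma_m(\alpha) A_\nu(T, \alpha^{-1} X_1)\bigr) \, \dd P_0(T) \le 1,$$
uniformly in $\theta$. For the residual, the Lipschitz estimate \eqref{lipschitz_bessel_2matrixargument} combined with $\|Z_j - \alpha^{-1} X_j\|_F \le \|X_j\|_F \|\bar{X}_n^{-1} - \alpha^{-1} I_m\|_F$ and the Cauchy-Schwarz inequality produces
$$\frac{1}{n}\|\mathcal{Y}_{n,2}\|_{L^2}^2 \le 4 m^{3/2}\Bigl(\int_{T > 0}\|T\|_F \, \dd P_0(T)\Bigr) \Bigl(\frac{1}{n}\sum_{j=1}^n \|X_j\|_F\Bigr) \|\bar{X}_n^{-1} - \alpha^{-1} I_m\|_F.$$
Taking expectation under $P_\theta$, applying Cauchy-Schwarz once more, and using the matrix identity $\bar{X}_n^{-1} - \alpha^{-1} I_m = \alpha^{-1}\bar{X}_n^{-1}(\alpha I_m - \bar{X}_n)$ together with $E_\theta X_1 = \alpha I_m$ from \eqref{assumption_efficiency_matrix}, one shows that $E_\theta\|\bar{X}_n^{-1} - \alpha^{-1} I_m\|_F^2 = O(n^{-1})$ uniformly for $|\theta|$ small, whence $E_\theta[n^{-1}\|\mathcal{Y}_{n,2}\|_{L^2}^2] \le M n^{-1/2}$ for a constant $M > 0$ independent of $\theta$ and $n$.

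Chebyshev's inequality and a union bound then yield, for any $\epsilon > 0$,
$$P_\theta\bigl(|n^{-1/2}\boldsymbol{T}_n - b(\theta)| > \epsilon b(\theta)\bigr) \le \frac{4}{n \epsilon^2 b^2(\theta)} + \frac{4 M}{\sqrt{n}\, \epsilon^2 b^2(\theta)}.$$
Choose $\theta^* > 0$ small enough that $b(\theta) \le 1$ for $|\theta| < \theta^*$. Both terms on the right are then bounded by $\gamma/2$ as soon as $n^{1/2} > C/b^2(\theta)$ for a sufficiently large $C = C(\epsilon, \gamma)$, which gives the desired inequality. The principal technical obstacle is the uniformity in $\theta$ of the mean-square bound on $\mathcal{Y}_{n,2}$: this rests on uniform moment estimates $\sup_{|\theta| < \theta^*} E_\theta\|X_1\|_F^r < \infty$ for $r = 2, 4$, which follow from the Radon-Nikodym formula $\dd P_\theta = (1+\theta h_\theta)\,\dd P_0$, the mean-square convergence \eqref{meanconvefficiency_matrix} of $h_\theta$ to $h$, and the finiteness of all positive moments under $P_0 = W_m(\alpha, I_m)$. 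Once these uniform estimates are in place, the remainder is a routine application of the Chebyshev-Markov machinery already used in the consistency proof of Theorem \ref{consistency_matrixcase}.
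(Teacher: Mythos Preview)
Your overall architecture matches the paper's: reverse triangle inequality in $L^2$, the same decomposition $\mathcal{Y}_{n,1}+\mathcal{Y}_{n,2}$, the variance bound $E_\theta\|\mathcal{Y}_{n,1}\|_{L^2}^2\le 1$, and a Markov/Chebyshev finish. The gap is in your control of $\mathcal{Y}_{n,2}$. You assert that $E_\theta\|\bar X_n^{-1}-\alpha^{-1}I_m\|_F^2=O(n^{-1})$ uniformly for small $|\theta|$, and you justify this by appealing to uniform \emph{positive} moment bounds $\sup_{|\theta|<\theta^*}E_\theta\|X_1\|_F^r<\infty$. But the quantity $\|\bar X_n^{-1}-\alpha^{-1}I_m\|_F$ involves \emph{negative} moments of $\bar X_n$ (equivalently, control of $\lambda_{\min}(\bar X_n)^{-1}$), and these are not delivered by the identity $\bar X_n^{-1}-\alpha^{-1}I_m=\alpha^{-1}\bar X_n^{-1}(\alpha I_m-\bar X_n)$ together with positive moments of $X_1$. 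Passing from $P_0$ to $P_\theta$ via $\dd P_\theta^{\otimes n}=\prod_j(1+\theta h_\theta(X_j))\,\dd P_0^{\otimes n}$ and H\"older makes the product factor $(E_0(1+\theta h_\theta)^q)^{n/q}$ blow up exponentially in $n$, so the only assumption available, namely $L^2(P_0)$ convergence of $h_\theta$ in \eqref{meanconvefficiency_matrix}, does not yield the uniform $O(n^{-1})$ bound you need.

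The paper sidesteps this entirely by a deterministic trick you are missing. Writing $Z_j-\alpha^{-1}X_j=\alpha^{-1}X_j^{1/2}\bar X_n^{-1/2}(\alpha I_m-\bar X_n)\bar X_n^{-1/2}X_j^{1/2}$ and factoring through $Y_j:=\bar X_n^{-1/2}X_j\bar X_n^{-1/2}$, one gets
\[
\frac{1}{n}\sum_{j=1}^n\|Z_j-\alpha^{-1}X_j\|_F^{1/2}\le \alpha^{-1/2}\,\|\alpha I_m-\bar X_n\|_F^{1/2}\cdot\frac{1}{n}\sum_{j=1}^n\|Y_j\|_F^{1/2},
\]
and then, since $\sum_{j=1}^n Y_j=nI_m$ exactly, Cauchy--Schwarz and $\|Y_j\|_F\le\tr Y_j$ give the \emph{deterministic} bound $\tfrac{1}{n}\sum_j\|Y_j\|_F^{1/2}\le m^{1/2}$. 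All dependence on $\bar X_n^{-1}$ disappears, and only $E_\theta\|\alpha I_m-\bar X_n\|_F$ remains; this is a positive-moment quantity that the paper bounds by $n^{-1/2}\bar\sigma$ uniformly in small $\theta$ using \eqref{assumption_efficiency_matrix} and Cauchy--Schwarz against $(E_0 h_\theta^2)^{1/2}$. If you insert this step in place of your $\bar X_n^{-1}$ estimate, the rest of your argument goes through essentially as written.
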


\Pro
For $T > 0$ and $\theta \in \Theta$, consider the orthogonally invariant Hankel transform, 
$$
\mathcal{H}_{X_1, \theta} (T) = E_{\theta} [\Gamma_m(\alpha) A_{\nu} (T, \alpha^{-1} X_1)].
$$
We have
$$
 n^{-1/2} \boldsymbol{T}_n =\bigg[ \int_{T > 0} \bigg( \frac{1}{n}\sum_{j=1}^{n} \Gamma_m(\alpha) A_{\nu} (T, Z_j)-\etr(-\alpha^{-1} T) \bigg)^2 \, \dd P_0(T) \bigg]^{1/2}.
$$
By adding and subtracting the term $\mathcal{H}_{X_1, \theta} (T)$ inside the squared term, and then applying Minkowski's inequality, we obtain 
\begin{multline}
\label{wieand_ineq1_matrix}
 n^{-1/2} \boldsymbol{T}_n \le \ \bigg[ \int_{T >0} {  \bigg( \frac{1}{n}\sum_{j=1}^{n} \Gamma_m(\alpha) A_{\nu} (T, Z_j)-\mathcal{H}_{X_1, \theta}(T)\bigg)^2}\, \dd P_0(T)\bigg]^{1/2} \\
+ \bigg[ \int_{T >0} { \big( \mathcal{H}_{X_1, \theta}(T) -\etr(-\alpha^{-1} T) \big)^2}\, \dd P_0(T)  \bigg]^{1/2}.
\end{multline}
Now set 
\begin{align*}
b(\theta) := \bigg[ \int_{T >0} { \big( \mathcal{H}_{X_1, \theta}(T) -\etr(-\alpha^{-1} T) \big)^2}\, \dd P_0(T)  \bigg]^{1/2}.
\end{align*}
By adding and subtracting the term $$ \frac{1}{n}\sum_{j=1}^{n} \Gamma_m(\alpha) A_{\nu} (T, Z_j)$$ inside the squared term, and then again applying Minkowski's inequality, we get
\begin{equation}
\label{wieand_ineq2_matrix}
b(\theta) \le n^{-1/2} \boldsymbol{T}_n 
+ \bigg[ \int_{T >0}  {\bigg( \frac{1}{n}\sum_{j=1}^{n} \Gamma_m(\alpha) A_{\nu} (T, Z_j)-\mathcal{H}_{X_1, \theta}(T) \bigg)^2}\, \dd P_0(T)\bigg]^{1/2}.
\end{equation}
Combining (\ref{wieand_ineq1_matrix}) and (\ref{wieand_ineq2_matrix}), we conclude that
\begin{equation}
\label{wieand_ineq3_matrix}
| n^{-1/2}\boldsymbol{T}_n - b(\theta)| 
\le \bigg[ \int_{T >0}  {\bigg( \frac{1}{n}\sum_{j=1}^{n} \Gamma_m(\alpha) A_{\nu} (T, Z_j)-\mathcal{H}_{X_1, \theta}(T) \bigg)^2}\, \dd P_0(T)\bigg]^{1/2}.
\end{equation}
Further, by subtracting and adding the term $$\frac{1}{n}\sum_{j=1}^{n} \Gamma_m(\alpha) A_{\nu} (T, \alpha^{-1} X_j)$$ inside the squared term
$$
\bigg( \frac{1}{n}\sum_{j=1}^{n} \Gamma_m(\alpha) A_{\nu} (T, Z_j)-\mathcal{H}_{X_1, \theta}(T) \bigg)^2,
$$
and then applying the Cauchy-Schwarz inequality, we obtain 
\begin{multline}
\bigg( \frac{1}{n}\sum_{j=1}^{n} \Gamma_m(\alpha) A_{\nu} (T, Z_j)-\mathcal{H}_{X_1, \theta}(T) \bigg)^2 \\
\label{wieand_ineq4_matrix}
 \le 2 \ \bigg[ \frac{1}{n} \sum_{j=1}^n \Gamma_m(\alpha) \bigg( A_{\nu} (T, Z_j) -A_{\nu} (T, \alpha^{-1} X_j) \bigg) \bigg]^2 \\
+ 2 \bigg[ \frac{1}{n} \sum_{j=1}^n \Gamma_m(\alpha) A_{\nu} (T, \alpha^{-1} X_j)  - \mathcal{H}_{X_1, \theta}(T)\bigg]^2.
\end{multline}
Next, by (\ref{lipschitz_bessel_2matrixargument}), 
\begin{equation}
\label{inequality_w_matrix}
\frac{1}{n} \sum_{j=1}^n  \Gamma_m(\alpha) | A_{\nu} (T, Z_j) - A_{\nu} (T, \alpha^{-1} X_j)| \ \le 
2 m^{3/4} \, \lVert T \rVert^{1/2}_F \ \frac{1}{n}  \sum_{j=1}^n \lVert Z_j-\alpha^{-1} X_j \rVert^{1/2}_F.
\end{equation}
Since 
\begin{align*}
Z_j-\alpha^{-1} X_j&= X_j^{1/2} \bar{X}^{-1}_n X_j^{1/2} -\alpha^{-1} X_j^{1/2} X_j^{1/2}\\
&= \alpha^{-1} X_j^{1/2} \bar{X}_n^{-1/2} (\alpha I_m-\bar{X}_n) \bar{X}_n^{-1/2} X_j^{1/2},
\end{align*}
and since the trace is invariant under cyclic permutations and the Frobenius norm is sub-multiplicative then 
\begin{align*}
\frac{1}{n}  \sum_{j=1}^n \lVert Z_j-\alpha^{-1} X_j \rVert^{1/2}_F & = \alpha^{-1} \frac{1}{n} \sum_{j=1}^n \lVert X_j^{1/2} \bar{X}_n^{-1/2} (\alpha I_m-\bar{X}_n) \bar{X}_n^{-1/2} X_j^{1/2} \rVert^{1/2}_F \\
&=\alpha^{-1} \frac{1}{n} \sum_{j=1}^n \lVert \bar{X}_n^{-1/2}  X_j \bar{X}_n^{-1/2} (\alpha I_m-\bar{X}_n) \rVert^{1/2}_F \\
& \le \alpha^{-1} \frac{1}{n} \sum_{j=1}^n \lVert \bar{X}_n^{-1/2}  X_j \bar{X}_n^{-1/2} \rVert^{1/2}_F \ \lVert \alpha I_m-\bar{X}_n \rVert^{1/2}_F.
\end{align*}
By the Cauchy-Schwarz inequality, 
\begin{align*}
\frac{1}{n} \sum_{j=1}^n \lVert \bar{X}_n^{-1/2}  X_j \bar{X}_n^{-1/2} \rVert^{1/2}_F & \le \frac{1}{\sqrt{n}} \ \bigg( \sum_{j=1}^n \lVert \bar{X}_n^{-1/2}  X_j \bar{X}_n^{-1/2} \rVert_F \bigg)^{1/2}\\
&=\frac{1}{\sqrt{n}} \ \bigg( \sum_{j=1}^n \big[ \tr ( \bar{X}_n^{-1/2}  X_j \bar{X}_n^{-1/2})^2 \big]^{1/2} \bigg)^{1/2}.
\end{align*}
Since $\bar{X}_n^{-1/2}  X_j \bar{X}_n^{-1/2}$ is a positive definite matrix then 
$$
\tr ( \bar{X}_n^{-1/2}  X_j \bar{X}_n^{-1/2})^2 \le ( \tr \bar{X}_n^{-1/2}  X_j \bar{X}_n^{-1/2})^2,
$$ 
and therefore,
\begin{align*}
\frac{1}{n} \sum_{j=1}^n \lVert \bar{X}_n^{-1/2}  X_j \bar{X}_n^{-1/2} \rVert^{1/2}_F  & \le n^{-1/2} \ \bigg( \sum_{j=1}^n \tr \bar{X}_n^{-1/2}  X_j \bar{X}_n^{-1/2} \bigg)^{1/2}\\
&=n^{-1/2} \ \bigg( \tr \bar{X}_n^{-1/2}  n \bar{X}_n \bar{X}_n^{-1/2} \bigg)^{1/2}\\
&= n^{-1/2} \ (n \tr I_m)^{1/2} = m^{1/2}.
\end{align*}
Therefore,
$$
\frac{1}{n}  \sum_{j=1}^n \lVert Z_j-\alpha^{-1} X_j \rVert^{1/2}_F \le \alpha^{-1} m^{1/2} \ \lVert \alpha I_m-\bar{X}_n \rVert^{1/2}_F,
$$
and by (\ref{inequality_w_matrix}), we obtain 
\begin{equation}
\label{wieand_ineq5_matrix}
\frac{1}{n} \sum_{j=1}^n  \Gamma_m(\alpha) | A_{\nu} (T, Z_j) - A_{\nu} (T, \alpha^{-1} X_j)| \ \le 2 \alpha^{-1} m^{5/4} \ \lVert T \rVert^{1/2}_F \ \lVert \alpha I_m-\bar{X}_n \rVert^{1/2}_F.
\end{equation}

By (\ref{wieand_ineq3_matrix}), Markov's inequality, and Fubini's theorem, 
\begin{multline}
\label{inequality_w2_matrix}
P\big(| n^{-1/2}\boldsymbol{T}_n - b(\theta)| \le \epsilon b(\theta) \big) \\
\ge 1 - \frac{1}{\epsilon^2 b^2(\theta)} \int_{T >0}  { E_{\theta} \bigg( \frac{1}{n}\sum_{j=1}^{n} \Gamma_m(\alpha) A_{\nu} (T, Z_j)-\mathcal{H}_{X_1, \theta}(T) \bigg)^2}\, \dd P_0(T).
\end{multline}
By (\ref{wieand_ineq4_matrix}) and (\ref{wieand_ineq5_matrix}), we see that (\ref{inequality_w2_matrix}) is greater than or equal to 
\begin{multline*}
1 - \frac{1}{\epsilon^2 b^2(\theta)} \bigg[ 8 \alpha^{-2} m^{5/2} \bigg( \int_{T > 0} {\lVert T \rVert_F }\, \dd P_0(T) \bigg) \  E_{\theta} \lVert \alpha I_m-\bar{X}_n \rVert_F \\
+ 2 \int^{\infty}_0 { E_{\theta} \bigg( \frac{1}{n} \sum_{j=1}^n \Gamma_m(\alpha) A_{\nu} (T, \alpha^{-1} X_j)-\mathcal{H}_{X_1, \theta}(T) \bigg)^2 }\, \dd P_0(T) \bigg].
\end{multline*}
In Theorem \ref{consistency_matrixcase}, we showed that $\tilde{C}:=\int_{T > 0} {\lVert T \rVert_F }\, \dd P_0(T) < \infty$. Further, by (\ref{2besselineq_matrixargument}), 
\begin{align*}
E_{\theta} \bigg( \frac{1}{n} \sum_{j=1}^n \Gamma_m(\alpha) A_{\nu} (T, \alpha^{-1} X_j)-\mathcal{H}_{X_1, \theta}(T) \bigg)^2 
&= n^{-1} \Var_{\theta}  \big( \Gamma_m(\alpha) A_{\nu} (T, \alpha^{-1} X_1) \big) \\ 
&\le n^{-1};
\end{align*}
therefore 
\begin{equation}
\label{wieand_ineq6_matrix}
P\big(| n^{-1/2}\boldsymbol{T}_n - b(\theta)| \le \epsilon b(\theta) \big) 
\ge 1- \frac{1}{\epsilon^2 b^2(\theta)} \bigg[ 8 \alpha^{-2} m^{5/2} \ \tilde{C} \ E_{\theta} \lVert \alpha I_m-\bar{X}_n \rVert_F + \frac{2}{n} \bigg].
\end{equation}

Next, we write 
\begin{align*}
\lVert \alpha I_m-\bar{X}_n \rVert_F = \big( \tr(\alpha I_m-\bar{X}_n)^2 \big)^{1/2}&=\bigg( \tr \bigg[ \frac{1}{n^2} \bigg(\sum_{j=1}^n (X_j -\alpha I_m) \bigg)^2 \bigg] \bigg)^{1/2}\\
&=\frac{1}{n} \bigg( \tr \bigg(\sum_{j=1}^n (X_j -\alpha I_m) \bigg)^2 \bigg)^{1/2},
\end{align*}
and expand the sum.  By the Cauchy-Schwarz inequality, and using the i.i.d. property of $X_1,\ldots, X_n$, we obtain 
\begin{align*}
E_{\theta} \lVert \alpha I_m-\bar{X}_n \rVert_F 
&\le \frac{1} {n} \bigg[ E_{\theta} \bigg( \tr \bigg(\sum_{j=1}^n (X_j -\alpha I_m) \bigg)^2 \bigg) \bigg]^{1/2} \nonumber\\
&=\frac{1}{n} \bigg[ E_{0}  \bigg( \tr \bigg(\sum_{j=1}^n (X_j -\alpha I_m) \bigg)^2 \cdot \prod_{j=1}^n (1+\theta h_{\theta}(X_j)) \bigg) \bigg]^{1/2}.
\end{align*}
Squaring the above sum and using the fact that $X_1,\ldots, X_n$ are i.i.d., we obtain 
\begin{align}
\label{expected_are_matrix}
E_{\theta} \lVert & \alpha I_m-\bar{X}_n \rVert_F \nonumber\\
&\le n^{-1/2} \bigg[ E_{0} \bigg( \tr \big[ (X_1-\alpha I_m)^2 \big] \cdot \prod_{j=1}^n (1+\theta h_{\theta}(X_j)) \bigg) \bigg]^{1/2} \nonumber\\
& \quad + \bigg(\frac{n-1}{n} \bigg)^{1/2} \bigg[ E_{0} \bigg( \tr \big[ (X_1-\alpha I_m) (X_2-\alpha I_m) \big] \cdot \prod_{j=1}^n (1+\theta h_{\theta}(X_j)) \bigg) \bigg]^{1/2}.
\end{align}
Since $E_0 h_{\theta}(X) = 0$ and, by (\ref{assumption_efficiency_matrix}), $E_0 X h_{\theta}(X) = 0$ 
for $\theta \in \Theta_1$, then $E_0 (1 + \theta h_{\theta} (X_1))=1$ and 
$$
E_{0} \bigg( \tr \big[ (X_1-\alpha I_m) \big] \cdot (1+ \theta h_{\theta}(X_1)) \bigg)= \tr E_0 \bigg( (X_1-\alpha I_m) (1+ \theta h_{\theta}(X_1)) \bigg)=0.
$$
Thus, the first term in the right-hand side of (\ref{expected_are_matrix}) equals 
$$
n^{-1/2} \bigg[ E_{0} \bigg( \tr \big[ (X_1-\alpha I_m)^2 \big] \cdot (1+\theta h_{\theta}(X_1)) \bigg) \bigg]^{1/2}
$$
and the second term equals $0$. 

Further, by applying the Cauchy-Schwarz inequality, we also find that 
\begin{align*}
E_{0} \bigg( \tr & \big[ (X_1-\alpha I_m)^2 \big] \cdot (1+\theta h_{\theta}(X_1)) \bigg)\\
&=E_{0} \big( \tr \big[ (X_1-\alpha I_m)^2 \big] \big)  +\theta E_{0} \big( \tr \big[ (X_1-\alpha I_m)^2 \big] \cdot h_\theta(X_1) \big)\\
& \le  \bigg[ E_{0} \big( \tr \big[ (X_1-\alpha I_m)^2 \big] \big)^2 \bigg]^{1/2} \ \bigg[ 1+ | \theta | \big( E_0 (h^2_{\theta}(X_1)) \big)^{1/2} \bigg].
\end{align*}
To show that $E_{0} \big( \tr \big[ (X_1-\alpha I_m)^2 \big] \big)^2$ is finite, we write 
\begin{align*}
\tr \big[ (X_1-\alpha I_m)^2 \big] &= \tr(X_1^2 -2\alpha X_1 +\alpha^2 I_m)\\
&=\tr X_1^2 -2\alpha \tr X_1 +\alpha^2 m,
\end{align*}
and since $(a+b+c)^2 \le 3(a^2+b^2+c^2)$, for $a, b, c \in \mathbb{R}$, it is sufficient to show that $E_0 (\tr X_1^2)^2 < \infty$ and $E_0 (\tr X_1)^2 < \infty$. However,
\begin{align*}
E_0 (\tr X_1^2)^2 \le E_0 (\tr X_1)^4 
= \sum_{|\kappa|=4} E_0 (C_\kappa (X_1)) < \infty,
\end{align*}
by (\ref{zonalintegral}).  By another application of (\ref{zonalintegral}),
\begin{align*}
E_0 (\tr X_1)^2 &= \sum_{|\kappa|=2} E_0 (C_\kappa (X_1)) < \infty.
\end{align*}
By assumption (\ref{meanconvefficiency_matrix}), we conclude that there exists $\theta^{*} \in (0, \eta)$ such that
$$
{\bar\sigma}^2 := \sup_{\theta \in (-\theta^{*}, \theta^{*})}E_{0} \bigg( \tr  \big[ (X_1-\alpha I_m)^2 \big] \cdot (1+\theta h_{\theta}(X_1)) \bigg) < \infty.
$$
Therefore, (\ref{wieand_ineq6_matrix}) can be written as
\begin{align*}
P\big(| n^{-1/2}\boldsymbol{T}_n - b(\theta)| \le \epsilon b(\theta) \big) 
&\ge 1- \frac{1}{n^{1/2} \epsilon^2 b^2(\theta)} \bigg[ 8 \alpha^{-2} m^{5/2} \ \tilde{C} \bar\sigma +\frac{2}{n^{1/2}} \bigg]\\
&\ge 1- \frac{8 \alpha^{-2} m^{5/2} \, \tilde{C} \bar\sigma + 2}{n^{1/2} \epsilon^2 b^2(\theta)},
\end{align*}
for all $\theta \in (-\theta^{*}, \theta^{*})$.  Setting 
$C = \big(8 \alpha^{-2} m^{5/2} \, \tilde{C} \bar\sigma +2 \big)/\epsilon^2 \gamma$ then, for all 
$\theta \in (-\theta^{*}, \theta^{*})$ and $n^{1/2} > C/b^2(\theta)$, 
$$
P\big(| n^{-1/2}\boldsymbol{T}_n - b(\theta)| \le \epsilon b(\theta) \big) \ge 1-\frac{\gamma C}{n^{1/2} b^2(\theta)} > 1-\gamma. 
$$
The proof now is complete. 
$\qed$

\bibliographystyle{ims}

\end{document}